\documentclass[12pt]{amsart}
\usepackage{amsmath,amssymb,latexsym, amsthm, amscd, mathrsfs, stmaryrd} 

\usepackage[mathscr]{euscript}

\usepackage{tikz}

\usepackage[Symbol]{upgreek}
\usepackage{tabularx,ragged2e,booktabs,caption}

\usepackage[all]{xy}
\usepackage{color}
\usepackage{hyperref}


\setlength{\hoffset}{0pt}
\setlength{\voffset}{0pt}
\setlength{\topmargin}{0pt}
\setlength{\oddsidemargin}{0in}
\setlength{\evensidemargin}{0in}
\setlength{\textheight}{8.75in}
\setlength{\textwidth}{6.5in}
\pagestyle{headings}

\theoremstyle{definition}
\newtheorem{Def}[subsubsection]{Definition}
\newtheorem{example}[subsubsection]{Example}
\newtheorem{rem}[subsubsection]{Remark}
\newtheorem{ass}[subsubsection]{Assumption}
\theoremstyle{plain}
\newtheorem{prop}[subsubsection]{Proposition}
\newtheorem{thm}[subsubsection]{Theorem}
\newtheorem{lem}[subsubsection]{Lemma}
\newtheorem{cor}[subsubsection]{Corollary}
\newtheorem{conj}[subsubsection]{Conjecture}
\newcommand{\Hom}{\mathrm{Hom}}
\newcommand{\mbf}{\mathbf}
\newcommand{\mbb}{\mathbb}
\newcommand{\mrm}{\mathrm}

\newcommand{\F}{\mathcal F}
\newcommand{\G}{\mathrm G}
\renewcommand{\H}{\mrm H}
\newcommand{\K}{\mathscr K}
\newcommand{\M}{\mathfrak M}
\newcommand{\N}{\mathcal N}
\renewcommand{\P}{\mrm P}

\newcommand{\bF}{\mbf F}

\newcommand{\bM}{\mbf M}
\newcommand{\fP}{\mathfrak P}
\newcommand{\fS}{\mathfrak S}
\newcommand{\fY}{\mathfrak Y_{\zeta}}

\newcommand{\bv}{\mbf v}
\newcommand{\bw}{\mbf w}
\newcommand{\bx}{\mbf x}
\newcommand{\by}{\mbf y}
\renewcommand{\o}{\text{{\bf o}}}
\renewcommand{\i}{\text{{\bf i}}}
\newcommand{\ve}{\varepsilon}
\newcommand{\z}{\zeta_{\mbb C}}

\newcommand{\GL}{\mrm{GL}}

\newcommand{\IC}{\mrm{IC}}

\renewcommand{\r}{\mbf r}
\newcommand{\s}{\mbf s}

\newcommand{\SP}{\mrm{Sp}}
\renewcommand{\sp}{\mathfrak{sp}}

\newcommand{\R}{\mathscr R}
\renewcommand{\S}{\mathscr S}
\newcommand{\T}{\mrm T}
\newcommand{\bT}{\mbb T}

\newcommand{\w}{\upomega}
\newcommand{\Y}{\mathscr Y}

\newcommand{\Z}{\mathfrak Z_{\zeta}}

\newcommand{\End}{\mrm{End}}
\newcommand{\Ext}{\mrm{Ext}}

\newcommand{\Stab}{\mrm{Stab}}

\newtheorem{thrm}{Theorem}

\newtheorem*{conjecture*}{Conjecture}

\title[Quiver varieties and symmetric pairs]{Quiver varieties and symmetric pairs}

\author{Yiqiang Li}
\address{
Department of Mathematics\\
University at Buffalo\\
the State University of New York
}
\email{yiqiang@buffalo.edu}

\keywords{Nakajima  variety,  partial Springer resolution,   nilpotent Slodowy slices of classical groups,  
rectangular symmetry,  column/row removal reduction,
symmetric pairs, 
$\K$-matrix} 

\subjclass[2010]{
16S30, 
14J50,  
14L35, 
51N30, 
53D05} 

\begin{document}

\begin{abstract}
We study
fixed-point loci of Nakajima varieties under symplectomorphisms and their anti-symplectic cousins, 
which are compositions of  a diagram isomorphism, a reflection functor and a transpose defined by certain  bilinear forms.
These subvarieties provide a natural home for geometric representation theory of symmetric pairs. 
In particular, the cohomology of a Steinberg-type variety of the symplectic fixed-point subvarieties 
is conjecturally related to  the universal enveloping algebra of the subalgebra in a symmetric pair.
The latter symplectic subvarieties are further 
used to construct geometrically an action of a twisted Yangian on torus equivariant cohomology of Nakajima varieties.
In type $A$ case, these subvarieties provide a quiver model
for partial Springer resolutions of nilpotent Slodowy slices of classical groups and associated symmetric spaces, which leads to 
a rectangular symmetry and a refinement of Kraft-Procesi row/column removal reductions.
\end{abstract}

\maketitle

\section{Introduction}

To a  Dynkin diagram of ADE type, one can attach a simply-laced complex simple Lie algebra, say $\mathfrak g$,  and a class of  Nakajima's quiver varieties~\cite{N94, N98}.
The latter provides a natural home for a geometric representation theory of the former. 
If  the algebra $\mathfrak g$ is further equipped with an involution, it yields a  complex Cartan decomposition of $\mathfrak g$:
\begin{align}
\label{Cartan-dec}
\mathfrak g= \mathfrak k \oplus \mathfrak p
\end{align}
where $\mathfrak k$ is the fixed-point subalgebra under involution,  and $\mathfrak p$ is the eigenspace of eigenvalue $-1$.
The pair $(\mathfrak g, \mathfrak k)$ is a so-called symmetric pair and $\mathfrak p$ is the associated  symmetric space.
The purpose of this paper 
is to develop a geometric  theory for the symmetric pair $(\mathfrak g, \mathfrak k)$ and its symmetric space $\mathfrak p$, by using
 Nakajima varieties together with
their  fixed-point loci  under certain symplectic and anti-symplectic involutions. 

Thanks to \'E. Cartan, the classification of symmetric pairs is equivalent to the classification of real simple Lie algebras, which is
given  by Satake diagrams~\cite{H, OV}. These 
are bicolor Dynkin diagrams with black or white vertices, equipped with  diagram involutions.
Representation theory of symmetric pairs was developed under the influence
of
Harish-Chandra's theory of 
$(\mathfrak g, K_{\mbb R})$-modules with  $K_{\mbb R}$ a real adjoint group of $\mathfrak k$ (\cite{D}).
A quantum version was  obtained later 
by Letzter in~\cite{Le}, 
where   a coideal subalgebra $\mbf U'_q(\mathfrak k)$ 
of the quantum algebra $\mbf U_q(\mathfrak g)$ is used as a $q$-analogue of the universal enveloping algebra of $\mathfrak k$ .

Recently, the algebra $\mbf U'_q(\mathfrak k)$ of  type  AIII/AIV without black vertices found its applications
in the study of   orthosymplectic Lie superalgebras  
by Bao and Wang~\cite{BW13}  and 
even special orthogonal Lie algebras by Ehrig and Stroppel~\cite{ES13}, independently. 
A new canonical basis was constructed for certain tensor modules of $\mbf U'_q(\mathfrak k)$ in~\cite{BW13}, 
for idempotented $\mbf U'_q(\mathfrak k)$ in ~\cite{BKLW, LW15}, and finally 
a general theory of canonical basis for $\mathfrak k$ of any type was obtained in ~\cite{BW16}. 
These works have inspired many  developments in various directions, such as categorification~\cite{BSWW} and K-matrix~\cite{BaK16}.

In the work~\cite{BKLW}, there is a geometric realization
of $\mbf U'_q(\mathfrak k)$ of type AIII/AIV without black vertices by using $n$-step isotropic flag varieties,
in the spirit of  Beilinson, Lusztig and MacPherson's influential work~\cite{BLM90}.
In light of the role of {\it loc. cit.} in the works~\cite{G91} by Ginzburg and ~\cite{N94, N98} by Nakajima,
the geometric favor in~\cite{BKLW}, as the tip of the iceberg,  strongly suggests  the existence of a new class of quiver varieties  for a general $\mathfrak k$
parallel to Nakajima varieties for $\mathfrak g$. 
Such an existence is  conjectured independently, and maybe earlier,  
by Wang through his iProgram in~\cite[Introduction]{BW13}.
This new class of quiver varieties, called $\sigma$-quiver varieties, turns out to be 
fixed-point subvarieties of Nakajima varieties under  certain symplectic involutions.
More precisely, the symplectic involution $\sigma$ is a composition of a  diagram involution, a reflection functor and a symplectic transpose induced from certain bilinear forms. 
Note that the prototype of the involution $\sigma$ has been used in~\cite[Section 9]{N03} (see also~\cite[4.6]{VV03})
for reinterpreting Lusztig's opposition~\cite{L00b}, which serves as a crucial ingredient in 
a construction of canonical bases. 
As we learned from~\cite{N18} and via private communication, it is known to Nakajima that 
in an affine analogue of ~\cite[A(iv)]{N15}, fixed-point subvarieties of $\sigma$ on the regular parts of Nakajima varieties 
provide a quiver model for $\mathrm{SO/Sp}$-instantons moduli spaces on ALE spaces, 
similar to the instantons-moduli-space origin~\cite{KN90} of Nakajima varieties; see also Remark~\ref{|a|=2} (3).

Just like Nakajima varieties, 
type $A$ $\sigma$-quiver varieties possess many desirable properties.

\begin{thrm}[{Theorems ~\ref{sigma-sigma-1},~\ref{i-Nakajima-Maffei}, Corollary ~\ref{cor:i-Nakajima-Maffei}}]
\label{a}
Nilpotent Slodowy slices of $\mathfrak k$ of type $\mrm{AI}/\mrm{AII}$ and  their partial Springer resolutions are examples of type $\mrm A$ $\sigma$-quiver varieties.
\end{thrm}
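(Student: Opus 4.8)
The plan is to reduce the statement to the Maffei--Nakajima isomorphism for ordinary type $\mrm{A}$ Nakajima varieties and then to carry the symplectic involution $\sigma$ through that isomorphism. Recall --- this is the input that Theorem~\ref{i-Nakajima-Maffei} is built on --- that for a linear quiver the affine Nakajima variety $\M_0(\bv,\bw)$ is isomorphic to a nilpotent Slodowy slice $\overline{\mathcal O}_\lambda\cap\mathcal S_\mu$ inside $\mathfrak{gl}_N$ (with $\mathcal S_\mu$ transverse to $\mathcal O_\mu$), and that $\M(\bv,\bw)$ is a partial Springer resolution of it, the partitions $\lambda,\mu$ --- hence $N=|\lambda|$, the orbit $\mathcal O_\mu$, the $\mathfrak{sl}_2$-triple $(e,h,f)$ cutting out $\mathcal S_\mu$, and the parabolic giving the partial resolution --- being read off from $\bv,\bw$ through Maffei's combinatorial dictionary. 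A type $\mrm A$ $\sigma$-quiver variety is by definition the $\sigma$-fixed locus of such a pair $\M_0(\bv,\bw)\leftarrow\M(\bv,\bw)$. Hence it is enough to show: under Maffei's isomorphism, $\sigma$ corresponds to the linear-algebraic involution $\theta$ of $\mathfrak{gl}_N$ whose fixed subalgebra is $\mathfrak{so}_N$ (type $\mrm{AI}$) or $\mathfrak{sp}_N$ (type $\mrm{AII}$), compatibly with the resolutions; passing to $\theta$-fixed points then exhibits the $\sigma$-quiver variety as a partial Springer resolution of $\overline{\mathcal O}_\lambda\cap\mathcal S_\mu\cap\mathfrak k$, a nilpotent Slodowy slice of $\mathfrak k$.

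First I would make the $\sigma$-datum on the linear quiver explicit: the diagram involution is the flip $i\mapsto n-i$ of the $A_{n-1}$ diagram (forcing $\bv,\bw$ to be palindromic, which on the slice side yields a self-dual pair $\lambda,\mu$); the relevant reflection functor is the one entering the definition of $\sigma$; and the transpose is that induced by a nondegenerate symmetric form on the framing data in the $\mrm{AI}$ case and a symplectic one in the $\mrm{AII}$ case (with the parity fixed along the quiver so that $\sigma^2=\mathrm{id}$, the overall type being exactly what separates the two cases and forcing the parity constraints on $\lambda,\mu$). Then I would run this $\sigma$ through the explicit maps of Maffei's proof --- the successive identifications of a stable quiver datum with a point $x$ of the slice together with a compatible partial flag $F_\bullet$ in $\mbb C^N$ --- and check that the transported involution sends $x\mapsto-x^{\mathsf T}$ (resp.\ $x\mapsto -Jx^{\mathsf T}J^{-1}$), with respect to the form on $\mbb C^N$ assembled from the framing forms, and sends $F_\bullet$ to its annihilator flag $F_\bullet^{\perp}$. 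Granting Theorem~\ref{sigma-sigma-1}, which records the behaviour of $\sigma$ relative to the reflection functor --- in particular that $\sigma$ is a genuine involution and that the natural variants cut out the same subvariety --- this verification is pure linear algebra.

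Taking fixed points then disposes of the unresolved variety: $\M_0(\bv,\bw)^\sigma\cong(\overline{\mathcal O}_\lambda\cap\mathcal S_\mu)^\theta=\overline{\mathcal O}_\lambda\cap\mathcal S_\mu\cap\mathfrak k$, and one identifies this with $\overline{\mathcal O}\cap\mathcal S$ for the nilpotent orbit $\mathcal O$ and the $\mathfrak{sl}_2$-triple in $\mathfrak k$ gotten by restricting $(e,h,f)$; the Kraft--Procesi parametrisation of nilpotent orbits of $\mathfrak{so}_N$ and $\mathfrak{sp}_N$ by partitions with parity restrictions is precisely the bookkeeping this requires, and it also shows conversely that every such slice arises from a palindromic $(\bv,\bw)$. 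For the resolved variety, Maffei realises $\M(\bv,\bw)\to\M_0(\bv,\bw)$ as the map to the slice from a variety of partial flags compatible with $x$; the annihilator involution has fixed locus the variety of $\theta$-stable, i.e.\ isotropic, partial flags adapted to $x\in\mathfrak k$, and the induced map is a partial Springer resolution of the nilpotent Slodowy slice of $\mathfrak k$. Corollary~\ref{cor:i-Nakajima-Maffei} then packages the resolved and unresolved statements together.

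The main obstacle is the middle step. Maffei's isomorphism is assembled from a web of non-canonical choices --- bases of the framing spaces, the auxiliary maps used to linearise the moment-map equations, and the ordering used to extract the flag --- and all of them must be made $\sigma$-equivariantly, i.e.\ compatibly with the chosen symmetric or symplectic form, so that the transported involution is literally $x\mapsto\pm x^{\mathsf T}$ rather than merely conjugate to it. A secondary difficulty is the scheme-theoretic accounting: one wants the $\sigma$-fixed locus to be reduced and to coincide on the nose with the nilpotent Slodowy slice of $\mathfrak k$, with the correct parity conventions separating $\mrm{AI}$ from $\mrm{AII}$ and the correct isotropic flag type for the partial resolution, which amounts to matching Maffei's dimension-vector dictionary against the Kraft--Procesi partition dictionary for the orthogonal and symplectic nilpotent cones.
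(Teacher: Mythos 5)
Your overall strategy matches the paper's: push $\sigma$ through the Maffei--Nakajima embedding, show that the transported involution on the cotangent-bundle/slice side is $(x,F_\bullet)\mapsto(-x^*,F_\bullet^\perp)$ with respect to the form assembled from the framing forms, and take fixed points, matching against the Kraft--Procesi dictionary. But your description of the $\sigma$-datum contains a genuine error. You take the diagram automorphism $a$ to be the flip $i\mapsto n-i$ of the linear quiver and deduce that $\bv,\bw$ must be palindromic and that on the slice side $\lambda,\mu$ are self-dual. The paper sets $a=1$ throughout the type-$A$ analysis (Sections 6 and 8); the ``flip'' you observe on the slice side is supplied jointly by $S_{w_0}$ and $\tau_{\zeta}$, not by $a$. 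If $a$ were literally the flip, $a\bw=\bw$ would already fail in the cotangent-bundle case ($\bw$ supported at vertex $1$), and the condition $a(w_0*\bv)=\bv$ would collapse to $\bv_i=\bw_1/2$ for all $i$, leaving no room for a general nilpotent Slodowy slice. The condition that actually holds is $w_0*\bv=\bv$, i.e.\ $\bv_i+\bv_{n+1-i}=\bw_1$ --- complementarity, not palindromicity --- and on the partition side one gets not self-duality of $\lambda,\mu$ but precisely the $\mathfrak{so}$ or $\mathfrak{sp}$ parity constraints.

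A secondary imprecision: the condition ensuring $\sigma^2=\mathrm{id}$ is that the signs $\delta_{\bw,i}$ \emph{alternate} along the quiver, $\delta_{\bw,i}\delta_{\bw,i+1}=-1$ (Proposition~\ref{order-4}); a constant sign would only give $\sigma^4=1$. The alternating pattern is also what makes the induced form on $\widetilde W_1$ uniformly symmetric or symplectic (Lemma~\ref{tV-alternate}), with the choice of $\delta_{\bw,1}$ being the switch between the AI and AII cases in Corollary~\ref{cor:i-Nakajima-Maffei}. With these two points corrected your outline becomes the paper's proof, realized by Propositions~\ref{tau-Phi}, \ref{Phi-Si}, \ref{Si-varphi} and assembled in Proposition~\ref{prop:sigma-varphi}.
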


In type AI/AII, the algebra $\mathfrak k$ is an orthogonal/symplectic Lie algebra, and thus we recover the geometry used in~\cite{BKLW}. 
Theorem~\ref{a} is a classical analogue of the well-known Nakajima-Maffei theorem that Nakajima varieties of type A
are  nilpotent Slodowy slices of $\mathfrak{sl}_n$ and their partial Springer resolutions~\cite{N94, M05}.
There are two easy-but-interesting applications from Theorem~\ref{a}. 
Note that the $\mathfrak{sl}_n$-version, presented in Sections~\ref{rect} and \ref{col}, has been done by Henderson~\cite{H15}.
The first one is a  symmetry in classical groups.

\begin{thrm}[{Theorem ~\ref{Sp-O}, Remark ~\ref{rem:Sp-O}}]
\label{b}
There is a rectangular symmetry for partial Springer resolutions of nilpotent Slodowy slices of classical groups, that is
if the partitions involved can be fit into a rectangle of a certain size (see Figure~\ref{rect-sym}), then the associated  varieties  are isomorphic. 
\end{thrm}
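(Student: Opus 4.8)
The plan is to deduce the rectangular symmetry of Theorem~\ref{b} directly from the quiver-variety description provided by Theorem~\ref{a} together with the rectangular symmetry already known for $\mathfrak{sl}_n$-type Nakajima varieties (the result of Henderson~\cite{H15} recalled in Section~\ref{rect}). The key point is that once nilpotent Slodowy slices of $\mathfrak k$ of type $\mrm{AI}/\mrm{AII}$ and their partial Springer resolutions are realized as type $\mrm A$ $\sigma$-quiver varieties $\fY_\zeta(\bv,\bw)$, the combinatorial data (the two partitions encoding the nilpotent orbit and the transverse slice, equivalently the dimension vectors $\bv$ and $\bw$) enter the construction in exactly the same way as for the ambient Nakajima variety. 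Therefore any symmetry of the parameter space that is realized by an isomorphism of the ambient varieties and that commutes with the involution $\sigma$ will descend to an isomorphism of the fixed-point loci.

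First I would recall precisely, from Section~\ref{rect}, Henderson's rectangular symmetry for $\mathfrak{sl}_n$: given partitions $\lambda, \mu$ that fit inside an $a \times b$ rectangle, there is an isomorphism between the partial Springer resolution attached to $(\lambda,\mu)$ and the one attached to the complementary pair $(\hat\lambda,\hat\mu)$ obtained by rotating the rectangle $180^\circ$. Translated into quiver-variety language via Maffei's theorem, this becomes a concrete isomorphism $\fY_\zeta(\bv,\bw)\xrightarrow{\ \sim\ }\fY_\zeta(\bv',\bw')$ with $\bw'$ determined by the rectangle size and $\bv'$ by $\bv$ and $\bw,\bw'$. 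Second, I would check that this isomorphism is $\sigma$-equivariant: since $\sigma$ is a composition of a diagram involution, a reflection functor, and a symplectic/orthogonal transpose coming from bilinear forms, and since the rectangular-symmetry isomorphism is itself built out of reflection functors and transposes (indeed it is essentially a product of such operations, as in the $\mathfrak{sl}_n$ case), the two commute up to keeping track of how the bilinear forms transform under the rectangle rotation. The content here is the bookkeeping: one must verify that a symplectic form on $\bw$ transforms to a symplectic (resp. orthogonal) form on $\bw'$ in the way dictated by type $\mrm{AI}$ versus $\mrm{AII}$ and the parity of the rectangle dimensions — this is the source of the "certain size" restriction and of the precise matching of classical types in Figure~\ref{rect-sym}. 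Third, taking $\sigma$-fixed points of the equivariant isomorphism yields $\fY_\zeta(\bv,\bw)^\sigma\cong\fY_\zeta(\bv',\bw')^\sigma$, and re-translating through Theorem~\ref{a} identifies both sides with the asserted partial Springer resolutions of nilpotent Slodowy slices of classical groups; Remark~\ref{rem:Sp-O} then records the Sp/O variant with the roles of the forms exchanged.

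The main obstacle I anticipate is precisely the equivariance and form-transformation step: one must pin down how the bilinear form defining $\sigma$ is carried along by the sequence of reflection functors implementing the rectangular rotation, and confirm that the resulting form is of the correct type (symmetric vs.\ alternating) and non-degenerate, so that the fixed-point locus on the target is again the intended $\sigma$-quiver variety rather than a degenerate or wrong-type variant. A secondary, more routine point is ensuring that the stability condition (the slope/stability parameter $\zeta$) is preserved under the identification, so that one genuinely gets the partial Springer resolution and not just the affine Slodowy slice; this should follow as in the $\mathfrak{sl}_n$ case since reflection functors act compatibly with the relevant stability chambers. Modulo these compatibility checks, the theorem reduces cleanly to Theorem~\ref{a} plus the known $\mathfrak{sl}_n$ statement.
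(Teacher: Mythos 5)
Your high-level strategy --- deduce the classical rectangular symmetry from the $\sigma$-quiver-variety description of Theorem~\ref{a} together with the $\mathfrak{sl}_n$ rectangular symmetry of Section~\ref{rect}, tracking the bilinear forms --- is in essence the paper's route. But there is a genuine gap in how you propose to establish the $\sigma$-equivariance. You claim the $\mathfrak{sl}_n$ rectangular-symmetry isomorphism ``is itself built out of reflection functors and transposes (indeed it is essentially a product of such operations).'' It is not. In Proposition~\ref{prop:rect} the isomorphism $\widetilde{\mathcal S}_{\mu',\lambda}\cong\widetilde{\mathcal S}_{\widehat\mu',\widehat\lambda}$ arises because the \emph{same} Nakajima variety $\M_\zeta(\bv,\bw)$ admits two distinct Maffei closed immersions into cotangent bundles of flag varieties, one built from the original vertex labeling and one from the reversed labeling $i\mapsto n+1-i$, and the rectangular isomorphism is the composition of one immersion with the inverse of the other. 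Maffei's immersion $\Phi$ (Proposition~\ref{Maffei-Phi}) is defined by the inductive system of transversality conditions (\ref{t1})--(\ref{sl2}); it is not a reflection functor or a transpose, so the ``bookkeeping'' you anticipate cannot reduce to equivariance of reflection functors, and checking equivariance of the map you describe would be checking the wrong map.

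What the paper actually establishes, and what your second step would have to prove, is that Maffei's $\Phi$, once $\widetilde V_i$ is given the form (\ref{tild-V-form}), intertwines $\tau$ on $\Lambda(V,W)$ with $\widetilde\tau$ on $\Lambda(\widetilde V,\widetilde W)$ (Proposition~\ref{tau-Phi}), and, separately, that $\Phi$ is compatible with each reflection functor $S_i$ (Proposition~\ref{Phi-Si}); these combine into Proposition~\ref{prop:sigma-varphi} and hence Theorem~\ref{i-Nakajima-Maffei}, giving $\fS_\zeta(\bv,\bw)\cong\widetilde{\mathcal S}^{\widetilde\sigma}_{\mu',\lambda}$. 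Repeating the whole procedure for the hatted data yields $\fS_\zeta(\bv,\bw)\cong\widetilde{\mathcal S}^{\widehat\sigma}_{\widehat\mu',\widehat\lambda}$, and Theorem~\ref{Sp-O} is the resulting composition. Both compatibility propositions are substantive case analyses with the transversal conditions and the grading (\ref{grad}), not a routine tracking of sign conventions. Your intuition about how the symmetric versus alternating form on $\widehat W_1$ matches the classical type and parity (Lemma~\ref{tV-alternate}, Remark~\ref{rem:Sp-O}) is sound, but it sits on top of the $\Phi$-compatibility rather than in place of it.
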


The rectangular symmetry is further applied to prove a conjecture in Henderson and Licata's work~\cite{HL14} on 
Springer resolutions of two-row nilpotent Slodowy slices of classical groups, 
and recover relevant results in {\it loc. cit.} and~\cite{W15}; see Example~\ref{2-row}. 

The second one is an enhancement of  
Kraft-Procesi's column/row removal reductions which play critical roles in the study of minimal singularities in classical nilpotent orbits.
Kraft-Procesi~\cite{KP82} showed smooth equivalences of 
singularities between  nilpotent Slodowy slices  $S^{\mathfrak g}_{\mu', \lambda}$ and 
$S^{\mathfrak g'}_{red(\mu'), red( \lambda)}$ for certain classical Lie algebras $\mathfrak g$ and $\mathfrak g'$, where $red(\mu')$, (resp. $ red( \lambda)$) is obtained from partition $\mu'$ (resp. $\lambda$)
by removing certain rows/columns from $\mu'$ (resp. $\lambda$), see Figure~\ref{CRR}. 

\begin{thrm}[Propositions \ref{i-prop:col}, ~\ref{i-prop:row}]
\label{c}
The nilpotent Slodowy slices  in Kraft-Procesi's  column/row removal reductions in~\cite{KP82} are isomorphic. 
\end{thrm}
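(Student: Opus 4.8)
\emph{Proof proposal.}
The plan is to pass to the quiver-variety model and reduce the geometric claim to combinatorics of dimension vectors. By Theorem~\ref{a} --- that is, by Theorem~\ref{i-Nakajima-Maffei} and Corollary~\ref{cor:i-Nakajima-Maffei} --- each nilpotent Slodowy slice $S^{\mathfrak g}_{\mu',\lambda}$ occurring in Kraft--Procesi's list, together with its partial Springer resolution, is isomorphic to a type $\mrm A$ $\sigma$-quiver variety $\Zs(\bv,\bw)$ for a dimension-vector datum $(\bv,\bw)$ read off explicitly from the pair $(\mu',\lambda)$ and the classical type of $\mathfrak g$. It therefore suffices to show that, under this dictionary, each of the two removal operations $\mathrm{red}$ of \cite{KP82} is realized by an \emph{isomorphism} $\Zs(\bv,\bw)\xrightarrow{\ \sim\ }\Zs(\bv',\bw')$, where $(\bv',\bw')$ is the datum attached to $(\mathrm{red}(\mu'),\mathrm{red}(\lambda))$ over $\mathfrak g'$. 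The analogous statements for $\mathfrak{sl}_n$ and ordinary Nakajima varieties are carried out in Sections~\ref{rect} and~\ref{col}, following Henderson~\cite{H15}, and the task is to lift them through the fixed-point construction.

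First I would treat \emph{column removal} (Proposition~\ref{i-prop:col}). Deleting the common first column of $\lambda$ and $\mu'$ corresponds, in the $\sigma$-analogue of the Nakajima--Maffei dictionary, to erasing from $\bw$ (with the matching adjustment of $\bv$) the contribution of that column. As in the $\mathfrak{sl}_n$ case, the quiver, the gauge group and the moment-map equations for $(\bv',\bw')$ embed into those for $(\bv,\bw)$ as the obvious complement of a trivial block, so the framed representation varieties, the stability conditions, and hence the GIT/symplectic quotients are canonically identified. Since the bilinear forms defining the transpose in $\sigma$, as well as the diagram involution and reflection functor that enter $\sigma$, are built blockwise and restrict to the corresponding data for $(\bv',\bw')$, the involution $\sigma$ is compatible with this identification; passing to $\sigma$-fixed points yields $\Zs(\bv,\bw)\cong\Zs(\bv',\bw')$, and the same compatibility in the resolution parameter gives the statement for the partial Springer resolutions.

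Next I would deduce \emph{row removal} (Proposition~\ref{i-prop:row}) from column removal together with the rectangular symmetry. Transposing both partitions interchanges row removal with column removal; on the quiver side, transposition of the pair $(\mu',\lambda)$ is exactly what the rectangular symmetry of Theorem~\ref{b} (Theorem~\ref{Sp-O}) encodes --- it trades the $\bv$-type and $\bw$-type data after completing the partitions inside the ambient rectangle. Concretely, I would rewrite $S^{\mathfrak g}_{\mu',\lambda}$ via Theorem~\ref{b} as a $\sigma$-quiver variety for the rectangular-complement datum, apply the column-removal isomorphism just established there, and transport back by Theorem~\ref{b} again; the composite is the desired isomorphism $S^{\mathfrak g}_{\mu',\lambda}\cong S^{\mathfrak g'}_{\mathrm{red}(\mu'),\mathrm{red}(\lambda)}$.

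The main obstacle is the bookkeeping in the classical types $\mrm B$, $\mrm C$, $\mrm D$: there Kraft--Procesi sometimes remove two rows or two columns at once, or pass between orthogonal and symplectic groups, so one must pin down precisely which $\sigma$-quiver-variety datum --- including the parity constraints on $(\bv,\bw)$ forced by the bilinear form and the choice of $\sigma$ --- corresponds to $(\mu',\lambda)$ and to $(\mathrm{red}(\mu'),\mathrm{red}(\lambda))$, and verify that the blockwise restriction of $\sigma$ in the column-removal step respects those constraints and produces the group $\mathfrak g'$ of \cite{KP82}. Once the dictionary of Theorem~\ref{a} is made explicit at this level of detail, the isomorphisms are essentially forced, with no geometric input beyond Theorems~\ref{a} and~\ref{b}.
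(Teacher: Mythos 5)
Your column-removal step is in the right spirit and is essentially the paper's argument for Proposition~\ref{i-prop:col}: one re-reads the type $A_n$ datum $(\bv,\bw)$ as the type $A_{n+1}$ datum $(\breve\bv,\breve\bw)$ by prepending a trivial vertex, checks that the bilinear forms and hence the automorphism $\sigma'$ are inherited blockwise, and invokes Theorem~\ref{i-Nakajima-Maffei}. But your final sentence overreaches when it claims that the $\sigma$-compatibility ``gives the statement for the partial Springer resolutions'' as well. The reflection functor $S_{w_0}$ entering $\sigma$ is that of $A_n$ on one side and of $A_{n+1}$ on the other, and the identification $\M_\zeta(\bv,\bw)\cong\M_\zeta(\breve\bv,\breve\bw)'$ does \emph{not} intertwine these two automorphisms. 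The paper explicitly remarks, immediately after Proposition~\ref{i-prop:col}, that $\widetilde{\mathcal S}^{\widetilde\sigma}_{\mu',\lambda}$ and $\widetilde{\mathcal S}^{\widetilde\sigma'}_{\breve\mu',\breve\lambda}$ are in general not isomorphic; only the bases are. So the isomorphism is for the nilpotent Slodowy slices, not for the resolutions.

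The more serious gap is in row removal. Your plan is to realize row removal as the conjugate of column removal by the rectangular symmetry of Theorem~\ref{Sp-O}. This cannot work, and not merely because of bookkeeping: rectangular symmetry is not transposition of Young diagrams. From (\ref{mu-hat})--(\ref{mu'-hat}), $\widehat\mu'$ (resp.\ $\widehat\lambda$) is the $180^{\circ}$-rotated complement of $\mu'$ (resp.\ $\lambda$) inside the $(n+1)\times\bigl(\sum_i\bw_i\bigr)$ rectangle, not the transpose. And the composite ``complement in $(n+1)\times W$, add a full-height column, complement in $(n+2)\times W$'' (with $W=\sum_i\bw_i$) is the identity: writing $\widehat\lambda_i=(n+1)-\lambda_{W+1-i}$ and $\breve{\widehat\lambda}_i=\widehat\lambda_i+1$, the final complement has parts $(n+2)-\breve{\widehat\lambda}_{W+1-j}=\lambda_j$. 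On the quiver side this is the same observation: rectangular symmetry reverses the vertex labeling, column removal prepends a vertex with $V_1=W_1=0$, and conjugating by the reversal produces an appended vertex with $V_{n+1}=W_{n+1}=0$ --- a trivial re-reading, not row removal. Row removal requires appending a vertex with $V_{n+1}=0$ but $W_{n+1}=\mbb C^a$ with $a\geq 1$ (see (\ref{row-addition}) and Proposition~\ref{prop:row}); it is precisely the nonzero framing at the new vertex that produces the $a$ extra rows of length $n+1$ in $\ddot\lambda$, and no composition of column removals with rectangular symmetries will create that framing. The paper instead proves Proposition~\ref{i-prop:row} by the same direct device as column removal: identify $\M_\zeta(\bv,\bw)$ with $\M_\zeta(\ddot\bv,\ddot\bw)'$ on $A_{n+1}$, check compatibility of forms and $\sigma$, and apply Theorem~\ref{i-Nakajima-Maffei}.
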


Nilpotent orbits and their intersections with Slodowy slices in $\mathfrak k$ have been studied 
via categorical quotients in~\cite{KP82, K90}, \cite[Remark 8.5 (4)]{N94}, and~\cite[Appendix A(i)-A(iv)]{N15}. 
The latter approach, which is quite restricted,  is closely related to the approach we take in this paper.
They represent two different orders of taking GIT quotients and taking fixed points. 
A closed immersion, which is  conjecturally isomorphic, between varieties appeared  from these two approaches 
is established in Proposition~\ref{prop:iota-1}.

In a parallel  direction, nilpotent orbits and, more generally, nilpotent Slodowy slices 
in the symmetric space $\mathfrak p$ have been studied by Kostant and Rallis~\cite{KR71}, Sekiguchi~\cite{S84} and Ohta~\cite{O86}.
They have important applications in the orbit method of real reductive groups~\cite{V, V89}.
A slight alteration of the transpose  in the involution $\sigma$ yields an anti-symplectic involution $\hat \sigma$. 
Its fixed-point subvariety, called a $\hat \sigma$-quiver variety,  can be regarded as the quiver variety for the symmetric space $\mathfrak p$,
since results similar to Theorems~\ref{a}-\ref{c} remain valid in this setting 
(see Section~\ref{sQV+QSS}). 
 To this end, the  geometries surrounding nilpotent elements in the triple $(\mathfrak g, \mathfrak k, \mathfrak p)$ in (\ref{Cartan-dec}) 
 have their quiver counterparts: 
\[
\mbox{Nakajima varieties}, \ \mbox{symplectic subvarieties}, \  \mbox{Lagrangian subvarieties}.
\]

 With Theorems~\ref{a}-\ref{c} in hand, 
 it is expected that there is a geometric representation theory for $\mathfrak k$ via general $\sigma$-quiver varieties, 
 parallel to Nakajima's original theory for $\mathfrak g$.
A further study shows that $\sigma$-quiver varieties and their Lagrangian cousins admit many favorable properties inherited from ambient Nakajima varieties.
In particular,  they are nonsingular, if the ambient Nakajima variety is so, 
and they carry a Weyl group action. The new Weyl groups   contains Weyl groups of type $\mrm B_{\ell}/\mrm C_{\ell}/\mrm F_4$. A Weyl group action of type $\mrm G_2$ is realized by using an 
automorphism
of order 6 on Nakajima varieties. 
Furthermore, a conjecture is formulated in the following, with supporting evidences given in Proposition~\ref{coideal} and (\ref{j}).

\begin{conjecture*}[{Conjecture \ref{conj-1}}]
Let $(\mathfrak g, \mathfrak k)$ be a symmetric pair listed in Table 1 in Section~\ref{sec:coideal}.
There is a nontrivial algebra homomorphism from the  enveloping algebra of $\mathfrak k$ to 
the top Borel-Moore homology of the $\sigma$-fixed-point, for a certain $\sigma$,  
of a Steinberg-type variety in the setting of Nakajima varieties.
\end{conjecture*}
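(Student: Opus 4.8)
The plan is to transplant Nakajima's geometric construction of the $U(\mathfrak g)$-action on the homology of Nakajima varieties~\cite{N98}, and its K-theoretic refinement by Varagnolo--Vasserot, to the $\sigma$-fixed locus, obtaining the $\mathfrak k$-action as the ``folded'' counterpart of the $\mathfrak g$-action. Fix the ambient ADE diagram and a framing vector $\bw$, write $\M(\bv,\bw)$ for the Nakajima variety, $\M^{\sigma}(\bv,\bw)$ for its $\sigma$-fixed-point subvariety, and let $\Zs=\bigsqcup_{\bv,\bu}\M^{\sigma}(\bv,\bw)\times_{\M_0^{\sigma}(\bw)}\M^{\sigma}(\bu,\bw)$ be the $\sigma$-fixed Steinberg-type variety, the fibre product being over the ($\sigma$-fixed) affine quotient $\M_0^{\sigma}(\bw)$ (cf.\ Proposition~\ref{prop:iota-1}). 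The first step is to equip $H^{\mathrm{BM}}_{\mathrm{top}}(\Zs)$ with an associative convolution product. Since, by the properties recalled in the introduction, $\M^{\sigma}$ is nonsingular whenever $\M$ is and carries a genuine symplectic form (because $\sigma$ is symplectic), the dimension bookkeeping that makes convolution on the top Borel--Moore homology of a Steinberg variety associative goes through on the $\sigma$-fixed locus: what must be checked is that the triple fibre products entering the associativity diagram have the expected dimension, and this reduces to the same transversality statement inside $\M^{\sigma}$ that Nakajima proves inside $\M$. Working throughout equivariantly with respect to an auxiliary torus, as in the twisted-Yangian part of the paper, keeps localization available for the later steps.

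Second, I would write down the homomorphism on generators. For a symmetric pair $(\mathfrak g,\mathfrak k)$ from the table in Section~\ref{sec:coideal} the subalgebra $\mathfrak k$ is generated by a Cartan part together with elements $b_i$, indexed by the vertices $i$ of the diagram, of the shape $b_i\approx e_i+f_{\sigma(i)}+(\text{Cartan correction})$, where $\sigma$ is the diagram involution of the Satake diagram; in particular $b_i\approx e_i+f_i$ at a $\sigma$-fixed white vertex, while black vertices assemble into a fixed type-$A$ subalgebra. Each Hecke correspondence $\fP_i\subset\M(\bv,\bw)\times\M(\bv',\bw)$, with $\bv'$ obtained from $\bv$ by adding the $i$-th coordinate vector, is carried by $\sigma$ to $\fP_{\sigma(i)}$ with source and target interchanged, so the combination defining $b_i$ descends to an honest $\sigma$-fixed Lagrangian correspondence, hence to a class in $H^{\mathrm{BM}}_{\mathrm{top}}(\Zs)$; the Cartan part goes to the corresponding restricted weight operators. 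This produces a candidate map $U(\mathfrak k)\to H^{\mathrm{BM}}_{\mathrm{top}}(\Zs)$, which Proposition~\ref{coideal} and~(\ref{j}) already pin down in rank one.

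Third, one must verify the defining relations of $U(\mathfrak k)$ --- the $\theta$-deformed (coideal) Serre relations, whose classical limit is an \emph{inhomogeneous} family of identities among the $b_i$ with corrections in the Cartan-and-lower part. I would reduce this to small rank: every such relation involves at most two vertices (together with their $\sigma$-images), so it suffices to verify it for the $\sigma$-quiver varieties attached to the $\sigma$-stable subdiagrams of type $A_1$, $A_1\sqcup A_1$, $A_2$ and $A_3$, i.e.\ for the rank one and rank two symmetric pairs. On each of these the relevant $\sigma$-quiver varieties are completely explicit --- points, projective spaces, cotangent bundles of (isotropic) flag varieties and their partial Springer resolutions, by Theorem~\ref{a} and its $\hat\sigma$-analogue --- so the convolutions $b_i\ast b_j$ can be computed directly, e.g.\ by torus localization, reducing each relation to a combinatorial identity among fixed-point classes. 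The main obstacle, I expect, is exactly the inhomogeneous correction terms: the ``leading'' Serre piece of $b_i\ast b_i$ is inherited from Nakajima's computation, but the cross terms between the $e$-part and the $f$-part of the Hecke correspondences must recombine into the prescribed Cartan and scalar corrections, and controlling these --- which forces one to fix the exact bilinear forms defining $\sigma$, and with them the normalization of each $b_i$ --- is where genuinely new input beyond a formal transplantation is needed.

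Finally, nontriviality. Letting $H^{\mathrm{BM}}_{\mathrm{top}}(\Zs)$ act by convolution on $\bigoplus_{\bv}H^{\mathrm{BM}}_{*}\bigl(\M^{\sigma}(\bv,\bw)\times_{\M_0^{\sigma}(\bw)}\{0\}\bigr)$ yields a module containing the fundamental classes of the $\sigma$-quiver varieties, and the lowering part of some $b_i$ moves these classes nontrivially as soon as the corresponding Hecke correspondence is nonempty --- which already happens in rank one; comparing the resulting weight grading with the combinatorics of the ambient symmetric space, as in Proposition~\ref{coideal} and~(\ref{j}), shows the image is nonzero. Assembling the three ingredients --- the convolution algebra structure on $H^{\mathrm{BM}}_{\mathrm{top}}(\Zs)$, the map on generators, and the rank $\le 2$ check of relations --- would establish the conjecture, with the rank two verification, and its inhomogeneous corrections, the decisive and hardest step.
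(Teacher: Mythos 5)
This statement is explicitly a \emph{conjecture} in the paper (Conjecture~\ref{conj-1}); the paper does not prove it. The only supporting material the paper supplies is Proposition~\ref{coideal} (coassociativity of the restriction homomorphism $\Delta^{\sigma}_{\bw^1,\bw^2}$), the map~(\ref{j}), and a one-line remark that when $\Gamma$ is of type $A$ and $a=1$ the conjecture follows from the results of~\cite{BKLW} together with an argument in the style of~\cite{BG99}. So there is no ``paper's own proof'' to compare against; what you have written is an independent roadmap, and it should be judged as such.

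As a roadmap, your outline is reasonable and is broadly consistent with what the paper's evidence suggests (transport Nakajima's convolution construction, build $b_i$ from $\sigma$-folded Hecke correspondences, check relations in small rank, check nontriviality via the module of zero-fibre homologies). But you should be aware that it contains genuine gaps, some of which the paper itself flags. First, you take the fibre product defining $\Zs$ over $\M_0^{\sigma}(\bw)$; the paper's $\fY(\bw)$ is defined as a fibre product over $\fS_0(\bw)$, the $\sigma_0$-fixed locus of Lusztig's variety $Z^{\z}_{\bw}$, and the relation between this and the categorical quotient $\Lambda^{\tau}//\G^{\tau}_{\bv}$ is established only as a closed immersion in Proposition~\ref{prop:iota-1}, with the isomorphism conjectural — so one must be careful that one is convolving over the correct base. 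Second, the assertion that the $\sigma$-folded Hecke correspondence ``descends to an honest $\sigma$-fixed Lagrangian correspondence, hence to a class in $H^{\mathrm{BM}}_{\mathrm{top}}$'' is precisely the kind of statement the paper warns is delicate: Remark~\ref{rem:abnormal}~(3) shows that the $\sigma$-fixed locus of a Lagrangian ($\pi^{-1}(0)$) need not be Lagrangian or even equidimensional in the symplectic fixed locus. The paper secures the half-dimensionality of $\fY(\bw)$ itself via the semismallness argument in Corollary~\ref{semismall}, which relies on a nearby-cycle/deformation trick, not on a naive fixed-point dimension count; an analogous argument would have to be supplied for each Hecke-type correspondence. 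Third — as you yourself acknowledge — the verification of the inhomogeneous ($\theta$-deformed) Serre relations in rank two is the hard step and is not carried out. Because of these open points, the proposal should be read as a credible plan of attack consonant with the paper's viewpoint (in particular with Proposition~\ref{coideal} and (\ref{j})), not as a proof, and indeed the paper does not claim one.
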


From the table, one observes that Conjecture~\ref{conj-1}, if holds,  would provide a new geometric construction of 
the universal enveloping algebra of  simple Lie algebras of type B and their representations.
In addition to developing a geometric/quiver theory of $\mathfrak k$ (and $\mathfrak p$), there is a substantial interest in
making connection with the original Nakajima theory  
for $\mathfrak g$ to have a more interesting theory for  $(\mathfrak g, \mathfrak k)$-modules.
The following theorem reflects such a flavor and is obtained 
by applying the machinery of Maulik-Okounkov's R-matrix~\cite{MO12, N16} to $\sigma$-quiver varieties.

\begin{thrm}[{Theorem~\ref{GSP}}]
\label{d}
There is a $(\Y(\mathfrak g), \Y_{\sigma})$-action on the localized torus equivariant cohomology of Nakajima varieties, where
$\Y(\mathfrak g)$ is the Yangian of $\mathfrak g$ and $\Y_{\sigma}$ is a twisted Yangian constructed
in this paper via a geometric $\K$-matrix.
\end{thrm}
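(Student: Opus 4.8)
\emph{The strategy} is to transplant the Maulik--Okounkov machinery~\cite{MO12, N16} to the $\sigma$-setting. Recall that for a Nakajima variety $\M(\bw)$ carrying an action of a torus $T$ (scaling the symplectic form by a character) and a one-parameter subgroup $\mathbb A\subset T$, one has the stable envelope $\Stab_{\mathfrak C}\colon H^*_T(\M(\bw)^{\mathbb A})\to H^*_T(\M(\bw))$ depending on a chamber $\mathfrak C$ of $\mathbb A$; for the quiver obtained by doubling the framing, the geometric $R$-matrix $R(u)=\Stab_{-\mathfrak C}^{-1}\Stab_{\mathfrak C}$ satisfies the Yang--Baxter equation, and the resulting RTT algebra is identified with (a completion of) the Yangian $\Y(\mathfrak g)$, acting on $\bigoplus_{\bw}H^*_T(\M(\bw))$ after localization. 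First I would fix once and for all this $\Y(\mathfrak g)$-module structure together with the Nakajima tensor-product decomposition that diagonalizes the Cartan part.

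\emph{Constructing the $\K$-matrix.} The involution $\sigma$ of the earlier sections is symplectic and conjugates the $T$-action into a $T$-action twisted by the diagram involution composed with inversion on the framing torus; its fixed locus is the $\sigma$-quiver variety $\M(\bw)^{\sigma}$, sitting inside $\M(\bw)$ as a symplectic subvariety, nonsingular whenever $\M(\bw)$ is. I would define the geometric $\K$-matrix $\K(u)$ as the composite of (i) the stable envelope of $\M(\bw)^{\sigma}$ for one chamber, (ii) the proper pushforward along $\M(\bw)^{\sigma}\hookrightarrow \M(\bw)$, and (iii) the inverse stable envelope of $\M(\bw)$ for the opposite chamber --- in other words the ``half'' of an $R$-matrix trapped between the ambient variety and its $\sigma$-fixed locus, normalized by the Euler class of the conormal bundle. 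This is the geometric avatar of the boundary transfer matrix.

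\emph{Reflection equation and the twisted Yangian.} The crux is to prove that $\K(u)$ solves the reflection (boundary Yang--Baxter) equation
\[
R_{12}(u-v)\,\K_1(u)\,R_{21}(u+v)\,\K_2(v)=\K_2(v)\,R_{12}(u+v)\,\K_1(u)\,R_{21}(u-v)
\]
in the convention adapted to the symmetric pair, with $R_{21}$ the appropriate twist/transpose of $R_{12}$. As in the Maulik--Okounkov proof of the Yang--Baxter equation, both sides should compute, through compositions of stable envelopes, the transition matrix between two chambers of the hyperplane arrangement of a rank-two subtorus; the presence of $\sigma$ changes this arrangement from $\mathrm A_1\times \mathrm A_1$ to one of type $\mathrm B_2/\mathrm C_2$, and walking around it produces precisely the six factors of the reflection equation. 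The essential input is a ``triangle lemma'' for stable envelopes in the presence of the involution: compatibility of the attracting sets of the twisted torus with $\sigma$, together with the cocycle/factorization property of $\Stab$ across a wall. Granting this, I would \emph{define} $\Y_{\sigma}$ to be the RTT algebra generated by the matrix coefficients of $\K(u)$ modulo this relation; its action on $\bigoplus_{\bw}H^*_T(\M(\bw))$ (localized) is then tautological, and the coideal compatibility $\Delta(\Y_{\sigma})\subset \Y(\mathfrak g)\,\widehat{\otimes}\,\Y_{\sigma}$ follows from the unitarity and crossing identities for $R$ and $\K$, established geometrically exactly as their classical counterparts. A final bookkeeping step compares the rank-one/classical limit of $\K(u)$ with the known presentations of twisted Yangians, identifying $\Y_{\sigma}$ with the twisted Yangian of $\mathfrak k$ (type $\mathrm B$, etc.).

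\emph{Main obstacle.} The hard part is the reflection equation, i.e.\ the triangle lemma for $\Stab$ in the presence of $\sigma$: one must control the attracting/repelling decompositions of $\M(\bw)$ under the twisted torus and check that the closed embedding of the $\sigma$-fixed locus is compatible with taking stable envelopes on both sides. A secondary, more algebraic difficulty is matching the abstract RTT algebra $\Y_{\sigma}$ with the Lie-theoretic twisted Yangian attached to $(\mathfrak g,\mathfrak k)$ in general type, where explicit presentations are delicate.
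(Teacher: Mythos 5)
Your overall picture is right at the level of geometry: the reflection equation should come from the type $\mathrm B_2/\mathrm C_2$ chamber arrangement of a rank-two torus, and wall-crossing/factorization of stable envelopes is the mechanism. The paper does exactly this (Figure~\ref{Reflection} and the proof of the reflection equation). However, your construction of the $\K$-matrix and of $\Y_\sigma$ diverges from the paper's in a way that leaves two real gaps.

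First, your $\K(u)$ is defined as the composite $\Stab^{-1}_{\M(\bw),-\mathfrak C}\circ i_*\circ\Stab_{\M(\bw)^\sigma,\mathfrak C}$, which maps $\H^*_T\bigl(\M(\bw)^{\sigma,\mathbb A}\bigr)$ into $\H^*_T\bigl(\M(\bw)^{\mathbb A}\bigr)$. These are different spaces in general (the $\sigma$-fixed locus only sees the $\sigma$-compatible components of $\M(\bw)^{\mathbb A}$), so your $\K$ is not an endomorphism and it is unclear how to feed it into an RTT/reflection formalism. The paper instead works \emph{entirely inside} the $\sigma$-quiver variety: $\Stab_{\mathscr C}$ is the stable map for $\fS_\zeta(\bw)$ itself (built sheaf-theoretically from the canonical isomorphism of Theorem~\ref{restriction-functor}), and $\K_{\mathscr C',\mathscr C}=\Stab_{\mathscr C'}^{-1}\circ\Stab_{\mathscr C}$ is an honest endomorphism of $\H^*_{\bT}\bigl(\fS_\zeta(\bw)^{\T}\bigr)\otimes\bF_{\T}$. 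Crucially, Proposition~\ref{A-fixed-sub-b} shows $\fS_\zeta(\bw)^{\T}\cong\fS_\zeta(\bw^1)\times\prod_i\M_\zeta(\bw^i)$, so this endomorphism acts on a \emph{tensor product of Nakajima modules} (times a $\sigma$-quiver factor which is a point when $\fS_\zeta(\mathbf s_{\bw})=\{\mathrm{pt}\}$). This decomposition is the step your proposal is missing; without it there is no obvious way to turn your half-$R$-matrix into operators on Nakajima cohomology at all.

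Second, you define $\Y_\sigma$ abstractly as "the RTT algebra generated by the matrix coefficients of $\K(u)$ modulo the reflection relation" and declare the action tautological. The paper does something more concrete: $\Y_\sigma$ is a \emph{subalgebra of $\Y(\mathfrak g)$} generated by matrix coefficients of the fused operators $\R_{0,m}\cdots\R_{0,1}\K_0\R_{0,1}\cdots\R_{0,m}$ from (\ref{K-tensor}), and the fusion procedure (Proposition~\ref{fusion}) together with an explicit induction shows these satisfy the reflection equation. Because $\Y_\sigma\subset\Y$ is literally a subalgebra of operators already acting on $\H^*_{\bT}(\M_\zeta(\bw))\otimes\bF_{\bT}$, Theorem~\ref{GSP} is immediate. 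Your abstract RTT algebra, by contrast, only comes with a candidate action, and proving it is a well-defined module structure would require work you have not indicated. Finally, the hard step you flag --- a "triangle lemma" for stable envelopes in the presence of $\sigma$ --- is sidestepped in the paper: the reflection equation is proven by observing that both sides equal $\K_{-\mathscr C,\mathscr C}$ and that crossing the walls $a_1=\pm a_2$ produces the Maulik--Okounkov $\R$-matrix (since the subtorus $\T'$ with $a_1=a_2$ forces the fixed locus to be a product with a single Nakajima factor by Proposition~\ref{A-fixed-sub}), so no new compatibility lemma for attracting sets is required.
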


The twisted Yangian $\Y_{\sigma}$ should coincide with its algebraic counterpart, which can be traced back
to Cherednik's work~\cite{Ch84}; see~\cite{M07, GRWa, GRWb}.
It is our hope that Theorem~\ref{d} will serve as a small step towards a geometric theory of $(\mathfrak g, K_{\mbb R})$-modules,
which in turn will shed light on that of  unitary representations of  the associated real simple group.

Finally, we caution the reader that in the main body of the paper the automorphisms $\sigma$ and
$\hat \sigma$ do not have to be involutive and the underlying graph is not necessarily of type ADE.

\subsection{Acknowledgements}

We thank Weiqiang Wang for fruitful collaborations, especially the work ~\cite{BKLW}, and several enlightening conversations.  
We also thank Dave Hemmer, Jiuzu Hong, Jim Humphreys,   Ivan Losev, George Lusztig, Hiraku Nakajima and Catharina Stroppel  for stimulating discussions.

Results in this paper have been announced in the ICRT VII at Xiamen, China, July 2016,  the AMS special session on ``Geometric methods in representation theory,'' Charleston, NC, March 2017,  the Taipei workshop on Lie superalgebras and related topics, 
National Center for Theoretical Sciences, Taipei; 
Colloquia in Shanghai Jiaotong University and Xiamen University, July 2017 and the Algebra Seminar at University of Virginia, November 2017. 
It is a pleasure to  thank the organizers for the invitation. 

We thank the anonymous referees for their careful readings, helpful comments and insightful suggestions.
This work is partially supported by the National Science Foundation under the grant DMS 1801915.

\tableofcontents

\section{Nakajima varieties}

In this section, we recall Nakajima's quiver varieties from the works~\cite{N94, N96, N98}.

\subsection{Graph}

Let $\Gamma$ be a graph without loops, with
$I$ and $H$ being  the vertex and arrow set, respectively.
For each arrow $h$, let $\o (h)$ and $\i (h)$ be its outgoing and incoming vertex so that we can depict $h$ as
$\o(h) \overset{h}{\to} \i (h)$.
There is an involution on the arrow set $\bar \empty: H \to H$, $h \mapsto \bar h$ such that
$\o (\bar h) = \i (h) $ and $\i (\bar h) = \o (h)$.

Let $\mbf C=(c_{ij})_{i,j\in I}$ be the Cartan matrix of the graph $\Gamma$ defined by
\begin{align}
c_{ij}
= 2 \delta_{i, j}  - \# \{ h\in H | \o (h)=i, \i (h) = j\}.
\end{align} 
For each $i\in I$, we define a bijection $s_i: \mbb Z^I \to \mbb Z^I$ by
$s_i (\xi) = \xi'$ where $\xi'_j = \xi_j  - c_{ji} \xi_i $, $\xi = (\xi_j)_{j\in I}$, $\xi'=(\xi_j')_{j\in I} \in \mbb Z^I$.
Let $\mathcal W$  be the the subgroup of $\mrm{Aut} (\mbb Z^I)$ generated by $s_i$ for all $i\in I$. 
The group $\mathcal W$ is the Weyl group of $\Gamma$.
It admits a presentation with generators $s_i$ for all $i\in I$ and 
the following defining relations.
\begin{align*}
s_i^2 & =1,   && \forall \ i\in I. \\
s_i s_j &  = s_j s_i, &&  \mbox{if} \ c_{ij} =0.\\
s_i s_j s_i & = s_j s_i s_j, &&  \mbox{if} \ c_{ij} =-1.
\end{align*}

For a fixed $\bw=(\bw_i)_{i\in I} \in \mbb Z^I$, we define a second (affine) $\mathcal W$-action on $\mbb Z^I$ by
$s_i * \bv = \bv'$, where $\bv=(\bv_i)_{i\in I}, \bv' =(\bv'_i)_{i\in I} \in \mbb Z^I$ such that 
$\bv'_i = \bv_i - \sum_{j\in I} c_{ij} \bv_j + \bw_i$ and $\bv'_j = \bv_j$ if $j\neq i$.
We will put a subscript $\bw$ under $*$, that is  $s_i *_{\bw}\bv$, if needed.
If $w=s_{i_1} s_{i_2}\cdots s_{i_l}$ is a sequence of simple reflections, 
we set $w* \bv = s_{i_1} * \cdots * s_{i_l} * \bv$.
We have
\begin{align}
\label{s*v}
\mbf C ( s_i *_{\bw} \bv)  = s_i ( \mbf C \bv - \bw ) + \bw.
\end{align}

\subsection{The variety $\Lambda_{\zeta_{\mbb C}} (\bv, \bw)$}
\label{lavw}

Let $V=\oplus_{i\in I} V_i$ and $W=\oplus_{i\in I}W_i$ be two finite dimensional $I$-graded 
vector spaces over the complex field $\mbb C$
of dimension vectors $\bv = (\bv_i)_{i\in I}$ and $\bw =(\bw_i)_{i\in I}$, respectively.
We consider the vector space
\[
\bM(\bv, \bw) \equiv \bM(V, W) 
=\oplus_{h\in H} \Hom (V_{\o (h)}, V_{\i (h)}) \oplus \oplus_{i\in I} \Hom(W_i, V_i) \oplus  \Hom (V_i, W_i).
\]
A typical element in $\bM(\bv, \bw)$ will be denoted by $\bx \equiv (x, p, q) \equiv (x_h, p_i, q_i)_{h\in H, i\in I}$, where
$x_h \in \Hom (V_{\o (h)}, V_{\i(h)})$, $p_i \in \Hom (W_i, V_i)$ and $q_i \in \Hom (V_i, W_i)$.

Let 
\begin{align}
\label{Gv}
\G_{\bv} \equiv \G_V  = \prod_{i\in I} \GL (V_i), \quad 
\G_{\bw} \equiv \G_W  = \prod_{i\in I} \GL(W_i).
\end{align}
The group $\G_{\bv}$  
acts from the left on $\bM(\bv, \bw)$ by conjugation. More precisely,  
for all $g=(g_i)_{i\in I} \in \G_{\bv}$ and $\bx \in \bM(\bv, \bw)$, we define
$g. \bx = \bx' \equiv (x'_h, p'_i, q'_i)$ where
$x'_h = g_{\i (h)} x_h g^{-1}_{\o (h)}$, $p'_i = g_i p_i$ and $q'_i = q_i g^{-1}_i$ for all $h\in H$ and $i\in I$.
Similarly, let $\G_{\bw}$ acts conjugately on $\bM(\bv, \bw)$ from the left, i.e.,
for any $\mrm f=(\mrm f_i)_{i \in I} \in \G_{\bw}$ and $\bx \in \bM(\bv, \bw)$, we define
$\mrm f.\bx= \bx'\equiv (x'_h, p'_i, q'_i)$ where
$x'_h = x_h$, $p'_i = p_i \mrm f_i^{-1}$ and $q'_i = \mrm f_i q_i$ for all $h\in H$ and $i\in I$.
It is clear that the $\G_{\bv}$-action and $\G_{\bw}$-action commute.

The space $\bM(\bv, \bw)$ can be endowed with a symplectic structure, given by  
\begin{align}
\label{symplectic-form}
\omega (\bx, \bx') = \sum_{h\in H} \mrm{tr} ( \ve(h) x_h x'_{\bar h}) + \sum_{i\in I} \mrm{tr} (p_i q'_i - p'_i q_i),
\quad \forall \bx, \bx' \in \bM(\bv, \bw).
\end{align}
where $\ve : H\to \{ \pm 1\}$ is a fixed orientation function such that $\ve(h)+\ve(\bar h) =0$ for all $h\in H$.
The orientation of $H$ associated to $\ve$ is $\Omega= \ve^{-1} (1)$.
Let 
\[
\mu\equiv \mu_{\mbb C} : \bM(\bv, \bw) \to \oplus_{i\in I} \mathfrak{gl}(V_i)
\]
be the moment map associated to the $\G_{\bv}$-action on the symplectic vector space $\bM(\bv, \bw)$. 
Its projection at the $i$-th component $\mathfrak{gl}(V_i)$ is given by
\[
\mu_i : \bM (\bv, \bw) \to \mathfrak{gl}(V_i), \quad \mu_i(\bx) = \sum_{h\in H: \i (h) =i} \ve( h) x_{ h} x_{\bar h} - p_i q_i.
\]

Let $\zeta_{\mbb C} =(\zeta^{(i)}_{\mbb C})_{i\in I} \in \mbb C^I$. 
We regard $\zeta_{\mbb C}$ as an element in $\oplus_{i\in I} \mathfrak{gl}(V_i)$ via 
the imbedding 
$(\zeta^{(i)}_{\mbb C})_{i\in I}  \mapsto (\zeta^{(i)}_{\mbb C} \mrm{Id}_{V_i})_{i\in I}$.
Let
\begin{align}
\label{Lambda}
\Lambda_{\zeta_{\mbb C}} (\bv, \bw) \equiv
\mu_{\mbb C}^{-1}(\zeta_{\mbb C})= 
\{ \bx \in \bM(\bv, \bw) | \mu_i (\bx) =\zeta^{(i)}_{\mbb C}, 
\quad \forall i\in I\}.
\end{align}
We shall use the notation $\Lambda_{\zeta_{\mbb C}} (V, W) $ for $\Lambda_{\zeta_{\mbb C}} (\bv, \bw) $ if we want to emphasize 
the pair $(V, W)$.
Note that  $\Lambda_{\zeta_{\mbb C}}(\bv, \bw)$ is an affine algebraic variety. Note also that $\mu_i(g.\bx) =g_i \mu_i(\bx) g_i^{-1}=\zeta^{(i)}_{\mbb C}$ for all $g \in \G_{\bv}$ and $\bx \in \Lambda_{\zeta_{\mbb C}}(\bv, \bw)$. So the $\G_{\bv}$-action on $\bM(\bv, \bw)$ restricts to a $\G_{\bv}$-action on 
$\Lambda_{\zeta_{\mbb C}} (\bv, \bw)$. 
Similarly, for all $\mrm f\in \G_{\bw}$ and $\bx \in \bM(\bv, \bw)$, we have $\mu_i(\mrm f.\bx) = \mu_i(\bx)$. 
Hence we have a $\G_{\bw}$-action on $\Lambda_{\zeta_{\mbb C}} (\bv, \bw)$.

\subsection{Quiver varieties $\M_{\zeta}(\bv, \bw)$ and $\M_{0}(\bv, \bw)$}
\label{mvw}

Let $\xi =(\xi_i)_{i\in I} \in \mbb Z^I$.
We define a character $ \chi \equiv \chi_{\xi} : \G_{\bv} \to \mbb C^*$ by 
\[
\chi(g) \equiv \chi_{\xi} (g) = \prod_{i\in I} \det (g_i)^{- \xi_i}, \quad \forall g\in \G_{\bv}.
\]
Let $\mbb C [\Lambda_{\zeta_{\mbb C}} (\bv, \bw)]^{\G_{\bv}, \chi^n}$ 
be the space of regular functions $f$ on $\Lambda_{\zeta_{\mbb C}}(\bv, \bw)$ such that
$f(g.\bx) = \chi^n(g) f(\bx)$ for all $g\in \G_{\bv}$ and $\bx \in \Lambda_{\zeta_{\mbb C}} (\bv, \bw)$.
Then the sum 
$$
R_{\zeta}(\bv, \bw) = \oplus_{n\in \mbb N} \mbb C [\Lambda_{\zeta_{\mbb C}} (\bv, \bw)]^{\G_{\bv}, \chi^n},
\quad \zeta\equiv (\xi, \zeta_{\mbb C}),
$$
becomes an $\mbb N$-graded commutative algebra with
a subalgebra
$$
R_{0}(\bv, \bw) =\mbb C [\Lambda_{\zeta_{\mbb C}} (\bv, \bw)]^{\G_{\bv}, \chi^0}.
$$
Following Nakajima ~\cite{N94, N98}, we define the quiver varieties:
for any $\zeta\equiv (\xi, \zeta_{\mbb C}) \in \mbb Z^I \times \mbb C^I$,
\begin{align}
\label{M-M_0}
\M_{\zeta} (\bv, \bw) = \mrm{Proj} \ R_{\zeta} (\bv, \bw),\quad 
\M_0 (\bv, \bw) = \mrm{Spec} \ R_{0} (\bv, \bw).
\end{align}
The inclusion  $R_0(\bv, \bw) \to R_{\zeta}(\bv, \bw)$ of the two rings involved induces a projective morphism of algebraic varieties:
\begin{align}
\label{pi}
\pi: \M_{\zeta} (\bv, \bw) \to \M_0 (\bv, \bw).
\end{align}

The $\G_{\bw}$-action on $\Lambda_{\zeta_{\mbb C}}(\bv, \bw)$, which commutes with the $\G_{\bv}$-action, 
induces $\G_{\bw}$-actions on $\M_{\zeta}(\bv, \bw)$ and $\M_{0}(\bv, \bw)$. It is clear that the proper map $\pi$ is $\G_{\bw}$-equivariant.

\subsection{Stability condition}

Fix an element $x =(x_h)_{h\in H}$  in the first component of  $\bM(\bv, \bw)$ and an $I$-graded subspace $S=(S_i)_{i\in }$ of $V$, 
we say that $S$ is $x$-invariant if $x_h( S_{\o (h)} ) \subseteq S_{\i(h)}$ for all $h\in H$.
The standard dot product on $\mbb Z^I$ is given by $a\cdot b = \sum_{i\in I} a_i b_i$ for all $a, b \in \mbb Z^I$.
Following Nakajima, a point $\bx =(x, p, q)$ in $\bM(\bv, \bw)$ is called $\xi$-$semistable$  
if the following two  stability conditions are satisfied.
Assume  $S$ and $T$ are $I$-graded subspaces of $V$  of dimension vector $\mbf s$ and $\mbf t$, respectively, 
then the stability conditions say that 
\begin{align}
& \text{
If $S$ is $x$-invariant and $S\subseteq \ker q$, then $\xi \cdot \mbf s \leq 0$.
}
\tag{S1}
\\
& \text{
If $T$ is $x$-invariant and $T \supseteq \mrm{im} \ p$, then
$\xi \cdot \mbf t \leq \xi \cdot \bv$.
}
\tag{S2}
\end{align}

Let $\Lambda^{\xi\text{-}ss}_{\zeta_{\mbb C}} (\bv, \bw)$ be the set of all $\xi$-semistable points in 
$\Lambda_{\zeta_{\mbb C}}(\bv, \bw)$. 
We see that $\Lambda^{\xi\text{-}ss}_{\zeta_{\mbb C}}(\bv, \bw)$  is $\G_{\bv}$-invariant.
For convenience, let $[\mbf x]$ denote the $\G_{\bv}$-orbit of $\mbf x$ in $\bM(\bv, \bw)$.
From Mumford's geometric invariant theory, we have

\begin{prop}[{\cite[3.ii]{N96}}]
The geometric points in $\M_{\zeta}(\bv, \bw)$ are $\Lambda^{\xi\text{-}ss}_{\zeta_{\mbb C}}(\bv, \bw)/\sim$, 
where the GIT equivalence relation $\sim$ 
is defined as $\bf x \sim \bf y $ if and only if 
$\overline{[\bx]} \cap \overline{[\bf y]} \cap \Lambda^{\xi\text{-}ss}_{\zeta_{\mbb C}}(\bv, \bw) \neq \O$ 
where the overline denotes the Zariski closure of the underlying orbit in $\Lambda_{\zeta_{\mbb C}}(\bv, \bw)$.
\end{prop}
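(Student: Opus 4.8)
The plan is to recognize $\M_{\zeta}(\bv,\bw)=\mrm{Proj}\,R_{\zeta}(\bv,\bw)$ as the geometric invariant theory quotient of the affine variety $\Lambda_{\zeta_{\mbb C}}(\bv,\bw)$ by the reductive group $\G_{\bv}$, linearized through the character $\chi=\chi_{\xi}$, and to invoke Mumford's general theory in the form worked out by King for the affine setting. For a reductive group $G$ acting on an affine $\mbb C$-variety $X$ and a character $\chi\colon G\to\mbb C^{*}$, the $\mbb C$-points of $\mrm{Proj}\bigoplus_{n\geq 0}\mbb C[X]^{G,\chi^{n}}$ are the $\chi$-semistable points of $X$ (those at which some positive-degree $\chi$-semi-invariant is nonzero) modulo the relation identifying $\bx$ and $\by$ exactly when $\overline{[\bx]}\cap\overline{[\by]}$ meets the $\chi$-semistable locus; moreover, by the Hilbert--Mumford numerical criterion, $\bx$ is $\chi$-semistable if and only if $\langle\chi,\lambda\rangle\geq 0$ for every one-parameter subgroup $\lambda$ of $G$ for which $\lim_{t\to 0}\lambda(t).\bx$ exists in $X$. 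Since $\Lambda_{\zeta_{\mbb C}}(\bv,\bw)$ is affine and $\G_{\bv}$ is reductive, these hypotheses hold automatically, and the only thing left to prove is that the $\chi_{\xi}$-semistable locus coincides with $\Lambda^{\xi\text{-}ss}_{\zeta_{\mbb C}}(\bv,\bw)$, i.e. with the locus cut out by (S1) and (S2); the relation $\sim$ then matches the one in the statement verbatim.

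\textbf{Translating the numerical criterion.} A one-parameter subgroup $\lambda$ of $\G_{\bv}=\prod_{i\in I}\GL(V_i)$ is the same datum as a $\mbb Z$-grading $V=\bigoplus_{m\in\mbb Z}V(m)$, with $V(m)=\bigoplus_{i}V_i(m)$ the weight-$m$ space; write $V_{\geq m}=\bigoplus_{m'\geq m}V(m')$. Unwinding how $\lambda(t)$ rescales the coordinates $x_h\in\Hom(V_{\o(h)},V_{\i(h)})$, $p_i\in\Hom(W_i,V_i)$, $q_i\in\Hom(V_i,W_i)$, one checks that $\lim_{t\to 0}\lambda(t).\bx$ exists if and only if: (a) each $V_{\geq m}$ is $x$-invariant, (b) $\mrm{im}\,p\subseteq V_{\geq 0}$, and (c) $V_{\geq 1}\subseteq\ker q$; and that
\[
\langle\chi_{\xi},\lambda\rangle=-\sum_{i\in I}\xi_i\sum_{m\in\mbb Z}m\dim V_i(m).
\]
An Abel summation rewrites this finite sum as
\[
\langle\chi_{\xi},\lambda\rangle=\sum_{m\geq 1}\bigl(-\xi\cdot\dim V_{\geq m}\bigr)+\sum_{m\leq 0}\xi\cdot\bigl(\bv-\dim V_{\geq m}\bigr).
\]
If $\bx$ satisfies (S1) and (S2): for $m\geq 1$ the subspace $V_{\geq m}$ is $x$-invariant by (a) and lies in $V_{\geq 1}\subseteq\ker q$ by (c), so (S1) gives $\xi\cdot\dim V_{\geq m}\leq 0$; for $m\leq 0$ the subspace $V_{\geq m}$ is $x$-invariant by (a) and contains $V_{\geq 0}\supseteq\mrm{im}\,p$ by (b), so (S2) gives $\xi\cdot\dim V_{\geq m}\leq\xi\cdot\bv$. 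Every summand is thus $\geq 0$, so $\langle\chi_{\xi},\lambda\rangle\geq 0$ for all admissible $\lambda$, and $\bx$ is $\chi_{\xi}$-semistable.

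\textbf{Conversely}, assume $\bx$ is $\chi_{\xi}$-semistable. Given an $x$-invariant $S\subseteq\ker q$ of dimension vector $\mbf s$, pick any graded vector-space complement of $S$ in $V$ and let $\lambda$ be the one-parameter subgroup with $V(1)=S$ and $V(0)$ that complement; conditions (a), (b), (c) hold, so $\lambda$ is admissible, and $\langle\chi_{\xi},\lambda\rangle=-\xi\cdot\mbf s\geq 0$ is precisely (S1). Symmetrically, given an $x$-invariant $T\supseteq\mrm{im}\,p$ of dimension vector $\mbf t$, take $\lambda$ with $V(0)=T$ and $V(-1)$ a graded complement; then $\langle\chi_{\xi},\lambda\rangle=\xi\cdot(\bv-\mbf t)\geq 0$, which is (S2). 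This establishes the equivalence of the two descriptions of the semistable locus, and combined with the Mumford--King description of $\mrm{Proj}\,R_{\zeta}(\bv,\bw)$ it proves the proposition.

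\textbf{Main obstacle.} The substantive content is bookkeeping, in two places. First, one must fix sign conventions so that King's semistability inequality $\langle\chi_{\xi},\lambda\rangle\geq 0$ lines up with the direction of (S1)--(S2); this rests on the minus sign in $\chi_{\xi}(g)=\prod_{i}\det(g_i)^{-\xi_i}$ and on $\mrm{Proj}$ being taken over nonnegative powers of $\chi$, and it is exactly the point to double-check against~\cite{N96}. Second, one needs the Abel-summation identity above, which is what reduces a general admissible $\lambda$ to the two-step filtrations underlying (S1) and (S2). Everything else is a direct unwinding of the $\G_{\bv}$-action on the three blocks of $\bM(\bv,\bw)$ together with the standard fact that GIT for a reductive group acting on an affine variety has no properness or finiteness pathologies.
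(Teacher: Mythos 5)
Your argument is correct and is the standard one. The paper itself does not prove this proposition — it cites it to Nakajima's article \cite{N96}, where the identification of $\M_{\zeta}(\bv,\bw)=\mrm{Proj}\,R_{\zeta}(\bv,\bw)$ with the GIT quotient is established by precisely the route you take: King's formulation of the Hilbert--Mumford criterion for a character-linearized action of a reductive group on an affine variety, followed by translating one-parameter subgroups of $\G_{\bv}$ into $\mbb Z$-filtrations of $V$ and applying Abel summation to reduce to the two-step filtrations that produce (S1) and (S2). Your sign bookkeeping (the minus sign in $\chi_{\xi}$ turning $\langle\chi_{\xi},\lambda\rangle\geq 0$ into the stated inequalities) and your verification that the limit $\lim_{t\to 0}\lambda(t).\bx$ exists exactly when the three conditions (a)--(c) hold are both correct, so the proof is complete and matches the cited source's approach.
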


Recall that  $\mbf C$ is the Cartan matrix of the graph $\Gamma$. We set
\begin{align*}
\begin{split}
R_+ & =\{ \gamma \in \mbb N^I | \ ^t\gamma\mbf C \gamma \leq 2\} - \{ 0\},\\
R_+(\bv) & =\{\gamma \in R_+ | \gamma_i \leq \bv_i, \forall i\in I\},\\
D_{\gamma} & =\{ a\in \mbb C^I | a\cdot \gamma=0\}.
\end{split}
\end{align*}
So the set $R_+$ consists of  positive roots of  $\mbf C$ and the set $D_{\gamma} $ is the wall defined by $\gamma$.
Note that there is
$^t\gamma \mbf C \gamma = 2 \sum_{i\in I} \gamma_i^2  -\sum_{h\in H} \gamma_{\o(h)}\gamma_{\i (h)}$.

\begin{Def}
A parameter $\zeta=(\xi, \zeta_{\mbb C})\in \mbb Z^I \times \mbb C^I$ is called $generic$ 
if it satisfies 
\begin{align}
\label{generic}
\xi \in \mbb Z^I \backslash  \cup_{\gamma\in R_+(\bv)} D_{\gamma}
\quad \mbox{or} \quad
\zeta_{\mbb C} \in \mbb C^I \backslash  \cup_{\gamma\in R_+(\bv)} D_{\gamma}.
\end{align}
\end{Def}

\begin{prop}[{\cite[Theorem 2.8]{N94}}]
Assume that the parameter $\zeta$ is generic.  
Then the group $\G_{\bv}$ acts freely on $\Lambda^{\xi\text{-}ss}_{\zeta_{\mbb C}}(\bv, \bw)$ 
and $\M_{\zeta}(\bv, \bw) = \Lambda^{\xi\text{-}ss}_{\zeta_{\mbb C}} (\bv, \bw)/\G_{\bv}$, 
the GIT quotient of $\Lambda^{\xi\text{-}ss}_{\zeta_{\mbb C}}(\bv, \bw)$ by $\G_{\bv}$.
Moreover, $\M_{\zeta}(\bv, \bw)$ is smooth.
\end{prop}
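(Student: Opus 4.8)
The plan is to follow the standard GIT argument of King and Nakajima, exploiting the genericity hypothesis (\ref{generic}) to rule out nontrivial stabilizers. First I would recall from King's criterion that for the linearization given by $\chi_\xi$, a point $\bx \in \Lambda_{\zeta_{\mbb C}}(\bv,\bw)$ is $\chi$-semistable in the GIT sense if and only if it satisfies the combinatorial conditions (S1) and (S2), and it is $\chi$-\emph{stable} if and only if the only $I$-graded $x$-invariant subspace $S \subseteq \ker q$ has $\xi\cdot\mbf s = 0$ forcing $S=0$, and dually for $T \supseteq \mrm{im}\,p$ with $\xi\cdot\mbf t = \xi\cdot\bv$ forcing $T=V$. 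The key point is then: under (\ref{generic}), every semistable point is automatically stable, and stable points have trivial stabilizer in $\G_{\bv}$ (modulo the fact that the center $\mbb C^*$ embedded diagonally acts trivially only if $\xi$ is orthogonal to $\bv$ — here one must be slightly careful, but the usual fix is that $\G_{\bv}$ acts on $\bM(\bv,\bw)$ through $\G_{\bv}/(\text{the subtorus acting trivially})$, and for the quiver-with-framing $\bM(\bv,\bw)$ the presence of the $W$-maps $p,q$ kills the diagonal torus, so $\G_{\bv}$ acts with finite stabilizers on stable points, and then genericity upgrades ``finite'' to ``trivial'').

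The heart of the matter is the implication: if $\zeta$ is generic then $\Lambda^{\xi\text{-}ss}_{\zeta_{\mbb C}}(\bv,\bw) = \Lambda^{\xi\text{-}s}_{\zeta_{\mbb C}}(\bv,\bw)$ and $\G_{\bv}$ acts freely. Suppose $\bx$ is semistable with nontrivial stabilizer, or semistable but not stable. In either case, a standard argument (decomposing $V$ under the stabilizer torus, or taking a destabilizing subspace $S$ with $\xi\cdot\mbf s = 0$) produces an $I$-graded subspace $0\neq S\subsetneq V$ which is $x$-invariant, contains $\mrm{im}\,p$ or is contained in $\ker q$, is preserved by the $\G_{\bv}$-action in the relevant sense, and is itself a representation in $\Lambda_{\zeta_{\mbb C}}$ for the restricted data. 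Restricting $\mu_i(\bx)=\zeta^{(i)}_{\mbb C}$ to $S$ and taking traces on each graded piece gives, for $\gamma = \dim S$, the relation $\zeta_{\mbb C}\cdot\gamma = 0$; a parallel computation on $V/S$ together with a root-theoretic estimate shows $\gamma$ (or its complement's analogue) lies in $R_+(\bv)$, so $\zeta_{\mbb C}\in D_\gamma$. Simultaneously $\xi\cdot\mbf s = 0$ puts $\xi\in D_\gamma$. This contradicts (\ref{generic}), which forbids $\xi$ or $\zeta_{\mbb C}$ from lying on \emph{any} wall $D_\gamma$ with $\gamma\in R_+(\bv)$. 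Hence no such $\bx$ exists: semistable equals stable, and the action is free.

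Once freeness is established, $\M_{\zeta}(\bv,\bw) = \mrm{Proj}\,R_\zeta(\bv,\bw)$ is identified with the honest orbit space $\Lambda^{\xi\text{-}ss}_{\zeta_{\mbb C}}(\bv,\bw)/\G_{\bv}$: the GIT equivalence relation $\sim$ from the preceding proposition collapses to the orbit relation because each semistable orbit is closed in $\Lambda^{\xi\text{-}ss}_{\zeta_{\mbb C}}$ (closedness of orbits of stable points is the usual consequence of the Hilbert–Mumford criterion), so $\overline{[\bx]}\cap\overline{[\by]}\cap\Lambda^{\xi\text{-}ss}_{\zeta_{\mbb C}}\neq\varnothing$ already forces $[\bx]=[\by]$. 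Finally, smoothness: since $\G_{\bv}$ acts freely, the quotient map $\Lambda^{\xi\text{-}ss}_{\zeta_{\mbb C}}(\bv,\bw)\to\M_{\zeta}(\bv,\bw)$ is a principal $\G_{\bv}$-bundle, so it suffices to show $\Lambda^{\xi\text{-}ss}_{\zeta_{\mbb C}}(\bv,\bw)$ is smooth; this follows because at a stable point the differential of the moment map $d\mu_{\mbb C}$ is surjective onto $\oplus_i \mathfrak{gl}(V_i)$ — its cokernel is dual to the kernel of the infinitesimal $\G_{\bv}$-action by the standard moment-map identity, and that kernel is $0$ by freeness — so $\mu_{\mbb C}^{-1}(\zeta_{\mbb C})$ is a smooth complete intersection near every semistable point. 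Therefore $\M_{\zeta}(\bv,\bw)$ is smooth.

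I expect the main obstacle to be the wall-crossing/root-theoretic step in the middle paragraph: namely, showing precisely that a destabilizing or stabilizing subspace forces a genuine \emph{root} $\gamma\in R_+(\bv)$ (so that $^t\gamma\mbf C\gamma\le 2$), rather than merely some dimension vector with $\zeta\cdot\gamma = 0$. This requires the trace computation on the sub- and quotient-representations to be organized so that the quadratic form $^t\gamma\mbf C\gamma$ enters with the correct sign, which is exactly the content of Nakajima's original argument and is where the symplectic/moment-map structure is essential. Everything else — King's criterion, closedness of stable orbits, the moment-map smoothness criterion — is routine once that combinatorial point is in place.
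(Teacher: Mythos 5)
The paper does not prove this proposition at all---it is quoted verbatim from Nakajima's \cite[Theorem 2.8]{N94}---so there is no internal proof to compare your sketch against; I will assess the argument on its own terms.

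Most of your outline tracks the standard GIT reconstruction faithfully: King's criterion identifying (S1)--(S2) with $\chi_\xi$-semistability, the observation that the framing $W$ kills the diagonal torus so that stability gives trivial stabilizer, the trace computation $\zeta_{\mbb C}\cdot\gamma=0$ on an $x$-invariant $S\subseteq\ker q$ (which is correct because the $\sum_h\ve(h)\,x_hx_{\bar h}$ contributions cancel pairwise on the invariant subspace and $p_iq_i|_{S_i}=0$), closedness of stable orbits collapsing the GIT equivalence to orbit equivalence, and the moment-map surjectivity criterion for smoothness of $\mu^{-1}_{\mbb C}(\zeta_{\mbb C})^{ss}$. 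That skeleton is sound.

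The genuine gap is exactly the one you flag at the end, and it is not a cosmetic one: nothing in your middle paragraph actually produces a destabilizing dimension vector $\gamma$ lying in $R_+(\bv)$. An arbitrary $x$-invariant $S\subseteq\ker q$ with $\xi\cdot\dim S=0$ need not have $\dim S$ a root---the quadratic form ${}^t\gamma\,\mbf C\,\gamma\le 2$ is a strong constraint, and the ``parallel computation on $V/S$'' you gesture at only gives another linear relation $\zeta_{\mbb C}\cdot(\bv-\gamma)=\cdots$, not a bound on ${}^t\gamma\,\mbf C\,\gamma$. To close the gap one must be more careful about \emph{which} sub/quotient to take: one decomposes the S-equivalence class of $\bx$ into simple summands (equivalently, passes to a simple subrepresentation of the restriction of $\bx$ after forgetting the framing via the Crawley--Boevey one-vertex-extension trick) and then invokes the theorem that dimension vectors of simple representations of the preprojective algebra at level $\zeta_{\mbb C}$ are positive roots $\gamma$ with $\zeta_{\mbb C}\cdot\gamma=0$. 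That last statement is precisely where the symplectic/moment-map structure becomes indispensable and where the inequality ${}^t\gamma\,\mbf C\,\gamma\le 2$ is produced; without it the wall-crossing contradiction you want never materializes. In Nakajima's original \cite{N94} the corresponding step is carried out in the hyperk\"{a}hler picture via a dimension/curvature estimate rather than the representation-theoretic route, so even after you fill the gap you will have a proof that is logically equivalent but organized differently from the cited source.
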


Hence the geometric point of the quiver variety $\M_{\zeta}(\bv, \bw)$ under (\ref{generic})
are parametrized by the $\G_{\bv}$-orbits in $\Lambda^{\xi\text{-}ss}_{\zeta_{\mbb C}}(\bv, \bw)$.
We set
\begin{align}
\M_{\zeta}(\bw) = \coprod_{\bv} \M_{\zeta}(\bv, \bw).
\end{align}

\begin{ass}
The parameter $\zeta$ is assumed to be  either generic or zero, unless otherwise stated.
\end{ass}

\begin{rem}
Our $\xi$ is corresponding to the parameter $\zeta_{\mbb R}$ where $\zeta_{\mbb R} = \sqrt{-1} \xi$ in ~\cite{N03}.
\end{rem}

\section{Isomorphisms on Nakajima varieties}

In this section, we introduce three classes of isomorphisms on Nakajima varieties. 
The fixed-point loci of their compositions, when become automorphisms,  will be studied in the next section.

\subsection{Reflection functors}
\label{Weyl-action}

To each element $\w\in \mathcal W$, Nakajima ~\cite{N94, N03},  
Lusztig ~\cite{L00} 
and Maffei ~\cite{M02} define the so-called $re\! f \! lection$ $f\! unctor$ 
\begin{align}
\label{S_w}
S_{\w}: \M_{\zeta} (\bv, \bw) \to \M_{\w (\zeta)} (\w * \bv, \bw), \quad \forall \zeta \ \mbox{subject to} \ (\ref{generic}),
\end{align}
which is an isomorphism of varieties 
such that $S_{\w'} S_{\w} = S_{\w' \w}$. 
When $\w$ is a simple reflection, the definition is very much like  Bernstein, Gelfand and Ponomarev's reflection functor~\cite{BGP}, from which it is named after.

Retain the pair of vector spaces $(V, W)$ of dimension vector $(\bv, \bw)$. Fix $i\in I$ and
set $$U_i = W_i \oplus \oplus_{h\in H : \o (h) =i} V_{\i (h)}.$$
Let $V'$ be a third vector space of dimension $\bv' = s_i * \bv$ 
such that  $V_j' = V_j$ if $j\neq i$. In particular, $\dim V'_i + \dim V_i   =  \dim U_i$.
To a point $\bx \in \bM(\bv, \bw)$, we set $a_i (\bx) = (q_i, x_h)_{h: \o (h) = i} $ 
and $b_i (\bx) = (p_i, \ve(\bar h) x_h)_{h: \i (h) = i}$.
Let $F$ be the pair of points $(\bx, \bx') \in \bM(\bv, \bw) \times \bM(\bv', \bw)$ 
such that the following conditions (\ref{reflection-a})-(\ref{reflection-d}) hold.
\begin{align}
& 
\begin{CD}
0 @>>> V_i'  @>a_i(\bx')>>  U_i @> b_i(\bx) >>  V_i  @>>>  0 \quad \mbox{is exact}, 
\end{CD}
\label{reflection-a} \tag{R1}
\\  
& a_i(\bx) b_i(\bx) - a_i(\bx') b_i(\bx') = \zeta'^{(i)}_{\mbb C},  \quad \zeta'_{\mbb C} = s_i (\zeta_{\mbb C}),  
\label{reflection-b} \tag{R2}
\\
& x_{h} = x'_{h}, p_j = p'_j, q_j=q'_j, \quad  \quad \mbox{if} \ \o (h) \neq i, \i (h) \neq i, \ \mbox{and} \ j\neq i, \label{reflection-c} \tag{R3} \\
& \mu_j (\bx) = \zeta^{(j)}_{\mbb C},
\mu_j (\bx') = \zeta'^{(j)}_{\mbb C},
\quad \ \mbox{if}  \ j\neq i.  \label{reflection-d} \tag{R4}
\end{align}
The $\G_{\bv}\times \G_{\bv'}$-action on $\bM(\bv, \bw) \times \bM(\bv', \bw)$
induces a $\G_{\bv \cup \bv'}=\G_{\bv}\times \mrm{GL}(V_i')$-action on $F$.  

Assume that the parameter $\xi$ satisfies $\xi_i<0$ or $\zeta^{(i)}_{\mbb C} \neq 0$. We have the following diagram.
\begin{align}
\label{reflection-raw}
\Lambda^{\xi\text{-}ss}_{\zeta_{\mbb C}} (V, W) \overset{\pi_1}{\longleftarrow} 
F^{ss}(V, V', W)
\overset{\pi_2}{\longrightarrow} \Lambda^{s_i(\xi)\text{-}ss}_{s_i(\zeta_{\mbb C})} (V', W) 
\end{align}
where 
$F^{ss}(V, V', W) 
= F \cap 
\left ( \Lambda^{\xi \text{-} ss}_{\zeta_{\mbb C}} (V, W) \times \Lambda^{s_i(\xi)\text{-}ss}_{s_i(\zeta_{\mbb C})}  (V', W) \right ) $,
$\pi_1$ and $\pi_2$ are the natural projections.
It is known that $\pi_1$ and $\pi_2$ are $\mrm{GL}(V'_i)$ and $\mrm{GL}(V_i)$ principal bundles, respectively.
This induces isomorphisms of varieties:
\begin{align}
\M_{\zeta} ( \bv, \bw) \overset{\pi_1}{\longleftarrow} 
\G_{\bv \cup \bv'} \backslash F^{ss}(V, V', W)
\overset{\pi_2}{\longrightarrow}
\M_{s_i(\zeta)} (s_i * \bv, \bw).
\end{align}
The simple reflection $S_i$ on quiver varieties is defined by
\begin{align}
\label{Si}
S_i = \pi_2 \pi_1^{-1}: \M_{\zeta} (\bv, \bw) \to \M_{s_i (\zeta)} (s_i * \bv, \bw), \quad \mbox{if} \ \xi_i < 0 \ \mbox{or}\ \zeta^{(i)}_{\mbb C} \neq 0.
\end{align}

Since $(s_i (\xi))_i >0$ if $\xi_i < 0$, we can define the reflection $S_i$ when $\xi_i >0$, by switching the roles of $\bx$ and $\bx'$.
So if $\w=s_{i_1}s_{i_2}\cdots s_{i_l} \in \mathcal W$ and $\zeta$ satisfies the condition (\ref{generic}), 
the reflection functor $S_{\w}$ in (\ref{S_w}) is defined to be
\begin{align*}
S_{\w} = S_{i_1} S_{i_2} \cdots S_{i_l} : \M_{\zeta}(\bv, \bw) \to \M_{\w(\zeta)} (\w * \bv, \bw).
\end{align*}

When $\zeta=0$,  the reflection functor $S_{\w}:  \M_0(\bv, \bw) \to \M_0(\w*\bv,\bw)$ is defined 
to be the identity morphism when $\w*\bv=\bv$, following~\cite[2.1]{L00}.

If we let $\G_{\bw}$ act diagonally on $\bM(\bv, \bw) \times \bM(\bv', \bw)$ in the above construction, we see that 
the simple reflections $S_i$ and hence the general Weyl group action $S_{\w}$ are $\G_{\bw}$-equivariant.

\subsection{The isomorphism $\tau$}
\label{involution}

A finite dimensional vector space $E$ equipped with a non-degenerate bilinear form $(-, -)_E$ is called  a $f\! ormed$ $space$.
To any linear transformation $T: E\to E'$ between two formed spaces, 
we define  its right adjoint $T^*: E' \to E$ by the rule 
\[
(T(e), e')_{E'} = (e, T^* (e'))_{E}, \quad \forall e \in E, e' \in E'. 
\]
It is clear that the map $T\mapsto T^*$ defines an isomorphism $\Hom(E, E') \cong \Hom (E', E)$ of vector spaces.
If further $E''$ is a formed space
and $T': E' \to E''$ is a linear transformation, then
$(T' T)^* = T^* T'^*$. 

Similarly, we can define the left adjoint $T^!$ of $T$ by 
$(e', T(e))_{E'} = (T^!(e'), e)_E$ for all $e\in E$ and $e'\in E'$.
We have $(T^*)^!= T$ and $(T^!)^*=T$.

Let $\delta$ be either $+1$ or $-1$. 
A formed space  $E$ is called 
a $\delta$-$f\! ormed$ space  if the associated form $(-, -)_E$ on $E$ satisfies 
that  $(e_1, e_2)_E = \delta (e_2, e_1)_E$ for all $e_1, e_2 \in E$. 
When $\delta=1$, we have a symmetric form, while when $\delta=-1$, we have a symplectic form.
In this case, the form $(-, -)_E$ is called  a $\delta$-$f\! orm$.
If  $E'$ is  a $\delta'$-formed space for some $\delta' \in \{ \pm 1\}$, then we have $(T^*)^* = \delta \delta' T$. 

Assume the vector space $E$ is a formed space and admits an $I$-grading $E = \oplus_{i\in I} E_i$, 
we call $E$ an $I$-$graded$ $f\! ormed$ $space$
if the restriction $(-, -)_{E_i}$ of the form $(-, -)_E$ to each subspace $E_i$ is a non-degenerate form and $(E_i, E_j)_E=0$ if $i\neq j$.
Let $E$ be an $I$-graded formed space  
and  fix a function $\underline \delta =(\delta_{ i})_{i\in I} \in \{\pm 1\}^I$. We call $E$
a $\underline \delta$-$f\! ormed$ $space$, or a $f\! ormed$ $space$ $with$ $sign$ $\underline \delta$,
if the restriction $(-,-)_{E_i}$ is a $\delta_{ i}$-form for all $i\in I$. 
We call $\underline \delta$ the $sign$ of $E$.

Recall the pair $(V, W)$ of vector spaces of dimension vector $(\bv, \bw)$ and $\bM(\bv, \bw)$ from Section ~\ref{lavw}. 
Assume that $V$ and $W$ are two $I$-graded formed spaces,
we define an automorphism
\begin{align}
\label{tau-raw}
\tau : \bM(\bv, \bw) \to \bM(\bv, \bw), \quad \bx=(x_h, p_i, q_i) \mapsto {}^{\tau} \bx =({}^{\tau} x_h, {}^{\tau} p_i, {}^{\tau}q_i)
\end{align}
where  ${}^\tau x_h = \ve (h)  x_{\bar h}^*$, ${}^{\tau} p_i = -  q_i^{*}$ and ${}^{\tau} q_i =  p_i^*$ 
for all $h\in H$ and $i\in I$.
Its inverse is defined by taking the left adjoints, that is $\tau^{-1} (\bx) = ({}^{\tau^{-1}} x_h, {}^{\tau^{-1}} p_i, {}^{\tau^{-1}} q_i)$
where ${}^{\tau^{-1}} x_h = \ve(\bar h) x^!_{\bar h}$, ${}^{\tau^{-1}} p_i = q_i^!$ and ${}^{\tau^{-1}} q_i = - p_i^!$.

By the properties of taking adjoints, we have  $\mu_i (^{\tau}\bx) = - \mu_i (\bx)^*$. 
So the automorphism on $\bM(\bv, \bw)$ restricts to an isomorphism still denoted by $\tau$
\begin{align}
\label{Lambda-tau-raw}
\tau: \Lambda_{\zeta_{\mbb C}} (\bv, \bw) \to \Lambda_{-\zeta_{\mbb C}}(\bv, \bw).
\end{align}
Further, for any regular function $f$ in $\mbb C[\Lambda_{\zeta_{\mbb C}} (\bv, \bw)]^{\G_{\bv}, \chi_{\xi}^n}$, we have
\begin{align*}
f  \tau (g.\bx) & = f ({}^{\tau}g. {}^{\tau} \bx)   && {}^{\tau}g_i = (g_i^{-1})^* \\
& = \chi_{\xi}^{n}({}^{\tau}g) f({}^{\tau} \bx)  && \empty \\
& = \chi_{-\xi}^n (g) f \tau (\bx),  && \forall g\in \G_{\bv}, \bx \in \Lambda_{-\zeta_{\mbb C}} (\bv, \bw).
\end{align*}
So $f \tau \in \mbb C[\Lambda_{-\zeta_{\mbb C}} (\bv, \bw)]^{\G_{\bv}, \chi_{-\xi}^n}$.
This implies that the assignment $f \mapsto f \tau$ defines an isomorphism  of graded associative algebras:
$R_{\zeta} (\bv, \bw) \to R_{- \zeta} (\bv, \bw)$, where $R_{\zeta} (\bv, \bw) $ is from Section ~\ref{mvw}.

The above isomorphism shows that the isomorphism on $\Lambda_{\zeta_{\mbb C}} (\bv, \bw)$ 
restricts to an isomorphism
$\Lambda^{\xi\text{-}ss}_{\zeta_{\mbb C}} (\bv, \bw) \to \Lambda^{(-\xi)\text{-}ss}_{-\zeta_{\mbb C}} (\bv, \bw)$.
Due to ${}^{\tau}(g.\bx) = {}^{\tau} g. {}^{\tau}\bx$, it further induces the isomorphism:
recall $[\bx]$ denotes the $\G_{\bv}$-orbit of $\bx$,
\begin{align}
\label{tau}
\begin{split}
&\tau_{\zeta} : \M_{\zeta} (\bv, \bw) \to \M_{-\zeta}(\bv, \bw), \quad [\bx] \mapsto [\ \! \! ^{\tau} \! \bx],
\end{split}
\end{align}
such that the following diagram commutes.
\begin{align}
\label{tau-comm}
\begin{CD}
\M_{\zeta} (\bv, \bw) @>\tau_{\zeta}>> \M_{-\zeta} (\bv, \bw) \\
@V\pi VV @V\pi VV \\
\M_0 (\bv, \bw) @>\tau_0 >> \M_{0} (\bv, \bw)
\end{CD}
\end{align}
Now we show that the isomorphism $\tau_{\zeta}$ depends only on the forms on $W$.

\begin{prop}
\label{tau-form}
The $\tau_{\zeta}$ in (\ref{tau}) is independent of the choices of forms on $V$.
\end{prop}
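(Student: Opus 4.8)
The plan is to show that changing the forms on $V$ alters the automorphism $\tau$ only by conjugation by an element of $\G_{\bv}$, which acts trivially on the GIT quotient $\M_{\zeta}(\bv,\bw)$. Concretely, suppose $(-,-)_{V_i}$ and $(-,-)'_{V_i}$ are two choices of nondegenerate $\delta_i$-forms on $V_i$ (for each $i\in I$). Standard linear algebra gives, for each $i$, an invertible $\phi_i\in\GL(V_i)$ such that $(v,v')'_{V_i} = (\phi_i v, v')_{V_i}$ for all $v,v'\in V_i$; here I keep the forms on $W$ fixed throughout. Let $\phi=(\phi_i)_{i\in I}\in\G_{\bv}$. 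The first step is to compute how the right adjoint of a map changes when the form on the source or target is replaced: if $T\colon V_{\o(h)}\to V_{\i(h)}$ has adjoint $T^*$ with respect to the original forms and $T^{*\prime}$ with respect to the primed forms, then unwinding the defining identity $(Tv,v')' = (v, T^{*\prime}v')'$ yields $T^{*\prime} = \phi_{\o(h)}^{-1}\, T^*\, \phi_{\i(h)}$; similarly for the maps $p_i\colon W_i\to V_i$ and $q_i\colon V_i\to W_i$ one gets ${}^{\tau'}p_i = \phi_i^{-1}\,{}^\tau p_i$ (no change from the $W$ side, since the $W$-forms are unchanged) and ${}^{\tau'}q_i = {}^\tau q_i\,\phi_i$ — exactly the contragredient of the $\phi$-conjugation transformation.

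The second step is to assemble these into a statement about $\tau$ on $\bM(\bv,\bw)$. Writing $\tau'$ for the automorphism built from the primed forms on $V$ (and the fixed forms on $W$), the formulas above say precisely that $\tau'(\bx) = \phi^{-1}.\,\tau(\bx)$, where the dot is the $\G_{\bv}$-action from Section~\ref{lavw} (recall $g.\bx$ conjugates $x_h$ by $g_{\i(h)}(-)g_{\o(h)}^{-1}$, multiplies $p_i$ by $g_i$ on the left and $q_i$ by $g_i^{-1}$ on the right). One should double-check the orientation signs $\ve(h)$ and the sign on ${}^\tau p_i=-q_i^*$ survive unchanged, which they do since those constants are untouched by the change of forms. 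Consequently $\tau'$ and $\tau$ differ by composing with the $\G_{\bv}$-action of the fixed element $\phi^{-1}$.

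The third step is to pass to the quotient. Since $\tau$ and $\tau'$ both descend (via the argument already given in the excerpt preceding the proposition, using $\mu_i({}^\tau\bx)=-\mu_i(\bx)^*$ and the semistability-preserving property) to isomorphisms $\M_\zeta(\bv,\bw)\to\M_{-\zeta}(\bv,\bw)$ sending $[\bx]\mapsto[{}^\tau\bx]$ resp. $[\bx]\mapsto[{}^{\tau'}\bx]$, and since $[{}^{\tau'}\bx]=[\phi^{-1}.\,{}^\tau\bx]=[{}^\tau\bx]$ as $\G_{\bv}$-orbits, the two induced maps $\tau_\zeta$ coincide. (For the $\zeta=0$ case one argues the same way with $\M_0=\mathrm{Spec}\,R_0$, noting $f\mapsto f\tau$ and $f\mapsto f\tau'$ agree on $\G_{\bv}$-invariants.) The only mildly delicate point — and the main thing to get right — is the bookkeeping of which side each $\phi_i$ lands on in the adjoint formulas, i.e. making sure the net effect is genuinely the $\G_{\bv}$-action of a single group element $\phi^{-1}$ and not, say, $\phi$ on the $x_h$ but $\phi^{-1}$ on the $p_i$; the $\delta_i$-symmetry of the forms and the contragredient nature of how $q_i$ transforms are what make this consistent.
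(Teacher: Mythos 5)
Your argument is essentially the paper's: both proofs show that changing the form on $V$ alters $\tau$ on $\bM(\bv,\bw)$ by the action of a fixed element of $\G_\bv$ and then pass to the GIT quotient, where that action is invisible. (The paper works with Gram matrices $M_i,\tilde M_i$ and sets $g_i=\tilde M_i^{-1}M_i$; you work intrinsically via $(v,v')'=(\phi_iv,v')$ and take $g_i=\phi_i^{-1}$.) The one place to be careful: your change-of-adjoint formula $T^{*\prime}=\phi_{\o(h)}^{-1}T^*\phi_{\i(h)}$ relies on $\phi_i^*=\phi_i$, which holds precisely when both forms on $V_i$ are $\delta_i$-forms of the \emph{same} sign — an assumption you quietly make but which the proposition does not impose (an $I$-graded formed space only requires nondegenerate, not (anti-)symmetric, forms on the $V_i$). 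For arbitrary nondegenerate forms the correct formula is $T^{*\prime}=(\phi_{\o(h)}^*)^{-1}T^*\phi_{\i(h)}^*$, and the right group element is $g_i=(\phi_i^*)^{-1}=\tilde M_i^{-1}M_i$ (which reduces to your $\phi_i^{-1}$ in the $\delta$-form case). This is a bookkeeping point rather than a conceptual gap, and the paper's matrix computation sidesteps it automatically; your structure and conclusion are otherwise correct.
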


\begin{proof}
If we fix a basis for each vector space $V_i$ and $W_i$, 
then to give a form on $V_i$ or $W_i$ is the same as to give a certain invertible matrix, say $M_i$ or $N_i$.
In this way, the right adjoints are presented as $x_h^* = M^{-1}_{\o (h)} {}^t x_h M_{\i (h)}$ 
and $p_i^* = N_i^{-1} {}^t p_i M_i$ and $q_i^*=M_i^{-1} {}^t q_i N_i$ for all $h\in H$ and $i\in I$.
If we attach to each $V_i$ a new form with associated matrix $\tilde M_i$, 
we can have a new automorphism, say $\tilde \tau$, on $\bM(\bv, \bw)$, 
and  a new point ${}^{\tilde \tau}\bx$ for each $\bx \in \bM(\bv, \bw)$.
Set $g =(g_i)_{i\in I}\in \G_{\bv} $ with $g_i = \tilde M_i^{-1} M_i$, $\forall i\in I$. 
Then  the proposition follows from  $g. {}^{\tau} \bx = {}^{\tilde \tau} \bx$.
\end{proof}

For $\mrm f \in \G_{\bw}$, we set ${}^{\tau} \mrm f = (\mrm f^{-1})^*$. 
Then $\tau(\mrm f. [\bx]) =  {}^{\tau} \mrm f. \tau ([\bx])$ for all $\mrm f \in \G_{\bw}$ and $[\bx] \in \M_{\zeta}(\bv, \bw)$.

\begin{prop}
\label{order-4}
If  $W$ is a formed space with sign $\delta_{\bw}$,   then the isomorphism $\tau_{\zeta}$ on
$\M_{\zeta}(\bv, \bw)$  satisfies $\tau^{4}_{\zeta}=1$.
Moreover, if the $\delta_{\bw}$ is $\Gamma$-alternating, i.e., $\delta_{\bw, \o (h)} \delta_{\bw, \i (h)} =-1$ for all $h\in H$, then
$\tau_{\zeta}^2=1$.
\end{prop}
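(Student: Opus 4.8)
The plan is to reduce the statement to an explicit computation of $\tau^2$ on the vector space $\bM(\bv,\bw)$ and then descend to the quiver variety. Recall from (\ref{tau}) that $\tau_\zeta$ is the map $[\bx]\mapsto[{}^\tau\bx]$ induced by $\tau\colon\bM(\bv,\bw)\to\bM(\bv,\bw)$, so its $k$-fold composite sends $[\bx]$ to $[{}^{\tau^k}\bx]$; it therefore suffices to describe ${}^{\tau^2}\bx$ and ${}^{\tau^4}\bx$ up to the $\G_{\bv}$-action. The construction of $\tau$ uses $I$-graded forms on both $V$ and $W$, but by Proposition~\ref{tau-form} the induced map $\tau_\zeta$ does not depend on the forms on $V$; so I would first equip each $V_i$ with a non-degenerate symmetric bilinear form. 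With $W$ of sign $\delta_{\bw}$ and $V$ thus symmetric, the reciprocity rule $(T^{*})^{*}=\delta\delta' T$ for a linear map between a $\delta$-formed and a $\delta'$-formed space applies to each of the three types of components of $\bx$.

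Next I would square the defining formulas ${}^\tau x_h=\ve(h)x_{\bar h}^{*}$, ${}^\tau p_i=-q_i^{*}$, ${}^\tau q_i=p_i^{*}$. Using $\bar{\bar h}=h$, $\ve(\bar h)=-\ve(h)$, the symmetry of the forms on $V$, and the reciprocity rule, one obtains
\[
{}^{\tau^2}x_h=-x_h,\qquad {}^{\tau^2}p_i=-\delta_{\bw,i}\,p_i,\qquad {}^{\tau^2}q_i=-\delta_{\bw,i}\,q_i .
\]
Hence $\tau^2$ is a \emph{sign automorphism} of $\bM(\bv,\bw)$, multiplying each homogeneous summand by a fixed element of $\{\pm1\}$; applying $\tau^2$ once more gives ${}^{\tau^4}\bx=\bx$ identically, so $\tau^4=\mrm{id}$ on $\bM(\bv,\bw)$ and on its $\xi$-semistable locus. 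Passing to the quotient, $\tau_\zeta^4=1$ on $\M_\zeta(\bv,\bw)$, which is the first assertion; the descent is harmless in both the generic and the $\zeta=0$ cases since $\tau$ is $\G_{\bv}$-equivariant.

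For the second assertion, assume $\delta_{\bw}$ is $\Gamma$-alternating. Then I would realize the sign automorphism $\tau^2$ as conjugation by the element $g=(g_i)_{i\in I}\in\G_{\bv}$ with $g_i=-\delta_{\bw,i}\,\mrm{Id}_{V_i}$: one checks $g_ip_i=-\delta_{\bw,i}p_i$, $q_ig_i^{-1}=-\delta_{\bw,i}q_i$, and $g_{\i(h)}x_hg_{\o(h)}^{-1}=\delta_{\bw,\o(h)}\delta_{\bw,\i(h)}\,x_h=-x_h$, where the last equality is exactly the $\Gamma$-alternating condition; thus $g.\bx={}^{\tau^2}\bx$ for every $\bx$, so $[{}^{\tau^2}\bx]=[\bx]$ in $\M_\zeta(\bv,\bw)$ and $\tau_\zeta^2=1$. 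I do not expect a serious obstacle: the proof is essentially bookkeeping, and the two places that need care are the appeal to Proposition~\ref{tau-form} to normalize the forms on $V$ (without it $\tau^2$ need not be a sign automorphism) and keeping track of the $\ve$-signs together with the involution $h\mapsto\bar h$ when squaring the formula for ${}^\tau x_h$.
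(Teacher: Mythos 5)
Your proof is correct and follows essentially the same route as the paper: normalize the form on $V$ to be symmetric via Proposition~\ref{tau-form}, compute $\tau^2(\bx)=(-x_h,-\delta_{\bw,i}p_i,-\delta_{\bw,i}q_i)$ (hence $\tau^4=\mrm{id}$ on $\bM(\bv,\bw)$), and under the $\Gamma$-alternating hypothesis absorb $\tau^2$ into the $\G_{\bv}$-action by $g=(-\delta_{\bw,i}\mrm{id}_{V_i})_{i\in I}$. The only cosmetic difference is that the paper deduces $\tau^4_\zeta=1$ directly from the double-adjoint rule without normalizing $V$, whereas you normalize once at the start and use it for both halves; this changes nothing.
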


\begin{proof}
From the property of taking adjoints twice with respect to $\delta$-forms, it is straightforward to see the first statement in the proposition.
By Proposition~\ref{tau-form}, we can attach to each $V_i$ a symmetric form. Then for $\bx=(x_h, p_i, q_i)$, we have
$\tau^2(\bx) =(-x_h, -\delta_{\bw, i} p_i, - \delta_{\bw, i}q_i)$. Let $g=(-\delta_{\bw, i} \mrm{id}_{V_i})_{i\in I}$, then we have
$g. \tau^2(\bx) = \bx$. This implies that $\tau_{\zeta}^2=1$. 
\end{proof}

We now show that the isomorphism $\tau_{\zeta}$ commutes with the reflection functors.
Recall the setting from Section ~\ref{Weyl-action}. 
We fix a vertex $i\in I$ and a triple $(V, V', W)$ of $I$-graded vector spaces of dimension vector $(\bv, \bv', \bw)$ 
such that $V_j=V'_j$ for all $j\neq i$ and $\bv' =s_i *\bv$.
We assume that all spaces in this triple are $I$-graded formed spaces.
For simplicity, let  $B_V=(-, -)_V, B_W=(-, -)_W$ stand for the bilinear forms on $V$ and $W$, respectively.
Similar to the isomorphism $\tau_{\zeta} \equiv \tau_{\zeta}(B_V, B_W)$ as above, 
we have an isomorphism 
\[
\tau_{s_i(\zeta)}  \equiv \tau_{s_i(\zeta)} (B_{V'}, B_{W}) : \M_{s_i (\zeta)} (s_i * \bv, \bw) \to \M_{- s_i(\zeta)} (s_i * \bv, \bw).
\]

\begin{lem}
\label{comm-Si-tau}
We have $S_i \tau_{\zeta} (B_V, B_W) = \tau_{s_i(\zeta)} ( B_{V'}, B_W) S_i$ 
where $S_i$ is the reflection functor defined in Section \ref{Weyl-action}.
\end{lem}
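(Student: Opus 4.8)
The plan is to check the commutativity at the level of the correspondence spaces $F$ that define the reflection functor, rather than on the quotients $\M_\zeta$ directly. Concretely, recall that $S_i = \pi_2\pi_1^{-1}$ is induced by the diagram $\Lambda^{\xi\text{-}ss}_{\zeta_\mbb C}(V,W) \overset{\pi_1}{\longleftarrow} F^{ss}(V,V',W) \overset{\pi_2}{\longrightarrow} \Lambda^{s_i(\xi)\text{-}ss}_{s_i(\zeta_\mbb C)}(V',W)$, and that $\tau_\zeta$ is induced by the pointwise map $\bx\mapsto{}^\tau\bx$. First I would define a map on the correspondence space, $\widetilde\tau: F(V,V',W)\to F(V,V',W)$, by applying $\tau$ (with the forms $B_V,B_W$) to the first factor $\bx$ and $\tau$ (with the forms $B_{V'},B_W$) to the second factor $\bx'$, i.e. $(\bx,\bx')\mapsto({}^\tau\bx,{}^\tau\bx')$. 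The claim reduces to two assertions: (i) $\widetilde\tau$ sends $F$ to $F$ (for the negated parameters), compatibly with $\pi_1,\pi_2$ and the $\tau_\zeta$, $\tau_{s_i(\zeta)}$ on the two ends; and (ii) $\widetilde\tau$ preserves semistability, so it descends to $F^{ss}$. Granting these, the lemma follows by chasing the square: $S_i\tau_\zeta = \pi_2\pi_1^{-1}\tau_\zeta = \pi_2\widetilde\tau\,\pi_1^{-1} = \tau_{s_i(\zeta)}\pi_2\pi_1^{-1} = \tau_{s_i(\zeta)}S_i$, using that $\pi_1,\pi_2$ are (principal-bundle) quotient maps so the pointwise identities pass to the quotient.

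The substantive point is (i), the verification that the defining conditions (R1)--(R4) of $F$ are stable under $\widetilde\tau$ up to replacing $\zeta_\mbb C$ by $-\zeta_\mbb C$. Conditions (R3) and (R4) are immediate: (R3) is a pointwise identity of the untouched components and (R4) was already handled in Section~\ref{involution}, where it was shown $\mu_j({}^\tau\bx) = -\mu_j(\bx)^*$, so $\mu_j(\bx)=\zeta^{(j)}_\mbb C$ forces $\mu_j({}^\tau\bx) = -\zeta^{(j)}_\mbb C = (-\zeta)^{(j)}_\mbb C$ (using that $\zeta^{(j)}_\mbb C$ is a scalar, hence self-adjoint). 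The key is to understand how $\tau$ interacts with the maps $a_i(\bx) = (q_i, x_h)_{h:\o(h)=i}$ and $b_i(\bx) = (p_i, \ve(\bar h)x_h)_{h:\i(h)=i}$, which target/source the auxiliary space $U_i = W_i\oplus\bigoplus_{h:\o(h)=i}V_{\i(h)}$. The observation I would isolate is that $U_i$ carries an induced form from $B_V$ and $B_W$, and that under the definitions ${}^\tau x_h = \ve(h)x_{\bar h}^*$, ${}^\tau p_i = -q_i^*$, ${}^\tau q_i = p_i^*$, the pair $(a_i,b_i)$ transforms by adjunction: up to an overall sign one gets $b_i({}^\tau\bx) = \pm\, a_i(\bx)^*$ and $a_i({}^\tau\bx) = \pm\, b_i(\bx)^*$ relative to the form on $U_i$ (this is exactly the bookkeeping of the $\ve(h)$ signs, which is why the orientation function is built into $a_i,b_i$ and into $\tau$). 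Dualizing the exact sequence in (R1) then turns it into the analogous exact sequence for ${}^\tau\bx,{}^\tau\bx'$ with the roles of $a_i$ and $b_i$ swapped — but that is precisely (R1) again, since (R1) is symmetric in the right way — and the moment-map relation (R2), $a_i(\bx)b_i(\bx) - a_i(\bx')b_i(\bx') = s_i(\zeta_\mbb C)^{(i)}$, becomes its own negative adjoint, i.e. (R2) for parameter $-\zeta_\mbb C$.

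For (ii), semistability, I would invoke what was already established for $\tau$ alone in Section~\ref{involution}: the pointwise map restricts to an isomorphism $\Lambda^{\xi\text{-}ss}_{\zeta_\mbb C}(V,W)\to\Lambda^{(-\xi)\text{-}ss}_{-\zeta_\mbb C}(V,W)$, and likewise for $V'$ with $s_i(\xi)$ and $-s_i(\xi) = s_i(-\xi)$. Since $F^{ss}(V,V',W)$ is by definition the intersection of $F$ with the product of these semistable loci, stability of $F$ under $\widetilde\tau$ (from (i)) together with stability of the two semistable loci gives $\widetilde\tau(F^{ss}) = F^{ss}$ for the negated parameters. I expect the main obstacle to be precisely the sign accounting in step (i): keeping track of the factors $\ve(h)$, $\ve(\bar h)$ and the $-1$ in ${}^\tau p_i$ across the adjunction, so that the exact sequence (R1) and the relation (R2) come out with exactly the right signs rather than off by one; a clean way to organize this is to equip $U_i$ with the form making $a_i(\bx)^* = b_i({}^\tau\bx)$ literally (absorbing signs into the definition of the form on $U_i$), and then the rest is formal. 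The reflection functor for $\xi_i>0$ is handled by the symmetry of the construction under swapping $\bx\leftrightarrow\bx'$, which commutes with $\widetilde\tau$, so no separate argument is needed.
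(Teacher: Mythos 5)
Your proposal is correct and takes essentially the same approach as the paper: work on the correspondence $F$, observe that $\tau$ transforms the maps $a_i, b_i$ by adjunction (the paper's precise identities are $a_i({}^\tau\bx)=b_i(\bx)^*$ and $b_i({}^\tau\bx)=-a_i(\bx)^*$), and deduce that (R1)--(R4) hold for the $\tau$-transformed pair with the reversed parameter, which by the role-swapping in the $\xi_i>0$ case gives a point of the correct correspondence. The paper additionally normalizes the forms on $V$ and $V'$ via Proposition~\ref{tau-form} before the computation, a step you leave implicit.
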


\begin{proof}
By Proposition \ref{tau-form}, we can assume that  the forms on $V_j$ and $V'_j$ coincide    for all $j\neq i$.
We observe that $a_i ({}^{\tau} \bx) =  b_i(\bx)^*$ and $b_i ({}^{\tau} \bx) = -  a_i(\bx)^*$.
So the short exact sequence in (\ref{reflection-a}) gives rise to the following short exact sequence.
\[
\begin{CD}
0 @>>> V_i @>a_i ({}^{\tau} \bx)>>  U_i @> b_i({}^{\tau} \bx')>> V_i' @>>> 0.
\end{CD}
\]
Similarly, the equation in (\ref{reflection-b}) yields the equality
$a_i({}^{\tau}\bx') b_i({}^{\tau}\bx') 
-a_i({}^{\tau}\bx) b_i({}^{\tau}\bx)   = \zeta'^{(i)}_{\mbb C}$.
As a consequence, we have the commutative relation in the lemma.
\end{proof}

\subsection{The diagram isomorphism $a$}

\label{dia}

Let $a$ be an automorphism of $\Gamma$, that is, there are permutations of vertex and edge sets, both denoted by $a$, such that
$a(\o(h))= \o (a(h))$, $a(\i (h)) = \i (a(h))$ and $a(\bar h) = \overline{a(h)}$ for all $h\in H$. 
We further assume that $a$ is compatible with the function $\ve$ in the definition of the moment map $\mu$ in Section ~\ref{lavw}: 
there exists a constant $c\equiv c_{a, \ve} \in \{ \pm 1\}$ such that 
\begin{align}
\label{compatible-pair}
\ve(a(h)) = c \cdot \ve(h), \  \forall h\in H .
\end{align}
The automorphism $a$ on $\Gamma$ induces operations on $I$-graded vector spaces and vectors. 
If $V$ is an $I$-graded space, we denote $a(V)$ the $I$-graded vector space whose $i$-th component is $V_{a^{-1}(i)}$.
Similarly $a(\bv)$ is a vector whose $i$-entry is the $a^{-1}(i)$-th entry of $\bv$. 
Given any point $\bx=(x, p, q) \in \bM(\bv,\bw)\equiv \bM(V, W)$, 
we define a point $$a(\bx)=(a(x), a(p), a(q) )\in \bM(a(\bv), a(\bw)) \equiv\bM(a(V), a(W))$$ 
by
\[
a(p)_i=p_{a^{-1}(i)}, \ a(q)_i = q_{a^{-1}(i)}, \ 
a(x)_h= \ve(h)^{\frac{1-c}{2}}  x_{a^{-1}(h)}, \quad \forall i\in I, h\in H.
\]
By definition, $\mu_i (a(\bx)) = \mu_{a^{-1} (i)} (\bx)$. Thus it induces a diagram isomorphism of finite order on Nakajima's varieties:
\begin{align}
\label{dia-a}
a: \M_{\zeta} (\bv, \bw) \to \M_{a(\zeta)} (a(\bv), a(\bw)).
\end{align}
The order of this isomorphism is the same as that on the diagram.

The isomorphism $a$ is a variant of diagram automorphisms studied in ~\cite{HL14}. 
Just like {\it loc. cit.}, it can be generalized as follows.
Let us fix $(\mrm f^0 , g^0)\in \G_{\bw} \times\G_{\bv}$, we can define an isomorphism 
$ a_{\mrm f^0,g^0}
: \M_{\zeta} (\bv, \bw) \to \M_{a(\zeta)} (a(\bv), a(\bw))
$ 
to be the composition of $a$ with the action of $(\mrm f^0, g^0)$.
Specifically,    for any $[\bx]\in \M_{\zeta}(\bv,\bw)$, the element
$a_{\mrm f^0, g^0}([\bx])$, 
is represented by $a_{\mrm f^0, g^0}(\bx) = (a_{\mrm f^0,g^0}(x), a_{\mrm f^0, g^0}(p), a_{\mrm f^0,g^0}(q))$ where  for all $i\in I$, $h\in H$
\begin{align*}
a_{\mrm f^0,g^0}(p)_i = g^0_{a^{-1}(i)} p_{a^{-1} (i)} (\mrm f_{a^{-1}(i)}^0)^{-1},\\
a_{\mrm f^0,g^0}(q)_i = \mrm f^0_{a^{-1}(i)} q_{a^{-1}(i)} (g^{0}_{a^{-1}(i)})^{-1},\\
a_{\mrm f^0,g^0}(x)_h= \ve(h)^{\frac{1-c}{2}}  g^0_{\i( a^{-1}(h))} x_{a^{-1}(h)} (g^0_{\o (a^{-1}(h))})^{-1}.
\end{align*}
Similar to Proposition~\ref{tau-form}, the isomorphism 
$a_{\mrm f^0,  g^0}$ is independent of the choice of $g^0$. Hence it makes sense
to denote this isomorphism by $a_{\mrm f^0}$ and the  $a$ in (\ref{dia-a}) is $a_1$. 

There is a permutation, $\mrm f \mapsto {}^a \mrm f$, on $\G_{\bw}$ 
given by $(^a \mrm f)_i = \mrm f_{a^{-1}(i)}$ for all $i\in I$. 
It is clear that 
\begin{align}
\label{a-g}
a_{\mrm f^0}(\mrm f. [\bx]) = {}^a\mrm f^0. a_{\mrm f} ([\bx]), \quad \forall [\bx]\in \M_{\zeta}(\bv, \bw), \mrm f\in \G_{\bw}. 
\end{align}
It is also clear that the isomorphism $a$ is compatible with the reflection functor $S_i$:
\begin{align}
\label{a-S}
a_{\mrm f^0} \circ S_i = S_{a(i)} \circ a_{\mrm f^0}. 
\end{align}
Subsequently, $S_{w_0}\circ a_{\mrm f^0} = a_{\mrm f^0} \circ S_{w_0}$ 
when $\Gamma$ is Dynkin and $w_0$ is the longest Weyl group element since $a(w_0)=w_0$. 
The two isomorphisms $\tau_{\zeta}$ and $a_{\mrm f^0}$  are compatible as well. 
Precisely, 
\begin{align}
\label{a-T}
\tau_{a\zeta}(a(B_V), a(B_W)) a_{\mrm f^0} = a_{{}^{\tau}{\mrm f^0}} \tau_{\zeta} (B_V, B_W).
\end{align}

Finally, we remark that the composition $a \tau_{\zeta}$ is similar to automorphisms in~\cite{E09}
and a special case of $\tau_{\zeta}$ appeared in~\cite{KP82}, see further Section~\ref{fixed-cat}.

\section{Geometric properties of $\sigma$-quiver varieties}
\label{sigma-quiver}

In this section, we study the fixed-point subvarieties, called $\sigma$-quiver varieties, of the compositions of the three classes of 
isomorphisms of Nakajima varieties introduced in the previous section. 

\subsection{The $\sigma$-quiver varieties, I: $\zeta$ generic}
In this subsection, we assume that $\zeta$ is generic. 
We consider the following isomorphism on quiver varieties. 
\begin{align}
\label{sigma}
\sigma: =a  S_{\w} \tau_{\zeta} : \M_{\zeta}(\bv, \bw) \to \M_{- a \w(\zeta)} (a(\w * \bv), a(\bw)),
\end{align}
where $\tau_{\zeta}$, $S_{\w}$ and $a$ are defined in (\ref{tau}), (\ref{S_w}) and (\ref{dia-a}), respectively.
We shall write $\sigma_{\zeta, \w, a}$ for $\sigma$ if we want to emphasize that $\sigma$ depends on $\zeta$, $\w$ and $a$.
By the commutativity of the three isomorphisms from Lemma~\ref{comm-Si-tau}, (\ref{a-S}) and (\ref{a-T}),  we have

\begin{prop}
\label{sigma-finite}
If the forms involved are $\delta$-forms and $\w$ is of finite order, then
the order of
$\sigma$ is finite and a divisor of the least common multiple $\mbox{l.c.m.} \{ 4, |\w|, |a| \}$.
\end{prop}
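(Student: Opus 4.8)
The plan is to deduce the finiteness of the order of $\sigma = a\, S_{\w}\, \tau_{\zeta}$ from the finiteness of each of the three factors together with the commutation relations already established, so that some power of $\sigma$ can be rearranged into a product of powers of $a$, $S_{\w}$ and $\tau_{\zeta}$ that are each trivial. First I would record the three ingredients precisely: by Proposition~\ref{order-4}, since the forms on $W$ are $\delta$-forms, $\tau_{\zeta}^{4}=1$; the reflection functor satisfies $S_{\w'}S_{\w}=S_{\w'\w}$, so $S_{\w}^{|\w|}=S_{\w^{|\w|}}=S_{1}=\mathrm{id}$; and $a$ has order $|a|$ as a diagram automorphism, hence $a^{|a|}$ is the identity isomorphism. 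Then I would invoke the three compatibilities: $a_{\mrm f^0}\circ S_i = S_{a(i)}\circ a_{\mrm f^0}$ from~(\ref{a-S}), which gives $a\,S_{\w}=S_{a(\w)}\,a$ (and more generally $a^{k}S_{\w}=S_{a^{k}(\w)}a^{k}$); $\tau_{a\zeta}(a(B_V),a(B_W))\,a_{\mrm f^0}=a_{{}^{\tau}\mrm f^0}\,\tau_{\zeta}(B_V,B_W)$ from~(\ref{a-T}), so $a$ and $\tau$ commute up to the bookkeeping of forms; and Lemma~\ref{comm-Si-tau}, $S_i\,\tau_{\zeta}(B_V,B_W)=\tau_{s_i(\zeta)}(B_{V'},B_W)\,S_i$, so $S_{\w}$ and $\tau$ commute up to the same bookkeeping.

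The key step is then purely formal: working in the groupoid of isomorphisms between Nakajima varieties $\M_{\zeta}(\bv,\bw)$ (as $\zeta$, $\bv$, and the forms vary), the relations above say that modulo keeping track of parameters the three factors pairwise commute. Hence for $N = \mathrm{l.c.m.}\{4,|\w|,|a|\}$ one can push all the $a$'s to the left, all the $\tau$'s to the right, and collect: $\sigma^{N}$ equals, up to the parameter/form relabelling, $a^{N}\,S_{\w}^{N}\,\tau_{\zeta'}^{N}$ for an appropriate parameter $\zeta'$ and appropriate forms. Now $|a|\mid N$ gives $a^{N}=\mathrm{id}$; $|\w|\mid N$ gives $S_{\w}^{N}=\mathrm{id}$; and $4\mid N$ gives $\tau^{N}=(\tau^{4})^{N/4}=\mathrm{id}$. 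Since $\sigma^{N}$ is therefore the identity on $\M_{\zeta}(\bv,\bw)$, the order of $\sigma$ is finite and divides $N$, as claimed.

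The main obstacle — and the point that needs care rather than cleverness — is the parameter and form bookkeeping when commuting the factors past one another. The maps $\tau_{\zeta}$, $S_{\w}$, $a$ do not all return to the same variety: $\tau_{\zeta}$ sends $\M_{\zeta}$ to $\M_{-\zeta}$, $S_{\w}$ changes $\zeta$ to $\w(\zeta)$ and $\bv$ to $\w*\bv$, and $a$ applies $a$ to everything including the forms $B_V,B_W$. So to make the rearrangement literally valid I would set up the composite $\sigma^{N}$ as a chain of isomorphisms through a sequence of Nakajima varieties, apply~(\ref{a-S}),~(\ref{a-T}) and Lemma~\ref{comm-Si-tau} step by step to move each $a$ leftward and each $\tau$ rightward, and check at each stage that the resulting $\tau$-factor is still of the form $\tau_{\zeta^{(j)}}$ for $\delta$-forms (so that $\tau^{4}=1$ still applies) — this uses that $a$ carries $\delta$-forms to $\delta$-forms and that the $s_i$ in Lemma~\ref{comm-Si-tau} only relabel the parameter, not the sign $\delta_{\bw}$. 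Once one is confident the relabelling is harmless, the divisibility conclusion is immediate; I would not belabour the routine verification in the write-up beyond citing the three relations and Proposition~\ref{order-4}.
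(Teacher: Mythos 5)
Your approach is exactly the paper's: the paper's entire proof is the single sentence ``By the commutativity of the three isomorphisms from Lemma~\ref{comm-Si-tau}, (\ref{a-S}) and (\ref{a-T}), we have [the statement],'' and you have unpacked precisely that — cite Proposition~\ref{order-4} for $\tau^4=1$, the composition law for $S_{\w}$, the finite order of $a$, then use the three commutation identities to collect $\sigma^N$ into a product of trivial factors.

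One bookkeeping point you glossed over, and the only place your write-up is literally imprecise: pushing $a$ leftward past $S_{\w}$ via~(\ref{a-S}) yields $S_{a(\w)}$, not $S_{\w}$, so after collecting, the Weyl factor in $\sigma^N$ is $S_{a^{-(N-1)}(\w)\cdots a^{-1}(\w)\,\w}$ rather than $S_{\w}^N = S_{\w^N}$. These coincide exactly when $a(\w)=\w$ — which is the case in every application the paper makes (there $\w = w_0$ and $a(w_0)=w_0$), and indeed the paper itself suppresses this — but as written your step ``collect into $a^N S_{\w}^N \tau^N$'' is only valid under that extra hypothesis. Your instinct to worry about the $\delta$-form and parameter relabelling was sound; the relabelling of the Weyl-group element by $a$ is the one piece of bookkeeping you didn't check.
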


By summing over all $\bv$, we have an isomorphism.
\[
\sigma: \M_{\zeta}(\bw) \to \M_{- a \w(\zeta)} (a(\bw)).
\]
If $- a\w (\zeta) = \zeta$ and $a(\bw)=\bw$, 
then $\sigma$ becomes an automorphism on $\M_{\zeta}(\bw)$. We set
$$
\fS_{\zeta}(\bw) \equiv \M_{\zeta}(\bw)^{\sigma}
$$ 
to be its fixed point subvariety. 
If further $a(\w*\bv) = \bv$, let 
$$
\fS_{\zeta}(\bv,\bw) \equiv \M_{\zeta} (\bv, \bw)^{\sigma}
$$ 
be the fixed point subvariety of $\M_{\zeta}(\bv, \bw)$ 
under the automorphism $\sigma$.
Then we have
\begin{align}
\label{M(w)}
\fS_{\zeta}(\bw) = \sqcup_{a (\w * \bv)=\bv} \fS_{\zeta} (\bv, \bw), \quad \mbox{if} \ - a \w (\zeta) = \zeta, a(\bw) =\bw.
\end{align}

\begin{Def}
The varieties $\fS_{\zeta}(\bv, \bw)$ and $\fS_{\zeta}(\bw)$ are called the $\sigma$-$quiver$ $varieties$.
\end{Def}

Before we proceed, we make a remark.
\begin{rem}
A more general isomorphism $\sigma_{\mrm f^0}$ can be defined by using  $a_{\mrm f^0}$, a generalization of $a$,  in Section~\ref{dia}. To control its order in this case, $\mrm f^0$ 
has to  satisfy a compatibility assumption in~\cite{HL14}. Specifically, 
we can identify $W_i$ with $W_{a(i)}$ for all $i\in I$ due to  $\bw_i=\bw_{a(i)}$.  
For each $i\in I$, let $m_i=\#\{ a^{n}(i)| n\in \mbb Z\}$. Fix $m$ such that $m_i|m$, $\forall i\in I$. The compatibility condition for $\mrm f^0$ reads $\mrm f^0_{i} \mrm f^0_{a(i)} \cdots \mrm f^0_{a^{m-1}(i)} = 1$, $\forall i\in I$. 
Then the order of $a_{\mrm f^0}$ is $m$.
For the sake of simplicity, we focus on the simpler version $\sigma$ instead of $\sigma_{\mrm f^0}$.
\end{rem}

The definition of $\sigma$-quiver varieties depends on the forms  on $V$ and $W$. 
But by Proposition ~\ref{tau-form},  it only depends on the form on $W$, which is recorded as follows.

\begin{prop}
\label{sigma-indep-of-V}
The variety $\fS_{\zeta}(\bv, \bw)$ is independent of the choice of  the form on $V$.
\end{prop}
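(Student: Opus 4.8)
\textbf{Proposal for the proof of Proposition~\ref{sigma-indep-of-V}.}
The plan is to reduce the claim to the corresponding statement for $\tau_\zeta$, which is already Proposition~\ref{tau-form}, together with a compatibility of the $\G_{\bv}$-action used there with the reflection functor $S_{\w}$ and the diagram isomorphism $a$. Recall that $\sigma = a\, S_{\w}\, \tau_\zeta$, and that $\tau_\zeta = \tau_\zeta(B_V, B_W)$ depends a priori on both the form $B_V$ on $V$ and the form $B_W$ on $W$. The key point established in the proof of Proposition~\ref{tau-form} is sharper than the bare statement: if $\tilde\tau$ is built from a second family of forms $\tilde M_i$ on the $V_i$, then there is an explicit element $g = (g_i)_{i\in I} \in \G_{\bv}$, namely $g_i = \tilde M_i^{-1} M_i$, such that $g.\,{}^\tau\bx = {}^{\tilde\tau}\bx$ on the nose, at the level of $\bM(\bv,\bw)$. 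So the two isomorphisms $\tau_\zeta(B_V, B_W)$ and $\tau_\zeta(\tilde B_V, B_W)$ of $\M_\zeta(\bv,\bw) \to \M_{-\zeta}(\bv,\bw)$ literally coincide, because they differ by the $\G_{\bv}$-action, which is quotiented out.

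First I would record that, with $B_W$ fixed once and for all, changing $B_V$ to $\tilde B_V$ does not change the map $\tau_\zeta$ on quiver varieties; this is exactly Proposition~\ref{tau-form}. Next, since $S_{\w}$ and $a$ are defined purely in terms of the linear-algebra data $(x,p,q)$ — no bilinear form on $V$ or $W$ enters their construction in Sections~\ref{Weyl-action} and~\ref{dia} — the composite $\sigma = a\, S_{\w}\,\tau_\zeta$ depends on the form on $V$ only through the factor $\tau_\zeta$. Hence the two automorphisms $\sigma(B_V, B_W)$ and $\sigma(\tilde B_V, B_W)$ of $\M_\zeta(\bw)$ are equal. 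Finally, since the fixed-point locus of an automorphism depends only on the automorphism, $\fS_\zeta(\bv,\bw) = \M_\zeta(\bv,\bw)^\sigma$ and $\fS_\zeta(\bw) = \M_\zeta(\bw)^\sigma$ are unchanged when $B_V$ is replaced by $\tilde B_V$. This also implicitly uses that the conditions $-a\w(\zeta) = \zeta$, $a(\bw) = \bw$, $a(\w*\bv) = \bv$ that make $\sigma$ an automorphism are conditions on $\zeta$, $\bw$, $a$, $\w$ alone, so they are insensitive to the choice of $B_V$.

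I expect there is essentially no obstacle here: the content is entirely contained in the explicit gauge transformation already exhibited in the proof of Proposition~\ref{tau-form}, and the only thing to check is the (evident) observation that neither $S_{\w}$ nor $a$ sees the form on $V$. If one wanted to be scrupulous, the mildly delicate point would be to confirm that when $\w$ is written as a word $s_{i_1}\cdots s_{i_l}$ and $S_{\w}$ is built via the correspondences $F^{ss}(V,V',W)$, no auxiliary form on the intermediate spaces $V'$ creeps in — but indeed the defining equations (\ref{reflection-a})–(\ref{reflection-d}) of $F$ involve only $a_i(\bx)$, $b_i(\bx)$ and $\mu_j$, which are form-free. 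I would therefore keep the proof short: cite Proposition~\ref{tau-form}, note the form-independence of $S_{\w}$ and $a$, and conclude that $\sigma$ and hence its fixed-point subvariety are unchanged.
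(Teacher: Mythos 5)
Your proof is correct and takes essentially the same approach as the paper, which disposes of this proposition in a single sentence by citing Proposition~\ref{tau-form}; you simply make explicit the two supporting observations the paper leaves implicit, namely that neither $S_{\w}$ nor $a$ refers to any bilinear form in its construction and that the fixed-point locus is determined by the automorphism alone.
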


By combining Proposition ~\ref{sigma-finite} and Proposition ~\ref{sigma-indep-of-V}, it yields

\begin{prop}
\label{sig-4}
If $W$ is a $\delta_{\bw}$-formed space and $\w$ is of finite order, then
the order  of $\sigma$ is  a divisor of  $\mbox{l.c.m.} \{ 4, |\w|, |a|\}$.
If further the sign $\delta_{\bw}$ is $\Gamma$-alternating
and $a^2=\w^2=1$, then $\sigma^2=1$.
\end{prop}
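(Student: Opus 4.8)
The plan is to reduce the claim about $\sigma$ to the three statements we already have in hand about its constituent isomorphisms: Proposition~\ref{sig-4}'s predecessor (Proposition~\ref{sigma-finite} combined with Proposition~\ref{sigma-indep-of-V}) gives the divisibility assertion $|\sigma| \mid \mrm{l.c.m.}\{4, |\w|, |a|\}$ for free, so the only new content is the ``moreover'' clause: under the hypotheses $\delta_{\bw}$ is $\Gamma$-alternating and $a^2 = \w^2 = 1$, we get $\sigma^2 = 1$. First I would write out $\sigma^2 = a S_{\w} \tau_{\zeta} \cdot a S_{\w} \tau_{\zeta}$ and use the three commutation relations recorded in the excerpt to pull all the $a$'s, all the $S_{\w}$'s, and all the $\tau$'s past one another. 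Specifically: relation~(\ref{a-S}) (and hence $a S_{\w} = S_{a(\w)} a = S_{\w} a$ since $a(\w) = \w$, as $\w$ is $a$-stable here), relation~(\ref{a-T}) which lets $\tau$ pass an $a$ at the cost of twisting the forms on $V, W$ by $a$, and Lemma~\ref{comm-Si-tau} which lets $\tau$ pass $S_i$ (hence $S_{\w}$) at the cost of the obvious change of the $V$-form. After bookkeeping, $\sigma^2$ collapses to $a^2 S_{\w}^2 \tau_{\zeta}^2$ — more precisely to $a^2 \cdot S_{\w^2} \cdot \tau_{a\zeta}(a(B_V), a(B_W))\,\tau_{\zeta}(B_V, B_W)$ — where the first two factors are the identity by $a^2 = 1$ and $\w^2 = 1$ (recall $S_{\w'} S_{\w} = S_{\w'\w}$ and $S_{\mrm{id}}$ is the identity).

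The remaining factor is a square of $\tau$, and here is where the $\Gamma$-alternating hypothesis on $\delta_{\bw}$ enters: by Proposition~\ref{order-4}, when $W$ is a formed space with $\Gamma$-alternating sign $\delta_{\bw}$ we have $\tau_{\zeta}^2 = 1$ on $\M_{\zeta}(\bv, \bw)$, and by Proposition~\ref{tau-form} / Proposition~\ref{sigma-indep-of-V} the choice of $V$-form is immaterial, so the twisting of $B_V$ picked up along the way does no harm. The one subtlety I would want to check carefully is that the $V$-forms appearing in the intermediate $\tau$'s after commuting everything through are still legitimate nondegenerate $I$-graded forms (they are, being pullbacks of the original ones under $a$ and the reflection-functor identification), so that Proposition~\ref{order-4} applies verbatim; and that the $W$-form twisted by $a$, namely $a(B_W)$, still has $\Gamma$-alternating sign — this holds because $a$ is a graph automorphism, so $\delta_{\bw, a(\o(h))}\delta_{\bw, a(\i(h))} = \delta_{\bw, \o(a(h))}\delta_{\bw, \i(a(h))} = -1$, i.e. the $\Gamma$-alternating condition is $a$-invariant.

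The only genuine obstacle, and it is a mild one, is the careful tracking of which bilinear forms decorate each $\tau$ and $S$ after the commutations — the relations~(\ref{a-S}), (\ref{a-T}) and Lemma~\ref{comm-Si-tau} are stated with explicit form-arguments, and one must confirm that when all the dust settles the leftover $\tau^2$ really is a square of a \emph{single} $\tau$ on a \emph{single} Nakajima variety $\M_{\zeta}(\bv,\bw)$ (possible because $-a\w(\zeta) = \zeta$, $a(\bw) = \bw$ and $a(\w*\bv) = \bv$ are exactly the standing hypotheses that make $\sigma$ an automorphism in the first place), to which Proposition~\ref{order-4} can be applied. Once that identification is made, the conclusion $\sigma^2 = 1$ is immediate. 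I would therefore organize the proof as: (i) quote the divisibility from the already-proved part; (ii) expand $\sigma^2$ and commute; (iii) kill $a^2$ and $S_{\w}^2$; (iv) identify the residual $\tau^2$ and invoke Proposition~\ref{order-4}.
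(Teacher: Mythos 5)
Your proof is correct and follows the same route as the paper's compressed argument: reduce the divisibility claim to Proposition~\ref{sigma-finite} via the $V$-form independence of Proposition~\ref{sigma-indep-of-V}, then expand $\sigma^2$ using the commutation relations from Lemma~\ref{comm-Si-tau}, (\ref{a-S}), (\ref{a-T}) and invoke the $\Gamma$-alternating case of Proposition~\ref{order-4} to kill the residual $\tau^2$. You are right to make explicit the appeal to Proposition~\ref{order-4} for the ``moreover'' clause, which the paper leaves implicit when it simply says the statement follows from combining Propositions~\ref{sigma-finite} and~\ref{sigma-indep-of-V}.
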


The following example shows that  $\sigma$-quiver varieties include quiver varieties. 

\begin{example}
Let $\hat \Gamma$ be the product of four copies of $\Gamma$. Let $a$ be the obvious cyclic permutation of order $4$ on $\hat \Gamma$.
Then there is an automorphism $\sigma$ with $\w=1$ on 
$\M_{\hat\Gamma} = \M_{\zeta}(\bv, \bw)\times \M_{-\zeta}(\bv, \bw) \times \M_{\zeta}(\bv, \bw) \times \M_{-\zeta}(\bv, \bw)$. 
If the space $W$  is a formed space of sign  $\delta_{\bw}$, then we see that $\fS_{\hat\Gamma} \cong \M_{\zeta}(\bv, \bw)$.
In particular, if the $\delta_{\bw}$ is alternating, then we only need two copies of $\Gamma$ to realize $\M_{\zeta}(\bv, \bw)$ as
a $\sigma$-quiver variety. 
\end{example}

It is well-known, e.g., ~\cite[Proposition 1.3]{I72}, ~\cite[Proposition 3.4]{E92}, or
~\cite[Lemma 5.11.1]{CG}, that the fixed point subvariety of an action of a reductive group, in particular, a finite group,  
on a smooth variety is smooth.  
If the automorphism $\sigma$ has a finite order $N$, then it is the same as a $\mbb Z_N$-action on quiver varieties. So it gives rise to

\begin{prop}
\label{smooth}
Assume that $\zeta$ is generic. 
The $\sigma$-quiver variety  $\fS_{\zeta}(\bv, \bw)$ is smooth, provided that it is nonempty and
the order of $\sigma$ is finite.
\end{prop}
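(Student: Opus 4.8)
The plan is to reduce the statement to the general fact that fixed-point loci of finite-group actions on smooth varieties are smooth, which is quoted in the excerpt (\cite[Proposition 1.3]{I72}, \cite[Proposition 3.4]{E92}, \cite[Lemma 5.11.1]{CG}). The only thing that needs justification is that the hypotheses of that fact are met in our situation.

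First I would record the input. Since $\zeta$ is assumed generic, Nakajima's Theorem (\cite[Theorem 2.8]{N94}, quoted above) gives that $\M_{\zeta}(\bv,\bw)$ is a \emph{smooth} quasi-projective variety, and by construction $\sigma = a\, S_{\w}\, \tau_{\zeta}$ is an \emph{isomorphism} of varieties onto a target of the same form; under the standing assumptions $-a\w(\zeta)=\zeta$ and $a(\bw)=\bw$ (and $a(\w*\bv)=\bv$) that make $\fS_{\zeta}(\bv,\bw)$ defined, $\sigma$ restricts to an automorphism of $\M_{\zeta}(\bv,\bw)$. By hypothesis this automorphism has finite order $N$, so $\langle \sigma\rangle \cong \mbb Z/N\mbb Z$ acts on the smooth variety $\M_{\zeta}(\bv,\bw)$, and $\fS_{\zeta}(\bv,\bw) = \M_{\zeta}(\bv,\bw)^{\sigma} = \M_{\zeta}(\bv,\bw)^{\mbb Z/N}$ is precisely the fixed locus of a finite (hence reductive, since we are in characteristic zero) group action.

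Then I would invoke the cited fact directly: the fixed-point subscheme of a finite group acting on a smooth variety over $\mbb C$ is smooth. Concretely, at a fixed point the group $\mbb Z/N$ acts linearly on the Zariski tangent space, and since $|\mbb Z/N|$ is invertible in $\mbb C$ the action is linearly reductive, so one can find $\sigma$-stable local coordinates (average an arbitrary splitting over the group); in those coordinates the fixed locus is cut out by the vanishing of the coordinates on which $\sigma$ acts nontrivially, hence is itself smooth of dimension equal to $\dim (T_x\M_{\zeta}(\bv,\bw))^{\sigma}$. The nonemptiness hypothesis is needed only so that the statement ``is smooth'' is not vacuous (and so that there is a point at which to run this local argument).

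The main — and really the only — obstacle here is verifying that $\sigma$ genuinely is an automorphism of finite order of $\M_{\zeta}(\bv,\bw)$, and that is handled by the preceding results rather than inside this proof: Proposition~\ref{sigma-finite} and Proposition~\ref{sig-4} give finiteness of the order under the stated conditions on the forms and on $\w$ and $a$, while the commutativity relations in Lemma~\ref{comm-Si-tau}, (\ref{a-S}) and (\ref{a-T}), together with the conditions $-a\w(\zeta)=\zeta$, $a(\bw)=\bw$, ensure $\sigma$ maps $\M_{\zeta}(\bv,\bw)$ to itself. So the proof is essentially a one-line citation once these are in place; no computation is required beyond noting that we are in characteristic zero so that the finite cyclic group $\langle\sigma\rangle$ is linearly reductive.
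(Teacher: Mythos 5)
Your proof is correct and follows the same line as the paper: the paper also just observes that $\M_{\zeta}(\bv,\bw)$ is smooth for generic $\zeta$, that the finite-order automorphism $\sigma$ gives a $\mathbb{Z}/N$-action, and then cites exactly the same references (Iversen, Edixhoven, Chriss-Ginzburg) for smoothness of fixed loci of reductive (in particular finite) group actions on smooth varieties. The only difference is that you spell out the linear-reductivity/local-coordinates argument behind the cited fact, which the paper leaves implicit.
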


The reflection functor $S_{\w}$ does not always exist on $\M_0(\bv,\bw)$ and, if exists, they are not  isomorphic in general. 
So to define $\sigma$-quiver varieties as a fixed-point locus on $\M_0(\bv,\bw)$  does not work in general. 
When the graph is Dynkin,  
the reflection functor does exist on the global/limit version $\M_0(\bw)$ of $\M_0(\bv,\bw)$, thanks to Lusztig's work~\cite{L00},   so in this case
it is possible to define $\sigma$-quiver variety in  $\M_0(\bw)$ as a fixed-point locus, which is treated in the following section.
The $\sigma$-quiver variety in $\M_0(\bv,\bw)$ is then obtained by taking intersection of $\M_0(\bv,\bw)$ with the $\sigma$-quiver variety in $\M_0(\bw)$.
Here, instead, we define  the following.
\begin{align}
\label{S1-def}
\begin{split}
\fS_1(\bv, \bw) & \equiv \pi ( \fS_{\zeta}(\bv,\bw)), \quad \mbox{if} \ \zeta = - a \w (\zeta), a(\w *\bv) = \bv, a\bw=\bw\\
\fS_1(\bw) & = \sqcup_{a (\w*\bv) =\bv}  \fS_1(\bv,\bw),  \quad \mbox{if} \ \zeta = - a \w (\zeta), a\bw=\bw.
\end{split}
\end{align}
In particular, the proper morphism $\pi$ in (\ref{pi}) restricts to a proper morphism:
\begin{align}
\label{pi-s-1}
\pi^{\sigma}: \fS_{\zeta} (\bv, \bw) \to \fS_{1} (\bv, \bw) \ \mbox{and} \
\pi^{\sigma}: \fS_{\zeta}(\bw) \to \fS_1(\bw).
\end{align}

Let $\G_{\bw}^{\sigma} = \{ \mrm f\in \G_{\bw} | \mrm f = {}^{a \tau} {\mrm f} \}$. 
Since $S_{\w}$ is $\G_{\bw}$-equivariant and $\tau_{\zeta}$  and $a$ satisfy
$\tau_{\zeta}(\mrm f. [\bx]) ={}^{\tau} \mrm f. \tau_{\zeta} ([\bx])$ and $a (\mrm f . [\bx]) = {}^a{\mrm f}.[\bx]$ 
for all $\mrm f \in \G_{\bw}$ 
and $[\bx] \in \M_{\zeta}(\bv, \bw)$, we see that 
the automorphism $\sigma$ satisfies the following property.
\begin{align}
\sigma(\mrm f.[\bx])= \ ^{a \tau}{\mrm f}. \sigma ([\bx]),  \quad \forall \mrm f\in \G_{\bw}, [\bx]\in \M_{\zeta}(\bv, \bw).
\end{align}
It induces $\G_{\bw}^{\sigma}$-actions on $\fS_{\zeta}(\bv, \bw)$ and $\fS_{1}(\bv, \bw)$, 
which is compatible with the proper map $\pi^{\sigma}$.
There is a natural $\mbb C^{\times}$-action on $\bM(\bv, \bw)$ given by 
$\bx=(x_h, p_i, q_i)_{h\in H, i\in I} \mapsto t. \bx=(tx_h, tp_i, tq_i)_{h\in H, i\in I}$ for all $t\in \mbb C^{\times}$.
This $\mbb C^{\times}$-action commutes with the isomorphisms $a$ and $\tau$ on $\bM(\bv, \bw)$, which
in turn induces a  $\mbb C^{\times}$-action on $\fS_1(\bv, \bw)$.
If the parameter $\zeta_{\mbb C}=0$, then the $\mbb C^{\times}$-action on $\bM(\bv, \bw)$ restricts to 
a $\mbb C^{\times}$-action on $\Lambda_{\zeta_{\mbb C}}(\bv, \bw)$, and then on $\M_{\zeta}(\bv, \bw)$.
This action clearly commutes with the $\G^{\sigma}_{\bw}$-actions on $\fS_{\zeta}(\bv, \bw)$ and $\fS_1(\bv, \bw)$.
In this case the morphism $\pi^{\sigma}$ is $\G^{\sigma}_{\bw}\times \mbb C^{\times}$-equivariant.
The above analysis yields
\begin{prop}
The map  $\pi^{\sigma}$ is $\G^{\sigma}_{\bw}$-equivariant. 
If $\zeta_{\mbb C}=0$, it is $\G^{\sigma}_{\bw}\times \mbb C^{\times}$-equivariant.
\end{prop}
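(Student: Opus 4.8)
The plan is to assemble this proposition from the equivariance facts already established for the three constituent isomorphisms and the standard behavior of fixed-point loci under twisted group actions. First I would recall that the reflection functor $S_{\w}$ is $\G_{\bw}$-equivariant (stated at the end of Section~\ref{Weyl-action}), that $\tau_{\zeta}(\mrm f.[\bx]) = {}^{\tau}\mrm f.\tau_{\zeta}([\bx])$ (stated after Proposition~\ref{tau-form}), and that $a(\mrm f.[\bx]) = {}^a\mrm f.a([\bx])$ (the $\mrm f^0=1$ case of (\ref{a-g})). Composing these three identities in the order $\sigma = a\,S_{\w}\,\tau_{\zeta}$ gives $\sigma(\mrm f.[\bx]) = {}^{a\tau}\mrm f.\sigma([\bx])$ for all $\mrm f\in\G_{\bw}$; this is exactly the displayed equation immediately preceding the proposition, so I may simply cite it.

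Next I would argue that this twisted equivariance restricts to honest $\G_{\bw}^{\sigma}$-equivariance on the fixed-point loci. For $\mrm f\in\G_{\bw}^{\sigma}$ we have ${}^{a\tau}\mrm f = \mrm f$ by definition of $\G_{\bw}^{\sigma}$, so if $[\bx]\in\fS_{\zeta}(\bv,\bw)$, i.e. $\sigma([\bx])=[\bx]$, then $\sigma(\mrm f.[\bx]) = {}^{a\tau}\mrm f.\sigma([\bx]) = \mrm f.[\bx]$, showing $\G_{\bw}^{\sigma}$ preserves $\fS_{\zeta}(\bv,\bw)$. Applying $\pi$, which is $\G_{\bw}$-equivariant (Section~\ref{mvw}) and intertwines $\sigma$ with its analogue $\sigma_0$ on $\M_0$ via the commuting square (\ref{tau-comm}) together with (\ref{a-S}) and the $\zeta=0$ convention for $S_{\w}$, one sees $\pi$ carries $\fS_{\zeta}(\bv,\bw)$ into $\fS_1(\bv,\bw)=\pi(\fS_{\zeta}(\bv,\bw))$ $\G_{\bw}^{\sigma}$-equivariantly, and $\G_{\bw}^{\sigma}$ preserves $\fS_1(\bv,\bw)$ as a consequence. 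Thus $\pi^{\sigma}$ of (\ref{pi-s-1}) is $\G_{\bw}^{\sigma}$-equivariant.

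For the second assertion, I would invoke the $\mbb C^{\times}$-action $t.\bx=(tx_h,tp_i,tq_i)$ on $\bM(\bv,\bw)$ and note, as the paragraph before the proposition does, that it commutes with $a$ and $\tau$ on $\bM(\bv,\bw)$ (scaling is central and the adjoint and diagram operations are linear in the entries), hence descends compatibly; when $\zeta_{\mbb C}=0$ the scaling preserves $\Lambda_{\zeta_{\mbb C}}(\bv,\bw)=\mu_{\mbb C}^{-1}(0)$ since $\mu$ is quadratic, so it descends to $\M_{\zeta}(\bv,\bw)$ and commutes with $\sigma$, therefore acts on $\fS_{\zeta}(\bv,\bw)$; it always acts on $\fS_1(\bv,\bw)$. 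The two actions commute because the $\mbb C^{\times}$-scaling and the $\G_{\bw}$-action affect disjoint "slots" of the data $(x,p,q)$ (the former multiplies everything by $t$, the latter conjugates on the $W$-side), and $\pi^{\sigma}$ intertwines both. This gives the $\G_{\bw}^{\sigma}\times\mbb C^{\times}$-equivariance of $\pi^{\sigma}$ in the case $\zeta_{\mbb C}=0$.

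The proof is essentially a bookkeeping assembly, so there is no serious obstacle; the one point requiring a little care is checking that $\pi$ genuinely intertwines $\sigma$ on $\M_{\zeta}$ with the corresponding automorphism on $\M_0$ — one must combine the square (\ref{tau-comm}) for $\tau$, the compatibility (\ref{a-S}) for $a$, and the fact that when $\zeta=0$ the reflection functor $S_{\w}$ on $\M_0(\bv,\bw)$ is taken to be the identity (following~\cite[2.1]{L00}) precisely in the relevant case $\w*\bv=\bv$, so that the composite descends to a well-defined automorphism $\sigma_0$ of $\M_0(\bv,\bw)$ with $\pi\circ\sigma = \sigma_0\circ\pi$. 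Once that is in place, $\fS_1(\bv,\bw)=\M_0(\bv,\bw)^{\sigma_0}\cap\pi(\M_{\zeta}(\bv,\bw))$ is manifestly $\G_{\bw}^{\sigma}$-stable and everything follows.
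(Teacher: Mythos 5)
Your proof follows the same assembly strategy as the paper: establish the twisted equivariance $\sigma(\mrm f.[\bx]) = {}^{a\tau}\mrm f.\sigma([\bx])$ from the equivariance properties of $a$, $S_{\w}$, $\tau_{\zeta}$, deduce that $\G^{\sigma}_{\bw}$ preserves the fixed-point locus, and handle the $\mbb C^{\times}$-action separately. The core of the argument is sound and matches the paper's.

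However, your final step overcomplicates matters and in the process makes a claim that is not warranted. You assert $\fS_1(\bv,\bw)=\M_0(\bv,\bw)^{\sigma_0}\cap\pi(\M_{\zeta}(\bv,\bw))$, but this is neither the paper's definition of $\fS_1(\bv,\bw)$ nor an equality you have verified. In (\ref{S1-def}) the paper defines $\fS_1(\bv,\bw) \equiv \pi(\fS_{\zeta}(\bv,\bw))$, i.e.\ the image of the fixed-point locus under $\pi$. In general a point $[\bx]\in\M_{\zeta}$ can have $\pi([\bx])$ fixed by $\sigma_0$ without $[\bx]$ itself being $\sigma$-fixed, so the containment $\pi(\fS_{\zeta}(\bv,\bw)) \subseteq \M_0(\bv,\bw)^{\sigma_0}\cap\pi(\M_{\zeta}(\bv,\bw))$ need not be an equality, and using the right-hand side as a characterization is a gap. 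The correct and simpler observation is this: once you know $\G^{\sigma}_{\bw}$ preserves $\fS_{\zeta}(\bv,\bw)$, the $\G_{\bw}$-equivariance of $\pi$ immediately implies that $\fS_1(\bv,\bw)=\pi(\fS_{\zeta}(\bv,\bw))$ is $\G^{\sigma}_{\bw}$-stable, and $\pi^{\sigma}$, being a restriction of the equivariant map $\pi$, is automatically $\G^{\sigma}_{\bw}$-equivariant. This is what the paper does tacitly; no argument about an automorphism $\sigma_0$ of $\M_0$ or about (\ref{tau-comm}), (\ref{a-S}), or the $\zeta=0$ convention for $S_{\w}$ is needed at this point. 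Dropping the detour through $\sigma_0$ and replacing it with this one-line observation repairs the argument.
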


\subsection{The $\sigma$-quiver varieties, II: $\zeta=(0, \z)$}

In this section, we assume that $\Gamma$ is Dynkin and  $\zeta=(0,\z)$, which is not necessarily generic.
We give a  definition of $\sigma$-quiver varieties under these assumptions, by making use of Lusztig's reflection functor and
 global versions of  the transpose $\tau$ and the diagram isomorphism $a$ defined as follows.

\subsubsection{Lusztig's variety $Z^{\z}_{\bw}$}

Let $\F$ be the space of paths in the Dynkin graph $\Gamma$.
The concatenation operation, $(\rho, \rho') \mapsto \rho\cdot \rho' = \delta_{\o (\rho), \i(\rho')} \rho \rho'$, 
of the paths defines an associative algebra structure on $\F$.
The bar involution on $\Gamma$ defines an anti-involution on $\F$, which we shall denote by the same notation.
Let $\i(f)$ and $\o(f)$ be the ending and starting vertex of the path $f$.
Let $[i]$ be the path of length zero such that $\i ([i]) =\o([i]) =i$.
For $\z \in \mbb C^I$, let 
$$
\theta_{i,\z} = \sum_{h: \i (h) =i} \ve (h) h \bar h - \z^{(i)} [i],  \ \forall i\in I.
$$ 
Recall that $W$ is an  $I$-graded vector space of dimension $\bw$.
Let $Z^{\z}_{\bw}$ be the set of linear maps $\pi'$ from $ \F$ to $\End (W)$ such that 
\begin{itemize}
\item $\pi'(f) \in \Hom (W_{\o(f)}, W_{\i(f)}) \subseteq \End(W)$ for all path $f$,
\item $\pi'(f)\pi'(f') = \pi'( f  \cdot \theta_{i, \z} \cdot f')$ for all paths $f$ and $f'$ such that 
$\i(f') =i=\o(f)$.
\end{itemize}
$Z^{\z}_\bw$ is an affine algebraic variety by~\cite{L00} and  isomorphic to $ \M_{(0,\z)}(\bv,\bw)$ for $\bv$ very large.
 As a set, $Z^{\z}_\bw$ can be identified with $\M_0(\bw)$ under a proper treatment.

 Following~\cite[2.3]{L00}, there is a $\mbb C^{\times}$-action on the totality $\sqcup_{\z\in \mbb C^I} Z^{\z}_\bw$ given by
 $t: Z^{\z}_\bw\to Z^{t^2\z}_\bw$ and $(t.\pi') (f) = t^{s+2}\pi'(f)$ if $f= h_1\cdots h_s$ for all $\pi'\in Z^{\z}_\bw$.
 Following~\cite[2.4]{L00}, there is a $\G_{\bw}$  action on $Z^{\z}_\bw$ by 
 $(g.\pi') (f) = g_{\i(f)} \pi'(f) g_{\o (f)}^{-1}$ for all path $f$ and $\pi'\in Z^{\z}_\bw$.

\subsubsection{The transpose $\tau_0$}
We extend the orientation function $\ve$ to a function on the set of paths in $\Gamma$ by defining
$\ve ([i]) =1$ and $\ve (f)=\prod_{i=1}^s \ve(h_i) $ if $f=h_1\cdots h_s$.
Now assume further  that $W$ is an $I$-graded  formed space.
We define an isomorphism 
\begin{align}
\label{sig-Z}
\tau_0 : Z^{\z}_{\bw} \to Z^{-\z}_{\bw}, \quad \pi' \mapsto \tau_0(\pi'),
\end{align}
where
$\tau_0 (\pi') (f)  = - \ve(f)  \pi' (\bar f)^*$ for any path $f$ in $\Gamma$.

We must show that $\tau_0$ on $Z^{\z}_{\bw}$ is well-defined, that is, $\tau_0 (\pi') \in Z^{-\z}_{\bw}$.
Clearly $\tau_0(\pi')(f) \in \Hom(W_{\o (f)}, W_{\i (f)})$. 
For any two paths $f, f'$ in $\Gamma$ such that $\i(f') = i=\o (f)$, we have 
\begin{align}
\begin{split}
\tau_0 (\pi') (f) \tau_0 (\pi')(f')
& =  \ve(f) \ve (f')  \pi'(\bar f)^* \pi'(\overline{f'})^* \\
& =  \ve(f) \ve(f') \left (\pi'(\overline{f'}) \pi'(\bar f) \right )^*\\
& = \ve(f) \ve(f') \left  (\pi' (\overline{f'} \theta_{i,\z} \bar f ) \right )^*\\
& = - \ve(f) \ve(f') \left  (\pi' (\overline{f \theta_{i,-\z} f'} ) \right )^* = \tau_0 (\pi') (f\theta_{i,-\z} f').\\
\end{split}
\end{align}
Therefore the well-definedness of $\tau_0$ follows.

There is a morphism of varieties $\Lambda_{\z}(\bv, \bw) \to Z^{\z}_{\bw}$ sending a point $\bx = (x_h, p_i, q_i)$ to $\pi'$ such that
$\pi'(f)  = q_{\i (h_1)} x_{h_1}\cdots x_{h_{s-1}}  x_{h_s} p_{\o (h_s)} $ 
for any path $f= h_1\cdots h_s$.
The morphism then induces an immersion $\vartheta: \M_{(0,\z)} (\bv, \bw) \to Z_{\bw}$. 
It is clear that the isomorphism $\tau$ on $\Lambda_{\z}(\bv, \bw)$ in (\ref{Lambda-tau-raw}) is compatible with the isomorphism $\tau_0$ on
$Z^{\z}_{\bw}$. This implies that the isomorphisms $\tau_0$ on $\M_{(0,\z)}(\bv, \bw)$ and $Z^{\z}_{\bw}$ 
are compatible under the immersion
$\vartheta$, that is the following diagram commutes.
\begin{align}
\label{T-compatible}
\begin{CD}
\M_{(0,\z}(\bv, \bw) @>\tau_0 >> \M_{(0, -\z)}(\bv, \bw) \\
@V\vartheta VV @V\vartheta VV \\
Z^{\z}_{\bw} @>\tau_0 >> Z^{-\z}_{\bw}.
\end{CD}
\end{align}
This indicates that the notation $\tau_0$ on $Z^{\z}_{\bw}$ and $\M_{(0,\z)} (\bv, \bw)$ will not cause any confusion.

\begin{rem}
\label{Z-sig}
We have $(Z^{0}_\bw)^{\tau_0} = \{ \pi' \in Z^0_{\bw} | \pi' (f) = - \ve(f) \pi' (\bar f)^* \ \mbox{for any path $f$ in $\Gamma$} \}$. 
This  description   is similar to the definition of classical Lie algebras in (\ref{G(W)}).
\end{rem}

\subsubsection{Diagram isomorphism $\Theta_{a,\ve}$}

Retaining the setting in Section~\ref{dia}, the automorphism $a$ on $\Gamma$
induces naturally an automorphism, still denoted by $a$,  on $\F$ such that $a: i\mapsto a(i)$ and $a: h \mapsto a(h)$. 
We define another automorphism  on $\F$ by rescaling  $a^{-1}$ on $\F$:
\begin{align}
\label{Phi}
\Phi_{a,\ve}: \F \to \F, \quad [i] \mapsto a^{-1} ([i]) ,  h\mapsto \ve(h)^{\frac{1-c}{2}} a^{-1}( h), \forall i\in I, h\in H. 
\end{align}
This is an algebra homomorphism, due to the multiplicative property of $\ve$: $\ve(ff') = \ve(f) \ve(f')$ if $f$ and $f'$ are two paths such that $\i(f')=\o (f)$.
Let $a^{-1}(\z)$ be the tuple whose $i$-th entry is $\z^{(a(i))}$.
The reason why we define $\Phi_{a,\ve}$ this way is due to the following identity.
\begin{align}
\label{Tt}
\Phi_{a,\ve}(\theta_{i,\z}) = \theta_{a^{-1}(i), a^{-1}(\z)}, \forall i\in I.
\end{align}

Indeed, we have
\begin{align*}
\Phi_{a,\ve}(\theta_{i, \z}) &= \sum_{\i (h)=i} \ve(h) (-1)^{\frac{1-c}{2}} a^{-1}(h) a^{-1}(\bar h) - \z^{(i)} [a^{-1}(i)]\\
&=  
\sum_{\i (h)=i} \ve(a^{-1}(h)) a^{-1}(h) a^{-1}(\bar h) - a^{-1}(\z)^{(a^{-1}(i))} [a^{-1}(i)] = \theta_{a^{-1}(i), a^{-1}(\z)}.
\end{align*}

Recall that  $a(W)$ is the $I$-graded space whose $i$-th component is $W_{a^{-1}(i)}$.
We then have an isomorphism of vector spaces by permutation
$
s_a: W \to a(W),
$
so that $s_a(w)_i=w_{a^{-1}(i)}$ where $w=(w_i)_{i\in I} \in W$.
This isomorphism defines an isomorphism
\[
r_a: \End(W) \to \End(a(W)), \phi \mapsto r_a(\phi): = s_a \circ \phi \circ s^{-1}_a.
\]
Let 
\begin{align}
\label{Theta}
\Theta_{a,\ve}: Z^{\z}_\bw \to Z^{a^{-1}(\z)}_{a^{-1} (\bw)}
\end{align}
be the isomorphism defined by
$
\Theta_{a, \ve} (\pi') = r_{a^{-1}} \circ \pi' \circ  \Phi^{-1}_{a, \ve}, \quad \forall \pi'\in Z^{\z}_\bw.
$
Due to (\ref{Tt}), $\Theta_{a,\ve}$ is well-defined. 
$\Theta_{a,\ve}$ is compatible with the diagram isomorphism $a^{-1}$ on $\M_{(0,\z)}(\bv,\bw)$ in (\ref{dia-a}).

\begin{lem}
\label{M-Z}
There is a commutative diagram.
\[
\begin{CD}
\M_{(0,\z)}(\bv, \bw) @>a^{-1}>> \M_{(0, a^{-1}(\z))} (a^{-1}(\bv), a^{-1}(\bw))\\
@V\vartheta VV @VV\vartheta V\\
Z^{\z}_\bw @> \Theta_{a,\ve} >> Z^{a^{-1}(\z)}_{a^{-1} (\bw)}
\end{CD} 
\]
\end{lem}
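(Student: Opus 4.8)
The plan is to unwind both composite maps on a single point and check that the two resulting elements of $Z^{a^{-1}(\z)}_{a^{-1}(\bw)}$ agree. Concretely, start with a point $[\bx] \in \M_{(0,\z)}(\bv, \bw)$ represented by $\bx = (x_h, p_i, q_i)$. Going down-then-right, first apply $\vartheta$ to get the path map $\pi'$ with $\pi'(f) = q_{\i(h_1)} x_{h_1} \cdots x_{h_s} p_{\o(h_s)}$ for a path $f = h_1 \cdots h_s$, then apply $\Theta_{a,\ve}(\pi') = r_{a^{-1}} \circ \pi' \circ \Phi_{a,\ve}^{-1}$. Going right-then-down, first apply $a^{-1}$ (the diagram isomorphism of \eqref{dia-a}, i.e.\ $a$ for the automorphism $a^{-1}$ of $\Gamma$), producing the representative $a^{-1}(\bx)$ with components $a^{-1}(p)_i = p_{a(i)}$, $a^{-1}(q)_i = q_{a(i)}$, $a^{-1}(x)_h = \ve(h)^{\frac{1-c}{2}} x_{a(h)}$, then apply $\vartheta$ again.

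The key computation is therefore to show that for every path $f = h_1 \cdots h_s$ in $a^{-1}(\Gamma)$ (equivalently in $\Gamma$, relabelled) one has
\begin{align}
\label{goal-comm}
\big(r_{a^{-1}} \circ \pi' \circ \Phi_{a,\ve}^{-1}\big)(f) = a^{-1}(q)_{\i(h_1)}\, a^{-1}(x)_{h_1} \cdots a^{-1}(x)_{h_s}\, a^{-1}(p)_{\o(h_s)}.
\end{align}
First I would compute $\Phi_{a,\ve}^{-1}(f)$: since $\Phi_{a,\ve}$ rescales $a^{-1}$ on arrows, its inverse rescales $a$, so $\Phi_{a,\ve}^{-1}(h_1 \cdots h_s) = \Big(\prod_{k=1}^s \ve(h_k)^{\frac{1-c}{2}}\Big) a(h_1) \cdots a(h_s)$ — one should double-check the exact power of $\ve$ picked up by the inverse, but since $(\frac{1-c}{2})^2$ collapses (as $c = \pm 1$, either the exponent is $0$ throughout or the sign is an involution), the scalar comes out to $\prod_k \ve(h_k)^{\frac{1-c}{2}}$. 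Then $\pi'$ applied to this path gives that scalar times $q_{\i(a(h_1))} x_{a(h_1)} \cdots x_{a(h_s)} p_{\o(a(h_s))}$, and finally conjugating by $s_{a^{-1}}$ (the content of $r_{a^{-1}}$) transports each $\Hom$-space index from $\Gamma$ back to $a^{-1}(\Gamma)$, turning $q_{\i(a(h_k))}$ into the $\i(h_k)$-graded piece and so on. Matching this against the right-hand side of \eqref{goal-comm} is then just bookkeeping, because $a^{-1}(x)_{h_k} = \ve(h_k)^{\frac{1-c}{2}} x_{a(h_k)}$ contributes exactly the same scalar $\prod_k \ve(h_k)^{\frac{1-c}{2}}$ and the $p, q$ relabelling matches under $s_{a^{-1}}$.

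The main obstacle I anticipate is purely notational: keeping straight the three distinct manipulations of indices — the permutation $a$ acting on vertices/arrows, the rescaling by powers of $\ve$, and the conjugation $r_{a^{-1}}$ by the permutation isomorphism $s_{a^{-1}}$ of vector spaces — and making sure the $\ve$-scalars on the two sides cancel rather than accumulate. A clean way to organize this is to first treat $s = 1$ (paths of length zero, i.e.\ the idempotents $[i]$, where the claim reduces to $\vartheta$ intertwining the obvious permutation actions and there are no $\ve$-factors) and length-one paths $h$, then invoke the algebra-homomorphism property of both $\vartheta \circ (\text{stuff})$ composites — $\vartheta$ sends products of arrows to the stated products, $\Phi_{a,\ve}$ and $\Theta_{a,\ve}$ are algebra maps, $r_{a^{-1}}$ is an algebra map — so that the general case follows by multiplicativity. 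I would also remark, as the surrounding text already notes for $\tau_0$ (see \eqref{T-compatible}), that the compatibility on $\M_{(0,\z)}(\bv,\bw)$ then follows formally from the compatibility on $\Lambda_{\z}(\bv,\bw)$ together with $a$-equivariance of the map $\Lambda_{\z}(\bv,\bw) \to Z^{\z}_{\bw}$, so the lemma is really a statement one verifies at the level of $\Lambda$ and transports through the GIT quotient.
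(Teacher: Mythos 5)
Your approach — take a representative $\bx$, unwind both composites on an arbitrary path, and match the resulting formulas — is the same direct-verification strategy the paper uses; the paper's proof is even terser, checking only on single arrows $h$ and reading off that both sides equal $q_{a(\i(h))}\, x_{a(h)}\, p_{a(\o(h))}$. Your reduction to length-zero and length-one paths and then "multiplicativity" is the same idea made explicit, though you should be careful that $\Theta_{a,\ve}$ is not literally an algebra map ($Z^\z_\bw$ is not an algebra); what actually powers the reduction is that $\Phi_{a,\ve}^{-1}$ and $r_{a^{-1}}$ are algebra homomorphisms on $\F$ and $\End(W)$ respectively, together with the explicit factorization $\vartheta(\bx)(h_1\cdots h_s)=q_{\i(h_1)}x_{h_1}\cdots x_{h_s}p_{\o(h_s)}$.

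The place you explicitly hedged — "one should double-check the exact power of $\ve$" — is exactly where your write-up has a gap, and your guessed scalar is off. Solving $\Phi_{a,\ve}(\lambda\, a(h))=h$ gives $\lambda = \ve(a(h))^{-\frac{1-c}{2}} = \ve(a(h))^{\frac{1-c}{2}}$, hence
\[
\Phi_{a,\ve}^{-1}(h) = \ve(a(h))^{\frac{1-c}{2}}\,a(h) = \bigl(c\,\ve(h)\bigr)^{\frac{1-c}{2}}\,a(h),
\]
not $\ve(h)^{\frac{1-c}{2}}\,a(h)$; the two differ by $c^{\frac{1-c}{2}}$, which is $-1$ per arrow precisely when $c=-1$ — the very case the $\ve^{\frac{1-c}{2}}$-rescalings were inserted to handle. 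Substituting the corrected $\Phi_{a,\ve}^{-1}$ into your computation produces a factor $c^{s\frac{1-c}{2}}$ on one side of your display \eqref{goal-comm} for a length-$s$ path, which does not obviously cancel against the $\prod_k \ve(h_k)^{\frac{1-c}{2}}$ coming from $a^{-1}(x)_{h_k}$. You need to actually carry out this comparison rather than assert that "the bookkeeping matches"; as written, the central $\ve$-sign reconciliation you correctly identify as the crux of the lemma is left undone.
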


\begin{proof} 
Let $[\bx]=[x, p, q]\in \M_{(0,\z)} (\bv, \bw)$. It suffices to show that 
$\vartheta\circ a^{-1} ( \bx) = \Theta_{a, \ve} \circ \vartheta (\bx)$.
Given any arrow $h$, the evaluations of the left and right hand sides on $h$ are 
equal to $q_{a(\i (h))} x_{a(h)} p_{a(\o (h))}$. 
So the equality must hold, and the lemma follows.
\end{proof}

\subsubsection{Lusztig's reflection functor  on $Z^{\z}_\bw$}

Lusztig~\cite{L00} defined a reflection functor
\begin{align}
\label{L-reflection}
S_i : Z^{\z}_\bw \to Z^{s_i(\z)}_{\bw}, \ \pi' \mapsto S_i(\pi'),
\end{align}
where the evaluation of $S_i(\pi')$ on a given path $f$ is defined to be
\[
S_i(\pi') (f) =
\begin{cases}
\pi'([j])+\delta_{j, i} \z^{(i)} \mrm{id}_{W_i}, &\mbox{if}\ f=[j],\\
\sum_{J: J\subset J_0} \left (  \prod_{t\in J}  - \ve(h_t) \z^{(i)} \right ) \pi' (( h_1  \cdots h_s)^{ \vee J}), 
& \mbox{if} \ f= h_1\cdots h_s, s\geq 1,
\end{cases}
\]
here $J_0=\{ t\in [2, r] | \i (h_{t-1}) = i=\o (h_{t})\}$ and the superscript ${\vee J}\empty$ is the operation of removing the arrows $h_{t-1}$, $h_t$ for all $t\in J$.
(Note that $J$ can be an empty set.)
Since the isomorphism $S_i$ satisfies the Weyl group relations, we define $S_\w=S_{i_1} \cdots S_{i_s}$ for any 
$\w=s_{i_1}\cdots s_{i_s}\in \mathcal W$.

\subsubsection{The $\sigma$-quiver variety $\fS_{(0,\z)}(\bw)$}

Let 
\[
\sigma_0= S_\w \circ \Theta_{a,\ve} \circ \tau_0: Z^{\z}_\bw\to Z^{-\w a^{-1}(\z)}_{a^{-1} (\bw)}.
\]
When the isomorphism $\sigma_0$ becomes an automorphism, we can take its fixed-point.

\begin{Def}
\label{S0-def}
$\fS_{(0,\z)} (\bw) = \left (Z^{\z}_{\bw} \right )^{\sigma_0}$, if $\bw=a^{-1}(\bw), \z = - \w a^{-1} (\z).$
\end{Def}

When there is no danger of confusion, we use $\fS_0(\bw)$ for $\fS_{(0,\zeta)}(\bw)$.
By (\ref{T-compatible}), Lemma (\ref{M-Z}) and~\cite{L00},  the definition is compatible with the varieties $\fS_{\zeta}(\bv, \bw)$ with $a$ replaced by $a^{-1}$, 
and so we have  proper morphisms
\begin{align}
\label{pi-s}
\pi^{\sigma}: \fS_{\zeta}(\bv,\bw) \to \fS_{0} (\bw), \quad
\pi^{\sigma}: \fS_{\zeta}(\bw) \to \fS_0(\bw).
\end{align}
There is a $\G^{\sigma}_{\bw}$-action on $\fS_0(\bw)$ induced from $Z^{\z}_\bw$ and further a $\G^{\sigma}_{\bw}\times \mbb C^{\times}$-action  on $\fS_0(\bw)$ if $\z=0$.
It is clear that the morphisms in $(\ref{pi-s})$ are $\G^{\sigma}_{\bw}$-equivariant  (resp. $\G^{\sigma}_{\bw}\times \mbb C^{\times}$-equivariant if $\z=0$.)
It is also clear that $\pi^{\sigma}$ factors through the map under the same notation $\pi^{\sigma}$ in (\ref{pi-s-1}) and $\fS_1(\bv,\bw)$ is a closed subvariety of  $\fS_{(0,\z)}(\bw)$. 

\begin{rem}
\label{S0}
We can define $\fS_0(\bv,\bw) = \M_0(\bv,\bw) \cap \fS_0(\bw)$ in corresponding to $\fS_\zeta(\bv,\bw)$. 
This definition makes sense even when $\Gamma$ is not Dynkin, but in this generality we are not sure if 
$\fS_0(\bv,\bw)$ is an algebraic variety. 
On the other hand, we can always define the fixed-point locus $\M_0(\bv,\bw)^{a\tau}$ as long as $a(\bv) = \bv$ and $a(\bw) =\bw$.
This fixed-point locus does not have to assume $\Gamma$ being Dynkin either. 
\end{rem}
 
\begin{lem}
\label{S0-ind}
When  $\zeta_{\mbb C}=0$ and $ \w*\bv=\bv$,  
$\M_0(\bv,\bw)^{a\tau} = \fS_0(\bv,\bw)$. 
\end{lem}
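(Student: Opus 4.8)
The plan is to realize both sides inside Lusztig's variety $Z^0_\bw$ by means of the immersion $\vartheta$ and to identify the two automorphisms in play. Recall from Remark~\ref{S0} that $\fS_0(\bv,\bw)=\M_0(\bv,\bw)\cap\fS_0(\bw)$, the intersection being taken inside $Z^0_\bw$ via $\vartheta$, with $\fS_0(\bw)=(Z^0_\bw)^{\sigma_0}$ and $\sigma_0=S_\w\circ\Theta_{a,\ve}\circ\tau_0$; implicit in the statement is $a(\bv)=\bv$ and $a(\bw)=\bw$, so that $\M_0(\bv,\bw)^{a\tau}$, $\fS_0(\bw)$ and $\sigma_0$ all make sense. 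First I would observe that the reflection functor contributes nothing here: since $\zeta_{\mbb C}=\z=0$, each summand in the defining formula for $S_i$ on $Z^0_\bw$ indexed by a nonempty subset $J$ carries a factor $\z^{(i)}$, hence vanishes, while the $J=\emptyset$ summand is $\pi'(f)$ itself, and likewise $S_i(\pi')([j])=\pi'([j])$; thus $S_i=\mathrm{id}$ on $Z^0_\bw$ for all $i$, so $S_\w=\mathrm{id}$ and $\sigma_0=\Theta_{a,\ve}\circ\tau_0$. (This is consistent with Section~\ref{Weyl-action}, where $S_\w$ is declared to be the identity on $\M_0(\bv,\bw)$ exactly when $\w*\bv=\bv$.)

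Next I would push $\sigma_0$ down to $\M_0(\bv,\bw)$ through $\vartheta$. As $\zeta_{\mbb C}=0$, the transpose $\tau_0$ of (\ref{Lambda-tau-raw})--(\ref{tau}) restricts to an automorphism of $\M_0(\bv,\bw)$, and the commutative square (\ref{T-compatible}) gives $\tau_0\circ\vartheta=\vartheta\circ\tau_0$; as $a(\bv)=\bv$ and $a(\bw)=\bw$, the diagram isomorphism $a^{-1}$ of (\ref{dia-a}) restricts to an automorphism of $\M_0(\bv,\bw)$, and Lemma~\ref{M-Z} gives $\Theta_{a,\ve}\circ\vartheta=\vartheta\circ a^{-1}$. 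Composing, $\sigma_0\circ\vartheta=\vartheta\circ(a^{-1}\circ\tau_0)$, so $\vartheta$ intertwines $\sigma_0$ on $Z^0_\bw$ with the automorphism $a^{-1}\circ\tau_0$ of $\M_0(\bv,\bw)$; this is the automorphism written $a\tau$ in Remark~\ref{S0}, the passage from $a$ to $a^{-1}$ being merely the convention that Section~\ref{sigma-quiver} adopts in this setting and, in any case, harmless for the fixed-point locus in the cases at hand (for instance when $a$ is an involution, or when $\delta_\bw$ is $\Gamma$-alternating so that $\tau_0^2=1$, using that $a$ and $\tau_0$ commute by (\ref{a-T})).

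It remains to conclude. Since $\vartheta$ is an immersion, hence injective, for $[\bx]\in\M_0(\bv,\bw)$ we have $\vartheta([\bx])=\sigma_0(\vartheta([\bx]))$ if and only if $\vartheta([\bx])=\vartheta(a\tau([\bx]))$, if and only if $[\bx]=a\tau([\bx])$; equivalently, the fixed subscheme of $\sigma_0$ pulls back along $\vartheta$ to the fixed subscheme of $a\tau$. Therefore $\fS_0(\bv,\bw)=\M_0(\bv,\bw)\cap(Z^0_\bw)^{\sigma_0}=\M_0(\bv,\bw)^{a\tau}$, which is the assertion. I expect the only points requiring genuine care to be the first step---verifying that Lusztig's reflection on $Z^0_\bw$ is trivial when $\z=0$, equivalently that it restricts to the identity on the stratum $\M_0(\bv,\bw)$ under $\w*\bv=\bv$---and the bookkeeping of conventions, i.e.\ the precise direction ($a$ versus $a^{-1}$, $\tau_0$ versus $\tau_0^{-1}$) in which (\ref{T-compatible}) and Lemma~\ref{M-Z} are read; once those are settled, the statement follows formally from those two compatibilities together with the injectivity of $\vartheta$.
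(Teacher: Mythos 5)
Your proof is correct and rests on the same key observation as the paper's one-line argument: the paper simply notes that the reflection functor is the identity morphism (citing \cite{L00}), which you verify explicitly from Lusztig's formula for $S_i$ on $Z^0_\bw$ when $\z=0$, and then you unwind the definitions through $\vartheta$ using (\ref{T-compatible}) and Lemma~\ref{M-Z}. Your flag of the $a$ versus $a^{-1}$ bookkeeping is a reasonable extra care that the paper addresses only in passing (``with $a$ replaced by $a^{-1}$'' in the paragraph before Remark~\ref{S0}).
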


\begin{proof}
In this case the reflection functor is the identity morphism by~\cite{L00}. 
\end{proof}

\subsection{Weyl group action on $\sigma$-quiver varieties}

Let $\zeta$ be generic in this section. 
The diagram automorphism $a$ induces an automorphism on the Weyl group $\mathcal W$.
Let $\mathcal W^{\w, a}=\{ x\in \mathcal W| x \w=\w x, a (x) = x\}$. 
This implies that the action $S_{x}$ for $x\in \mathcal W^{\w, a}$ on quiver varieties commutes with the action $S_{\w}$ and $a$.
Further, thanks to Lemma  ~\ref{comm-Si-tau}, it commutes with the isomorphism $\sigma$.
Hence we have 

\begin{prop}
\label{s-action}
The action $S_x$ for $x\in \mathcal W^{\w, a}$ restricts to an action on $\sigma$-quiver varieties:
\begin{align}
S^{\sigma}_{x}: \fS_{\zeta}(\bv, \bw) \to \fS_{x(\zeta)}(x* \bv, \bw), \quad \forall x\in \mathcal W^{\w, a}.
\end{align}
\end{prop}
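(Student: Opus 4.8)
The plan is to verify, step by step, that the reflection functor $S_x$ for $x \in \mathcal{W}^{\w,a}$ carries the $\sigma$-fixed locus to another $\sigma$-fixed locus, and that this is the only thing that needs checking because $S_x$ is already known to be an isomorphism of the ambient Nakajima varieties (from~\eqref{S_w}). First I would record the target of $S_x$: by~\eqref{S_w} we have $S_x \colon \M_{\zeta}(\bv,\bw) \to \M_{x(\zeta)}(x * \bv,\bw)$, and since $x$ commutes with $\w$ and is fixed by $a$, the conditions $-a\w(\zeta) = \zeta$, $a(\bw) = \bw$, $a(\w * \bv) = \bv$ defining $\fS_{\zeta}(\bv,\bw)$ translate into the analogous conditions with $\zeta$ replaced by $x(\zeta)$ and $\bv$ by $x * \bv$. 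Concretely, $-a\w(x(\zeta)) = -a x \w(\zeta) = -x a\w(\zeta) = x(\zeta)$, using $\mathcal{W}$-linearity of $a$ and $x\w = \w x$; and $a(\w * (x*\bv)) = a(x * (\w * \bv)) = x * a(\w * \bv) = x * \bv$, using that $*$ is a left action so $\w$ and $x$ commute there as well, and that $a$ intertwines the $*$-action appropriately. So the right-hand side $\fS_{x(\zeta)}(x * \bv,\bw)$ is well-defined as a $\sigma$-quiver variety for the same data $(\w, a)$.

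The heart of the argument is to show $S_x \circ \sigma = \sigma \circ S_x$ as maps $\M_{\zeta}(\bv,\bw) \to \M_{-a\w(\zeta)}(\ldots)$, or more precisely that $S_x$ intertwines the two automorphisms $\sigma$ on source and target. Unravelling $\sigma = a \, S_{\w} \, \tau_{\zeta}$, I would chain together the three commutation facts already available: Lemma~\ref{comm-Si-tau} (iterated over a reduced word for $x$) gives $S_x \tau_{\zeta} = \tau_{x(\zeta)} S_x$; the relation $S_{\w'} S_{\w} = S_{\w'\w}$ together with $x\w = \w x$ gives $S_x S_{\w} = S_{\w} S_x$; and~\eqref{a-S}, $a \circ S_i = S_{a(i)} \circ a$, iterated over a reduced word, together with $a(x) = x$, gives $a \, S_x = S_x \, a$ (this is exactly why we need $a$ to fix $x$, not merely normalize it). Composing these three, $S_x \circ (a S_{\w} \tau_{\zeta}) = a S_{\w} \tau_{x(\zeta)} \circ S_x$, which says $S_x$ intertwines $\sigma$ on the two sides. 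Hence $S_x$ restricts to an isomorphism between the fixed loci, and we set $S^{\sigma}_x := S_x|_{\fS_{\zeta}(\bv,\bw)}$.

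The main obstacle I anticipate is purely bookkeeping rather than conceptual: making sure the word-by-word iterations of Lemma~\ref{comm-Si-tau} and~\eqref{a-S} are legitimate when $x$ is written as a product $s_{i_1}\cdots s_{i_l}$, since the intermediate varieties $\M_{s_{i_k}\cdots s_{i_l}(\zeta)}(\ldots)$ carry forms on different $V$'s and one must invoke Proposition~\ref{tau-form} (and Lemma~\ref{comm-Si-tau}'s own proof, which already handles the form-independence) to glue the commutations. One should also note that $S_x$ for $x \in \mathcal{W}^{\w,a}$ need not commute with $\tau$ at the level of the chosen forms on $V$; it is only after passing to the quotient, where $\tau_{\zeta}$ is independent of the $V$-form, that the identity $S_x \tau_{\zeta} = \tau_{x(\zeta)} S_x$ is unambiguous — and that is precisely the content of Lemma~\ref{comm-Si-tau}, so no new work is needed there. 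Finally, functoriality $S^{\sigma}_{x'} S^{\sigma}_{x} = S^{\sigma}_{x'x}$ for $x,x' \in \mathcal{W}^{\w,a}$ is inherited immediately from $S_{x'}S_x = S_{x'x}$ on the ambient varieties, giving the claimed Weyl group action.
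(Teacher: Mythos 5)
Your proposal is correct and takes essentially the same route as the paper, which compresses the argument into the sentence preceding the statement: $x\w=\w x$ gives $S_x S_{\w}=S_{\w}S_x$, $a(x)=x$ together with \eqref{a-S} gives $aS_x=S_xa$, and Lemma~\ref{comm-Si-tau} gives $S_x\tau_{\zeta}=\tau_{x(\zeta)}S_x$, whence $S_x$ intertwines $\sigma$. Your additional bookkeeping — verifying that $(x(\zeta),x*\bv)$ satisfies the defining constraints for a $\sigma$-quiver variety, and the caveat about form-independence via Proposition~\ref{tau-form} — is a sound spelling-out of what the paper takes for granted.
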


As a consequence, we obtain

\begin{cor}
The group $\mathcal W^{\w, a}$ acts on the cohomology  group
$\H^*(\fS_{\zeta}(\bv, \bw),\mbb Z)$ when $\bw - \mbf C\bv=0$.
\end{cor}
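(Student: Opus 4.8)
The plan is to promote the family of varietal isomorphisms $S^{\sigma}_{x}$ from Proposition~\ref{s-action} to automorphisms of a single cohomology group; the hypothesis $\bw-\mbf C\bv=0$ is used precisely to guarantee that the affine $\mathcal W$-action fixes the dimension vector $\bv$.

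First I would observe that $\bw=\mbf C\bv$ forces $x*_{\bw}\bv=\bv$ for every $x\in\mathcal W$. For a simple reflection this is immediate from the definition of the $*$-action: $(s_i*_{\bw}\bv)_i=\bv_i-\sum_{j\in I}c_{ij}\bv_j+\bw_i=\bv_i-(\mbf C\bv)_i+\bw_i=\bv_i$, while $(s_i*_{\bw}\bv)_j=\bv_j$ for $j\neq i$; one then iterates along a word for $x$. Applying this with $x\in\mathcal W^{\w,a}$, Proposition~\ref{s-action} gives isomorphisms of varieties $S^{\sigma}_{x}\colon\fS_{\zeta}(\bv,\bw)\xrightarrow{\sim}\fS_{x(\zeta)}(\bv,\bw)$; the target is legitimate because $-a\w(x\zeta)=x(-a\w(\zeta))=x\zeta$ (using $x\w=\w x$, $a(x)=x$, $-a\w(\zeta)=\zeta$), $a(\bw)=\bw$, and $a(\w*_{\bw}\bv)=a(\bv)=\bv$ (as $\w*_{\bw}\bv=\bv$ and $a(\bv)=\bv$ is already built into the definition of $\fS_{\zeta}(\bv,\bw)$ once $\bw=\mbf C\bv$).

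Next I would identify all the groups $\H^{*}(\fS_{x(\zeta)}(\bv,\bw),\mbb Z)$ with $\H^{*}(\fS_{\zeta}(\bv,\bw),\mbb Z)$. Over the connected locus of generic parameters the ambient Nakajima varieties $\M_{\zeta}(\bv,\bw)$ form a topologically locally trivial family — this is the deformation argument that underlies Nakajima's Weyl group action on $\H_{*}(\M(\bv,\bw))$ — and the isomorphisms $a$, $S_{\w}$, $\tau_{\zeta}$, and hence $\sigma$, act on this family compatibly with the variation of the parameter. Taking $\sigma$-fixed points, which is smooth by Proposition~\ref{smooth} since $\sigma$ has finite order, therefore yields a locally trivial family of $\sigma$-quiver varieties over the same base, so $\H^{*}(\fS_{\zeta}(\bv,\bw),\mbb Z)$ is independent, canonically, of the generic parameter. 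Denote by $c_{x}\colon\H^{*}(\fS_{x(\zeta)}(\bv,\bw),\mbb Z)\xrightarrow{\sim}\H^{*}(\fS_{\zeta}(\bv,\bw),\mbb Z)$ the resulting identification.

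Finally I would declare the action of $x\in\mathcal W^{\w,a}$ on $\H^{*}(\fS_{\zeta}(\bv,\bw),\mbb Z)$ to be $c_{x}\circ(S^{\sigma}_{x})^{*}$ and verify it is a homomorphism. This rests on two facts: the relation $S^{\sigma}_{xy}=S^{\sigma}_{x}S^{\sigma}_{y}$, got by restricting $S_{\w'\w}=S_{\w'}S_{\w}$ from~(\ref{S_w}) to $\sigma$-fixed loci; and the compatibility of the identifications $c_{x}$ with composition of parameter-deformations. The main obstacle is this last point: making precise the local triviality of the family of $\sigma$-quiver varieties over the generic-parameter locus and the coherence (path-independence) of the $c_{x}$, which is where Nakajima's analysis of reflection functors has to be invoked and checked to be compatible with the fixed-point construction.
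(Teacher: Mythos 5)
Your proposal is essentially correct and fills in precisely the details that the paper leaves implicit when it says ``As a consequence'' after Proposition~\ref{s-action}. The three ingredients you isolate are the right ones: (i) $\bw=\mbf C\bv$ forces $x*_{\bw}\bv=\bv$ so the dimension vector is fixed; (ii) the parameter-equivariance checks $-a\w(x\zeta)=x\zeta$, $a(\bw)=\bw$, $a(\bv)=\bv$ making the target $\fS_{x(\zeta)}(\bv,\bw)$ legitimate; and (iii) the canonical identification of cohomology across the generic-parameter locus. For (iii) you could be slightly more direct by invoking what the paper has already set up: it cites Nakajima's~\cite[Theorem~6.1]{N03} that the reflection functors are hyper-K\"ahler isometries, and the deformation-invariance of $\H^*(\M_\zeta(\bv,\bw))$ over the connected generic locus of $\zeta$ in the $\zeta_{\mbb C}$-direction is a standard consequence that passes to fixed loci of the finite-order automorphism $\sigma$ by smoothness (Proposition~\ref{smooth}) and Ehresmann. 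Your caveat at the end --- that the coherence of the identifications $c_x$ with composition is the real technical content --- is apt, but this is exactly the same coherence Nakajima and Lusztig verify for the ambient quiver varieties, and it is inherited by the $\sigma$-fixed loci because $\sigma$ commutes with all the deformations and reflection functors involved (Lemma~\ref{comm-Si-tau}, \eqref{a-S}, \eqref{a-T}). So this is the same argument the paper intends, just written out.
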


\begin{rem}
When $\Gamma$ is of Dynkin type, $a=1$ and $\w=w_0$, 
the group $\mathcal W^{\w, a}$ is a Weyl group of type $B_{\ell}$ if $\Gamma$ is of type $A_{2\ell}$,
$C_{\ell}$ if $\Gamma$ is of type $A_{2\ell-1}$, $B_{\ell}$ if $\Gamma$ is of type $D_{\ell +1}$, $\ell$ even, 
of type $F_4$ if $\Gamma$ is of type $E_6$.
If $\Gamma$ is of type $D_4$, $\w=w_0$ and $a$ is the unique automorphism of order $3$, then the group $\mathcal W^{\w, a}$
is the Weyl group $G_2$. 
\end{rem}

\subsection{Symplectic structure on $\fS_{\zeta} (\bv, \bw)$}

In this section, we assume that the parameter $\zeta$ is generic.
Recall the symplectic vector space $\bM(\bv, \bw)$ from Section ~\ref{lavw}.
It is straightforward to check that the isomorphisms $a$ and $\tau$ on $\bM(\bv, \bw)$ in (\ref{dia-a}) and (\ref{tau-raw}), respectively, are  symplectomorphisms.
The varieties $\M_{\zeta}(\bv, \bw)$ inherit from $\bM(\bv, \bw)$ a symplectic structure. 
In turn, the fact that $a$ and $\tau$ being  symplectomorphisms implies that  
the induced isomorphisms $a$ and  $\tau_{\zeta}$ on $\M_{\zeta}(\bv, \bw)$ are also symplectomorphisms.
By the analysis in ~\cite[Theorem 6.1]{N03}, the reflection functor $S_{\w}$ is a hyper-K\"{a}hler isometry and in particular  a symplectomorphism.
Altogether, we see that the isomorphism $\sigma$ on $\M_{\zeta}(\bv, \bw)$ is a symplectomorphism.

\begin{prop}
\label{Ms-symplectic}
Assume that  $W$ is a $\delta_{\bw}$-formed space and the order of $\w$ is finite.
Then the $\sigma$-quiver variety $\fS_{\zeta}(\bv, \bw)$ is a  symplectic submanifold of  $\M_{\zeta}(\bv, \bw)$. 
\end{prop}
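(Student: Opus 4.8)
The plan is to combine two facts: first, that $\sigma$ is a finite-order symplectomorphism of the symplectic manifold $\M_{\zeta}(\bv,\bw)$, and second, a general linear-algebra statement that the fixed-point locus of a finite-order symplectomorphism of a symplectic manifold is a symplectic submanifold. The first fact is established in the paragraph preceding the statement: $a$ and $\tau$ are symplectomorphisms of $\bM(\bv,\bw)$, hence the induced $a$ and $\tau_\zeta$ on $\M_\zeta(\bv,\bw)$ are symplectomorphisms, and by~\cite[Theorem 6.1]{N03} the reflection functor $S_{\w}$ is a hyper-K\"ahler isometry, in particular a symplectomorphism; therefore $\sigma = a S_{\w}\tau_{\zeta}$ is a symplectomorphism. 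By Proposition~\ref{sig-4} (using that $W$ is a $\delta_{\bw}$-formed space and $\w$ has finite order), $\sigma$ has finite order $N$, so it generates a $\mbb Z_N$-action. By Proposition~\ref{smooth}, $\fS_{\zeta}(\bv,\bw) = \M_\zeta(\bv,\bw)^{\sigma}$ is a smooth submanifold.

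The remaining point is that the embedded submanifold $\fS_{\zeta}(\bv,\bw)$ inherits a symplectic form, i.e., the restriction of $\w$ (the symplectic form on $\M_\zeta(\bv,\bw)$) to $\fS_\zeta(\bv,\bw)$ is nondegenerate. First I would note that at a fixed point $x\in\fS_\zeta(\bv,\bw)$, the tangent space $T_x\fS_\zeta(\bv,\bw)$ equals the fixed subspace $(T_x\M_\zeta(\bv,\bw))^{d\sigma_x}$ of the linear automorphism $d\sigma_x$ (this is the infinitesimal version of the smoothness statement, and follows from the same averaging argument used to prove it). Since $\sigma$ preserves the symplectic form $\w$, the linear map $d\sigma_x$ is a symplectic linear automorphism of $(T_x\M_\zeta(\bv,\bw),\w_x)$ of order dividing $N$. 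Now I invoke the elementary fact: if $\psi$ is a finite-order symplectic automorphism of a symplectic vector space $(U,\Omega)$, then $\Omega$ restricts to a nondegenerate form on the fixed subspace $U^\psi$. The standard argument is to average a compatible complex structure $J$ over the cyclic group $\langle\psi\rangle$ to get a $\psi$-invariant $\Omega$-compatible $J$; then the associated inner product $g(\cdot,\cdot)=\Omega(\cdot,J\cdot)$ and $J$ are both $\psi$-invariant, so $J$ preserves $U^\psi$ and restricts to a complex structure there compatible with $\Omega|_{U^\psi}$, whence $\Omega|_{U^\psi}$ is nondegenerate. Applying this with $U = T_x\M_\zeta(\bv,\bw)$, $\Omega=\w_x$, $\psi = d\sigma_x$ shows $\w_x|_{T_x\fS_\zeta(\bv,\bw)}$ is nondegenerate for every $x$, so $\fS_\zeta(\bv,\bw)$ is a symplectic submanifold.

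I do not expect a serious obstacle here; the proof is essentially an assembly of results already in the excerpt (the symplectomorphism property of $\sigma$, finiteness of order via Proposition~\ref{sig-4}, smoothness via Proposition~\ref{smooth}) together with a textbook fact about fixed loci of finite-order symplectomorphisms. If anything, the one point deserving a sentence of care is the identification $T_x\fS_\zeta(\bv,\bw) = (T_x\M_\zeta(\bv,\bw))^{d\sigma_x}$, which is where finiteness of the order (equivalently, reductivity of the acting group) is genuinely used — without it the fixed locus need not be smooth and the tangent space need not be the fixed subspace. One could alternatively cite a ready-made statement that the fixed-point set of a symplectic action of a compact (or reductive) group is a symplectic submanifold, e.g.\ as in~\cite{CG}, and skip the pointwise linear-algebra argument entirely; I would likely present the short averaging argument for self-containedness and completeness.
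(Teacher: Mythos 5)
Your proof is correct. The overall architecture matches the paper's: establish that $\sigma$ is a finite-order symplectomorphism, identify $T_{[\bx]}\fS_{\zeta}(\bv,\bw)$ with the fixed subspace of $d\sigma_{[\bx]}$ (the paper cites Edixhoven~\cite{E92} for this), and then reduce to a pointwise linear-algebra statement about the fixed subspace of a finite-order linear symplectomorphism. Where you diverge is in the linear-algebra kernel. The paper uses the eigenspace decomposition $T_{[\bx]}\M_{\zeta}(\bv,\bw) = T_{[\bx]}\fS_{\zeta}(\bv,\bw)\oplus C$, with $C$ spanned by eigenvectors of eigenvalue $\neq 1$, and the observation that these two pieces are symplectically orthogonal because $\sigma$ preserves $\omega$ (if $u$ is fixed and $\sigma v = \lambda v$ with $\lambda\neq 1$, then $\omega(u,v)=\omega(\sigma u,\sigma v)=\lambda\,\omega(u,v)$, forcing $\omega(u,v)=0$); nondegeneracy on the summand then follows immediately. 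You instead average a compatible complex structure $J$ over $\langle\psi\rangle$ to produce an invariant one, and use that $J$ then preserves $U^{\psi}$. Both are standard and correct; the paper's eigenspace-orthogonality argument is a touch more self-contained and tailored to the finite-order situation, while your averaging argument is slightly more general (it applies verbatim to any compact group of symplectomorphisms, not just cyclic ones) but invokes the existence of compatible almost-complex structures as a black box. Either is acceptable; no gap.
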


\begin{proof}
We only need to show that the restriction of the form $\omega$ to $\fS_{\zeta}(\bv, \bw)$ is non-degenerate.
Fix a point $[\bx] \in \fS_{\zeta}(\bv, \bw)$, the differential $d\sigma_{[\bx]}$ of the automorphism $\sigma$ 
at $[\bx]$ is an automorphism on the tangent space $T_{[\bx]} \M_{\zeta}(\bv, \bw)$.
By the assumption, we see from Proposition ~\ref{sig-4} that $\sigma^N=1$ for some $N$, and hence $(d\sigma_{[\bx]})^N=1$.
By a result of Edixhoven ~\cite{E92}, the fixed points of  $d\sigma_{[\bx]}$, i.e., the eigenspace of eigenvalue $1$, is exactly the tangent space
$T_{[\bx]} \fS_{\zeta}(\bv, \bw)$ of $\fS_{\zeta}(\bv, \bw)$ at $[x]$.
It thus yields the following eigenspace decomposition:
\[
T_{[\bx]} \M_{\zeta}(\bv, \bw) = T_{[\bx]} \fS_{\zeta}(\bv, \bw) \oplus C,
\]
where $C$ consists of linear combinations of eigenvectors of eigenvalues other than $1$.
By the above analysis,  the automorphism $\sigma$ is a symplectomorphism, 
and so this implies that $T_{[\bx]} \fS_{\zeta}(\bv, \bw)$ and $ C$ are orthogonal with each other. 
Hence the restriction of the symplectic form on them are non-degenerate. We are done.
\end{proof}

Since the zero fiber $\pi^{-1}(0)$ is Lagrangian, we see that the fiber $(\pi^{\sigma})^{-1}(0)=\pi^{-1}(0)^{\sigma}$  is isotropic. 
But the fiber $(\pi^{\sigma})^{-1}(0)$ is not coisotropic and hence not Lagrangian in general, 
see Remark~\ref{rem:abnormal} (3).
A nice consequence of Proposition~\ref{Ms-symplectic} is the semismallness of $\pi^{\sigma}$.

\begin{cor}
\label{semismall}
The map $\pi^{\sigma} : \fS_{\zeta}(\bv, \bw) \to \fS_1(\bv,\bw)$ is semismall.
\end{cor}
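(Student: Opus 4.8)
The plan is to deduce the semismallness of $\pi^{\sigma}$ from the known semismallness of $\pi: \M_{\zeta}(\bv, \bw) \to \M_0(\bv, \bw)$ (which is a standard fact for quiver varieties, following Nakajima) together with the symplectic-submanifold structure established in Proposition~\ref{Ms-symplectic}. Recall that a proper morphism $f: X \to Y$ of irreducible varieties with $X$ smooth is semismall if $\dim X \times_Y X \leq \dim X$, equivalently if for every $k$ the locus $Y_k = \{ y \in Y : \dim f^{-1}(y) = k\}$ satisfies $\dim Y_k + 2k \leq \dim X$. The key point is that $\fS_{\zeta}(\bv, \bw)$ is symplectic of dimension $\tfrac{1}{2} \dim \M_{\zeta}(\bv, \bw)$ in a suitable sense — more precisely, the fixed locus of a symplectomorphism of finite order is itself symplectic and even-dimensional, and the base $\fS_1(\bv, \bw) = \pi(\fS_{\zeta}(\bv, \bw))$ sits inside $\M_0(\bv, \bw)^{a\tau}$.

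First I would set up the stratification of the base. Stratify $\M_0(\bv, \bw)$ by the fiber dimension of $\pi$; semismallness of $\pi$ gives, for each stratum $\M_0(\bv, \bw)_k$ with generic fiber dimension $k$, the inequality $\dim \M_0(\bv, \bw)_k + 2k \leq \dim \M_{\zeta}(\bv, \bw)$. Since $\sigma$ is an automorphism compatible with $\pi$ via the commuting square analogous to (\ref{tau-comm}), each such stratum is $\sigma$-stable, and intersecting with $\fS_1(\bv, \bw)$ gives a stratification of the base of $\pi^{\sigma}$. Over a point $x$ in a stratum of $\fS_1(\bv, \bw)$, the fiber $(\pi^{\sigma})^{-1}(x) = \pi^{-1}(x)^{\sigma}$ is the fixed locus of $\sigma$ acting on the Lagrangian (in the generic-parameter smooth case, projective) fiber $\pi^{-1}(x)$.

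The heart of the matter is a dimension count relating $\dim \pi^{-1}(x)^{\sigma}$ and $\dim (\fS_1(\bv,\bw) \cap \M_0(\bv,\bw)_k)$ to $\dim \fS_{\zeta}(\bv, \bw) = \tfrac{1}{2}\dim \M_{\zeta}(\bv,\bw)$. The clean way is to use that $\sigma$ is a symplectomorphism of finite order on the symplectic manifold $\M_{\zeta}(\bv,\bw)$: by the Edixhoven-type argument already invoked in the proof of Proposition~\ref{Ms-symplectic}, the tangent space splits $\sigma$-equivariantly and orthogonally, so the "halving" of dimensions in the ambient semismall situation is inherited by the fixed loci. Concretely, one argues that $\dim \fS_{\zeta}(\bv, \bw)$ restricted over a stratum equals $\dim(\text{stratum in base}) + 2\cdot(\text{generic fiber dim of }\pi^{\sigma})$, using that the fixed locus of a finite-order symplectomorphism on a symplectic vector space has dimension the sum of the even-eigenvalue contributions, and the relevant numbers are governed by the full quiver-variety computation. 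I expect the main obstacle to be precisely this bookkeeping: one must check that passing to $\sigma$-fixed points does not break the semismall inequality, i.e. that $\dim \fS_1(\bv,\bw)_k + 2k^{\sigma} \leq \dim \fS_{\zeta}(\bv,\bw)$ where $k^{\sigma}$ is the generic fiber dimension of $\pi^{\sigma}$ over that stratum, and this requires knowing that fiber dimension and base dimension each "drop by roughly half" compatibly. An alternative, possibly cleaner route is purely homological: semismallness of $\pi$ means $R\pi_* \mathbb{Q}$ is a pure perverse sheaf (a direct sum of shifted IC sheaves), and one restricts to the $a\tau$-fixed locus and takes $\sigma$-invariants; combined with the fact that $\fS_{\zeta}(\bv,\bw)$ is smooth (Proposition~\ref{smooth}) and symplectic (Proposition~\ref{Ms-symplectic}), one identifies $R\pi^{\sigma}_* \mathbb{Q}$ with the relevant direct summand and reads off semismallness from perversity. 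I would present the proof along the stratification-and-dimension-count line, invoking Proposition~\ref{Ms-symplectic} and Proposition~\ref{smooth} for the smoothness and symplectic structure, and the standard semismallness of $\pi$ as the input, with the symplectic orthogonality of the $\sigma$-eigenspace decomposition as the device that transfers the inequality to the fixed loci.
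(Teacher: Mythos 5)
Your stratification-and-dimension-count plan stalls at exactly the point you flag as "bookkeeping," and I don't think that step can be made to work as stated. The ambient semismall inequality $\dim \M_0(\bv,\bw)_k + 2k \leq \dim \M_{\zeta}(\bv,\bw)$ does not descend to the $\sigma$-fixed locus by any simple "halving": a finite-order symplectomorphism need not cut the fiber dimension and the base-stratum dimension in compatible proportions, and $\dim \fS_{\zeta}(\bv,\bw)$ is not $\tfrac12\dim\M_{\zeta}(\bv,\bw)$ in general (the fixed locus of a symplectomorphism has even codimension but can have any even dimension). That the issue is real and not just bookkeeping is illustrated by the paper's own counterexample on the anti-symplectic side (Remark in Section~\ref{sQV+QSS}): $\pi^{\hat\sigma}$ fails to be semismall already for $T^*\mathbb P^1 \to \mathrm{pt}$, even though the ambient $\pi$ is semismall there too. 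So the symplectic-versus-anti-symplectic distinction must enter through a sharper input than semismallness of $\pi$, and your argument never produces one.

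The paper's proof uses a strictly stronger input, namely Nakajima's theorem (\cite[Thm.\ 7.2]{N98}) that $\M_{\zeta}(\bv,\bw)\times_{\M_0(\bv,\bw)}\M_{\zeta}(\bv,\bw)$ is \emph{Lagrangian} in $\M_{\zeta}(\bv,\bw)\times\M_{\zeta}(\bv,\bw)$, not just that $\pi$ is semismall. Given that, the argument is one line: $\fS_{\zeta}(\bv,\bw)\times\fS_{\zeta}(\bv,\bw)$ is a symplectic submanifold by Proposition~\ref{Ms-symplectic}, and the intersection of a Lagrangian with a symplectic submanifold is isotropic in that submanifold; hence $\fS_{\zeta}(\bv,\bw)\times_{\fS_1(\bv,\bw)}\fS_{\zeta}(\bv,\bw)$ is isotropic, has dimension at most $\dim\fS_{\zeta}(\bv,\bw)$, contains the diagonal, so has dimension exactly $\dim\fS_{\zeta}(\bv,\bw)$, and \cite[8.9.2]{CG} gives semismallness. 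This is also where your approach should be patched: replace the attempt to stratify and halve with the observation that the Steinberg variety is isotropic. Your homological alternative ("restrict the semisimple direct image and take $\sigma$-invariants") is likewise not well-formed as stated --- restriction of a perverse sheaf to a closed subvariety is not perverse --- but there is a correct sheaf-theoretic proof in the paper (Remark~\ref{rem:can}(2)) using the nearby-cycle functor on a one-parameter deformation of $\pi^{\sigma}$, which is the rigorous version of what you were reaching for.
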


\begin{proof}
By~\cite[Theorem 7.2]{N98}, the fiber product $\M_{\zeta} (\bv, \bw) \times_{\M_0(\bv, \bw)} \M_{\zeta}(\bv, \bw)$ is a lagrangian subvariety
in $\M_{\zeta} (\bv, \bw) \times \M_{\zeta}(\bv, \bw)$. So its $\sigma$-analogue 
$\fS_{\zeta} (\bv, \bw) \times_{\fS_1(\bv, \bw)} \fS_{\zeta}(\bv, \bw)$ is isotropic in 
$\fS_{\zeta} (\bv, \bw) \times \fS_{\zeta}(\bv, \bw)$, and thus has at most half of the dimension of the latter manifold. 
So we have
\[
\dim \fS_{\zeta} (\bv, \bw) \times_{\fS_1(\bv, \bw)} \fS_{\zeta}(\bv, \bw)= \dim \fS_{\zeta}(\bv, \bw).
\]
According to~\cite[8.9.2]{CG}, it implies the corollary.
\end{proof}

\begin{rem}
(1). We refer to Remark~\ref{rem:can} for an alternative proof of Corollary~\ref{semismall}.

(2) By Corollary~\ref{semismall}, to show that $\fS_{\zeta}(\bv, \bw)$ is equidimensional, a.k.a., of pure dimension, it is enough to show
that the images of all connected components under $\pi^{\sigma}$ coincide.
\end{rem}


\section{Quiver varieties and symmetric pairs}

In this section, we assume that the graph $\Gamma$ is a Dynkin diagram, $|a|=1$ or $2$, 
and the Weyl group element $\w = w_0$ is the longest element
in the Weyl group of $\Gamma$ and the sign function $\delta_{\bw}$ is $\Gamma$-alternating in the definition of the automorphism $\sigma$.
In this case,  we have $\sigma^2=1$ by Proposition~\ref{sig-4}.

\subsection{Restriction diagram}

Now we assume that $\zeta_{\mbb C} =0$ and $\xi_i=1$ for all $i\in I$.

Let  
$\T$ be a torus in $\G_{\bw}$.
Let $\M_{\zeta}(\bv, \bw)^{\T}$ be the $\T$-fixed point subvariety of $\M_{\zeta}(\bv, \bw)$.
For each homomorphism $\rho: \T \to \G_{\bv}$, let 
$$
\M_{\zeta} (\rho) =\{ [\bx] \in \M_{\zeta} (\bv, \bw) | t. \bx = \rho(t)^{-1}. \bx, \forall t\in \T\}.
$$
Nakajima ~\cite{N00} showed that
the $\M_{\zeta} (\rho)$ depends on the $\G_{\bv}$-conjugacy class of $\rho$ and there is  a partition of $\M_{\zeta} (\bv, \bw)^{\T}$ into connected components:
$
\M_{\zeta}(\bv, \bw)^\T = \coprod \M_{\zeta}(\rho),
$
where the union is over the set of all $\G_{\bv}$-conjugacy classes, say $\langle \rho\rangle$, of homomorphisms $\rho: \T \to \G_{\bv}$.
We  define 
\begin{align}
\fS_{\zeta} (\bv, \bw)^{\T} =\fS_{\zeta}(\bv, \bw) \cap \M_{\zeta}(\bv, \bw)^\T. 
\end{align}
Thus there is a decomposition:
\[
\fS_{\zeta} (\bv, \bw)^{\T} = \coprod_{\langle \rho \rangle} \fS_{\zeta}(\rho),
\quad
\fS_{\zeta}(\rho) = \fS_{\zeta} (\bv, \bw)  \cap \M_{\zeta}(\rho).
\]

In the special case $\T = \mbb C^{\times} \subseteq \G^{\sigma}_{\bw} $, then we can further define
\begin{align}
\fS_{\zeta} (\bv, \bw)^{+\mbb C^{\times}} =\{[\bx]\in \fS_{\zeta} (\bv, \bw) | \lim_{t \to 0} t. [\bx] \ \mbox{exists}\},\\
\fS_{\zeta} (\bv, \bw)^{- \mbb C^{\times}} =\{[\bx]\in \fS_{\zeta} (\bv, \bw) | \lim_{t \to \infty } t. [\bx] \ \mbox{exists}\}.
\end{align}
Similarly, there are varieties $\M_{\zeta}(\bv, \bw)^{\pm \mbb C^{\times}}$. Since the chosen $\mbb C^{\times}$ is in $\G^{\sigma}_{\bw} $, we have
\[
\fS_{\zeta} (\bv, \bw)^{ \pm \mbb C^{\times}} = \fS_{\zeta}(\bv, \bw) \cap \M_{\zeta}(\bv, \bw)^{\pm \mbb C^{\times}}.
\]
Thus there is the following hyperbolic localization/restriction diagram.
\begin{align}
\label{restriction-diag}
\xymatrix{
& \fS_{\zeta} (\bv, \bw)^{+ \mbb C^{\times}} \ar@{->}[dr]^{\kappa^+}  \ar@{->}[dl]_{\iota^+} & \\
\fS_{\zeta} (\bv, \bw) && \fS_{\zeta} (\bv, \bw)^{ \mbb C^{\times}} \\ 
& \fS_{\zeta} (\bv, \bw)^{- \mbb C^{\times}} \ar@{->}[ul]^{\iota^-}  \ar@{->}[ur]_{\kappa^-} 
}
\end{align}
where $\iota^{\pm}$ and $\kappa^{\pm}$ are natural embeddings and projections.

\subsection{A characterization of $\fS_{\zeta} (\bv, \bw)^{ \T} $}

Assume now that we have a decomposition 
$W=W^1 \oplus W^2 \oplus W^3 $ of the formed space $W$ such that  the following conditions hold.
\begin{itemize}
\item  For all $i\in I$,
the restrictions of the form to $W^1_i$ and $W^2_i\oplus W^3_i$ are non-degenerate.  

\item For all $i\in I$, $W^1_i$ and $W^2_i\oplus W^3_i$ are orthogonal to each other.
 
\item   For all $i\in I$, we have 
$(W^2_i)^{\perp}_{W^2_i\oplus W^3_i} = W^3_i$, $(W^3_i)^{\perp}_{W^2_i\oplus W^3_i}=W^2_i$, where $(-)^{\perp}_{W^2_i\oplus W^3_i}$ is taken in $W^2_i\oplus W^3_i$,   and hence $W^2_i$ and $W^3_i$ are maximal isotropic  in $W^2_i\oplus W^3_i$ of the same dimension. 
  
\item $a(W^1) = W^1$, $a(W^2) = W^2$ and $a(W^3)=W^3$.  
\end{itemize}
Set $\dim W^1 = \bw^1$, $\dim W^2 = \bw^2$ and $\dim W^3 = \bw^3$ so that they satisfy the  condition as follows from the above assumption.
 $$
 \bw^2=\bw^3, \bw^2=a \bw^2, a\bw^1=\bw^1\ \mrm{and}\ \bw=\bw^1 + 2\bw^2.
 $$ 
Consider the following 1-parameter subgroup in $\G_{\bw}^{\sigma}$.
\begin{align}
\label{A}
\lambda: \mbb C^{\times} \to \G^{\sigma}_{\bw} , \quad t\mapsto 
\mrm{id}_{W^1} \oplus t\cdot \mrm{id}_{W^2} \oplus t^{-1} \cdot  \mrm{id}_{W^3}.
\end{align}
By a result ~\cite[Lemma 4.4]{VV00} of Varagnolo and Vasserot, we have that 
$\M_{\zeta}(\rho)$ is empty unless $\rho$ is $\G_{\bv}$-conjugate to 
the group homomorphism
\begin{align}
\label{rho}
\mbb C^{\times} \to \G_{\bv}, \quad t\mapsto \mrm{id}_{V^1} \oplus t \cdot \mrm{id}_{V^2} \oplus t^{-1} \cdot \mrm{id}_{V^3}, 
\end{align}
for some decomposition $V = V^1 \oplus V^2 \oplus V^3$.
Moreover, if $\rho$ is of the latter form with the dimension vector of $V^1, V^2$ and $V^3$ being $\bv^1$, $\bv^2$ and $\bv^3$, respectively, then we have 
\begin{align}
\label{rho-VV}
\M_{\zeta} (\rho)  =  \M_{\zeta} (\bv^1, \bw^1) \times \M_{\zeta} (\bv^2, \bw^2) \times \M_{\zeta} (\bv^3, \bw^3).
\end{align}

Since $\lambda(\mbb C^{\times}) \leq \G_{\bw}^{\tau} \cap\G_{\bw}^a \leq \G_{\bw}^{\sigma}$ and $\sigma$ is $\G^{\sigma}_{\bw}$-equivariant, we see that
\[
\sigma(\M_{\zeta} (\bv, \bw)^{\lambda(\mbb C^{\times}) }) \subseteq \M_{\zeta} (a w_0 * \bv, a \bw)^{\lambda(\mbb C^{\times})}.
\]
Recall that the automorphism $\sigma$ is a composition $a \tau S_{w_0}$. 
If $\rho$ is of the form (\ref{rho}), we write $w_0 (\rho)$ be the group homomorphism
\begin{align}
\label{w-rho}
\mbb C^{\times} \to \G_{\bv}, \quad t\mapsto \mrm{id}_{w_0 *_{\bw^1} V^1} \oplus t \cdot \mrm{id}_{w_0 *_{\bw^2} V^2} \oplus t^{-1} \cdot \mrm{id}_{w_0 *_{\bw^2}V^3}, 
\end{align}
where $w_0*_{\bw^1}V^1$, $w_0*_{\bw^2} V^2$ and $w_0*_{\bw^2}V^3$ 
are vector spaces of dimension vectors $w_0 *_{\bw^1} \bv^1$, $w_0 *_{\bw^2} \bv^2$ and $w_0 *_{\bw^2} \bv^3$, respectively.
Since the construction of the automorphism $\sigma$ is independent of the choice of forms on $V$, we can, and shall, assume that
the non-degenerate symmetric form on $V$ has its restriction to $V^1$, $V^2$ and $V^3$ non-degenerate and that the latter spaces are orthogonal with each other.
Note that $a(\lambda(\mbb C^{\times})) =\lambda(\mbb C^{\times})$ and $\tau (\lambda(\mbb C^{\times})) =\lambda (\mbb C^{\times})$.
We observe that 
\[
a(\M_{\zeta}(\rho)) \subseteq \M_{a \zeta} (a \circ \rho \circ a^{-1}), 
\tau (\M_{\zeta} (\rho) ) \subseteq \M_{-\zeta} (\tau \circ \rho \circ \tau^{-1}),
S_{w_0} (\M_{\zeta} (\rho)) \subseteq \M_{w_0 \zeta} (w_0 (\rho)).
\]
Thus we have $\sigma (\M_{\zeta} (\rho)) \subseteq \M_{-a w_0 \zeta} ( w_0({}^{a \tau}\rho))$ 
where ${}^{a\tau}\rho$ is the compositions of $\rho$ with the automorphism $a\tau$ on $\G_{\bv}$.
This implies that $\fS_{\zeta} (\rho)$ is empty unless 
\begin{align}
\label{rho-w-rho}
\rho = w_0 ({}^{a \tau} \rho), \quad \mbox{up to a $\G_{\bv}$-conjugate.}
\end{align}
By comparing  (\ref{w-rho}) for $w_0 ({}^{a \tau} \rho)$ and (\ref{rho}), we see that $\fS_{\zeta} (\rho)$ is empty unless 
\begin{align}
\label{rho-cond-1}
\bv^1 =a ( w_0 *_{\bw^1} \bv^1) \quad \mbox{and} \quad 
\bv^2 =a ( w_0 *_{\bw^3} \bv^3).
\end{align}

Assume now that  the condition (\ref{rho-cond-1}) holds.
If $[\bx^2] \in \M_{\zeta} (\bv^2, \bw^2)$, then a slight generalization of the operation $\sigma$
yields an element in $\M_{\zeta} (\bv^3, \bw^3)$, denoted abusively by $\sigma([\bx^2])$.
(The involution $\tau$ in the definition is changed to be an isomorphism $\M_{\zeta}(\bv^2, \bw^2) \to \M_{-\zeta} (\bv^2, \bw^3)$ with respect to the above decomposition.)
Similarly, we can define $\sigma([\bx^3])$.
By definition, if $([\bx^1], [\bx^2], [\bx^3]) \in \M_{\zeta} (\rho)$ under the identification (\ref{rho-VV}), then
\[
\sigma ([\bx^1], [ \bx^2],  [ \bx^3]) = (\sigma ([\bx^1]),  \sigma ([\bx^3]),  \sigma ([\bx^2])).
\]
Thus, in light of the fact that $\sigma^2=1$, that  $([\bx^1], [\bx^2], [\bx^3]) \in \fS_{\zeta} (\rho)$ if and only if
$[\bx^1] = \sigma ([\bx^1])$ and $[\bx^2] = \sigma ([\bx^3])$.
Therefore, under the assumption (\ref{rho-cond-1}), 
it yields
\begin{align}
\fS_{\zeta} (\rho) \cong \fS_{\zeta} (\bv^1, \bw^1) \times \M_{\zeta} (\bv^2, \bw^2).  
\end{align}

Summing up the above analysis, there is

\begin{prop}
\label{A-fixed-sub}
Assume that $\T=\lambda(\mbb C^{\times})$ in (\ref{A}). Then there is an isomorphism:
\begin{align}
\fS_{\zeta} (\bv, \bw)^{\lambda(\mbb C^{\times})} \cong \coprod_{(\bv^1, \bv^2)}  \fS_{\zeta} (\bv^1, \bw^1) \times \M_{\zeta} (\bv^2, \bw^2),
\end{align}
where $\bw= \bw^1 + 2\bw^2$, $a \bw = \bw$, $a \bw^1 = \bw^1$ and  the union is over $(\bv^1, \bv^2)$ such that 
\begin{align}
\label{models}
\bv^1 = a (w_0 *_{\bw^1} \bv^1), \quad \bv^1 + \bv^2 + a(w_0 *_{\bw^2} \bv^2) = \bv.
\end{align}
\end{prop}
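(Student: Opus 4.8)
The plan is to decompose the $\T$-fixed locus $\fS_{\zeta}(\bv,\bw)^{\lambda(\mbb C^{\times})}$ along the partition into connected components $\M_{\zeta}(\rho)$ coming from Nakajima's work~\cite{N00}, and then to identify each nonempty piece $\fS_{\zeta}(\rho)$. First I would recall that $\lambda(\mbb C^{\times})\leq \G_{\bw}^{\tau}\cap\G_{\bw}^{a}\leq\G^{\sigma}_{\bw}$, so the $\lambda(\mbb C^{\times})$-fixed locus in $\M_{\zeta}(\bv,\bw)$ is preserved by $\sigma$; hence taking $\sigma$-fixed points and taking $\lambda(\mbb C^{\times})$-fixed points commute, and
\[
\fS_{\zeta}(\bv,\bw)^{\lambda(\mbb C^{\times})} = \M_{\zeta}(\bv,\bw)^{\sigma}\cap\M_{\zeta}(\bv,\bw)^{\lambda(\mbb C^{\times})} = \coprod_{\langle\rho\rangle}\fS_{\zeta}(\rho),\qquad \fS_{\zeta}(\rho)=\fS_{\zeta}(\bv,\bw)\cap\M_{\zeta}(\rho).
\]
The Varagnolo–Vasserot result~\cite[Lemma 4.4]{VV00} quoted in the excerpt already tells us that $\M_{\zeta}(\rho)$ is empty unless $\rho$ is conjugate to the standard form (\ref{rho}) attached to a decomposition $V=V^1\oplus V^2\oplus V^3$, and in that case (\ref{rho-VV}) gives the product decomposition $\M_{\zeta}(\rho)\cong\M_{\zeta}(\bv^1,\bw^1)\times\M_{\zeta}(\bv^2,\bw^2)\times\M_{\zeta}(\bv^3,\bw^3)$. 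So it remains to run $\sigma$ through this picture.

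Next I would track how $\sigma = aS_{w_0}\tau_{\zeta}$ acts on the data $\rho$. Using $a(\lambda(\mbb C^{\times}))=\lambda(\mbb C^{\times})$, $\tau(\lambda(\mbb C^{\times}))=\lambda(\mbb C^{\times})$, together with the behavior of each of $a$, $\tau_{\zeta}$, $S_{w_0}$ on the fixed loci $\M_{\zeta}(\rho)$ (namely $a(\M_{\zeta}(\rho))\subseteq\M_{a\zeta}(a\rho a^{-1})$, $\tau(\M_{\zeta}(\rho))\subseteq\M_{-\zeta}(\tau\rho\tau^{-1})$, $S_{w_0}(\M_{\zeta}(\rho))\subseteq\M_{w_0\zeta}(w_0(\rho))$), one gets $\sigma(\M_{\zeta}(\rho))\subseteq\M_{-aw_0\zeta}(w_0({}^{a\tau}\rho))$. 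Since $-aw_0\zeta=\zeta$ and $a\bw=\bw$, the condition for $\fS_{\zeta}(\rho)$ to be nonempty is $\rho = w_0({}^{a\tau}\rho)$ up to $\G_{\bv}$-conjugacy. Comparing the explicit formula (\ref{w-rho}) for $w_0({}^{a\tau}\rho)$ with (\ref{rho}), and observing that $\tau$ swaps the roles of the maximal isotropics (it sends the $V^2$-block to a $V^3$-type block and vice versa, because of the orthogonality structure on $W^2\oplus W^3$), this forces precisely (\ref{rho-cond-1}): $\bv^1 = a(w_0*_{\bw^1}\bv^1)$ and $\bv^2 = a(w_0*_{\bw^3}\bv^3)$. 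Combined with $\bv^1+\bv^2+\bv^3=\bv$, the second of these can be rewritten (using $a^2=1$, $\bw^2=\bw^3$) as $\bv^3 = a(w_0*_{\bw^2}\bv^2)$, giving the constraint $\bv^1+\bv^2+a(w_0*_{\bw^2}\bv^2)=\bv$ of (\ref{models}).

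Then, under (\ref{rho-cond-1}), I would compute the induced $\sigma$-action on the product $\M_{\zeta}(\bv^1,\bw^1)\times\M_{\zeta}(\bv^2,\bw^2)\times\M_{\zeta}(\bv^3,\bw^3)$. The key point is that $\sigma$ restricts to an honest automorphism on the first factor (since $W^1$ is a $\delta_{\bw}$-formed space stable under $a$), while on the last two factors it interchanges them via a slight generalization of the transpose construction — the isomorphism $\M_{\zeta}(\bv^2,\bw^2)\to\M_{-\zeta}(\bv^2,\bw^3)$ built from the pairing between the isotropics $W^2$ and $W^3$. Concretely, $\sigma([\bx^1],[\bx^2],[\bx^3]) = (\sigma[\bx^1],\,\sigma[\bx^3],\,\sigma[\bx^2])$. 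Because $\sigma^2=1$, the fixed-point condition becomes $[\bx^1]=\sigma[\bx^1]$ and $[\bx^2]=\sigma[\bx^3]$; the second equation lets $[\bx^3]$ be eliminated, leaving $[\bx^3]$ determined by $[\bx^2]$, so that the projection onto the first two factors yields
\[
\fS_{\zeta}(\rho)\cong \fS_{\zeta}(\bv^1,\bw^1)\times\M_{\zeta}(\bv^2,\bw^2).
\]
Finally I would reindex the disjoint union: the $\G_{\bv}$-conjugacy classes $\langle\rho\rangle$ with $\fS_{\zeta}(\rho)\neq\emptyset$ are in bijection with pairs $(\bv^1,\bv^2)$ satisfying (\ref{models}), since $\bv^3$ is then forced. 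Assembling these pieces gives the stated isomorphism.

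\textbf{Main obstacle.} I expect the delicate step to be the precise bookkeeping in the middle paragraph: verifying that $\tau_{\zeta}$ really does exchange the $V^2$- and $V^3$-blocks (rather than fixing each), and that this exchange is compatible with the isotropic/orthogonal structure on $W^2\oplus W^3$, so that the ``slightly generalized $\sigma$'' $\M_{\zeta}(\bv^2,\bw^2)\to\M_{-\zeta}(\bv^2,\bw^3)$ is well-defined and squares correctly into $\sigma^2=1$. Everything else — the commutation of fixed-point operations, the VV decomposition, the reindexing — is formal once this transpose-swaps-isotropics computation is pinned down.
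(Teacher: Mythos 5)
Your proposal reproduces the paper's argument step for step: the same appeal to Varagnolo--Vasserot for the component decomposition of the $\T$-fixed locus, the same tracking of $\sigma(\M_{\zeta}(\rho))\subseteq\M_{-aw_0\zeta}(w_0({}^{a\tau}\rho))$ to get the nonemptiness condition (\ref{rho-cond-1}), and the same observation that under that condition $\sigma$ restricts to the first factor and swaps the other two, so $\sigma^2=1$ reduces the fixed-point equations to $[\bx^1]=\sigma[\bx^1]$ and $[\bx^2]=\sigma[\bx^3]$. You have also correctly singled out the one delicate point the paper passes over quickly (the ``slight generalization of $\sigma$'' built from the duality between the isotropic pieces $W^2$ and $W^3$), though note that the $V^2\leftrightarrow V^3$ exchange in ${}^{a\tau}\rho$ comes from two sources---the $t\mapsto t^{-1}$ reversal in $g\mapsto(g^{-1})^*$ on $\G_{\bv}$, and the isotropic pairing on $W^2\oplus W^3$---not solely from the latter as your phrasing suggests.
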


We shall write ``$\bv^1+\bv^2 \models \bv $'' if the condition (\ref{models}) is satisfied.
In general, we can consider a $1$-parameter subgroup in $\G^{\sigma}_{\bw}$ defined by
\begin{align}
\label{T-general}
\lambda: \mbb C^\times \to \G^{\sigma}_{\bw},  t\mapsto 
\mrm{id}_{W^1} \oplus \oplus_{i=2}^m (t^{\lambda_i} \mrm{id}_{W^i} \oplus t^{-\lambda_i} \mrm{id}_{W^{i,-}}),
\end{align}
where the pair $(W^i, W^{i,-})$ play a similar role as $(W^2, W^3)$ in (\ref{A}) and $0 < \lambda_1< \lambda_2 < \cdots <\lambda_m$. 
By applying the same argument, it gives rise to the following decomposition.
\[
\fS_{\zeta} (\bv, \bw)^{ \lambda(\mbb C^{\times})} \cong \coprod_{\bv^1 + (\sum_{i=2}^m \bv^i) \models \bv}  
\fS_{\zeta} (\bv^1, \bw^1) \times \prod_{i=2}^m \M_{\zeta} (\bv^i, \bw^i),
\]
where the product is taken in the natural order.
By summing up all $\bv$, the above decomposition gives rise to the following.

\begin{prop}
\label{A-fixed-sub-b}
Assume that $\lambda$ is given by (\ref{T-general}), there is an isomorphism:
\begin{align}
\fS_{\zeta}(\bw)^{ \lambda(\mbb C^{\times})} \cong 
\fS_{\zeta} (\bw^1) \times \prod_{i=2}^m \M_{\zeta} (\bw^i), \quad  \bw= \bw^1 + 2\sum_{i=2}^m \bw^i, a(\bw^i)= \bw^i.
\end{align}
\end{prop}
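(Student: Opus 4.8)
The plan is to reduce Proposition~\ref{A-fixed-sub-b} to the finite-level version, Proposition~\ref{A-fixed-sub}, or rather to its iterated analogue, by a standard ``sum over dimension vectors $\bv$'' argument together with the compatibility of $\sigma$ with the $\T$-fixed-point decomposition. First I would recall that $\fS_{\zeta}(\bw) = \sqcup_{a(w_0*\bv)=\bv} \fS_{\zeta}(\bv,\bw)$ by (\ref{M(w)}), so that
\[
\fS_{\zeta}(\bw)^{\lambda(\mbb C^{\times})} = \coprod_{a(w_0*\bv)=\bv} \fS_{\zeta}(\bv,\bw)^{\lambda(\mbb C^{\times})},
\]
reducing the global statement to understanding each $\fS_{\zeta}(\bv,\bw)^{\lambda(\mbb C^{\times})}$.

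Next I would run the same chain of reasoning that produced Proposition~\ref{A-fixed-sub}, but with the $1$-parameter subgroup (\ref{T-general}) in place of (\ref{A}). By Varagnolo--Vasserot~\cite[Lemma 4.4]{VV00} as invoked before, only $\rho$ conjugate to $t\mapsto \mrm{id}_{V^1}\oplus\oplus_{i=2}^m(t^{\lambda_i}\mrm{id}_{V^i}\oplus t^{-\lambda_i}\mrm{id}_{V^{i,-}})$ contribute, and for such $\rho$ one has the product decomposition $\M_{\zeta}(\rho) = \M_{\zeta}(\bv^1,\bw^1)\times\prod_{i=2}^m \M_{\zeta}(\bv^i,\bw^i)\times\M_{\zeta}(\bv^{i,-},\bw^{i,-})$ generalizing (\ref{rho-VV}). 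The key point is that $\lambda(\mbb C^{\times})\le \G_{\bw}^{\tau}\cap\G_{\bw}^a\le\G_{\bw}^{\sigma}$, so $\sigma$ preserves $\M_{\zeta}(\bv,\bw)^{\lambda(\mbb C^{\times})}$; tracing through $\sigma = a\tau S_{w_0}$ as in the paragraph preceding Proposition~\ref{A-fixed-sub}, $\sigma$ sends the factor indexed by $(V^i)$ to the factor indexed by $(w_0*_{\bw^i}V^{i,-})$ and vice versa, while acting on the $V^1$-factor by the (one-factor) $\sigma$. Hence $\fS_{\zeta}(\rho)$ is empty unless the $w_0$-twisted matching conditions $\bv^1 = a(w_0*_{\bw^1}\bv^1)$ and $\bv^i = a(w_0*_{\bw^i}\bv^{i,-})$ hold, and in that case taking $\sigma$-fixed points identifies $[\bx^{i,-}]$ with $\sigma([\bx^i])$, so the constraint from the pair $(V^i, V^{i,-})$ just records a free copy of $\M_{\zeta}(\bv^i,\bw^i)$, giving
\[
\fS_{\zeta}(\rho) \cong \fS_{\zeta}(\bv^1,\bw^1)\times\prod_{i=2}^m \M_{\zeta}(\bv^i,\bw^i).
\]

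Assembling, $\fS_{\zeta}(\bv,\bw)^{\lambda(\mbb C^{\times})} \cong \coprod \fS_{\zeta}(\bv^1,\bw^1)\times\prod_{i=2}^m\M_{\zeta}(\bv^i,\bw^i)$ over tuples with $\bv^1 = a(w_0*_{\bw^1}\bv^1)$ and $\bv^1+\sum_{i=2}^m(\bv^i + a(w_0*_{\bw^i}\bv^i)) = \bv$, which is the displayed formula quoted just before Proposition~\ref{A-fixed-sub-b}. Summing over all $\bv$ with $a(w_0*\bv)=\bv$, the constraint ``$\bv$ is fixed'' becomes automatic once $\bv^1$ is fixed and the $\bv^i$ are arbitrary (this is where one uses that $w_0$ and $a$ are compatible so the $*$-action and the matching are consistent), the index sets on the two sides match up, and one obtains
\[
\fS_{\zeta}(\bw)^{\lambda(\mbb C^{\times})} \cong \fS_{\zeta}(\bw^1)\times\prod_{i=2}^m\M_{\zeta}(\bw^i).
\]
The main obstacle I anticipate is purely bookkeeping: carefully checking that the emptiness/matching conditions $\bv^1=a(w_0*_{\bw^1}\bv^1)$, $\bv^i = a(w_0*_{\bw^i}\bv^{i,-})$ are exactly what is needed so that, after summing over $\bv$, the index set on the left collapses to ``$\bv^1\models\bw^1$-admissible, $\bv^i$ arbitrary'' with no leftover constraint — i.e. that nothing is over- or under-counted when passing from the $(\bv^i, \bv^{i,-})$ description to the single $\bv^i$. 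I would also want to double-check that the generalized $\sigma$ relating the $i$-th and $i$-minus factors is well-defined (the transpose being reinterpreted as an isomorphism $\M_{\zeta}(\bv^i,\bw^i)\to\M_{-\zeta}(\bv^i,\bw^{i,-})$ as in the parenthetical remark before Proposition~\ref{A-fixed-sub}), but this is a routine extension of the $m=2$ case already treated.
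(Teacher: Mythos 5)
Your argument matches the paper's proof: the paper also generalizes the $m=2$ case (Proposition~\ref{A-fixed-sub}) verbatim to the $1$-parameter subgroup~(\ref{T-general}), obtains the per-$\bv$ decomposition $\fS_{\zeta}(\bv,\bw)^{\lambda(\mbb C^{\times})}\cong\coprod\fS_{\zeta}(\bv^1,\bw^1)\times\prod_{i=2}^m\M_{\zeta}(\bv^i,\bw^i)$, and then sums over $\bv$. Your ``main obstacle'' bookkeeping worry is the only thing you do not fully resolve, but the paper leaves it implicit too, so this is consistent with the source.
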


Now consider an arbitrary torus $\T \in \G^{\sigma}_{\bw}$ and the space $\Hom(\mbb C^{\times}, \T)$ of $1$-parameter subgroups in $\T$.
We form the real form $\Hom(\mbb C^{\times}, \T) \otimes_{\mbb Z} \mbb R$. 
There are $generic$  $1$-parameter subgroups in $\T$, i.e., those $\lambda$ such that 
$\fS_{\zeta}(\bv, \bw)^{ \lambda(\mbb C^{\times})}= \fS_{\zeta}(\bv, \bw)^{\T}$. 
The remaining ones are called $special$, giving rise to larger fixed-point subvarieties. 
The special $1$-parameter subgroups form unions of hyperplanes, i.e., $walls$,  in $\Hom(\mbb C^{\times}, \T) \otimes_{\mbb Z} \mbb R$, 
separating generic $1$-parameter subgroups into $chambers$, i.e., connected components of the complements of the unions of walls. 
From our analysis above, we see that if $\T$ is a maximal torus in $\G^{\sigma}_{\bw}$, then the chamber structure can be identified with
the usual Weyl chambers of type $B/C$.

\subsection{Coideal structure}

\label{sec:coideal}

We shall write $\IC_X$ the intersection cohomology complex attached to an algebraic variety $X$ (see~\cite{BBD82}). 
In particular, if $X = \sqcup_{i=1}^n X_i$ is a disjoint union of irreducible smooth varieties, then $\IC_X=\oplus_{i=1}^n \mbb C_{X_i}[\dim X_i]$, where $\mbb C_{X_i}$ is the constant sheaf on $X_i$ with coefficients in $\mbb C$.

Recall from  (\ref{pi-s}) that there is a proper map
\[
\pi^{\sigma} : \fS_{\zeta} (\bw) \to \fS_0(\bw).
\]
So one can consider the following complex:
\begin{align}
\P_{\fS_{0}(\bw)} = (\pi^{\sigma})_! \IC_{\fS_\zeta(\bw)}.
\end{align}
Similarly, we define the complexes $\P_{\M_0(\bw)}$ and $\P_{\fS_0(\bw)^{T}}$. 
The complexes
$\P_{\fS_{0}(\bw)}$, $\P_{\M_0(\bw)}$ and $\P_{\fS_0(\bw)^{ T}}$ are semisimple perverse sheaves, since the map
 $\pi^{\sigma}$ is semismall by Corollary~\ref{semismall}. 
Now we study the hyperbolic localization/restriction functor of Braden~\cite{B03} and Drinfeld-Gaitsgory~\cite{DG14} on the level of $\sigma$-quiver varieties.

\begin{thm}
\label{restriction-functor}
There exists a canonical isomorphism
\begin{align}
\label{can-iso}
\mrm{can}_{\mathscr C}: \P_{\fS_0(\bw)^{\T}} \overset{\cong}{\longrightarrow}
\kappa^+_* (\iota^+)^! \P_{\fS_0 (\bw)},
\end{align}
where $\kappa^+$ and $\iota^+$ are in (\ref{restriction-diag}) with the $1$-parameter subgroup of $\T$ in the chamber $\mathscr C$.
\end{thm}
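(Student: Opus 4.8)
The strategy is to realize the isomorphism $\mrm{can}_{\mathscr C}$ as a $\sigma$-fixed-point version of the corresponding statement for ambient Nakajima varieties, which is Braden's hyperbolic localization theorem \cite{B03} combined with the semismallness of $\pi$ (the $\sigma$-free analogue of Corollary~\ref{semismall}). First I would record the hyperbolic localization diagram (\ref{restriction-diag}) for $\M_\zeta(\bw)$ itself, obtaining the canonical map $\mrm{can}: \P_{\M_0(\bw)^{\T}} \to \kappa^+_*(\iota^+)^! \P_{\M_0(\bw)}$; by Braden's contraction principle together with the fact that the attracting set $\M_\zeta(\bw)^{+\mbb C^\times}$ fibers over the fixed locus $\M_\zeta(\bw)^{\mbb C^\times}$ (this is the geometric input of \cite{N00, VV00}), this map is an isomorphism. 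The key point is that the proper map $\pi$ is $\G^\sigma_{\bw}\times\mbb C^\times$-equivariant (the Proposition following (\ref{S0-def})), so the hyperbolic localization functor commutes with $\pi_!$, and the semismallness upgrades this to an honest isomorphism of perverse sheaves rather than merely an isomorphism in the derived category.

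Next I would descend to the $\sigma$-fixed locus. Because the $\mbb C^\times$-action defining the attracting/repelling sets lies inside $\G^\sigma_{\bw}$, it commutes with $\sigma$, so $\sigma$ preserves the diagram (\ref{restriction-diag}) for $\M_\zeta(\bw)$ and restricts to the diagram (\ref{restriction-diag}) for $\fS_\zeta(\bw)$; moreover $\fS_\zeta(\bw)^{+\mbb C^\times} = \M_\zeta(\bw)^{+\mbb C^\times}\cap\fS_\zeta(\bw)$ and likewise for the fixed and repelling loci, exactly as in the displayed identities preceding (\ref{restriction-diag}). The complex $\P_{\fS_0(\bw)}=(\pi^\sigma)_!\IC_{\fS_\zeta(\bw)}$ is the $\sigma$-isotypic (here: $\sigma$-invariant) summand of the pushforward from the corresponding component of the $\sigma$-fixed locus, and since taking the $\sigma$-invariant part is an exact functor commuting with the proper pushforwards $\kappa^+_*$ and the open-closed restriction $(\iota^+)^!$ (all of which are $\sigma$-equivariant by the first paragraph), I would define $\mrm{can}_{\mathscr C}$ to be the restriction of the ambient $\mrm{can}$ to this summand. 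Exactness of the invariants functor (characteristic zero, finite group $\langle\sigma\rangle$) then shows that $\mrm{can}_{\mathscr C}$ is an isomorphism because $\mrm{can}$ is.

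The remaining bookkeeping is to check that the source really is $\P_{\fS_0(\bw)^{\T}}$: one needs the decomposition of the $\mbb C^\times$-fixed (equivalently $\T$-fixed, for $\lambda$ in the chamber $\mathscr C$) locus of $\fS_\zeta(\bw)$ into products of smaller $\sigma$-quiver varieties and ordinary quiver varieties, which is precisely Propositions~\ref{A-fixed-sub} and~\ref{A-fixed-sub-b}; this identifies $(\pi^\sigma)_!\IC$ on the fixed locus with $\P_{\fS_0(\bw)^{\T}}$ as defined just before the theorem, compatibly with the analogous product decomposition of the ambient $\M_\zeta(\bw)^{\T}$ from (\ref{rho-VV}). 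I expect the main obstacle to be this last compatibility: one must verify that the Künneth-type identification of the fixed-point subvarieties matches the hyperbolic-localization identification at the level of the perverse sheaves $\P$, i.e.\ that $\kappa^+_*(\iota^+)^!$ applied to the factorized $\IC$ really reproduces the $\IC$ of the factorized fixed locus and not some twist of it — this is where one genuinely uses that $\pi^\sigma$ is semismall (so no higher perverse cohomology intervenes) and that the attracting-set fibration over the fixed locus is an affine-space bundle, which forces the hyperbolic localization on each stratum to be a shift with no nontrivial local system, hence agrees with the naive restriction.
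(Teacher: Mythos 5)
Your proposal does not match the paper's argument, and I believe the central step is unsound.

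The fundamental problem is in your second paragraph, where you propose to obtain $\mrm{can}_{\mathscr C}$ by ``restricting the ambient $\mrm{can}$ to the $\sigma$-invariant summand.''  This does not make sense as a sheaf-theoretic operation: $\P_{\fS_0(\bw)}=(\pi^{\sigma})_!\IC_{\fS_{\zeta}(\bw)}$ is the pushforward of the intersection complex of the \emph{submanifold} $\fS_{\zeta}(\bw)\subset\M_{\zeta}(\bw)$, and when $\sigma$ is nontrivial that submanifold has strictly smaller dimension.  There is no invariants functor carrying $\pi_!\IC_{\M_{\zeta}(\bw)}$ (a perverse sheaf on $\M_0(\bw)$) to $(\pi^{\sigma})_!\IC_{\fS_{\zeta}(\bw)}$ (a perverse sheaf on $\fS_0(\bw)$), nor is $\IC_{\fS_{\zeta}(\bw)}$ in any useful sense a $\sigma$-isotypic piece of $\IC_{\M_{\zeta}(\bw)}$: the two live on different spaces and have unrelated supports and shifts.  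The phrase ``the $\sigma$-invariant summand of the pushforward from the $\sigma$-fixed locus'' is circular, since that pushforward already \emph{is} $\P_{\fS_0(\bw)}$, and in any case it gives no map from the ambient canonical isomorphism.  Your first paragraph is also shakier than you present it: Braden's theorem gives adjointness and preservation of perversity but does not by itself identify $\kappa^+_*(\iota^+)^!\P_{\M_0(\bw)}$ with $\P_{\M_0(\bw)^{\T}}$; even the ambient isomorphism requires the deformation argument of Nakajima's \cite{N16} lecture notes, which you are treating as a black box.

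The paper proves the theorem \emph{directly} on the $\sigma$-quiver varieties, by adapting that same deformation-and-nearby-cycle argument.  One forms the one-parameter family $\fS^{\clubsuit}(\bv,\bw)=\sqcup_{z\in\mbb C}\fS_{\zeta(z)}(\bv,\bw)$ over $\mbb C$ (with $\zeta_{\mbb C}(z)$ varying and $\xi$ fixed positive), together with the analogous family $\fS^{\clubsuit}_1(\bv,\bw)$, and the proper map $\Pi^{\sigma}$ between them over $\mbb C$.  The crucial inputs are: (i) over $\mbb C\setminus\{0\}$ the map $\Pi^{\sigma}$ is an isomorphism, so the nearby cycle $\uppsi_{f_0}[-1]$ of $\IC_{\fS^{\clubsuit}_1}$ recovers $\pi^{\sigma}_!\IC_{\fS_{\zeta}(\bv,\bw)}$; (ii) the same with $\T$ inserted; (iii) the relative symplectic form on $\M^{\clubsuit}$ yields a canonical isomorphism between $\IC$ on the $\T$-fixed part of the deformed family and $\tilde\kappa_*\tilde\iota^!$ of the ambient $\IC$, restricted away from the zero fiber; and (iv) nearby cycles commute with proper pushforward and with hyperbolic restriction.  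Chaining these gives the chain of isomorphisms (\ref{can_c}) in the paper.  Your observations about $\G^{\sigma}_{\bw}\times\mbb C^{\times}$-equivariance of $\pi^{\sigma}$ and the fixed-point decomposition of Propositions~\ref{A-fixed-sub} and~\ref{A-fixed-sub-b} are accurate and do play a role, but they enter only after the deformation argument has been run intrinsically on the $\sigma$-varieties, not as a way to descend from the ambient statement.
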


\begin{proof}
For each $z \in \mbb C$, let $\zeta_{\mbb C} (z) \in \mbb C^I$  be the element whose $i$-th component is $z$. 
Let $\zeta(z) = (\xi, \zeta_{\mbb C}(z))$ where $\xi \in \mbb C^I$ be the element whose $i$-th component is $1$. 
We consider 
\[
\fS^{\clubsuit}(\bv, \bw) = \sqcup_{z\in \mbb C} \fS_{\zeta(z)} (\bv, \bw)
\quad \mbox{and} \quad
\fS^{\clubsuit}_1(\bv, \bw) = \sqcup_{z\in \mbb C} \fS_{\zeta (z), 1} ( \bw),
\]
where $ \fS_{\zeta(z), 1} (\bv, \bw)$ is the  $\fS_{1} (\bv, \bw)$ with $\zeta(z)$ emphasized.
Similarly, one can consider $\M^{\clubsuit} (\bv, \bw)$ and $\M^{\clubsuit}_1(\bv, \bw)$.
These are algebraic varieties defined in a similar way as $\M_{\zeta}(\bv, \bw)$ and $\M_{1}(\bv, \bw)$, and so are 
$\fS^{\clubsuit}(\bv, \bw) $ and $\fS^{\clubsuit}_1(\bv, \bw)$ 
as fixed-point subvarieties of automorphisms on the former algebraic varieties.
Similarly, there is a proper morphism over $\mbb C$:
\begin{align}
\label{Diag-deform}
\xymatrix{
\fS^{\clubsuit}(\bv, \bw) \ar[rr]^{\Pi^{\sigma}}  \ar[dr]_{f} & &  \fS^{\clubsuit}_1(\bv, \bw) \ar[dl]^{f_0}\\
& \mbb C & 
}
\end{align}
where the morphisms to $\mbb C$ are defined by sending a point in  $ \fS_{\zeta(z)} (\bv, \bw)$
and $ \fS_{\zeta(z), 1} (\bv, \bw)$ to  $z$.
The $\Pi^{\sigma}$  is a one-parameter deformation of $\pi^{\sigma}$.  In particular, the fiber of $\Pi^{\sigma}$ over $0 \in \mbb C$ is exactly $\pi^{\sigma}$. Moreover, the fiber over $\mbb C-\{0\}$ is an isomorphism 
\begin{align}
\label{Pi-a}
\fS^{\clubsuit} (\bv, \bw) \backslash \fS_{\zeta}(\bv, \bw) \cong
\fS^{\clubsuit}_1(\bv, \bw) \backslash \fS_1(\bv, \bw).
\end{align}
Now we apply the argument in ~\cite[5.4]{N16}.
By  (\ref{Pi-a}), there is a canonical isomorphism
\begin{align}
\label{can_a}
\uppsi_{f_0}[-1] \IC_{\fS^{\clubsuit}_1(\bv,\bw)}|_{ \fS^{\clubsuit}_1(\bv,\bw) \backslash \fS_1(\bv,\bw)}
\cong \pi^{\sigma}_! \IC_{\fS(\bv,\bw)},
\end{align}
where $\uppsi_{f_0}$ is the nearby cycle functor with respect to $f_0$.
Similarly, there is an isomorphism
\begin{align}
\label{can_aT}
\uppsi_{f^\T_0}[-1] \IC_{\fS^{\clubsuit}_1(\bv,\bw)^{\T}}|_{ \fS^{\clubsuit}_1(\bv,\bw)^{ \T} \backslash \fS_1(\bv,\bw)^{\T}} 
\cong \pi^{\sigma,\T}_! \IC_{\fS(\bv,\bw)^{\T}},
\end{align}
where $f^\T_0$ is an analogue of $f_0$ in (\ref{Diag-deform}) and $\pi^{\sigma, \T}$ is the restriction of $\pi^{\sigma}$ to its $\T$-fixed point part. By (\ref{Pi-a})
and the relative symplectic form on $\M^{\clubsuit}(\bv, \bw)$ induced from (\ref{symplectic-form}), it yields a canonical isomorphism
\begin{align}
\label{can_b}
\IC_{\fS^{\clubsuit}_1(\bv,\bw)^{\T}}|_{\fS^{\clubsuit}_1(\bv,\bw)^{\T}   \backslash \fS_1(\bv,\bw)^{\T}}
\overset{\cong}{\longrightarrow}
\tilde \kappa_* \tilde \iota^! \IC_{\fS_1(\bv,\bw)} |_{\fS^{\clubsuit}_1(\bv,\bw)^{\T}  \backslash \fS_1(\bv,\bw)^{\T}}.
\end{align}
where $\tilde \kappa$ and $\tilde \iota$ are the counterparts of $\kappa^+$ and $\iota^+$ respectively 
on $\fS^{\clubsuit}_0(\bv,\bw)$.
Therefore, there is a canonical  isomorphism
\begin{align}
\label{can_c}
\begin{split}
\pi^{\sigma,\T}_! \IC_{\fS(\bv,\bw)^{\T}} 
&\overset{(\ref{can_aT})}{\cong} \pi^{\sigma,\T}_! \uppsi_{f^\T} [-1] \IC_{\fS^{\clubsuit} (\bv,\bw)^{\T}}|_{ \fS^{\clubsuit} (\bv,\bw)^{\T} \backslash \fS(\bv,\bw)^{\T}} \\
&\overset{(\star)}{\cong} \uppsi_{f^\T_0}[-1] \Pi^{\sigma,\T}_! \IC_{\fS^{\clubsuit} (\bv,\bw)^{\T}}|_{ \fS^{\clubsuit} (\bv,\bw)^{\T} \backslash \fS(\bv,\bw)^{\T}} \\
&\overset{(\dagger)}{\cong}  \uppsi_{f^\T_0} [-1]\IC_{\fS^{\clubsuit}_1(\bv,\bw)^{\T}}|_{ \fS^{\clubsuit}_1(\bv,\bw)^{\T} \backslash \fS_1(\bv,\bw)^{\T}} \\
&\overset{(\ref{can_b})}{\cong}  \uppsi_{f^\T_0} [-1]\tilde \kappa_* \tilde \iota^! \IC_{\fS^{\clubsuit}_1(\bv,\bw)}|_{ \fS^{\clubsuit}_1(\bv,\bw)^{\T} \backslash \fS_1(\bv,\bw)^{\T}}  \\
& \overset{(\flat)}{\cong} (\kappa^+)_* (\iota^+)^! \uppsi_{f_0}[-1] \IC_{\fS^{\clubsuit}_1(\bv,\bw)}|_{ \fS^{\clubsuit}_1(\bv,\bw) \backslash \fS_1(\bv,\bw)}   \\
& \overset{(\ref{can_a})}{\cong} (\kappa^+)_* (\iota^+)^!  \pi^{\sigma}_! \IC_{\fS(\bv,\bw)},
\end{split}
\end{align}
where  ($\star$) is due to the fact that a nearby cycle functor commutes with proper maps,
($\dagger$) is due to $\pi^\T$ is an isomorphism when restricts to $\fS^{\clubsuit}(\bv,\bw)^{\T} \backslash \fS(\bv,\bw)^{\T}$,
and ($\flat$) is due to the fact that a nearby cycle functor commutes with hyperbolic restrictions.
The theorem follows by summing the above (\ref{can_c})  over all $\bv$.
\end{proof}

\begin{rem}
\label{rem:can}
(1). We refer the reader to ~\cite[3(iv)]{N13} for the subtleties in choosing an isomorphism in (\ref{can-iso}).

(2) Since a nearby cycle functor, shifted by $[-1]$, sends perverse sheaves to perverse sheaves, the isomorphism (\ref{can_a}) implies that 
the complex $ \pi_* \IC_{\fS(\bv,\bw)}$ is a semisimple perverse sheaf. This in turn implies that the map $\pi^{\sigma}$ is
semismall onto its image (see Corollary~\ref{semismall}). 
\end{rem}

By Proposition~\ref{A-fixed-sub-b}, we see that there is a canonical isomorphism
\[
\P_{\fS_0(\bw)^{\T}} \cong \oplus_{ \bv^2 \models \bv} \pi^{\sigma}_! \IC_{\fS_{\zeta}( \bw^1)} 
\boxtimes \pi_!\IC_{\M_{\zeta}(\bw^2)}.
\]
Thus the complex $\P_{\fS_0(\bw)^{ \T}} $ is a direct summand of the complex
$\P_{\fS_0(\bw^1)} \boxtimes \P_{\M_0(\bw^2)} $.
So the restriction functor $ \kappa^+_* (\iota^+)^! $ induces an algebra homomorphism
\begin{align}
\label{D-End}
\Delta^{\sigma}_{\bw^1, \bw^2}: \End ( \P_{\fS_0 (\bw)}) \to \End( \P_{\fS_0(\bw^1)}) \otimes \End ( \P_{\M_0(\bw^2)}), 
\end{align}
where $ \bw^1 + 2\bw^2 = \bw$ and $a\bw^i=\bw^i$ for $i=1,2$.
(Here the endomorphisms are taken inside  abelian categories of perverse sheaves.)

Now we consider the following Steinberg-like varieties.
\begin{align}
\fY (\bw) = \sqcup_{\bv^1, \bv^2} \fY (\bv^1, \bv^2, \bw), \quad
\fY (\bv^1, \bv^2, \bw) = \fS_{\zeta} (\bv^1, \bw) \times_{\fS_0(\bw) }  \fS_{\zeta} (\bv^2, \bw).
\end{align}
Similarly, the notation $\Z(\bw)$ is defined with respect to Nakajima varieties $\M_{\zeta}(\bv,\bw)$.
Let $\H_{top}(X)$ denote the top Borel-Moore homology of $X$, see~\cite{CG}. 
From ~\cite[8.9.7]{CG}, there is an algebra isomorphism 
\[
\End ( \P_{\fS_0 (\bw)}) \cong \H_{top} (\fY(\bw))
\quad \mbox{and} \quad
\End ( \P_{\M_0 (\bw)}) \cong \H_{top} (\Z(\bw)).
\]
In terms of top Borel-Moore homology, the algebra homomorphism in (\ref{D-End}) becomes the following algebra homomorphism, 
denoted by the same notation. 
\begin{align}
\label{D-BM}
\Delta^{\sigma}_{\bw^1, \bw^2} : \H_{top} (\fY (\bw)) \to \H_{top} (\fY (\bw^1)) \otimes \H_{top} (\Z(\bw^2)),
\ \mbox{if} \ \bw^1 + 2\bw^2 = \bw, a\bw^i=\bw^i.
\end{align}
In the same vein, we have an algebra homomorphism:
\begin{align}
\Delta_{\bw^1, \bw^2} : \H_{top}(\Z(\bw)) \to \H_{top} (\Z(\bw^1) ) \otimes \H_{top}(\Z(\bw^2)) , \quad \mbox{if}\  \bw^1 + \bw^2 =\bw.
\end{align}
By the canonical choice of isomorphism in Theorem~\ref{restriction-functor}, we obtain

\begin{prop}
\label{coideal}
The algebra homomorphism $\Delta^{\sigma}_{\bw^1, \bw^2}$ satisfies the coassociativity, that is
\begin{align}
(\Delta^{\sigma}_{\bw^1, \bw^2} \otimes 1) \circ \Delta^{\sigma}_{\bw^1+\bw^2+
\bw^3, \bw^3} =
(1\otimes \Delta_{\bw^2,\bw^3} ) \circ  \Delta^{\sigma}_{\bw^1,\bw^2+\bw^3} , 
\end{align}
for all $\bw^1+2(\bw^2 +\bw^3)  =\bw$ and $a\bw^i=\bw^i$ for $i=1,2,3$.
\end{prop}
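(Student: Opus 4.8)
The plan is to deduce the coassociativity identity from the transitivity of hyperbolic localization under iteration, applied to a rank-two subtorus of $\G^{\sigma}_{\bw}$, together with the canonicity of the isomorphism in Theorem~\ref{restriction-functor}; this is the $\sigma$-analogue of Nakajima's argument in~\cite[5.4]{N16} (after Maulik--Okounkov~\cite{MO12}) for the ordinary comultiplication $\Delta$ on $\H_{top}(\Z(\bw))$, the only structural novelty being that exactly one of the three tensor factors is a $\sigma$-quiver variety while the other two split off as ordinary Nakajima varieties.

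First I would fix a decomposition $W = W^1 \oplus W^2 \oplus W^{2,-} \oplus W^3 \oplus W^{3,-}$ of the formed space underlying~(\ref{T-general}): the summands $a$-stable, $\dim W^1 = \bw^1$, $\dim W^i = \dim W^{i,-} = \bw^i$ for $i=2,3$, with $(W^i, W^{i,-})$ mutually orthogonal dual isotropic pairs. I then introduce the rank-two subtorus $\T' \cong (\mbb C^{\times})^2 \subseteq \G^{\sigma}_{\bw}$ whose cocharacter lattice is spanned by $\lambda_{\mathrm{out}}$ (scaling $W^2 \oplus W^3$ by $t$, $W^{2,-} \oplus W^{3,-}$ by $t^{-1}$, fixing $W^1$) and $\lambda_{\mathrm{in}}$ (scaling $W^3$ by $t$, $W^{3,-}$ by $t^{-1}$, fixing $W^1 \oplus W^2 \oplus W^{2,-}$), and I take $\mathscr C$ to be the chamber of $\Hom(\mbb C^{\times},\T') \otimes_{\mbb Z} \mbb R$ spanned by $\lambda_{\mathrm{out}}$ and $\lambda_{\mathrm{in}}$, whose two walls are the rays $\langle \lambda_{\mathrm{in}} \rangle$ and $\langle \lambda_{\mathrm{out}} \rangle$. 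A generic cocharacter of $\mathscr C$ has fixed locus $\fS_{\zeta}(\bw)^{\T'}$, which by Proposition~\ref{A-fixed-sub-b} (applied with $m=3$) equals $\fS_{\zeta}(\bw^1) \times \M_{\zeta}(\bw^2) \times \M_{\zeta}(\bw^3)$ with $\bw = \bw^1 + 2\bw^2 + 2\bw^3$. Applying Theorem~\ref{restriction-functor} to $\T'$ and $\mathscr C$ then yields a single canonical isomorphism $\mathrm{can}^{\T'}_{\mathscr C}$ and hence, via~\cite[8.9.7]{CG}, one algebra homomorphism $\H_{top}(\fY(\bw)) \to \H_{top}(\fY(\bw^1)) \otimes \H_{top}(\Z(\bw^2)) \otimes \H_{top}(\Z(\bw^3))$; all complexes in sight are semisimple perverse with well-defined $\H_{top}$ because $\pi^{\sigma}$ is semismall by Corollary~\ref{semismall}.

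The core step is to compute $\mathrm{can}^{\T'}_{\mathscr C}$ by degenerating the cocharacter of $\mathscr C$ to each of its two walls. Here I would invoke the transitivity of hyperbolic localization along a flag of one-parameter subgroups --- the refined form of Braden's theorem~\cite{B03},~\cite{DG14} used in~\cite[5.4]{N16}: for $\lambda \in \mathscr C$ chosen near the relative interior of a wall $\mathscr C'$, the hyperbolic restriction $\kappa^+_*(\iota^+)^!$ along $\lambda$ to $\fS_{\zeta}(\bw)^{\T'}$ factors as hyperbolic restriction along a representative of $\mathscr C'$ to the corresponding intermediate fixed locus, followed by hyperbolic restriction inside that locus down to $\fS_{\zeta}(\bw)^{\T'}$; moreover this factorization is compatible with the one-parameter deformation / nearby-cycle construction in the proof of Theorem~\ref{restriction-functor}, since a nearby-cycle functor commutes with proper maps and with hyperbolic restriction --- the steps $(\star)$ and $(\flat)$ in~(\ref{can_c}) --- and the construction can be carried out for the two wall directions in either order. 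Degenerating first to $\langle \lambda_{\mathrm{in}} \rangle$, the intermediate fixed locus is $\fS_{\zeta}(\bw^1 + 2\bw^2) \times \M_{\zeta}(\bw^3)$, so on $\H_{top}$ the factorization reads $(\Delta^{\sigma}_{\bw^1,\bw^2} \otimes 1) \circ \Delta^{\sigma}_{\bw^1+2\bw^2,\bw^3}$, the left-hand composite in the statement. Degenerating first to $\langle \lambda_{\mathrm{out}} \rangle$, the intermediate fixed locus is $\fS_{\zeta}(\bw^1) \times \M_{\zeta}(\bw^2 + \bw^3)$, and the residual hyperbolic restriction on the Nakajima factor $\M_{\zeta}(\bw^2+\bw^3)$ is precisely the comultiplication $\Delta_{\bw^2,\bw^3}$ of~\cite[5.4]{N16}, so the factorization reads $(1 \otimes \Delta_{\bw^2,\bw^3}) \circ \Delta^{\sigma}_{\bw^1,\bw^2+\bw^3}$, the right-hand composite. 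Since both expressions compute the single map $\mathrm{can}^{\T'}_{\mathscr C}$, the identity follows.

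The step I expect to be the main obstacle is the bookkeeping of the canonical normalization of~(\ref{can-iso}). As flagged in Remark~\ref{rem:can}(1) and~\cite[3(iv)]{N13}, this normalization is a genuine choice, and for the two wall-degenerations to produce literally the same map --- not merely maps agreeing up to an automorphism of the common target --- one must verify that it is stable under composition of hyperbolic restrictions and under the product decomposition $\fS_{\zeta}(\bw)^{\T'} \cong \fS_{\zeta}(\bw^1) \times \M_{\zeta}(\bw^2) \times \M_{\zeta}(\bw^3)$. Concretely this reduces to two checks: that the relative-symplectic-form argument producing~(\ref{can_b}) goes through unchanged for a product of a $\sigma$-quiver variety with Nakajima varieties and can be run in the two deformation directions simultaneously; and that the nearby-cycle functors attached to the two wall directions commute, so that the order of degeneration does not change $\mathrm{can}^{\T'}_{\mathscr C}$. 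Both are standard facts about nearby cycles and hyperbolic localization; granting them, the remainder is a formal diagram chase of exactly the shape of the Nakajima case, with the single $\sigma$-factor carried along unchanged.
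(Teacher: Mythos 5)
Your proof is correct and fleshes out precisely the argument the paper compresses into the single clause ``By the canonical choice of isomorphism in Theorem~\ref{restriction-functor}, we obtain,'' namely computing the rank-two hyperbolic restriction via the two wall degenerations of a two-dimensional torus and invoking transitivity of hyperbolic localization together with the canonicity supplied by the nearby-cycle construction of (\ref{can_c}); your flag of the normalization subtlety from Remark~\ref{rem:can}(1) is exactly the right caveat, and the paper offers no more detail on it than you do. Incidentally, you silently corrected an apparent typo: the paper's subscript $\Delta^{\sigma}_{\bw^1+\bw^2+\bw^3,\bw^3}$ should be $\Delta^{\sigma}_{\bw^1+2\bw^2,\bw^3}$ as you wrote, since (\ref{D-BM}) requires the first subscript plus twice the second to equal $\bw$, and here $\bw^1+2(\bw^2+\bw^3)=\bw$.
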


To $\bw$, we define $\r_{\bw}$ by  $(\r_{\bw})_i = \frac{\bw_i}{2} - \frac{1- (-1)^{\bw_i}}{4}$, that is,
$(\r_{\bw})_i$ is the rank of the $i$-th isometry group with respect to the $i$-th $\delta_{\bw,i}$-form.
Let $\s_{\bw} = \bw - 2 \r_{\bw}$. 
If $\fS_{\zeta}(\s_{\bw}) = \{ \mrm{pt}\}$, then the coproduct (\ref{D-BM}) becomes
the following algebra homomorphism.
\begin{align}
\label{j}
\jmath: \H_{top} (\fY (\bw)) \to \H_{top} (\Z(\r_{\bw})), \quad \mbox{if} \ \fS_{\zeta}(\s_{\bw}) = \{ \mrm{pt}\}.
\end{align}

Recall that $\Gamma$ is a Dynkin diagram
and $w_0$ is the longest element in the associated Weyl group.
There is an involution $\theta$ on $I$ such that $w_0 (\alpha_i) =  - \alpha_{\theta(i)}$ 
where $\alpha_i$ is the $i$-th simple root of $\Gamma$. 
Let $\mathfrak g_{\Gamma}$ be the simple Lie algebra associated to $\Gamma$ with Chevalley generators $\{ e_i, f_i, h_i| i\in I\}$.
Then the assignment $e_i \mapsto f_{a \theta(i)}$, $f_i \mapsto e_{a\theta (i)}$ and $h_i \mapsto - h_{a\theta(i)}$ for all $i\in I$ defines
an involution, denoted by $\sigma$, on $\mathfrak g_{\Gamma}$. 
It is known that the fixed-point Lie subalgebra  $\mathfrak g_{\Gamma}^{\sigma}$ 
is generated by $e_i + f_{a\theta(i)}$ and $h_i - h_{a\theta(i)}$ for all $i\in I$. 
The algebra $\mathfrak g_{\Gamma}^{\sigma}$ is usually denoted by $\mathfrak k$ in the introduction. 
The pair $(\mathfrak g_{\Gamma}, \mathfrak g_{\Gamma}^{\sigma})$ then forms a so-called symmetric pair. 
The Lie algebra $\mathfrak g_{\Gamma}^{\sigma}$ is classified by the Satake diagrams without black vertices (i.e., $X=\O $ in~\cite{K14}).
Specifically, they are listed in the Table 1.
\begin{table}
\label{Fixed-point-subalgebra}
\begin{center}
\begin{tabular}{| l | l | l | }
\hline
$(\Gamma, |a|)$ & $\mathfrak g^{\sigma} \equiv \mathfrak k$ & Satake type\\
\hline 
($\mrm A_\ell, 1): \ell=2p$ & $\mathfrak{sl}_p\oplus \mathfrak{gl}_{p+1}$ & AIII\\
\hline
($\mrm A_\ell, 1): \ell = 2p-1$ & $\mathfrak{sl}_p \oplus \mathfrak{gl}_p$ & AIII\\
\hline
$(\mrm A_\ell, 2)$ & $\mathfrak{so}_{\ell+1}$ & AI\\
\hline
$(\mrm D_\ell, 1): \ell \ \mbox{odd}$ & $\mathfrak{so}_{\ell -1} \oplus \mathfrak{so}_{\ell+1} $ & DI\\
\hline
$(\mrm D_\ell, 1): \ell \ \mbox{even}$ & $\mathfrak{so}_\ell \oplus \mathfrak{so}_{\ell}$ & DI\\
\hline
$(\mrm D_\ell, 2): \ell \ \mbox{odd}$ & $\mathfrak{so}_\ell \oplus \mathfrak{so}_\ell$ & DI\\
\hline
$(\mrm D_\ell, 2): \ell \ \mbox{even}$ & $\mathfrak{so}_{\ell -1}\oplus \mathfrak{so}_{\ell +1}$ & DI\\
\hline
$(\mrm E_6, 1)$ & $\mathfrak{sl}_2 \oplus \mathfrak{sl}_6$ & EII\\
\hline 
$(\mrm E_6, 2)$ & $\mathfrak{sp}_4$ & EI\\
\hline 
$(\mrm E_7, 1)$ & $\mathfrak{sl}_8$ & EV\\
\hline
$(\mrm E_8, 1)$ & $\mathfrak{so}_{16}$ & EVIII\\
\hline
\end{tabular}
\end{center}
\caption{}
\end{table}

Let $U(\mathfrak g_{\Gamma}^{\sigma})$ be the universal enveloping algebra of $\mathfrak g^{\sigma}_{\Gamma}$. 
With the coassociativity and (\ref{j}) in hand, we make the following conjecture.

\begin{conj}
\label{conj-1}
There is a nontrivial algebra homomorphism
$
U(\mathfrak g^{\sigma}_{\Gamma}) \to \H_{top}(\fY(\bw)). 
$
\end{conj}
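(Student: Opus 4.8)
The plan is to build the map as the composite of Nakajima's geometric homomorphism with restriction to the $\sigma$-fixed locus. Via the Hecke correspondences $\mathfrak P_i(\bv,\bw)\subset\M_\zeta(\bv,\bw)\times\M_\zeta(\bv+e_i,\bw)$ and the diagonals, Nakajima produces classes $E_i,F_i,H_i$ in the convolution algebra $\H_{top}(\Z(\bw))$ and a ring homomorphism $\nu\colon U(\mathfrak g_\Gamma)\to\H_{top}(\Z(\bw))$ (see \cite{CG, N98}). On the other hand, $\fY(\bw)$ is precisely the $(\sigma\times\sigma)$-fixed locus of $\Z(\bw)$, with the fibre product taken over $\fS_0(\bw)=\M_0(\bw)^\sigma$, and I would use that restriction to such a fixed locus, $\rho\colon\H_{top}(\Z(\bw))\to\H_{top}(\fY(\bw))$, is a homomorphism of convolution algebras --- a standard consequence of $\sigma$ having finite order together with the base-change identity $\Z(\bw)^{\sigma\times\sigma}=\fY(\bw)$. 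The candidate is then $\Psi:=\rho\circ\nu|_{U(\mathfrak g_\Gamma^\sigma)}$, a ring map $U(\mathfrak g_\Gamma^\sigma)\to\H_{top}(\fY(\bw))$; on generators it sends $e_i+f_{a\theta(i)}\mapsto\rho(E_i)+\rho(F_{a\theta(i)})$ and $h_i-h_{a\theta(i)}\mapsto\rho(H_i)-\rho(H_{a\theta(i)})$.

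\textbf{Why $\mathfrak k$ is the natural domain.} For this to be non-vacuous I must see that $\rho$ does not kill the relevant classes, and that $\mathfrak g_\Gamma^\sigma$, rather than some larger or smaller subalgebra, is what governs the construction. The point is that $\sigma=a\,S_{w_0}\,\tau_\zeta$ exchanges the $E$- and $F$-correspondences: since $\sigma\colon\M_\zeta(\bv,\bw)\to\M_\zeta(a(w_0*\bv),\bw)$ and $\tau_\zeta$ is the transpose, conjugating $\mathfrak P_i(\bv,\bw)$ by $\sigma\times\sigma$ gives, up to the relabeling $i\mapsto a\theta(i)$ dictated by $w_0(\alpha_i)=-\alpha_{\theta(i)}$ and by $a$ and up to an explicit scalar, the transposed Hecke correspondence defining $F_{a\theta(i)}$; likewise $\sigma$ sends $H_i$ to $-H_{a\theta(i)}$. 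Restricting a class and its $\sigma$-conjugate to the $\sigma$-fixed locus gives the same answer, so $\rho(E_i)=\rho(F_{a\theta(i)})$ and $\rho(H_i)=-\rho(H_{a\theta(i)})$; thus $\Psi$ is governed exactly by the involution whose fixed subalgebra is $\mathfrak k$, and the classes $\rho(E_i),\rho(H_i)$ land in the top Borel--Moore homology by the dimension bounds behind the semismallness of $\pi^\sigma$ (Corollary~\ref{semismall}). Since $\nu$ and $\rho$ are ring maps, $\Psi$ is automatically a homomorphism, so no Serre-type relations need be checked by hand.

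\textbf{Nontriviality.} It remains to show $\rho(E_i)\neq0$ (or $\rho(H_i-H_{a\theta(i)})\neq0$) for some $i$ and some admissible $\bw$. Here I would compose with the specialization $\jmath\colon\H_{top}(\fY(\bw))\to\H_{top}(\Z(\r_\bw))$ of~(\ref{j}), valid when $\fS_\zeta(\s_\bw)=\{\mrm{pt}\}$: the composite $\jmath\circ\Psi$ should recover a twisted form of Nakajima's $\mathfrak g$-action on $\H_{top}(\Z(\r_\bw))$ restricted to $\mathfrak k$, which is visibly nonzero; matching the two is facilitated by the coassociativity of Proposition~\ref{coideal}. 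Alternatively one verifies directly, using the type $\mrm A$ models of Theorem~\ref{a}, that $\Psi(e_i+f_{a\theta(i)})$ acts nontrivially on $\H_*(\fS_\zeta(\bv,\bw))$ for a small $(\bv,\bw)$ where the $\sigma$-quiver variety is a point or a $\mbb P^1$.

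\textbf{Main obstacle.} The crux is the well-definedness of the convolution algebra $\H_{top}(\fY(\bw))$ and of the ring map $\rho$. Unlike in Nakajima's setting, $(\pi^\sigma)^{-1}(0)$ is only isotropic and \emph{not} Lagrangian in general (remark after Proposition~\ref{Ms-symplectic}), so $\fY(\bw)$ need not be equidimensional, the identification $\End(\P_{\fS_0(\bw)})\cong\H_{top}(\fY(\bw))$ becomes delicate, and the dimension estimates needed both to place $\rho(E_i)$ in top degree and to make $\rho$ multiplicative require care; I expect one must restrict to the equidimensional components singled out by $\pi^\sigma$ (cf.\ the remark after Corollary~\ref{semismall}) or extract the algebra structure abstractly from the hyperbolic-localization formalism of Theorem~\ref{restriction-functor}. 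A secondary point is pinning down the exact scalars in the identification of $\sigma$-conjugates of the $E_i$-correspondence with the $F_{a\theta(i)}$-one, which decides whether some generators must be twisted by signs --- the analogue of the sign bookkeeping in Lusztig's opposition recalled in~\cite{N03}.
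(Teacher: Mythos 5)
This statement is a \emph{conjecture} in the paper, not a theorem: the paper offers no proof of it, only supporting evidence, namely the coideal structure $\Delta^{\sigma}_{\bw^1,\bw^2}$ of Proposition~\ref{coideal} and the specialization $\jmath$ of~(\ref{j}), and the remark that for type $\mrm A$ with $a=1$ it should follow from~\cite{BKLW} together with a Braverman--Gaitsgory-style argument~\cite{BG99}. Your proposal takes a genuinely different route: rather than building the map out of the coideal/comultiplication structure (which is how~\cite{BG99} produces $U(\mathfrak{gl}_n)$, and which is what the paper's Proposition~\ref{coideal} is clearly set up to mimic), you try to pull back Nakajima's generators $E_i,F_i,H_i\in\H_{top}(\Z(\bw))$ along a ``restriction to the $\sigma$-fixed locus'' map $\rho$. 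The observation that conjugating the Hecke correspondence for $E_i$ by $\sigma\times\sigma$ produces the one for $F_{a\theta(i)}$ (up to the relabeling by $a\theta$ and some signs) is a good sanity check that $\mathfrak k=\mathfrak g_\Gamma^\sigma$ is the right source, and it matches the presentation of $\mathfrak k$ given in Section~\ref{sec:coideal}.

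However, the central step of your proposal is not merely delicate but, as stated, unjustified and in my view false: the assertion that $\rho\colon\H_{top}(\Z(\bw))\to\H_{top}(\fY(\bw))$ is a homomorphism of convolution algebras ``as a standard consequence of $\sigma$ having finite order and $\Z(\bw)^{\sigma\times\sigma}=\fY(\bw)$.'' There is no such general fact. Restriction to the fixed-point locus of a finite-order automorphism in Borel--Moore homology requires a wrong-way (Gysin) map, which is problematic because $\Z(\bw)$ is singular and $\fY(\bw)\hookrightarrow\Z(\bw)$ is not a regular embedding; even when a Gysin map is available it shifts homological degree by twice the codimension, so there is no reason it should carry top degree to top degree, and there is certainly no general reason it should intertwine the convolution products, which are built from refined intersections and proper pushforwards in the ambient smooth $\M_\zeta\times\M_\zeta$ and $\fS_\zeta\times\fS_\zeta$ separately. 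You half-acknowledge this under ``Main obstacle,'' but a proof of the conjecture along these lines would have to \emph{construct} $\rho$ and verify multiplicativity directly, not invoke it. The paper's machinery points instead toward producing the algebra structure from the sheaf-theoretic side (via $\End(\P_{\fS_0(\bw)})$ and the hyperbolic-restriction coproduct of Theorem~\ref{restriction-functor} and Proposition~\ref{coideal}), bypassing any direct comparison with $\H_{top}(\Z(\bw))$; if you want to rescue your approach, the most promising move is to phrase $\rho$ as a composition of the adjunctions appearing in the proof of Theorem~\ref{restriction-functor}, where compatibility with composition of correspondences is at least conceivably trackable. The nontriviality argument via $\jmath$ is reasonable in spirit, but it presupposes the (as yet unestablished) identification of $\jmath\circ\Psi$ with a restriction of Nakajima's action, so it cannot be used to bootstrap the existence of $\Psi$.
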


When the Dynkin diagram $\Gamma$ is of type $A$ and $a=1$, 
this conjecture can be shown by the results in ~\cite{BKLW} and an argument similar to
~\cite{BG99}.

\subsection{The stable map $\mathrm{Stab}_{\mathscr C}$}
Recall that $\fS_{\zeta}(\bw)$ has a $\mbb C^{\times}$-action by scaling. 
Since the maps $\pi^{\sigma}$ and $\pi^{\sigma, \T}$ are $\bT \equiv \T \times \mbb C^{\times}$ -equivariant. 
The isomorphism $\mrm{can}_{\mathscr C}$ in (\ref{can-iso}) also holds in the derived category of 
$\bT$-equivariant $\mbb C$-constructible sheaves.
This $\bT$-equivariant version of the canonical isomorphism (\ref{can-iso}) 
is the same as the one given by  Maulik-Okounkov's stable envelope~\cite{MO12}, 
as explained in ~\cite{N16} (the statement after Corollary 5.4.2 therein).
With the help of $\mrm{can}_{\mathscr C}$ in (\ref{can-iso}), one obtains the stable map  on the torus-equivariant cohomologies.
\begin{align}
\Stab_{\mathscr C}:  \H^{[*]}_{\bT} (\fS_{\zeta}(\bw)^{\T}) \to \H^{[*]}_{\bT} (\fS_{\zeta}(\bw)),
\end{align}
where $[*]$ is the shifted degree defined by $\H^{[*]}_{\bT}(?) = \H^{* + \dim ?}_{\bT}(?)$.
This sheaf-theoretic definition of $\Stab_{\mathscr C}$ is formal and contained in~\cite{N16}. To the convenience of the reader, we reproduce it here. 
For simplicity, we write $\mathfrak X \equiv \fS_{\zeta}(\bw)$ and $\mathfrak X_0 \equiv \fS_{0}(\bw)$  in this process.
There are  canonical isomorphisms/identifications:
\begin{align}
\label{stable-def-1}
\H^{[*]}_{\bT}(\mathfrak X^{\T}) \cong \Ext^*_{\bT}(\mbb C_{\mathfrak X^{\T}}, \IC_{\mathfrak X^{\T}})
\cong \Ext^*_{\bT} ( (\pi^{\sigma, \T})^* \mbb C_{\mathfrak X_0^\T}, \IC_{\mathfrak X^{\T}}) 
\cong  \Ext^*_{\bT} (\mbb C_{\mathfrak X_0^\T}, (\pi^{\sigma, \T})_* \IC_{\mathfrak X^{\T}} ) .
\end{align}
On the other hand, there are canonical isomorphisms
\begin{align}
\label{stable-def-2}
\begin{split}
\Ext^*_{\bT} *(\mbb C_{\mathfrak X_0^{\T}}, \kappa^+_* (\iota^+)^! \pi^{\sigma}_* \IC_{\mathfrak X}) 
 \cong \Ext^*_{\bT} ( \mbb C_{\mathfrak X_0^{+\mbb C^{\times}}}, (\iota^+)^! \pi^{\sigma}_* \IC_{\mathfrak X})
& \cong \Ext^*_{\bT} ( \mbb C_{\mathfrak X_0^{+\mbb C^{\times}}}, (\tilde \pi^{\sigma})_* (\tilde \iota)^! \IC_{\mathfrak X})\\
& \cong \Ext^*_{\bT}( \mbb C_{\mathfrak X^{+\mbb C^{\times}}}, (\tilde \iota)^! \IC_{\mathfrak X})\\
& \cong \Ext^*_{\bT}(\mbb C_{\mathfrak X} , (\tilde \iota)_! (\tilde \iota)^! \IC_{\mathfrak X}), 
\end{split}
\end{align}
where  the 1-parameter subgroup $\mbb C^{\times}$ is chosen from the chamber $\mathscr C$, 
$\tilde  \pi^{\sigma}$ and $\tilde \iota$ are given in the following cartesian diagram.
\[
\begin{CD}
\mathfrak X @<\tilde \iota << \mathfrak X^{+\mbb C^{\times}} \\
@V\pi^{\sigma} VV @VV\tilde \pi^{\sigma} V\\
\mathfrak X_0 @<\iota^+<< \mathfrak X_0^{+\mbb C^{\times}}
\end{CD}
\]
Note that there is an adjunction $\mrm{adj}: (\tilde \iota)_! (\tilde \iota)^! \to \mrm{id}$, which induces a morphism
\begin{align}
\mrm{adj}: \Ext^*_{\bT}(\mbb C_{\mathfrak X} , (\tilde \iota)_! (\tilde \iota)^! \IC_{\mathfrak X}) 
\to \Ext^*_{\bT}(\mbb C_{\mathfrak X}, \IC_{\mathfrak X} ) = \H^{[*]}_{\bT}(\mathfrak X).
\end{align}
The stable map is thus defined to be the following composition.

\begin{align}
\begin{split}
\H^{[*]}_{\bT}(\mathfrak X^{\T})   \overset{(\ref{stable-def-1})}{=}
\Ext^*_{\bT}(\mbb C_{\mathfrak X} , (\tilde \iota)_! (\tilde \iota)^! \IC_{\mathfrak X})
& \overset{\mrm{can}_{\mathscr C}}{\longrightarrow}
\Ext^*_{\bT} *(\mbb C_{\mathfrak X_0^{\T}}, \kappa^+_* (\iota^+)^! \pi^{\sigma}_* \IC_{\mathfrak X}) \\
&\overset{(\ref{stable-def-2})}{=}
\Ext^*_{\bT}(\mbb C_{\mathfrak X} , (\tilde \iota)_! (\tilde \iota)^! \IC_{\mathfrak X})
\overset{\mrm{adj}}{\longrightarrow}
\H^{[*]}_{\bT}(\mathfrak X).
\end{split}
\end{align}
So, modulo  the canonical identifications in (\ref{stable-def-1}) and (\ref{stable-def-2}), we have
$
\Stab_{\mathscr C} = \mrm{adj} \circ \mrm{can}_{\mathscr C}. 
$

\subsection{Universal $\K$-matrix}
Let $\mbb C[\mrm{Lie} (\bT)]$ be the coordinate ring of the Lie algebra $\mrm{Lie}(\bT)$, as an affine space. 
Let $\bF_{\T}$ be its rational field. 
The cohomologies $\H^{[*]}_{\bT}( \fS_{\zeta}(\bw)^{\T}) $ and $\H^{[*]}_{\bT}( \fS_{\zeta}(\bw))$ 
are $\mbb C[\mrm{Lie}(\bT)]$-modules (since $\H^{*}_{\bT}(\{\mrm{pt}\} ) = \mbb C[\mrm{Lie}(\bT)]$)
and the map $\Stab$ is compatible with the $\mbb C[\mrm{Lie}(\bT)]$-module structures. 
It is known that after change of coefficients from $\mbb C[\mrm{Lie}(\bT)]$ to $\bF_{\T}$, the stable map is invertible. 
Following Maulik-Okounkov~\cite{MO12},  the $\K$-matrix is defined by   
\begin{align}
\K_{\mathscr C', \mathscr C} =  \Stab_{\mathscr C'}^{-1 } \circ \Stab_{\mathscr C} \in 
\End_{\mbb C[ \mrm{Lie} (\bT)]} ( \H^*_{\bT} (\fS_{\zeta}(\bw)^{\T}))
\otimes_{\mbb C[\mrm{Lie}(\bT)]} \bF_{\T}.
\end{align}
Clearly, one has 
\begin{align}
\label{KCC}
\K_{\mathscr C'', \mathscr C'} \K_{\mathscr C', \mathscr C} = \K_{\mathscr C'', \mathscr C}
\quad \mbox{and}\quad
\K_{\mathscr C, \mathscr C} =1.
\end{align}

If the chamber $\mathscr C$ is determined by the inequalities $a_m>\cdots>a_1>0$  and 
$\fS(\bw)^{ \T} \cong \prod_{i=1}^m \M(\bw^i)$ we write $\K_{- \mathscr C, \mathscr C}$ by
$\K_{\underline \bw} (\underline a)$ where $\underline \bw = (\bw_1, \cdots, \bw_m)$ and $\underline a=(a_1, \cdots, a_m)$. 
For $\underline \bw=(\bw^1)$ and $\underline a=(a_1)$, we write $\K_{\bw^1}(a_1)$ for $\K_{\underline \bw}(\underline a)$.
Let us list an example of the $\K$-matrix.
\begin{example}
Let $\Gamma=A_1$ and $\bw=2$ with $\delta_{\bw}=-1$ so that 
$\fS_{\zeta}(\bw)= T^* \mathscr B^{\sp_2} \cong T^* \mbb P^1$ the cotangent bundle of complete flag variety of 
$\SP_2 \cong \mrm{SL}_2$.  So by ~\cite[4.1.2]{MO12}, we have
\[
\K_1(a) = \frac{1 - \frac{\hbar}{a} \begin{bmatrix}0 & 1\\ 1 & 0\end{bmatrix}}{1- \frac{\hbar}{a}}
\in \End (\mbb C^2(\hbar, a)).
\]
\end{example}
 
Since $\K_{\mathscr C, \mathscr C} =1$, the identity in (\ref{KCC})  yields

\begin{prop}
The $\K$-matrix is unitary, i.e.,
$\K_{\bw} (-a) = \K_{\bw} (a)^{-1}$.
\end{prop}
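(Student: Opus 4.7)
The plan is to deduce unitarity directly from the cocycle/composition identity \eqref{KCC} by carefully unpacking what substituting $-a$ for $a$ does at the level of chambers. The key observation is purely notational: in the defining identification $\K_{\bw}(a) = \K_{-\mathscr C,\mathscr C}$, the chamber $\mathscr C$ is the one cut out by the inequality $a>0$ on the $1$-parameter subgroup. Replacing $a$ by $-a$ therefore reverses the inequality, so the chamber previously called $\mathscr C$ becomes $-\mathscr C$, and the chamber previously called $-\mathscr C$ becomes $\mathscr C$.

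First, I would make the above identification precise by examining the convention introduced just before the displayed example: the chamber $\mathscr C$ associated to parameters $\underline a = (a_1)$ with $a_1>0$ is the positive half-line in $\Hom(\mbb C^{\times},\T)\otimes_{\mbb Z}\mbb R$, while $-\mathscr C$ is the negative half-line. Thus by definition
\[
\K_{\bw}(a) = \K_{-\mathscr C,\mathscr C}, \qquad \K_{\bw}(-a) = \K_{\mathscr C,-\mathscr C}.
\]
Second, I would apply the composition law from \eqref{KCC} with the triple of chambers $(\mathscr C'',\mathscr C',\mathscr C) = (\mathscr C,-\mathscr C,\mathscr C)$, which gives
\[
\K_{\mathscr C,-\mathscr C}\,\K_{-\mathscr C,\mathscr C} = \K_{\mathscr C,\mathscr C} = 1,
\]
where the second equality is the normalization $\K_{\mathscr C,\mathscr C}=1$ from \eqref{KCC}. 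Rewriting via the previous display immediately yields
\[
\K_{\bw}(-a)\,\K_{\bw}(a) = 1,
\]
which is the claimed unitarity $\K_{\bw}(-a) = \K_{\bw}(a)^{-1}$.

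This argument is essentially an unwinding of definitions, and I do not expect any genuine obstacle: both the composition identity and the normalization $\K_{\mathscr C,\mathscr C}=1$ are already recorded in \eqref{KCC}, and the invertibility of $\Stab_{\mathscr C}$ after extending scalars from $\mbb C[\mrm{Lie}(\bT)]$ to $\bF_{\T}$ (which is needed to make sense of the inverse) was noted in the paragraph introducing the $\K$-matrix. The only point requiring a line of care is the notational identification of $\pm\mathscr C$ with the sign of $a$, which is fixed by the single-variable convention $\underline a=(a_1)$, $a_1>0$, preceding the example.
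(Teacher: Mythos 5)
Your proof is correct and is exactly the paper's argument: the paper also deduces unitarity by combining the composition identity $\K_{\mathscr C'',\mathscr C'}\K_{\mathscr C',\mathscr C}=\K_{\mathscr C'',\mathscr C}$ with the normalization $\K_{\mathscr C,\mathscr C}=1$, using that replacing $a$ by $-a$ swaps the chambers $\mathscr C$ and $-\mathscr C$. Your write-up just makes explicit the chamber-sign bookkeeping that the paper leaves implicit.
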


Suppose that $\T$ is a two-dimensional torus in $\G^{\sigma}_{\bw}$. 
A typical $1$-parameter subgroup in $\T$ is given similar to (\ref{T-general}) for various $(a_1, a_2)\in \mbb Z^2$:
\begin{align}
\label{reflection-para}
\lambda_{a_1, a_2}: 
\mbb C^{\times} \to \G^{\sigma}_{\bw}, t\mapsto \mrm{id}_{W^0} \oplus (t^{a_1}\mrm{id}_{W^1} \oplus t^{-a_1} \mrm{id}_{W^{1,-}})
\oplus (t^{a_2}\mrm{id}_{W^2} \oplus t^{-a_2} \mrm{id}_{W^{2,-}}).
\end{align}
The real form of $\mrm{Lie}(\T)$ is thus a plane $\mbb R^2$, whose walls are the lines $a_1=0$, $a_2=0$, $a_1-a_2=0$ and $a_1+a_2=0$.
In particular, there are $8$ chambers in $\mrm{Lie}(\T)$, which is exactly the Weyl chambers of type $B_2/C_2$ (see Figure~\ref{Reflection}).
Let $\R_{\mathscr C', \mathscr C}$ denote Maulik-Okounkov's $\R$-matrix on 
the torus equivariant cohomology of Nakajima variety. 
Under this setting, the $\K$-matrix satisfies the reflection equation, instead of the Yang-Baxter equation for $\R$-matrix.

\begin{prop}
The $\K$-matrix satisfies the following reflection equation. 
\begin{align}
\label{Ref-equation}
%
\K_{\bw^2}(a_2) \R(a_1 + a_2) \K_{\bw^1}(a_1) \R(a_1-a_2) & = 
\R(a_1-a_2) \K_{\bw^1}(a_1) \R(a_1+a_2) \K_{\bw^2}(a_2),
\end{align}
where the $\K_{\bw^i}(a_i)$'s are  understood as $\K_{\bw^1} (a_1) \otimes 1$ and $1\otimes \K_{\bw^2} (a_2)$, respectively. 
\end{prop}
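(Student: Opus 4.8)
The plan is to derive (\ref{Ref-equation}) from the cocycle identity (\ref{KCC}) for $\K$ together with a single--wall factorization of the stable maps, in exact parallel with the derivation of the quantum Yang--Baxter equation for the $\R$-matrix in~\cite{MO12} and~\cite{N16}. First I would fix notation: inside $\mrm{Lie}(\T)\otimes_{\mbb Z}\mbb R\cong\mbb R^2$ the four walls $a_1=0$, $a_2=0$, $a_1-a_2=0$, $a_1+a_2=0$ cut out the eight Weyl chambers of type $\mrm B_2/\mrm C_2$; let $\mathscr C$ be the chamber with $a_1>a_2>0$, so that $-\mathscr C=\{a_1<a_2<0\}$. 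The two sides of (\ref{Ref-equation}) then correspond to the two minimal galleries from $\mathscr C$ to $-\mathscr C$ in this arrangement: one crossing the walls in the order $\{a_1-a_2=0\},\{a_1=0\},\{a_1+a_2=0\},\{a_2=0\}$ and the other in the reverse order. By (\ref{KCC}) one has $\K_{-\mathscr C,\mathscr C}=\K_{\mathscr C_N,\mathscr C_{N-1}}\cdots\K_{\mathscr C_1,\mathscr C_0}$ along any gallery $\mathscr C=\mathscr C_0,\dots,\mathscr C_N=-\mathscr C$, so it suffices to evaluate each single--wall factor and check that the two products reproduce the two sides of (\ref{Ref-equation}).

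The heart of the argument is therefore the single--wall factorization. For adjacent chambers $\mathscr C_k,\mathscr C_{k+1}$ separated by exactly one wall $D$, I would show, using the transitivity (``triangle lemma'') of stable envelopes and the compatibility of $\mrm{can}_{\mathscr C}$ in (\ref{can-iso}) with iterated hyperbolic restriction (Theorem~\ref{restriction-functor}) together with the product decomposition of the fixed locus in Proposition~\ref{A-fixed-sub-b}, the following. If $D=\{a_i=0\}$, then $\K_{\mathscr C_{k+1},\mathscr C_k}$ equals $\K_{\bw^i}(a_i)$ acting on the tensor slot $\M_\zeta(\bw^i)$ that collapses across $D$ and the identity on the remaining slots; here crossing $D$ merges $W^i$ and $W^{i,-}$ into a single $\sigma$--fixed factor, so the relevant two--step restriction is governed by the $\sigma$--quiver geometry of $\fS_\zeta(2\bw^i)$, which is exactly the rank--one situation defining $\K_{\bw^i}$. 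If $D=\{a_i-a_j=0\}$ (resp.\ $\{a_i+a_j=0\}$), then $\K_{\mathscr C_{k+1},\mathscr C_k}$ equals the Maulik--Okounkov $\R$-matrix $\R(a_i-a_j)$ (resp.\ $\R(a_i+a_j)$) of the ambient Nakajima variety acting on the two slots $\M_\zeta(\bw^i),\M_\zeta(\bw^j)$ (resp.\ $\M_\zeta(\bw^i),\M_\zeta(\bw^{j,-})$) and the identity elsewhere, since crossing such a wall involves only the $\M$-factors and the argument of~\cite{MO12,N16} applies verbatim.

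Granting this, the assembly is formal: substituting these values into $\K_{-\mathscr C,\mathscr C}$ computed along the first gallery yields, reading right to left, $\K_{\bw^2}(a_2)\,\R(a_1+a_2)\,\K_{\bw^1}(a_1)\,\R(a_1-a_2)$, and along the second gallery yields $\R(a_1-a_2)\,\K_{\bw^1}(a_1)\,\R(a_1+a_2)\,\K_{\bw^2}(a_2)$, with $\K_{\bw^1}(a_1)$ and $\K_{\bw^2}(a_2)$ understood as $\K_{\bw^1}(a_1)\otimes 1$ and $1\otimes\K_{\bw^2}(a_2)$ as in the statement. Since both products equal $\K_{-\mathscr C,\mathscr C}$, they coincide, which is precisely (\ref{Ref-equation}).

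I expect the single--wall factorization to be the main obstacle. One must pin down the correct tensor--slot placement of each factor, verify that a wall of type $a_i\pm a_j=0$ contributes the honest Nakajima $\R$-matrix with no residual $\sigma$--twist while a wall $a_i=0$ contributes the rank--one $\K$-matrix, and track the degree shifts $[*]$ and the normalization of the canonical isomorphism in (\ref{can-iso}) (cf.\ Remark~\ref{rem:can}(1)) consistently across all four crossings. Once this local analysis is in place, the global bookkeeping over the $\mrm B_2$ chamber arrangement is purely formal via (\ref{KCC}).
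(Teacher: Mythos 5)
Your proposal is correct and follows essentially the same route as the paper: factorize $\K_{-\mathscr C,\mathscr C}$ along the two galleries of the $\mrm B_2$ chamber arrangement via the cocycle identity (\ref{KCC}), observe the walls $a_i=0$ contribute the rank-one $\K$-matrices, and reduce the remaining walls $a_1\pm a_2=0$ to Maulik--Okounkov $\R$-matrices by noting that on the subtorus-fixed locus $\fS_{\zeta}(\bw)^{\T'}\cong\fS_\zeta(\bw^0)\times\M_\zeta(\bw^1+\bw^2)$ the quotient torus acts trivially on the $\fS$-factor (Proposition~\ref{A-fixed-sub}). The paper states the same argument somewhat more compactly, first noting that both sides equal $\K_{-\mathscr C,\mathscr C}$ once the $\R$'s are read as single-wall $\K$'s and then verifying $\K(a_1\pm a_2)=\R(a_1\pm a_2)$; your single-wall-by-single-wall presentation makes the tensor-slot bookkeeping more explicit but is the same proof.
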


\begin{proof}
If the $\R$s in the equation are replaced by the $\K$s, then this holds because both sides are the same as 
$\K_{-\mathscr C, \mathscr C}$ where $\mathscr C$ is the chamber in  Figure~\ref{Reflection}.
It remains to show that $\R(a_1\pm a_2)=\K(a_1\pm a_2)$. 
Under the setting (\ref{reflection-para}), 
the condition $a_1-a_2=0$ defines a subtorus $\T'$ in $\T$  so that 
$\fS_{\zeta}(\bw)^{ \T'} \cong \fS_{\zeta}(\bw^0)\times \M_{\zeta}(\bw^1+\bw^2)$ 
by Proposition \ref{A-fixed-sub}. 
By general properties of $\K$/$\R$-matrices, the $\K$-matrix $\K_{\mathscr C', \mathscr C}$ of crossing the wall $a_1-a_2=0$ 
in Figure~\ref{Reflection} is the same as the $\K$-matrix for the torus $\T/\T'$ on $\fS_{\zeta}(\bw)^{ \T'} $. 
Note that $\T/\T'$ acts trivially on the component  $\fS_{\zeta}(\bw^0)$,
so the latter $\K$-matrix is Maulik-Okounkov's original $\R$-matrix $\R(a_1-a_2)$ on quiver varieties. 
This shows that $\K(a_1-a_2)=\R(a_1-a_2)$. The other equality can be obtained by the same argument. The proposition is thus proved.
\end{proof}

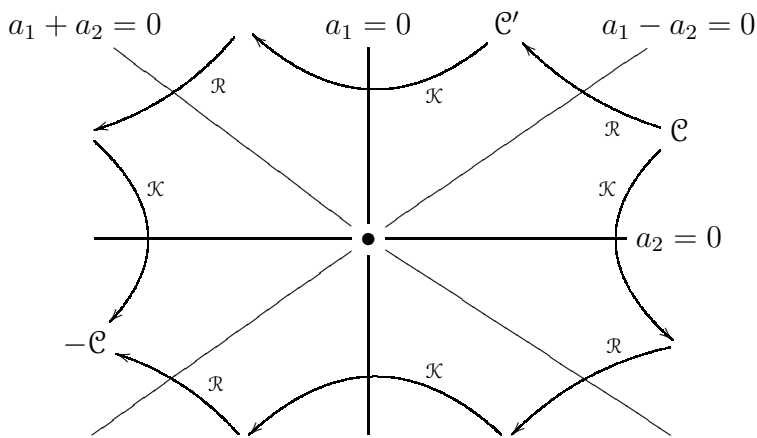
\begin{figure}
\begin{center}
\xymatrix{
&&& a_1+ a_2=0&  \ar@/^/[dl]^(.3) \R &a_1=0 &\ar@/^2pc/[ll]^(.3) \K  \mathscr C' & a_1-a_2=0\\
&&& \ar@/^2pc/[dd]^(.3) \K & && &  \mathscr C \ar@/^/[ul]^(.3) \R \ar@/_2pc/[dd]_(.3)\K \\
&&& &&\bullet \ar@{-}[rr] \ar@{-}[ll]  \ar@{-}[uu] \ar@{-}[dd] \ar@{-}[uurr]  \ar@{-}[uull] \ar@{-}[ddll] \ar@{-}[ddrr]&& a_2=0\\
&&& - \mathscr C &&&& \ar@/_/[dl]_(.3) \R \\
&&&  &\ar@/_/[ul]_(.3) \R&& \ar@/_2pc/[ll]_(.3) \K &
}
\end{center}
\caption{Reflection Equation}
\label{Reflection}
\end{figure}

In general, the $\K$-matrix $\K_{\underline \bw}(\underline a)$ can be obtained  from the $\K_{\bw^i}(a_i)$'s 
via  the so-called fusion procedure.
In particular, when $\underline \bw$ contains two components, it reads as follows. 

\begin{prop}
\label{fusion}
One has
\begin{align}
\begin{split}
\K_{\bw^1, \bw^2} (a_1, a_2)& =  \R(a_2-a_1) \K_{\bw^2} (a_2)  \R (a_1+a_2)   \K_{\bw^1}(a_1) \\
&= \K_{\bw^1}(a_1) \R(a_1+a_2) \K_{\bw^2}(a_2) \R(a_2-a_1).
\end{split}
\end{align}
\end{prop}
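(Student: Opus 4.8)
The plan is to identify $\K_{\bw^1,\bw^2}(a_1,a_2)$ with the wall-crossing operator $\K_{-\mathscr C,\mathscr C}$, where $\mathscr C$ is the chamber $\{0<a_1<a_2\}$ in the real form $\mrm{Lie}(\T)\otimes_{\mbb Z}\mbb R\cong\mbb R^2$ of the two-dimensional torus $\T\subseteq\G^{\sigma}_{\bw}$ scaling the pairs $(W^1,W^{1,-})$ and $(W^2,W^{2,-})$ as in (\ref{reflection-para}), and then to compute it by walking from $\mathscr C$ to $-\mathscr C$ one wall at a time, using the cocycle identity $\K_{\mathscr C'',\mathscr C'}\K_{\mathscr C',\mathscr C}=\K_{\mathscr C'',\mathscr C}$ of (\ref{KCC}). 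The relevant arrangement is the type $\mrm B_2/\mrm C_2$ arrangement with walls $a_1=0$, $a_2=0$, $a_1+a_2=0$ and $a_1-a_2=0$, and there are exactly two reduced galleries of length four from $\mathscr C$ to $-\mathscr C$; these will yield the two displayed factorizations.

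The key input is a codimension-one reduction for stable envelopes (Maulik--Okounkov~\cite{MO12}), already used in the proof of the reflection equation (\ref{Ref-equation}): the $\K$-matrix attached to crossing a single wall $\mrm L$ depends only on the geometry transverse to $\mrm L$, hence equals the wall-crossing matrix of the residual one-parameter torus $\T/\T_{\mrm L}$ acting on the partial fixed locus $\fS_{\zeta}(\bw)^{\T_{\mrm L}}$, where $\T_{\mrm L}\subseteq\T$ is the codimension-one subtorus cut out by $\mrm L$. For $\mrm L=\{a_i=0\}$, Proposition~\ref{A-fixed-sub} gives $\fS_{\zeta}(\bw)^{\T_{\mrm L}}\cong\fS_{\zeta}(2\bw^i)\times\M_{\zeta}(\bw^{3-i})$, with $\T/\T_{\mrm L}$ scaling the first factor exactly as in (\ref{A}) and acting trivially on the second; hence crossing $a_1=0$ contributes $\K_{\bw^1}(a_1)$ (acting as $\K_{\bw^1}(a_1)\otimes 1$) and crossing $a_2=0$ contributes $\K_{\bw^2}(a_2)$ (acting as $1\otimes\K_{\bw^2}(a_2)$). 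For $\mrm L=\{a_1\pm a_2=0\}$, Proposition~\ref{A-fixed-sub} applied to the one-parameter subgroup cutting out $\mrm L$ fuses the two isotropic pairs into a single dual pair of maximal isotropic subspaces, so $\fS_{\zeta}(\bw)^{\T_{\mrm L}}\cong\M_{\zeta}(\bw^1+\bw^2)$, a genuine Nakajima variety carrying no $\sigma$-factor; here the wall-crossing matrix of the residual torus is Maulik--Okounkov's $\R$-matrix, by the same identification $\K=\R$ established in the proof of the reflection equation. Thus crossing $a_1+a_2=0$ contributes $\R(a_1+a_2)$ and crossing $a_1-a_2=0$ contributes $\R(a_2-a_1)$, all as operators on $\H^*_{\bT}(\M_{\zeta}(\bw^1)\times\M_{\zeta}(\bw^2))$ after extension of scalars to $\bF_{\T}$.

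It remains to assemble. The gallery
\[
\mathscr C=\{0<a_1<a_2\}\to\{-a_2<a_1<0\}\to\{a_1+a_2<0<a_2\}\to\{a_1<a_2<0\}\to\{a_2<a_1<0\}=-\mathscr C
\]
crosses the walls in the order $a_1=0$, $a_1+a_2=0$, $a_2=0$, $a_1-a_2=0$; composing the corresponding factors in the order imposed by the cocycle identity (the most recent crossing on the left) gives the first formula
\[
\K_{\bw^1,\bw^2}(a_1,a_2)=\R(a_2-a_1)\,\K_{\bw^2}(a_2)\,\R(a_1+a_2)\,\K_{\bw^1}(a_1).
\]
The complementary gallery, which passes first through $\{0<a_2<a_1\}$ and crosses the same four walls in the order $a_1-a_2=0$, $a_2=0$, $a_1+a_2=0$, $a_1=0$, yields the second formula; alternatively, the second formula follows from the first by the reflection equation (\ref{Ref-equation}) together with the unitarity $\R(a_2-a_1)=\R(a_1-a_2)^{-1}$.

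The main obstacle is the codimension-one reduction of the second paragraph: checking that the single-wall $\K$-matrix is exactly the $\K$- (resp. $\R$-) matrix of the residual rank-one torus on the partial fixed locus, that this identification is compatible with the chosen canonical isomorphisms $\mrm{can}_{\mathscr C}$ of Theorem~\ref{restriction-functor}, and that the normalizations agree, together with the sign and ordering bookkeeping needed to reproduce the stated formulas verbatim; the example $\Gamma=\mrm A_1$, $\bw=2$ can be used to pin down the remaining sign conventions.
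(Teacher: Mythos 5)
Your proposal is correct and follows essentially the same route as the paper's proof: identify $\K_{\bw^1,\bw^2}(a_1,a_2)$ with the wall-crossing operator $\K_{-\mathscr C,\mathscr C}$ for the chamber $\mathscr C=\{0<a_1<a_2\}$, factor it into four single-wall crossings via the cocycle identity (\ref{KCC}), and recognize each factor as a $\K$- or $\R$-matrix by the same codimension-one reduction used in the proof of the reflection equation. The paper phrases this more compactly by invoking Equation (\ref{Ref-equation}) directly and multiplying by $\R(a_2-a_1)$ on both sides, but that is exactly the conjugation your two explicit galleries encode, so the arguments are the same; your closing remark that the second identity can alternatively be read off from the first together with the reflection equation and unitarity is precisely the paper's stated route.
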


\begin{proof}
We have $\K_{\bw^1, \bw^2} = \K_{- \mathscr C', \mathscr C'}(a_1, a_2)$ where the chamber $\mathscr C'$ is given in Figure~\ref{Reflection}.
The proposition follows by multiplying $\R(a_2 - a_1)$ on both sides of Equation (\ref{Ref-equation}) and using the unitary property of $\K$-matrix. 
\end{proof}

\begin{rem}
As we learnt from  Weiqiang Wang, algebraic $K$-matrix for quantum symmetric pairs of type AIII/IV first appeared in~\cite{BW13}.
The relationship between the $\K$-matrix in this section and the algebraic ones in~\cite{BaK16} is not clear.   
\end{rem}

\subsection{Twisted Yangian via the FRT formalism}
Let $\Y$ be Maulik-Okounkov's Yangian, which is formulated in the framework of Faddeev-Reshetikhin-Takhtajan~\cite{FRT}. 
In particular, the algebra $\Y$  is a subalgebra  in the product $\prod_{\bw, \T} \H^*_{\bT}(\fS_{\zeta}(\bw)^{\T})\otimes \bF_{\bT}$ generated by
the matrix coefficients in the $\R$-matrix $\R_{0,1}(a_0-a_1) \cdots  \R_{0,m}(a_0-a_m)$ with respect to $a_0$ (see~\cite[6.2.6]{MO12}). 
Let $\Y_{\sigma}$ be the subalgebra of $\Y$   
generated by the matrix coefficients with respect to $a_0$ of the operators
\begin{align}
\label{K-tensor}
\begin{split}
\R_{0,m}(a_0-a_m)  \cdots \R_{0, 1}(a_0 -a_1) \cdot  \K_0(a_0) \cdot  \R_{0,1}(a_0-a_1) \cdots  \R_{0,m}(a_0-a_m).
\end{split}
\end{align}

Note that operators of the above form satisfy the reflection equation, which 
can be shown by induction in the following.
In light of this property, we shall call $\Y_{\sigma}$ a $twisted$ $Yangian$.

\begin{prop}
Let $\R_{ij}(a_i-a_j)$ be an $\R$-matrix at $(i,j)$-component on the tensor $F_0(a_0) \otimes F_1(a_1) \otimes \cdots \otimes F_m(a_m)$
Let $\K_0(a_0)$ be a $\K$-matrix at the $0$-component.
Then  the operator, say $\S(a_0)$, in (\ref{K-tensor}) satisfies the reflection equation
\[
\R_{0, 1} (a_0 - b_0)  \S_0 (a_0)  \R_{0, 1} (a_0 + b_0) \S_1 (b_0)  
= 
\S_1 (b_0) \R_{0, 1} (a_0+ b_0) \S_0 (a_0) \R_{0, 1} (a_0- b_0),
\]
in the tensor $F_0(a_0) \otimes F_0(b_0) \otimes F_1(a_1)\otimes\cdots \otimes F_m(a_m)$.
\end{prop}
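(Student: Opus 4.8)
The plan is to argue by induction on the number $m$ of spectator factors $F_1(a_1),\dots,F_m(a_m)$. For $0\le k\le m$ set
\[
\S^{(k)}(a_0)=\R_{0,m-m+k}(a_0-a_k)\cdots\R_{0,1}(a_0-a_1)\,\K_0(a_0)\,\R_{0,1}(a_0-a_1)\cdots\R_{0,k}(a_0-a_k),
\]
i.e. the operator on $F_0(a_0)\otimes F_1(a_1)\otimes\cdots\otimes F_k(a_k)$ obtained from (\ref{K-tensor}) by keeping only the first $k$ spectators, so that $\S^{(m)}(a_0)=\S(a_0)$ and $\S^{(0)}(a_0)=\K_0(a_0)$. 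The assertion $P(k)$ to be proved is that $\S^{(k)}(a_0)$ satisfies the reflection equation on the space obtained by adjoining a second copy $F_0(b_0)$ of the auxiliary factor. The base case $P(0)$ is precisely the statement that a $\K$-matrix satisfies the reflection equation, which was already established in the form (\ref{Ref-equation}) from the defining properties of Maulik--Okounkov's stable envelopes.

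For the inductive step one uses the recursion $\S^{(k)}(a_0)=\R_{0,k}(a_0-a_k)\,\S^{(k-1)}(a_0)\,\R_{0,k}(a_0-a_k)$; thus it suffices to show that if an operator $\K(a_0)$ on $F_0(a_0)\otimes F_1(a_1)\otimes\cdots\otimes F_{k-1}(a_{k-1})$ solves the reflection equation, then so does $\R_{0,k}(a_0-a_k)\,\K(a_0)\,\R_{0,k}(a_0-a_k)$ after enlarging by the single spectator factor $F_k(a_k)$. To verify this, substitute $\S^{(k)}_{0}(a_0)=\R_{0,k}(a_0-a_k)\,\S^{(k-1)}_{0}(a_0)\,\R_{0,k}(a_0-a_k)$ and $\S^{(k)}_{1}(b_0)=\R_{1,k}(b_0-a_k)\,\S^{(k-1)}_{1}(b_0)\,\R_{1,k}(b_0-a_k)$ into the left-hand side of the reflection equation for $\S^{(k)}$, and commute the four newly inserted factors $\R_{0,k}(a_0-a_k)$ and $\R_{1,k}(b_0-a_k)$ out towards the two ends of the word. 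Past the boundary factor $\R_{0,1}(a_0-b_0)$ this is a clean application of the Yang--Baxter equation for $\R$, since the triple $\R_{0,1}(a_0-b_0)$, $\R_{0,k}(a_0-a_k)$, $\R_{1,k}(b_0-a_k)$ carries consistent arguments $a_0-b_0$, $a_0-a_k$, $b_0-a_k$; the unitarity relation $\R_{i,j}(u)\R_{j,i}(-u)=1$ is used to create and cancel the auxiliary $\R_{1,k}$ factors needed to stage each Yang--Baxter move. Once every spectator factor has been pushed to the two ends, the inductive hypothesis (the reflection equation for $\S^{(k-1)}$, tensored with the identity on $F_k$) applies, and running the same moves in reverse reassembles the right-hand side of the reflection equation for $\S^{(k)}$.

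The step I expect to be the main obstacle is passing the spectator $\R$-matrices through the \emph{crossed} boundary factor $\R_{0,1}(a_0+b_0)$, whose argument is a sum rather than a difference of spectral parameters, so that no naive rapidity assignment turns $\R_{0,1}(a_0+b_0)$, $\R_{0,k}$, $\R_{1,k}$ into a Yang--Baxter triple. Handling it requires the mixed form of the Yang--Baxter equation $\R_{i,j}(u)\R_{i,\ell}(u+v)\R_{j,\ell}(v)=\R_{j,\ell}(v)\R_{i,\ell}(u+v)\R_{i,j}(u)$ together with unitarity and the symmetry built into Maulik--Okounkov's geometric construction (in particular, that $\R_{0,k}$ and $\R_{1,k}$ are the same operator applied to the two copies of $F_0$), plus careful bookkeeping of the two mirror-image strings of $\R$'s flanking $\K_0(a_0)$ in (\ref{K-tensor}). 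All of this is formal: no new geometry is needed beyond the properties of $\R$ and $\K$ from Maulik--Okounkov and the preceding subsections, which is exactly why the statement can be asserted ``by induction'' — the only content is the diagrammatic accounting just outlined.
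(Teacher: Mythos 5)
Your plan is essentially the one carried out in the paper: both induct on $m$ using the recursion $\S^{(k)}(a_0)=\R_{0,k}(a_0-a_k)\,\S^{(k-1)}(a_0)\,\R_{0,k}(a_0-a_k)$, push the two newly inserted spectator $\R$-factors through the word by Yang--Baxter moves, invoke the inductive hypothesis in the middle, then run the moves back. The paper writes the base case out explicitly at $m=1$ rather than taking $m=0$, but that is only a presentational difference since the $m=1$ verification is just the first application of your inductive step.

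One imprecision worth flagging in what you have identified as ``the main obstacle.'' The ``mixed form'' you write,
\[
\R_{i,j}(u)\,\R_{i,\ell}(u+v)\,\R_{j,\ell}(v)=\R_{j,\ell}(v)\,\R_{i,\ell}(u+v)\,\R_{i,j}(u),
\]
is the standard Yang--Baxter equation in disguise (set $u=u_i-u_j$, $v=u_j-u_\ell$), and one can check directly that it will \emph{not} move the spectator factors past $\R_{0,1}(a_0+b_0)$: with $\R_{0,k}(a_0-a_k)$ and $\R_{0,1}(a_0+b_0)$ fixed, matching arguments forces $\R_{1,k}$ to carry the argument $-(b_0+a_k)$, not the $b_0-a_k$ that actually occurs. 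What the paper uses at this step is the identity
\[
\R_{13}(u-w)\,\R_{12}(u+v)\,\R_{23}(v-w) = \R_{23}(v-w)\,\R_{12}(u+v)\,\R_{13}(u-w),
\]
in which the three arguments are \emph{not} compatible with a single rapidity assignment; this is a genuinely modified Yang--Baxter relation, obtained there from the standard one together with the unitarity $\R_{ij}(u)\R_{ji}(-u)=1$. You do correctly anticipate that unitarity and the fact that $\R_{0,k}$ and $\R_{1,k}$ act on two copies of the same $F_0$ are what makes the crossed factor tractable, so the plan is sound; but the specific identity you propose is not the right one, and the derivation of the modified relation is the one nontrivial algebraic input you would still need to supply. (Also a small typo: in your formula for $\S^{(k)}$ the first factor should read $\R_{0,k}(a_0-a_k)$, not $\R_{0,m-m+k}$.)
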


\begin{proof}
We shall prove the proposition by induction.
When $m=1$, we shift the subindex by $1$ and set $(a_0, b_0, a_1)=(u, v, w)$.
Then we have
\begin{align*}
& \R_{12}(u-v) \S_1(u) \R_{12}(u+v) \S_2(v) = \\
& =\R_{12}(u -v) \left ( \R_{13}(u-w) \K_1(u) \R_{13}(u-w) \right ) \R_{12}(u+v) \left ( \R_{23}(v-w) \K_2(v) \R_{23}(v-w) \right ) \\
& =  \R_{12}(u -v) \R_{13}(u-w) \K_1(u)  \R_{23}(v-w)  \R_{12}(u+v) \R_{13}(u-w) \K_2(v) \R_{23}(v-w)\\
& = \R_{12}(u -v) \R_{13}(u-w)  \R_{23}(v-w)  \K_1(u)  \R_{12}(u+v)  \K_2(v)  \R_{13}(u-w) \R_{23}(v-w)\\
& = \R_{23}(v-w) \R_{13}(u-w) \R_{12}(u -v)  \K_1(u)  \R_{12}(u+v)  \K_2(v)  \R_{13}(u-w) \R_{23}(v-w)\\
& = \R_{23}(v-w) \R_{13}(u-w)  \K_2(v) \R_{12}(u+v)  \K_1(u)  \R_{12}(u -v) \R_{13}(u-w) \R_{23}(v-w)\\
& = \R_{23}(v-w) \R_{13}(u-w)  \K_2(v) \R_{12}(u+v)  \K_1(u) \R_{23}(v-w) \R_{13}(u-w) \R_{12}(u -v)\\
& = \R_{23}(v-w)  \K_2(v)   \R_{13}(u-w) \R_{12}(u+v) \R_{23}(v-w) \K_1(u)  \R_{13}(u-w) \R_{12}(u -v)\\
&=  \R_{23}(v-w)  \K_2(v) \R_{23}(v-w) \R_{12}(u+v)  \R_{13}(u-w) \K_1(u)  \R_{13}(u-w) \R_{12}(u -v)\\
& = \S_2(v) \R_{12}(u+v) \S_1(u) \R_{12} (u-v),
\end{align*} 
where the second equality is due to the modified Yang-Baxter equation
\[
\R_{13}(u-w) \R_{12}(u+v) \R_{23}(v-w)  = \R_{23}(v-w)  \R_{12}(u+v) \R_{13}(u-w),
\]
via the unitary property of $\R$, the third equality is due to the  commutativity of $\K_i(a)$ with $\R_{j, k}$ if $i\neq j, k$, and
the fifth one is due to the reflection equation of the $\K$-matrices. 

In general, we write $\S^{(m)}(u)$ for the $\S$ on $F_0(a_0)\otimes \cdots  \otimes F_m(a_m)$. Then we have
$$\S^{(m)}(u) = \R_{0,m} (a_0-a_m) \S^{(m-1)}(u) \R_{0,m}(a_0-a_m).$$
In particular, there is  the following with $(a_0, b_0)=(u,v)$.
\begin{align*}
& \R_{0 1} (u-v) \S^{(m)}_1(u)  \R_{01} (u+v) \S^{(m)}_2(v) =\\
& = \R_{01}(u-v) \R_{0, m+1} (u- a_m)  \S^{(m-1)}_1(u) \R_{0, m+1}(u- a_m) R_{01} (u+v) \R_{1,m+1}(v- a_m) \\
& \hspace{10cm} \S^{(m-1)}_2(v)  R_{1,m+1}(v-a_m) \\
& =  \R_{01}(u-v) \R_{0, m+1} (u- a_m)   \R_{1,m+1}(v- a_m) \S^{(m-1)}_1(u) R_{01} (u+v)  \S^{(m-1)}_2(v)   \\
&\hspace{10cm} \R_{0, m+1}(u- a_m)  R_{1,m+1}(v-a_m) \\
& = \R_{1,m+1}(v- a_m) \R_{0, m+1} (u- a_m)  \R_{01}(u-v)  \S^{(m-1)}_1(u) R_{01} (u+v)  \S^{(m-1)}_2(v)   \\
& \hspace{10cm} \R_{0, m+1}(u- a_m)  R_{1,m+1}(v-a_m) \\
&= \R_{1,m+1}(v- a_m) \R_{0, m+1} (u- a_m) \S^{(m-1)}_2(v) R_{01} (u+v)  \S^{(m-1)}_1(u)  \R_{01}(u-v) \\
&\hspace{10cm} \R_{0, m+1}(u- a_m)  R_{1,m+1}(v-a_m) \\
&= \R_{1,m+1}(v- a_m) \R_{0, m+1} (u- a_m) \S^{(m-1)}_2(v) R_{01} (u+v)  \S^{(m-1)}_1(u)   R_{1,m+1}(v-a_m)  \\
&\hspace{10cm} \R_{0, m+1}(u- a_m)  \R_{01}(u-v) \\
&= \R_{1,m+1}(v- a_m) \S^{(m-1)}_2(v)   \R_{0, m+1} (u- a_m) R_{01} (u+v)  R_{1,m+1}(v-a_m)  \S^{(m-1)}_1(u)  \\
&\hspace{10cm} \R_{0, m+1}(u- a_m)  \R_{01}(u-v) \\
& =  \R_{1,m+1}(v- a_m) \S^{(m-1)}_2(v) R_{1,m+1}(v-a_m) R_{01} (u+v)   \R_{0, m+1} (u- a_m) \S^{(m-1)}_1(u)  \\
& \hspace{10cm} \R_{0, m+1}(u- a_m)  \R_{01}(u-v) \\
& = \S^{(m)}_2(v) \R_{01}(u+v) \S^{(m)}_1(u) \R_{01}(u-v).
\end{align*}
Proposition is thus proved.
\end{proof}

From the definition, both algebras $\Y$ and $\Y_{\sigma}$ act on 
$\H^*_{\bT}(\M_{\zeta}(\bw)) \otimes \bF_{\bT}$ and their tensor products.
Summing up the above analysis, it yields

\begin{thm}
\label{GSP}
There is a $(\Y, \Y_{\sigma})$-action on $\H^*_{\bT}(\M_{\zeta}(\bw)) \otimes \bF_{\bT}$ and their tensor products.
\end{thm}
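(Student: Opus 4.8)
The plan is to read the statement off from the constructions already in place, using Maulik--Okounkov~\cite{MO12}, the reflection equation just proved, and the fusion procedure of Proposition~\ref{fusion}. First I would recall that $\H^*_{\bT}(\M_{\zeta}(\bw))\otimes\bF_{\bT}$, and more generally each finite tensor product $\bigotimes_i\H^*_{\bT}(\M_{\zeta}(\bw^i))\otimes\bF_{\bT}$, is a module over $\Y$: by definition the generators of $\Y$ are the matrix coefficients in the auxiliary variable $a_0$ of $\R_{0,1}(a_0-a_1)\cdots\R_{0,m}(a_0-a_m)$, each $\R$-factor acts on such tensor products by~\cite{MO12}, and the Yang--Baxter equation forces the defining (RTT) relations of $\Y$; the coproduct of $\Y$ carried by this same $\R$-matrix is what makes the passage between the various tensor products consistent.

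Next I would verify that the generating operator $\S(a_0)$ in~(\ref{K-tensor}) of $\Y_{\sigma}$ is once again a composable string of endomorphisms of such a tensor product. Its $\R$-factors act by the previous step. Its middle factor $\K_0(a_0)$ is a $\K$-matrix, and by its very definition $\Stab_{\mathscr C'}^{-1}\circ\Stab_{\mathscr C}$ it acts on $\H^*_{\bT}(\fS_{\zeta}(\bw)^{\T})\otimes\bF_{\bT}$ for the relevant $\bw$ and torus; invoking Proposition~\ref{A-fixed-sub-b} to decompose $\fS_{\zeta}(\bw)^{\T}$ together with the hypothesis~(\ref{j}) that the central $\sigma$-quiver variety $\fS_{\zeta}(\s_{\bw})$ is a point, this cohomology is identified with a tensor product of cohomologies of Nakajima varieties, and since $\Stab_{\mathscr C}$ on $\sigma$-quiver varieties agrees with Maulik--Okounkov's stable envelope (as recorded after the definition of $\Stab_{\mathscr C}$, following~\cite{N16, MO12}), $\K_0(a_0)$ is an honest matrix over $\bF_{\bT}$ in this basis. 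Hence the matrix coefficients of~(\ref{K-tensor}) with respect to $a_0$ act on $\H^*_{\bT}(\M_{\zeta}(\bw))\otimes\bF_{\bT}$; and the reflection equation proved in the preceding proposition says exactly that these operators satisfy the reflection equation, so that $\Y_{\sigma}$ is a well-defined algebra of twisted-Yangian type whose defining relations are realized by this geometric action.

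For the clause about tensor products I would invoke Proposition~\ref{fusion}: the expression of $\K_{\bw^1,\bw^2}(a_1,a_2)$ through $\K_{\bw^1}(a_1)$, $\K_{\bw^2}(a_2)$ and $\R$-matrices is precisely a coideal-type comultiplication $\Y_{\sigma}\to\Y\otimes\Y_{\sigma}$; combining it with the coproduct $\Y\to\Y\otimes\Y$ of~\cite{MO12} makes each $\H^*_{\bT}(\M_{\zeta}(\bw^1))\otimes\H^*_{\bT}(\M_{\zeta}(\bw^2))\otimes\bF_{\bT}$ into a compatible $(\Y,\Y_{\sigma})$-module, and iterating handles all finite tensor products, completing the assembly.

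I expect the genuine work to be concentrated in the second step, namely pinning down the tensor product of Nakajima cohomologies on which the sandwiched operator~(\ref{K-tensor}) actually lives: one must reconcile the central factor $\fS_{\zeta}(\s_{\bw})$ appearing in Proposition~\ref{A-fixed-sub-b} with the requirement that $\K_0(a_0)$ be a genuine matrix --- which is what forces the hypothesis $\fS_{\zeta}(\s_{\bw})=\{\mrm{pt}\}$ of~(\ref{j}) --- and then check that the index conventions for the $\R$- and $\K$-factors in~(\ref{K-tensor}) match so that they truly compose. Everything else is then a formal consequence of~\cite{MO12}, the reflection equation, and the fusion procedure.
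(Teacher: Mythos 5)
Your proposal is essentially the same argument the paper makes, and it is correct. The paper's own ``proof'' is little more than the observation that, by construction, $\Y$ and $\Y_\sigma$ are defined inside an endomorphism algebra of the relevant equivariant cohomologies, so the action is immediate from the definitions and from the preceding propositions (the reflection equation for the sandwiched operators, Proposition~\ref{A-fixed-sub-b}, and Proposition~\ref{fusion}); your write-up simply fills in the details the paper leaves implicit, in particular isolating the need for $\fS_{\zeta}(\s_{\bw})=\{\mrm{pt}\}$ so that $\K_0(a_0)$ is genuinely an endomorphism of a tensor product of Nakajima cohomologies, which is indeed the hidden hypothesis behind the paper's notation $\K_{\underline\bw}(\underline a)$. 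One small imprecision to flag: since $\Y_\sigma$ is introduced as a subalgebra of $\Y$ generated by specified matrix coefficients, the reflection equation is a property these generators satisfy (justifying the name ``twisted Yangian''), not a defining relation that needs to be verified for $\Y_\sigma$ to act.
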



\section{Example I: cotangent bundles of  isotropic flag varieties}

In this section, we show that a natural involution on the cotangent bundle of the $n$-step partial flag variety of type $\mrm A_n$ is a special case of the automorphism $\sigma$. As a consequence,
we show that cotangent bundles of partial flag varieties of classical type are examples of the quiver varieties 
$\fS_{\zeta}(\bv, \bw)$.

\subsection{Notations}

In this section, we assume that  the graph $\Gamma$ is a Dynkin diagram  of type $A_n$. 
\begin{align}
\label{Dynkin-A}
\begin{split}
\xymatrix{
\mrm A_n (n\geq 1):   \quad 1 \ar@<1ex>[r] & 2 \ar@<1ex>[l] \ar@<1ex>[r]  & \cdots \ar@<1ex>[l] \ar@<1ex>[r]  & n \ar@<1ex>[l] 
}\\
\end{split}
\end{align}
Assume further
that the dimension vectors $\bv$ and $\bw$ of the pair of vector spaces  $V$ and $W$ satisfy that
$\bw_i=0$ for $i\geq 2$ and $\bw_1\geq \bv_1\geq \bv_2\geq \cdots \geq \bv_n$. 
Let $ \mathscr F_{\bv, \bw}$ be the variety of $n$-step partial flags,
$F = ( W\equiv F_0 \supseteq F_1 \supseteq \cdots \supseteq F_n \supseteq F_{n+1} \equiv 0)$,
such that $\dim F_i =\bv_i$ for all $1\leq i\leq n$.
The cotangent bundle $T^* \mathscr F_{\bv, \bw}$ of $\mathscr F_{\bv, \bw}$ can be defined as follows.
\begin{align}
T^* \mathscr F_{\bv, \bw} =\{ (x, F) \in \End (W) \times \mathscr F_{\bv, \bw} | x( F_i) \subseteq F_{i+1}, \forall 0\leq i\leq n\}.
\end{align}

From Section ~\ref{involution}, we assume that 
$W\equiv W_1$ is a formed space with the bilinear-form $(-,-)_W$.  
Let $G(W)$ be the subgroup of $\mrm{GL}(W)$ leaving the form invariant. 
In particular, if the form is a $\delta$-form, 
$G(W) = \mrm O_{\bw_1}$ the orthogonal group if  $\delta=1$ 
and $G(W) = \mrm{Sp}_{\bw_1}$ the symplectic group if $\delta=-1$.
Let $\mathfrak g(W)$ be the Lie algebra of $G(W)$. Then we have
\begin{align}
\label{G(W)}
\begin{split}
G(W) & = \{ g\in \mrm{GL}(W) | gg^*=1\}, \\
\mathfrak g(W) & =\{x\in \End(W) | x = - x^*\}.
\end{split}
\end{align}

For each subspace $F_i \subseteq W$, we can define its orthogonal complement $F_i^{\perp} =\{ w\in W| (x, F_i)_W=0\}$.
We set $F^{\perp} = ( F_{n+1}^{\perp} \supseteq F_n^{\perp} \supseteq \cdots \supseteq F_1^{\perp} \supseteq F_0^{\perp})$. 
Note that $w_0 * \bv = ( \bw_1 - \bv_n, \bw_1 - \bv_{n-1} ,\cdots, \bw_1 - \bv_1)$.
%
%
%
So if $F\in \mathscr F_{\bv, \bw}$, then  $F^{\perp} \in \mathscr F_{w_0*\bv, \bw}$,
thus taking $\perp$ defines an involution $\sigma_1: \mathscr F_{\bv, \bw} \to \mathscr F_{w_0 * \bv, \bw}$.
In the case when $w_0 * \bv = \bv$, that is $\bv_i + \bv_{n+1-i} = \bw_1$ for all $1\leq i\leq n$, 
the fixed point subvariety $\mathscr F^{\sigma_1}_{\bv, \bw}$ under
$\sigma_1$ is a partial flag variety of the classical group $G(W)$.
Its cotangent bundle is given by
\[
T^* \mathscr F^{\sigma_1}_{\bv, \bw} =\{ (x, F) \in \mathfrak g(W) \times \mathscr F^{\sigma_1}_{\bv, \bw} | x(F_i) \subseteq F_{i+1},
\ \forall 0\leq i\leq n\}.
\]
 More generally, the assignment $(x, F) \mapsto (- x^*, F^{\perp})$ defines an isomorphism
\begin{align}
\label{sigma1}
\sigma_1: T^* \mathscr F_{\bv, \bw} \to T^* \mathscr F_{w_0 * \bv, \bw}.
\end{align}

We must prove the well-definedness of $\sigma_1$. 
We only need to show $-x^* (F^{\perp}_{i} ) \subseteq   F^{\perp}_{i-1}$ for all $1\leq i\leq n+1$.
For  any $u \in F_{i-1}$ and $u' \in F_{i}^{\perp}$,  one has
\begin{align}
(u, - x^* (u'))_{W} = - (x(u), u')_W =0, 
\end{align}
since $x(u) \in F_i$. This implies that $-x^*(u') \in F_{i-1}^{\perp}$ as required. 

From the above analysis, one has
\begin{align}
T^* \mathscr F^{\sigma_1}_{\bv, \bw} = (T^* \mathscr F_{\bv, \bw})^{\sigma_1}, \quad \mbox{if} \ w_0 * \bv= \bv.
\end{align}

Note that $\mrm f\mapsto \! \ ^{\tau}\mrm f = (\mrm f^*)^{-1}$ defines an automorphism $\tau: \G_{\bw} \to \G_{\bw}$. 
The isomorphism $\sigma_1$ is $\tau$-equivariant, i.e., 
$\sigma( \mrm f. (x, F)) =  \ ^{\tau} \! \mrm f.  \sigma_1(x, F)$ for all $\mrm f\in \G_{\bw}$ and $(x, F) \in T^* \mathscr F_{\bv, \bw}$.
In turn, this induces a $G(W)$-action on the fixed point variety  $T^* \mathscr F^{\sigma_1}_{\bv, \bw}$. 

\subsection{Identification with $\sigma$-quiver varieties}
\label{quiver-cotangent}

In this section, we assume that $a=1$ and  $\w=w_0$ the longest element in the Weyl group $\mathcal W$. 
Let $\theta: I\to I$ be the involution defined by $\theta(i) = n+1-i$ for all $1\leq i \leq n$.
Assume that the parameter $\zeta=(\xi, \zeta_{\mbb C})$ satisfies that $\zeta_{\mbb C} =0$,  $\theta(\xi) =\xi$ 
and $\xi_i>0$ for all $i\in I$.
We choose the function $\ve: H\to \{\pm 1\}$ to be $\ve(h) = \o (h) - \i (h)$ with the label in (\ref{Dynkin-A}).
Recall from ~\cite[Theorem 7.3]{N94}, there is an isomorphism $\phi: \M_{\zeta}(\bv, \bw) \to T^* \mathscr F_{\bv, \bw}$ of varieties
given by 
\begin{align}
\label{M-TF}
[\bx]=[x_h, p_i, q_i] \mapsto 
(q_1 p_1, 
W_1\supseteq \mbox{im} \ q_1 \supseteq  \mbox{im} \ q_1 y_{1}  \supseteq  \cdots \supseteq 
\mbox{im} \ q_1 y_{1} \cdots y_{n-1} \supseteq 0),
\end{align}
where $y_{i}$ is the $x_h$ such that $\o (h) = i+1$ and $\i (h) = i$.

\begin{thm}
\label{sigma-sigma-1}
Under Nakajima's isomorphism, the isomorphism  $\sigma=\sigma_{\zeta, w_0}$ for $\w=w_0$ (\ref{sigma})  on quiver varieties 
gets identified with the isomorphism $\sigma_1$ (\ref{sigma1}) on the cotangent bundle of flag varieties.
In particular, if $w_0 * \bv = \bv$, the quiver variety $\fS_{\zeta} (\bv, \bw)$ is 
the cotangent bundle $T^* \mathscr F^{\sigma_1}_{\bv, \bw}$.
\end{thm}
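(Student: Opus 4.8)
The plan is to prove the identification $\phi\circ\sigma=\sigma_1\circ\phi$ by chasing the explicit formulas through Nakajima's isomorphism $\phi$ of (\ref{M-TF}). Since $\sigma=a S_{w_0}\tau_\zeta$ with $a=1$, the first reduction is to understand how each of $\tau_\zeta$ and $S_{w_0}$ interacts with $\phi$; equivalently, because all maps in sight are isomorphisms and $\phi$ intertwines the $\G_{\bw}$-actions compatibly with $\tau$ on $\G_{\bw}$ (the $\tau$-equivariance statements are established both for $\sigma_1$ just above and for $\tau_\zeta$, $S_{\w}$ in Sections~\ref{involution}--\ref{Weyl-action}), it suffices to check the identity on representatives. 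By Proposition~\ref{tau-form} and Proposition~\ref{sigma-indep-of-V} I am free to pick convenient forms on $V$, and I will choose symmetric forms (using a fixed basis) so that adjoints are literally transposes conjugated by the form matrices, as in the proof of Proposition~\ref{tau-form}.

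First I would analyze $S_{w_0}$. Because $\bw$ is concentrated at the vertex $1$ and $\zeta_{\mbb C}=0$, $\xi_i>0$, the variety $\M_\zeta(\bv,\bw)$ is already $T^*\mathscr F_{\bv,\bw}$, and the composite reflection $S_{w_0}$ on these cotangent bundles is classically the map sending a flag to the flag built from the "opposite" order of the maps $y_i$; concretely one expects $S_{w_0}$, read through $\phi$, to be an isomorphism $T^*\mathscr F_{\bv,\bw}\to T^*\mathscr F_{w_0*\bv,\bw}$ whose underlying operation on $\End(W)$ is the identity (or a sign, absorbed by a $\G_{\bv}$-twist) and on flags is $F\mapsto$ the flag $W\supseteq\ker(x|\cdots)\supseteq\cdots$ dual-in-the-quiver-sense to $F$. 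I would pin this down by evaluating (\ref{M-TF}) on a representative $\bx$ after applying the simple reflections $S_i$ one at a time via the diagram (\ref{reflection-raw}): each $S_i$ replaces $V_i$ by $V_i'$ sitting in the exact sequence (\ref{reflection-a}), and tracking $\mathrm{im}\,q_1$ and the compositions $q_1 y_1\cdots y_{k}$ through these exact sequences shows that the resulting flag in $W$ is unchanged as a set of subspaces but relabelled — this is where the combinatorial bookkeeping lives.

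Next I would handle $\tau_\zeta$. On $\bM(\bv,\bw)$ it sends $(x_h,p_i,q_i)$ to $(\ve(h)x_{\bar h}^*,-q_i^*,p_i^*)$. Applying $\phi$ to $\tau_\zeta[\bx]$: the "$\End(W)$ part" becomes $({}^\tau q_1)({}^\tau p_1)=p_1^*(-q_1^*)=-(q_1 p_1)^*$, exactly the $x\mapsto -x^*$ in (\ref{sigma1}); and the flag part becomes the images of $p_1^*$, $p_1^*({}^\tau y_1)=p_1^*\,\ve(\bar h)y_{\bar h}^*$, etc. The key linear-algebra lemma is that, for a composition of maps $q_1,y_1,\dots$ with the given adjoints, $\mathrm{im}\big(p_1^*\,y_1^{*}\cdots y_{k}^{*}\big)=\big(\mathrm{im}(q_1 y_1\cdots y_k)\big)^{\perp}$ up to the reindexing that $w_0$ performs; this is the standard "image of adjoint = orthogonal complement of kernel" identity iterated, combined with the exactness in (\ref{reflection-a}) that matches kernels with images across the reflection. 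Composing the $\tau_\zeta$-computation with the $S_{w_0}$-computation then yields precisely $(x,F)\mapsto(-x^*,F^\perp)$, i.e.\ $\sigma_1$.

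The main obstacle I anticipate is the bookkeeping in the $S_{w_0}$ step: making the iterated reflection functor on cotangent bundles completely explicit in the coordinates of (\ref{M-TF}), keeping precise track of the signs coming from $\ve(h)=\o(h)-\i(h)$ and of how the successive exact sequences (\ref{reflection-a})--(\ref{reflection-d}) reorganize $\mathrm{im}\,q_1$ and the partial composites into the $w_0*\bv$-labelled flag. Once the effect of $S_{w_0}$ on $T^*\mathscr F_{\bv,\bw}$ is identified as (the identity on $\End(W)$, together with the natural "transpose-of-the-flag" bijection that is visibly orthogonality-compatible), the rest is the routine adjoint/orthogonal-complement identity above. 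Finally, the "in particular" clause is immediate: when $w_0*\bv=\bv$ both sides are automorphisms, so $\fS_\zeta(\bv,\bw)=\M_\zeta(\bv,\bw)^{\sigma}\xrightarrow{\ \phi\ }(T^*\mathscr F_{\bv,\bw})^{\sigma_1}=T^*\mathscr F^{\sigma_1}_{\bv,\bw}$, using the displayed equation $T^*\mathscr F^{\sigma_1}_{\bv,\bw}=(T^*\mathscr F_{\bv,\bw})^{\sigma_1}$ recorded just before the theorem.
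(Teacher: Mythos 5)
Your overall architecture matches the paper's: split $\sigma = S_{w_0}\tau_{\zeta}$ into two intertwining statements, verify each by explicit coordinates, and use an adjoint/orthogonal-complement identity for the $\tau_{\zeta}$ step. However, there is a genuine gap in the $\tau_{\zeta}$ computation that needs to be addressed.

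You write ``Applying $\phi$ to $\tau_\zeta[\bx]$'', but $\tau_\zeta$ lands in $\M_{-\zeta}(\bv,\bw)$, where the stability parameter $\xi$ is negative and Nakajima's image-based isomorphism~(\ref{M-TF}) simply does not apply: the maps $q_1$ and $y_i$ need not be injective there, so the flag of images is not even well-defined. What the paper does instead (and what you actually need) is a second isomorphism $\psi:\M_{-\zeta}(\bv,\bw)\to T^*\mathscr F_{w_0*\bv,\bw}$ built from \emph{kernels} rather than images: the flag attached to $[\bx]$ is $W_1\supseteq \ker x_{n-1}\cdots x_1 p_1\supseteq\cdots\supseteq\ker p_1\supseteq 0$. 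Correspondingly, your ``key linear-algebra lemma'' is misstated: the composites $p_1^*y_1^*\cdots y_k^*$ do not even compose ($p_1^*\colon V_1\to W$ and $y_1^*\colon V_1\to V_2$ point the wrong way), and the correct identity is $(\mathrm{im}\,q_1 y_1\cdots y_{i-1})^\perp = \ker {}^{\tau}x_{i-1}\cdots {}^{\tau}x_1\,{}^{\tau}p_1$, i.e.\ the standard fact $(\mathrm{im}\,f)^\perp = \ker f^*$ applied to $f = q_1 y_1\cdots y_{i-1}$. Once you introduce $\psi$ and state the identity this way, the first half of the argument ($\psi\circ\tau_\zeta = \sigma_1\circ\phi$) goes through exactly as you envision, with the dimension count (injectivity of $q_1, y_j$) finishing the equality.

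Your treatment of $S_{w_0}$ is the right idea but is currently hand-waving at exactly the point where the work lives. The paper fixes the reduced word $w_0 = s_n(s_{n-1}s_n)\cdots(s_1\cdots s_n)$, notes that at each step the stability parameter is negative at the relevant vertex so (\ref{Si}) applies, and then literally tracks a representative through the chain of exact sequences to see that $\phi S_{w_0} = \psi$. Your phrase ``dual-in-the-quiver-sense'' gestures at the right answer (the image flag is replaced by the kernel flag), but without the explicit bookkeeping you have no argument, only an expectation. If you carry out that computation, you will also see that $S_{w_0}$ is not naturally described as ``the identity on $\End(W)$ together with a flag transpose'' — it changes the \emph{representative} $\bx$ while leaving the induced point of $T^*\mathscr F$ equal to $\psi[\bx]$, which is a subtly different statement than an automorphism of $T^*\mathscr F$. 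Fixing these two points would bring your proof into agreement with the paper's.
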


\begin{proof}
Similar to ~\cite[Theorem 7.3]{N94}, there is an isomorphism $\psi: \M_{-\zeta}(\bv, \bw) \to T^* \mathscr F_{w_0 * \bv, \bw}$ of varieties
given by
\begin{align}
\label{M-TF-dual}
[\bx] \mapsto 
(q_1 p_1, W_1 \supseteq \ker  x_{ n-1}\cdots x_{1}  p_1  \supseteq \cdots \supseteq \ker x_{1} p_1  \supseteq \ker p_1 \supseteq 0),
\end{align}
where $x_{i} $ stands for the $x_h$ with $\o (h) =i$ and $\i (h) = i+1$. 
The proof consists of two steps.
The first step of the proof is to show that the following diagram is commutative.
\[
\begin{CD}
\M_{\zeta} (\bv, \bw) @>\tau_{\zeta}>> \M_{-\zeta} ( \bv, \bw) \\
@V\phi VV @VV\psi V \\
T^* \mathscr F_{\bv, \bw} @> \sigma_1 >> T^* \mathscr F_{w_0 * \bv, \bw}
\end{CD}
\]
where the isomorphism $\tau_{\zeta}$ is from (\ref{tau}).
Note that $^{\tau} q_1 \ ^{\tau} p_1 = - (q_1p_1)^*$. 
So it suffices to show that the associated flags to $[\bx]$ and  $[^{\tau}\bx]$  can be obtained from each other via the operator $\perp$.
More precisely, setting $x_{0}= y_{0} =1$, we need to show
\begin{align}
\label{im-ker}
(\mrm{im} \ q_1 y_{1} \cdots y_{i-1})^{\perp} = \ker \ \!  ^{\tau} x_{i-1}\cdots \ \!  ^{\tau} x_{1}  \ \! ^{\tau}p_1  , 
\quad \forall 1\leq i\leq n.
\end{align}
Let $f_i = q_1 y_{1} \cdots y_{i-1}$. 
Then by definition, $^{\tau} x_{i-1}\cdots \ \!  ^{\tau} x_{1}  \ \! ^{\tau} p_1  = (- 1)^i f_i^*$.
So for all $u\in \mrm{im} \ f_i$, $u' \in  \ker \ (-1)^i  f_i^* = \ker f^*_i$, there is $v_i\in V_i$ such that $f_i (v_i) = u$ and
$(u, u')_{W_1} = (f_i(v_i), u')_{W_1} = (v_i, f_i^* (u'))_{V_i} =0$. 
Hence we obtain that  $\ker f^*_i \subseteq (\mrm{im}  f_i)^{\perp}$.
Since the linear maps  $q_1$, $y_{1}, \cdots, y_{i-1}$ are injective,  
we have that $\dim \mrm{im} f_i + \dim \ker f_i^* = \dim V_i + \dim W_1 - \dim V_i = \bw_1$, which implies  the equality (\ref{im-ker}).
This proves that the above diagram is commutative.

The second step is to show that the involution $S_{w_0}: \M_{-\zeta} (\bv, \bw) \to \M_{\zeta} (w_0* \bv, \bw)$ commutes with the maps in (\ref{M-TF}) and (\ref{M-TF-dual}), that is $\psi = \phi S_{w_0}$. Here we use $w_0(-\xi) = \theta(\xi) = \xi$.
Let us fix a reduced expression of the longest element $w_0 = s_n (s_{n-1} s_n) \cdots (s_1\cdots s_n)$, 
so that 
$S_{w_0} = S_n (S_{n-1} S_n) \cdots (S_1\cdots S_n)$.
Observe that each time we apply $S_i$, the parameter on the affected quiver varieties always has a negative value at $i$.
This allows us to use the definition (\ref{Si}).
Now fix a point $[\bx] \in \M_{-\zeta} (\bv, \bw)$, with $\bx$ given by
\[
\xymatrix{
W_1 \ar@<1ex>@{->>}[r]^{p} & 
V_1 \ar@<1ex>[l]^{q} \ar@<1ex>@{->>}[r]^{x_{1}} 
& V_2 \ar@<1ex>[l]^{y_{1}} \ar@<1ex>@{->>}[r]^{x_{2}}  
& \cdots \cdots \ar@<1ex>[l]^{y_{2}}  \ar@<1ex>@{->>}[r]^{x_{n-2}} 
&  V_{n-1} \ar@<1ex>[l]^{y_{n-2}} \ar@<1ex>@{->>}[r]^{x_{n-1}} 
& V_{n} \ar@<1ex>[l]^{y_{n-1}}
}
\] 
By applying $S_1 \cdots S_n$ to $\bx$, the point $[\bx]$ gets sent to a point represented by 
\[
\xymatrix{
W_1 \ar@<1ex>[r]^{qp} &
\ker x_{n-1} \cdots x_{1} p \ar@<1ex>@{^{(}->}[l] \ar@<1ex>@{->>}[r]^{ p} 
&  \ker x_{n-1} \cdots x_{1}  \ar@<1ex>[l]^{q} \ar@<1ex>@{->>}[r]^{x_{1}}  
& \cdots \cdots \ar@<1ex>[l]^{y_{1}}  \ar@<1ex>@{->>}[r]^{ x_{n-3}} 
&  \ker x_{n-1} x_{n-2}   \ar@<1ex>[l]^{y_{n-3}} \ar@<1ex>@{->>}[r]^{x_{n-2}} 
& \ker x_{n-1}  \ar@<1ex>[l]^{y_{n-2}}
}
\]
where the arrow without name is the natural inclusion.
By applying $S_i\cdots S_n$ for $i=2, \cdots, n$ consecutively, we see that the point $S_{w_0} ([\bx])$ is represented by
\[
\xymatrix{
W_1 \ar@<1ex>[r]^{qp} &
\ker x_{n-1} \cdots x_{1} p \ar@<1ex>@{^{(}->}[l] \ar@<1ex>[r]^{qp} 
&  \ker x_{n-2} \cdots x_{1}p  \ar@<1ex>@{^{(}->}[l] \ar@<1ex>[r]^{qp}  
& \cdots \cdots \ar@<1ex>@{^{(}->}[l]  \ar@<1ex>[r]^{qp} 
&  \ker  x_{1} p  \ar@<1ex>@{^{(}->}[l] \ar@<1ex>[r]^{qp} 
& \ker p  \ar@<1ex>@{^{(}->}[l] 
}
\]
By (\ref{M-TF}) and (\ref{M-TF-dual}), 
it implies immediately that $\phi S_{w_0} ([\bx]) = \psi ([\bx])$, completing the proof.
\end{proof}

\begin{rem}
\label{rem:abnormal}
The identification in Theorem~\ref{sigma-sigma-1} indicates that the geometry of general $\sigma$-quiver varieties
is quite complicated as we shall see from the following remarks.
\begin{enumerate}

\item 
In general, the quiver variety $\fS_{\zeta}(\bv, \bw)$  is not connected. An example is as follows. 
When $\delta=1$ and $\bw_1$ is even, $G(W)= \mrm O_{\bw_1}$ is an even orthogonal group, 
hence the cotangent bundle $T^* \mathscr F^{\sigma_1}_{\bv, \bw}$ has two connected/irreducible components if $n$ is odd.

\item 
The morphism $\pi^{\sigma}$ is not a resolution of singularities in general.
Indeed, we have a commutative diagram
\[
\begin{CD}
\fS_{\zeta}(\bv, \bw) @>\phi>\cong> T^*\mathscr F^{\sigma_1}_{\bv, \bw} \\
@V\pi^{\sigma}VV @VV\Pi V\\
\fS_1(\bv, \bw) @>\phi_0>> \mathfrak g(W)
\end{CD}
\]
where $\Pi$ is the first projection. 
So we only need to know that $\Pi$ is not a resolution of singularities. 
For example, the morphism 
$\Pi: T^* \mathscr F^{\mathfrak{sp}_6}_{\bv, \bw} \to \overline{\mathcal O}_{(2^2, 1^2)}^{\mathfrak{sp}_6}$,
with $\bv = ( 5, 1)$ and $\bw=(6,0)$, is not a resolution of singularities since the fiber at any point of Jordan type $2^2, 1^2$ contains two points.
Note that  
$\Pi$ is generically finite to its image.

Note that $\Pi$ factors through the affinization map of $ T^*\mathscr F^{\sigma_1}_{\bv, \bw}$ 
which is a resolution of singularities.
It is not clear if the same holds for the affinization map of $\fS_{\zeta}(\bv, \bw)$.

\item
The fiber $(\pi^{\sigma})^{-1}(0)$ is not equidimensional/lagrangian in general. 
Indeed, by Corollary~\ref{cor:i-Nakajima-Maffei},  it
corresponds to the  fiber of a partial resolution of nilpotent Slodowy slices at $e_0$, 
which  is  not necessarily equidimensional/lagrangian. 

\item
In general, the variety $\fS_1(\bv, \bw)$ is non-normal. 
In ~\cite{KP82}, the orbit closure $\overline{\mathcal O}_{3^2, 1^2}$ in $\mathfrak{sp}_8$ is connected non-normal, 
which is a special case of $\fS_1(\bv, \bw)$. 
\end{enumerate}
\end{rem}

\section{Nakajima-Maffei isomorphism and  symmetry}

In this section, we assume again that the Dynkin diagram is of type $\mrm A_n$.
We recall Nakajima-Maffei's isomorphism of  the quiver varieties and partial Springer resolutions of nilpotent Slodowy slices of type $\mrm A$.
We deduce, as preliminary results for later study,  a rectangular symmetry and the column-removal and row-removal reductions in~\cite[Proposition 5.4]{KP81}. 
(During the preparation of the paper, we noticed that these applications have been appeared in ~\cite[Section 9]{H15}.)

\subsection{Nakajima-Maffei theorem}
\label{Maffei}

Recall from Section ~\ref{quiver-cotangent}, we define $\ve (h) = \o (h) - \i (h)$ for all arrow $h$.
For any pair $(\bv, \bw)$,
we define a new pair  $(\widetilde \bv=(\widetilde \bv_i)_{1\leq i\leq n}, \widetilde \bw=(\widetilde \bw_i)_{1\leq i\leq n})$
where
\begin{align}
\label{tild-bw}
\widetilde \bv_i = \bv_i + \sum_{j\geq i+1} (j-i) \bw_j, \quad 
\widetilde \bw_i = \delta_{i, 1} \sum_{1\leq j \leq n} j \bw_j,\quad \forall 1\leq i\leq n.
\end{align}
To a pair $(V, W)$ of $I$-graded vector spaces of dimension vector $(\bv, \bw)$,
we associate  a new pair $(\widetilde V, \widetilde W)$ of dimensional vector $(\widetilde \bv, \widetilde \bw)$ whose $i$-th component
is given by
\begin{align}
\label{t-V-W}
\widetilde  V_i = V_i \oplus \oplus_{1\leq h\leq j-i} W_j^{(h)}, \quad
\widetilde W_1 = \oplus_{1\leq h\leq j} W_j^{(h)},
\end{align}
where $W_j^{(h)}$ is an identical copy of $W_j$ for all $h$.
For convenience, we set $V_0=0$ and $\widetilde V_0 =\widetilde W_1$.
With respect to the decomposition of $\widetilde V_i$, 
a linear map $\widetilde x_i: \widetilde V_i \to \widetilde V_{i+1}$ is  a collection of the following four types of linear maps.
\begin{align}
\label{xi-dec}
\begin{split}
X_i : V_i \to V_{i+1}, \
T^V_{i, j, h}: V_i \to W_j^{(h)},\
T^{j', h'}_{i, V}: W_{j'}^{(h')} \to V_{i+1},\
T^{j', h'}_{i, j, h} : W_{j'}^{(h')} \to W_{j}^{(h)}, 
\end{split}
\end{align}
for all $j\geq i+1$, $1\leq h\leq j-i$.
Similarly, to give a linear map $\widetilde y_i: \widetilde V_{i+1} \to \widetilde V_i$ is the same as to give a collection of the following four types of linear maps for all $j\geq i+1$, $1\leq h\leq j-i$.
\begin{align}
\label{yi-dec}
\begin{split}
Y_i : V_{i+1} \to V_{i},  \
S^V_{i, j, h}: V_{i+1} \to W_j^{(h)},\
S^{j', h'}_{i, V}: W_{j'}^{(h')} \to V_{i}, \
S^{j', h'}_{i, j, h} : W_{j'}^{(h')} \to W_{j}^{(h)}.
\end{split}
\end{align}

Following Maffei,  we define the following numerical data.
\begin{align}
\label{grad}
\begin{split}
\mrm{grad} (T^{j', h'}_{i, j, h} ) & = \min ( h-h'+1, h-h'+1+j'-j), \\
%
%
\mrm{grad} (S^{j', h'}_{i, j, h}) & = \min (h-h', h-h'+j'-j).\\
%
\end{split}
\end{align}

Let $W_i' = \oplus_{1\leq h\leq j-i} W_j^{(h)}$ so that $\widetilde V_i = V_i \oplus W_i'$ for all $0\leq i\leq n$.
Let $e_i: W_i'\to W_i'$ be a linear map 
whose component  $W_j^{(h)} \overset{e_i}{\to} W_j^{(h-1)}$ is equal to   $\mrm{id}_{W_j}$ for $2\leq h\leq j-i$ and $0$ otherwise.
Let $f_i: W_i' \to W_i'$ be a linear map 
whose component  $W_j^{(h)} \overset{f_i}{\to} W_j^{(h+1)}$ is equal to $h(j-i-h) \mrm{id}_{W_j}$ for $1\leq h\leq j-i -1$ 
and $0$ otherwise. 
The triple $(e_i, f_i, [e_i, f_i])$ in $\mathfrak{sl}(W'_i)$
is a Maffei $\mathfrak{sl}_2$-triple .

Now assume that $\zeta_{\mbb C}=0$ and we write $\Lambda (V, W)$ instead of  $\Lambda_{\zeta_{\mbb C}}(V, W)$.
We preserve the convention used in previous sections: $x_i/y_i$ stands for the map associated to the arrow $i\to i+1/ i+1\to i$.
Following Maffei, an element $\widetilde \bx =(\widetilde x_i, \widetilde y_i, \widetilde p_i, \widetilde q_i) \in \Lambda(\widetilde V, \widetilde W)$, represented in the form of (\ref{xi-dec})-(\ref{yi-dec}), is $transversal$ if it satisfies the following conditions.
\begin{align}
T^{V}_{i, j, h} & =0,  \tag{t1} \label{t1}\\
T^{j', h'}_{i, V} & =0, &&  \mbox{if} \ h' \neq 1, \tag{t2} \label{t2}\\
T^{j', h'}_{i, j, h} & =0, && \mbox{if} \ \mrm{grad} (T^{j', h'}_{i, j, h}) < 0, \tag{t3} \label{t3} \\
T^{j',h'}_{i, j, h} & = 0, && \mbox{if} \  \mrm{grad} (T^{j', h'}_{i, j, h}) =0, (j', h') \neq (j, h+1), \tag{t4} \label{t4}\\
T^{j', h'}_{i, j, h} & = \mrm{id}_{W_j}, && \mbox{if} \  \mrm{grad} (T^{j', h'}_{i, j, h}) =0, (j', h') = (j, h+1), \tag{t5} \label{t5} \\
S^{j', h'}_{i, j, h} & = \mrm{id}_{W_j}, && \mbox{if} \  \mrm{grad} ( S^{j', h'}_{i, j, h} ) =0, (j', h') = (j, h), \tag{s5} \label{s5}\\
S^{j', h'}_{i, j, h} & =0, && \mbox{if}\ \mrm{grad} ( S^{j', h'}_{i, j, h}) =0, (j',h') \neq (j, h), \tag{s4} \label{s4} \\
S^{j', h'}_{i, j, h} & =0,  && \mbox{if} \ \mrm{grad} ( S^{j', h'}_{i, j, h}) < 0, \tag{s3} \label{s3}\\
S^{V}_{i, j, h} & = 0, && \mbox{if} \ h \neq j- i, \tag{s2} \label{s2}\\
S^{j', h'}_{i, V} & =0, &&  \tag{s1} \label{s1}  \\
[ \pi_{W'_i} \widetilde y_i \widetilde x_i|_{W'_i}  - e_i, f_i]  & =0.  \tag{r1} \label{sl2}
\end{align}

\begin{prop}[{\cite[Lemma 19]{M05}}] 
\label{Maffei-Phi}
There is an injective morphism $\Phi: \Lambda(V, W) \to \Lambda(\widetilde V, \widetilde W)$  of varieties defined by the following rules. 
For all $\bx  =( x_i, y_i, p_i, q_i) \in \Lambda(V, W)$,  
the element $\Phi(\bx)$ is the unique transversal element in $\Lambda(\widetilde V, \widetilde W)$ 
satisfies that 
\begin{align}
X_i & = x_i, & Y_i &= y_i, \\
T^{i+1, 1}_{i, V} &  = p_{i+1}, & S^V_{i, i+1,1} &= q_{i+1}.\\
\intertext{Moreover $T^{j, h}_{i, V} $ and $S^V_{i, j, h}$ are zero unless $j>i$, and in this case they are}
T^{j, h}_{i, V} & = \delta_{h, 1} y_{i+1} \cdots y_{j-1} p_j, 
&
S^V_{i, j, h} & = \delta_{h, j-i} q_j x_{j-1} \cdots x_{i+1}.
\end{align} 
\end{prop}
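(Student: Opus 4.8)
The plan is to follow Maffei's proof of~\cite[Lemma 19]{M05}, adapting it to the present notation. Fix $\bx=(x_i,y_i,p_i,q_i)\in\Lambda(V,W)$, so that $\mu_i(\bx)=0$ for all $i$. I would split the blocks of a candidate transversal element $\widetilde\bx\in\bM(\widetilde V,\widetilde W)$, written as in~(\ref{xi-dec})--(\ref{yi-dec}), into three groups: (i) the ``untilded'' blocks $X_i,Y_i$ together with the surviving mixed blocks $T^{j,h}_{i,V},S^V_{i,j,h}$; (ii) the blocks $T^{j',h'}_{i,j,h},S^{j',h'}_{i,j,h}$ of grading $\le 0$; and (iii) the blocks $T^{j',h'}_{i,j,h},S^{j',h'}_{i,j,h}$ of grading $>0$, together with the framing maps $\widetilde p_1,\widetilde q_1$ attached to $\widetilde W_1=\widetilde V_0$. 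The transversality conditions~(\ref{t1})--(\ref{s5}) force group (ii) to consist only of zeros and identity copies of $W_j$, and together with~(\ref{t1})--(\ref{t2}),~(\ref{s1})--(\ref{s2}) and the prescribed values $T^{i+1,1}_{i,V}=p_{i+1}$, $S^V_{i,i+1,1}=q_{i+1}$ they force group (i) to be precisely what the statement asserts (the mixed blocks being the compositions $\delta_{h,1}y_{i+1}\cdots y_{j-1}p_j$, resp.\ $\delta_{h,j-i}q_j x_{j-1}\cdots x_{i+1}$). So everything is pinned down by $\bx$ except group (iii).

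The core of the proof is to show that the moment-map equations $\mu_i(\widetilde\bx)=0$ for $1\le i\le n$ (with the convention $\widetilde V_0=\widetilde W_1$), together with the $\mathfrak{sl}_2$-relation~(\ref{sl2}), have a unique solution in the group-(iii) unknowns, polynomial in the entries of $\bx$. I would run an induction on the grading. For fixed $j$, the chain $W_j^{(1)},\dots,W_j^{(j-i)}$ realizes a standard $(j-i)$-dimensional $\mathfrak{sl}_2$-module (tensored with $W_j$) on which $e_i,f_i$ act as the raising/lowering operators, so $\mrm{ad}(e_i)$ and $\mrm{ad}(f_i)$ are injective on the relevant weight spaces; and the function $\mrm{grad}$ is set up so that, ordering the unknowns by grading, each component of a moment-map equation (or of~(\ref{sl2})) reads $\bigl(\text{data already determined from }\bx\text{ and lower-grading blocks}\bigr)=\bigl(\text{an injective }\mathfrak{sl}_2\text{-operator}\bigr)\bigl(\text{one group-(iii) block}\bigr)$. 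Solving these triangularly produces the unique transversal element $\Phi(\bx)\in\Lambda(\widetilde V,\widetilde W)$ with the prescribed blocks and exhibits each entry of $\Phi(\bx)$ as a polynomial in the entries of $\bx$; hence $\Phi$ is a morphism of varieties.

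Injectivity is then immediate and uses none of the hard part: from $\Phi(\bx)$ one reads off $x_i=X_i$, $y_i=Y_i$, $p_{i+1}=T^{i+1,1}_{i,V}$ and $q_{i+1}=S^V_{i,i+1,1}$, so projection onto these blocks is a morphism left inverse to $\Phi$; since $\Lambda(\widetilde V,\widetilde W)$ is separated, $\Phi$ is in fact a closed immersion, onto the closed locus of transversal elements cut out by~(\ref{t1})--(\ref{sl2}).

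The step I expect to be the main obstacle is the triangular solvability itself: one must verify that the bookkeeping of gradings genuinely makes the moment-map equations and~(\ref{sl2}) upper-triangular in the group-(iii) unknowns, with invertible diagonal operators and no circular dependence, and that the equations at adjacent vertices (the moment map at vertex $i$ couples blocks at $i-1,i,i+1$) are mutually consistent. This is the combinatorial heart of Maffei's lemma; I would record the $\mathfrak{sl}_2$-theoretic mechanism sketched above and otherwise cite~\cite[Lemma 19]{M05} for the detailed verification.
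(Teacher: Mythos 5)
The paper does not actually prove this proposition; it is restated verbatim with a citation to Maffei's Lemma 19 in~\cite{M05} and the paper defers entirely to that source, so there is no internal proof to compare against. Your sketch is a faithful high-level reconstruction of Maffei's argument: the transversality conditions pin down all blocks except the positively graded pure-$W$ ones, which are then determined by the moment-map equations and the $\mathfrak{sl}_2$-relation solved triangularly in the grading, using that $\mathrm{ad}(e_i)$, $\mathrm{ad}(f_i)$ are injective on the relevant weight pieces; and you correctly flag the triangular solvability as the combinatorial heart and defer it to~\cite{M05}, exactly as the paper does.

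Two small caveats. First, treating $\widetilde p_1,\widetilde q_1$ as a separate piece of group~(iii) is slightly misleading: with the convention $\widetilde V_0=\widetilde W_1$, $V_0=0$, the framing maps are just $\widetilde x_0,\widetilde y_0$ and they decompose into the same four block types of~(\ref{xi-dec})--(\ref{yi-dec}) at $i=0$ (with the $V_0$-blocks automatically zero), so those blocks are distributed across all three of your groups rather than all sitting in group~(iii). Second, your closed-immersion conclusion uses the extra fact that the image of $\Phi$ is the entire transversal locus, i.e.\ that any transversal $\widetilde\bx\in\Lambda(\widetilde V,\widetilde W)$ has $(X_i,Y_i,T^{i+1,1}_{i,V},S^V_{i,i+1,1})$ already lying in $\Lambda(V,W)$; that surjectivity onto the transversal locus is also established by Maffei but is strictly more than the stated proposition (which only asserts an injective morphism), so it should be flagged as an additional input if you want the stronger statement.
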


Assume the parameter $\xi$ satisfies that $\xi_i >0$ for all $i\in I$. 
The homomorphism $\Phi$ restricts to an injective homomorphism
$\Lambda^{\xi\text{-}ss}(\bv,  \bw) \to \Lambda^{\xi\text{-}ss}(\widetilde \bv, \widetilde \bw)$
which is compatible with the $\G_{\bv}$ and $\G_{\bw}$ actions on the respective varieties.
Hence it   induces closed immersions with $\zeta_{\mbb C}=0$
\begin{align}
\label{varphi}
\varphi: \M_{\zeta}(\bv, \bw) \to \M_{\zeta}(\widetilde \bv, \widetilde \bw), \quad
\varphi_0: \M_0(\bv, \bw) \to \M_0(\widetilde \bv, \widetilde \bw),
\end{align}
such that we have the following diagram.
\begin{align}
\label{diag-varphi}
\begin{CD}
\M_{\zeta}(\bv, \bw) @>\varphi>> \M_{\zeta}(\widetilde \bv, \widetilde \bw)\\
@V\pi VV @VV\pi V\\
\M_0(\bv, \bw) @>\varphi_0>> \M_0(\widetilde \bv, \widetilde \bw)
\end{CD}
\end{align}
Moreover, $\varphi_0(0) = e_0$ where $e_0$ is in  
the $\mathfrak{sl}_2$ triple $(e_0, f_0, [e_0, f_0])$ from the paragraph below (\ref{grad}).
Now set
\[
\mu = ( \widetilde \bv_0 - \widetilde \bv_1, \widetilde \bv_1 - \widetilde \bv_2, \cdots, 
\widetilde \bv_{n-1} - \widetilde \bv_n, \widetilde \bv_n).
\]
We have $\mu_i = \bw_i + \cdots + \bw_n - \bv_i + \bv_{i-1}$.
Reorder the entries in $\mu$ in decreasing order: $\rho_1\geq \rho_2 \geq \rho_3 \geq \cdots \geq \rho_{n+1}$ and set
\[
\mu' = 1^{\rho_1 -\rho_2} 2^{\rho_2-\rho_3} \cdots n^{\rho_n -\rho_{n+1}} (n+1)^{\rho_{n+1}}.
\]
Let $\mathcal O_{\mu'}$ be the nilpotent $\mrm{GL}(\widetilde W_1)$-orbit in $\mathfrak{gl}(\widetilde W_1)$
whose Jordan blocks have size $\mu'$.
It is known that the closure $\overline{\mathcal O}_{\mu'}$ of the orbit $\mathcal O_{\mu'}$ is  the image of 
the first projection $\Pi$ from the cotangent bundle $T^* \mathscr F_{\widetilde \bv, \widetilde \bw}$ to 
$\mathfrak{gl}(\widetilde W_1)$ so that via (\ref{M-TF}) we have a commutative diagram:
\begin{align}
\label{phi-comm}
\begin{CD}
\M_{\zeta}(\widetilde \bv, \widetilde \bw) @>\phi >\cong> T^*\mathscr F_{\widetilde \bv, \widetilde \bw} \\
@V\pi VV @VV \Pi V\\
\M_{1} (\widetilde \bv, \widetilde \bw) @>\phi_0 >\cong > \overline{\mathcal O }_{\mu'}
\end{CD}
\end{align}
where $\M_{1} (\widetilde \bv, \widetilde \bw)$ is the image of $\pi$.
Recall again Maffei's $\mathfrak{sl}_2$-triple $(e_0, f_0, [e_0, f_0])$ from the paragraph below (\ref{grad}).
The Slodowy transversal slice of the orbit of $e_0$ at $e_0$ is defined to be 
\[
\mathcal S_{e_0} =\{ x\in \mathfrak{gl}(\widetilde W_1) | x\ \mbox{is nilpotent}, [x - e_0, f_0] =0\}.
\]
Note that $e_0 \in \mathcal O_{\lambda} $ where $\lambda =  1^{\bw_1} 2^{\bw_2} \cdots n^{\bw_n}$.
For simplicity, we also say the trivial triple $(0, 0, 0)$ is an $\mathfrak{sl}_2$-triple, and in this case
the Slodowy slice is the whole nilpotent cone.
For convenience, we set
\begin{align}
\label{slodowy}
\mathcal S_{\mu', \lambda} = \overline{\mathcal O}_{\mu'} \cap  \mathcal S_{e_0} \quad \mbox{and}\quad
\widetilde{\mathcal S}_{\mu', \lambda} = \Pi^{-1}(\mathcal S_{\mu', \lambda}),
\end{align}
where $\Pi$ is from (\ref{phi-comm}).
The following theorem is conjectured by Nakajima ~\cite[Conjecture 8.6]{N94} and proved by Maffei ~\cite[Theorem 8]{M05}.

\begin{thm}[{\cite[Theorem 8]{M05}, \cite[Conjecture 8.6]{N94}}]
\label{Nakajima-Maffei}
The compositions $\phi \varphi$ and $\phi_0 \varphi_0$ of morphisms from (\ref{varphi}) and (\ref{phi-comm})  
yield isomorphisms  $\M_{\zeta}(\bv, \bw) \simeq \widetilde{\mathcal S}_{\mu', \lambda}$ 
and  
$\M_{1}(\bv, \bw) \simeq \mathcal S_{\mu', \lambda}$, respectively.
In particular, we have the following commutative diagram
\begin{align}
\label{phi-varphi}
\begin{CD}
\M_{\zeta}(\bv, \bw) @>\phi\varphi >> 
\widetilde{\mathcal S}_{\mu', \lambda} \\
@V\pi VV @VV\Pi V\\
\M_1(\bv, \bw) @>\phi_0 \varphi_0 >> \mathcal S_{\mu', \lambda}
\end{CD}
\end{align}
\end{thm}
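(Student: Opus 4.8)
The plan is to follow Maffei's strategy, which is already prepared by all of the transversality machinery above: reduce the theorem to a precise identification, under Nakajima's isomorphism $\phi$ of (\ref{phi-comm}), of the image of the closed immersion $\varphi$ of (\ref{diag-varphi}) with the preimage $\widetilde{\mathcal S}_{\mu',\lambda}$, and then pass to $\G_{\widetilde\bv}$-invariants for the affine statement. First I would work at the level of the raw varieties. Given $\bx=(x_i,y_i,p_i,q_i)\in\Lambda^{\xi\text{-}ss}(\bv,\bw)$, Proposition \ref{Maffei-Phi} produces the transversal element $\Phi(\bx)\in\Lambda(\widetilde\bv,\widetilde\bw)$; applying (\ref{M-TF}) one reads off a pair $(\widetilde q_1\widetilde p_1,\widetilde F)\in T^*\mathscr F_{\widetilde\bv,\widetilde\bw}$. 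The key computation is that, using the explicit formulas of Proposition \ref{Maffei-Phi} together with the transversality conditions (\ref{t1})--(\ref{sl2}), the endomorphism $\widetilde q_1\widetilde p_1$ lies in the Slodowy slice $\mathcal S_{e_0}$, i.e. $[\widetilde q_1\widetilde p_1-e_0,f_0]=0$: the block decomposition $\widetilde W_1=\oplus_{j,h}W_j^{(h)}$ and the maps $e_i,f_i$ of the paragraph below (\ref{grad}) are rigged so that the grading-zero blocks $T^{j,h+1}_{i,j,h}$, $S^{j,h}_{i,j,h}$ assemble exactly into $e_0$, while every surviving block of non-negative grading commutes with $f_0$. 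Combined with $\varphi_0(0)=e_0$ and the fact that $\phi_0$ sends $\M_1(\widetilde\bv,\widetilde\bw)$ isomorphically onto $\overline{\mathcal O}_{\mu'}$ (the image of $\Pi$), this shows that $\phi\varphi$ lands in $\widetilde{\mathcal S}_{\mu',\lambda}$ and makes the square (\ref{phi-varphi}) commute.

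Next I would show that $\phi\varphi$ is an isomorphism onto $\widetilde{\mathcal S}_{\mu',\lambda}$. Injectivity is immediate since $\varphi$ is a closed immersion and $\phi$ an isomorphism. For surjectivity, the heart of the matter is a normal-form result: every semistable point of $\Lambda^{\xi\text{-}ss}(\widetilde\bv,\widetilde\bw)$ whose associated endomorphism (under (\ref{M-TF})) lies in $\mathcal S_{e_0}$ is $\G_{\widetilde\bv}$-conjugate to a transversal element, and a transversal element is in turn uniquely recovered from a point $\bx\in\Lambda^{\xi\text{-}ss}(\bv,\bw)$ by the formulas of Proposition \ref{Maffei-Phi}. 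I would prove this by descending induction on the flag position $i$: using the representation theory of the Maffei $\mathfrak{sl}_2$-triple $(e_i,f_i,[e_i,f_i])$ acting on $W_i'$, one conjugates the auxiliary blocks $T^{j',h'}_{i,j,h}$, $S^{j',h'}_{i,j,h}$, $T^{j,h}_{i,V}$, $S^V_{i,j,h}$ into the prescribed shape — forced to vanish in negative grading, to be identity or zero in grading zero, and free in positive grading — the positive-grading parameters being precisely the moduli of $\bx$. Throughout, one must track the stability conditions (S1)--(S2) to keep the conjugations inside the semistable locus and to verify that $\xi$-semistability of $\Phi(\bx)$ is equivalent to that of $\bx$; this is where the hypothesis $\xi_i>0$ for all $i$ enters.

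Finally, for the affine statement I would take $\G_{\widetilde\bv}$-invariant functions: $\varphi_0$ is induced from the $\G_{\widetilde\bv\cup\bv}$-equivariant map on the raw $\Lambda$'s by applying $\mathrm{Spec}$ to rings of invariants, and $\phi_0$ identifies $\M_1(\widetilde\bv,\widetilde\bw)$ with $\overline{\mathcal O}_{\mu'}$. Running the same normal-form argument without the stability conditions (now for closed $\G_{\widetilde\bv}$-orbits) identifies $\M_1(\bv,\bw)$ with $\overline{\mathcal O}_{\mu'}\cap\mathcal S_{e_0}=\mathcal S_{\mu',\lambda}$, and the commutativity of (\ref{diag-varphi}) and (\ref{phi-comm}) yields the square (\ref{phi-varphi}). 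I expect the surjectivity/normal-form step to be the main obstacle: showing that an arbitrary semistable point with endomorphism in the Slodowy slice can genuinely be brought to transversal form, with residual data amounting to exactly a point of $\M_\zeta(\bv,\bw)$, requires the delicate grading-by-grading $\mathfrak{sl}_2$ analysis of Maffei together with careful bookkeeping of the stability conditions — everything else is a matter of organizing the diagrams and invoking the already-established properties of $\Phi$, $\phi$, and $\pi$.
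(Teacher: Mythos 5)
This theorem is not proved in the paper; it is quoted verbatim as \cite[Theorem 8]{M05}, which established Nakajima's \cite[Conjecture 8.6]{N94}. There is therefore no in-paper proof to compare against. Your sketch is, in fact, a fair outline of Maffei's actual argument: the reduction to showing $\phi\varphi$ has image in the Slodowy preimage $\widetilde{\mathcal S}_{\mu',\lambda}$ via the transversality conditions, the $\mathfrak{sl}_2$-normal-form argument for surjectivity, and the passage to the affine version by invariants are all there. Your reading of (\ref{sl2}) at $i=0$ as the statement $[\widetilde q_1\widetilde p_1-e_0,f_0]=0$ is the right way to see that $\widetilde q_1\widetilde p_1$ lands in $\mathcal S_{e_0}$, given the convention $\widetilde V_0 = \widetilde W_1 = W_0'$. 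You also correctly locate the hard part: proving that an arbitrary $\xi$-semistable representation whose cyclic quotient lands in $\mathcal S_{e_0}$ can be conjugated into transversal form, and tracking semistability through those conjugations, is the bulk of \cite{M05}.

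Two caveats. First, your affine step is stated a bit loosely. The quantity $\M_1(\bv,\bw)$ in the theorem is the image of $\pi:\M_\zeta(\bv,\bw)\to\M_0(\bv,\bw)$, not the categorical quotient $\M_0(\bv,\bw)$ itself; the identification $\M_1(\bv,\bw)\cong\mathcal S_{\mu',\lambda}$ is then deduced by combining the top isomorphism with the projectivity and surjectivity of $\Pi$ and $\pi$ onto their images, rather than by an independent normal-form argument on closed orbits. Second, the existence part of the normal form ("every semistable point lands there up to conjugation") uses not only the $\mathfrak{sl}_2$-structure of $(e_i,f_i)$ on $W'_i$ but also a uniqueness statement \cite[Lemma 18]{M05} for the transversal representative, without which the map $\Phi$ (and hence $\varphi$) would not be well defined as a morphism of varieties; worth naming explicitly rather than leaving implicit in Proposition \ref{Maffei-Phi}.
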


In what follows, we discuss two applications of the above remarkable theorem, which could have been stated in~\cite{M05} 
and has been discussed in~\cite[Section 9]{H15}.
We present the two applications in the following as a preparation  to the analogous results in classical groups and their associated symmetric spaces.

\subsection{Rectangular symmetry}
\label{rect}

In this section, we deduce a rectangular symmetry from Theorem~\ref{Nakajima-Maffei}.
If we relabel the vertex set $I$ by $i \to \theta(i)$, where $\theta(i)= n+1-i$, 
we can repeat the process in Section~\ref{Maffei} again. In particular, we obtain an immersion
\[
\M_{\zeta}(\bv, \bw) \to T^*\mathscr F_{\widehat \bv, \widehat \bw},
\]
where the pair $(\widehat \bv, \widehat \bw)$ 
is given by
\begin{align}
\label{hat-bw}
\widehat \bw_i = \delta_{i, 1} \sum_{1\leq j \leq n} (n+1- j) \bw_{j}, \quad  \widehat \bv_i 
= \bv_{n+1-i} + \sum_{j: j \geq  i+1} (j - i ) \bw_{n+1-j}, \quad \forall 1\leq i\leq n.
\end{align}
Similar to the $\mathfrak{sl}_2$-triple $(e_0, f_0, [e_0, f_0])$, 
we have an $\mathfrak{sl}_2$-triple $(\widehat e_0, \widehat f_0, [\widehat e_0, \widehat f_0])$
where the  nilpotent element $\widehat e_0$ has the  Jordan type 
$ 1^{\bw_{\theta(1)}} 2^{\bw_{\theta(2)}}\cdots i^{\bw_{\theta(i)}} \cdots n^{\bw_{\theta(n)}}$.
Similar to the partition $\mu$, we also have a partition $\widehat \mu \equiv (\widehat \mu_i)_{1\leq i\leq n+1}
=(\widehat \bv_{i-1} - \widehat \bv_{i})_{1\leq i \leq n+1}$ 
determined by $\widehat \bv$ 
and let $\widehat \mu'$ be its transpose.
Observe that 
\[
\widehat \bv_{n-i+1} - \widehat \bv_{n-i+2} = \bw_1+\cdots + \bw_{i-1} + \bv_i - \bv_{i-1}.
\]
This implies that 
\begin{align}
\label{mu-hat}
\mu_i + \widehat \mu_{n-i+2}  = \bw_1+\bw_2 + \cdots + \bw_n, \quad \forall 1\leq i\leq n+1.
\end{align}
In particular, if the transpose $\mu'$ is $\mu'= 1^{\mu'_1} 2^{\mu'_2} \cdots (n+1)^{\mu'_{n+1}}$, then the transpose $\widehat \mu'$ of $\widehat \mu$ is
\begin{align}
\label{mu'-hat}
\widehat \mu' = 1^{\mu'_n} 2^{\mu'_{n-1}} \cdots n^{\mu'_1} (n+1)^{\mu'_0} 
=(i^{\mu'_{\theta(i)}})_{1\leq i\leq n+1}, \quad \mu'_0 = \sum_{1\leq i\leq n} \bw_i - \sum_{1\leq i\leq n+1} \mu'_i .
\end{align}
Hence we have a similar result as Theorem ~\ref{Nakajima-Maffei} in describing the new immersion via the intersection
$\mathcal S_{\widehat \mu', \widehat{\lambda}} =
\overline{\mathcal O}_{\widehat \mu'} \cap \mathcal S_{\widehat e_0}$ and its 
partial Springer resolution 
$\widetilde{\mathcal S}_{\widehat \mu', \widehat{\lambda}}:=\Pi_{\widehat \bv, \widehat \bw}^{-1}(\mathcal S_{\widehat e_0})$
where $\Pi_{\widehat \bv, \widehat \bw}$ is the  natural projection similar to $\Pi$.
Altogether, we have the following result.

\begin{prop}
\label{prop:rect}
Let $(\widetilde \bv, \widetilde \bw)$ and $(\widehat \bv, \widehat \bw)$ be the pairs defined by
(\ref{tild-bw}) and (\ref{hat-bw}) such that associated compositions $\mu$ and $\widehat \mu$ satisfy (\ref{mu-hat}) (see also (\ref{mu'-hat})).
Then the following diagram is commutative with  isomorphic horizontal maps, 
which sends $e_0$ of Jordan type $(i^{\bw_i})_{1\leq i\leq n}$ to $\widehat e_0$  of Jordan type $(i^{\bw_{\theta(i)}})_{1\leq i\leq n}$ in the base.  
\begin{align}
\begin{CD}
\widetilde{\mathcal S}_{\mu', \lambda}
@>\cong>> 
\widetilde{\mathcal S}_{\widehat \mu', \widehat \lambda}\\
@V \Pi_{\widetilde \bv, \widetilde \bw} VV @VV\Pi_{\widehat \bv, \widehat \bw}V\\
\mathcal S_{\mu', \lambda}
@>\cong >>
\mathcal S_{\widehat \mu', \widehat \lambda}
\end{CD}
\end{align}
\end{prop}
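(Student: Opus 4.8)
The plan is to reduce Proposition~\ref{prop:rect} to Theorem~\ref{Nakajima-Maffei} applied twice, once with the original labelling of the Dynkin diagram $\mrm A_n$ and once with the flipped labelling $i \mapsto \theta(i)$, and then to identify the two quiver-variety sides. Both constructions in Section~\ref{Maffei} start from the \emph{same} pair $(\bv, \bw)$ of $I$-graded vector spaces; relabelling the vertex set $I$ by $\theta$ is an automorphism of the underlying data, so the Maffei embedding $\Phi$ of Proposition~\ref{Maffei-Phi} and the identifications in (\ref{varphi})--(\ref{phi-varphi}) are available for both. First I would invoke Theorem~\ref{Nakajima-Maffei} in its original form to get the commutative square with isomorphic horizontal maps
\[
\begin{CD}
\M_{\zeta}(\bv, \bw) @>\phi\varphi >> \widetilde{\mathcal S}_{\mu', \lambda} \\
@V\pi VV @VV\Pi_{\widetilde \bv, \widetilde \bw} V\\
\M_1(\bv, \bw) @>\phi_0\varphi_0 >> \mathcal S_{\mu', \lambda},
\end{CD}
\]
and then I would re-run the \emph{entire} construction of Section~\ref{Maffei} with $I$ relabelled by $\theta$, producing a second such square with $\M_{\zeta}(\bv,\bw)$ and $\M_1(\bv,\bw)$ unchanged on the left but $\widetilde{\mathcal S}_{\mu',\lambda}$, $\mathcal S_{\mu',\lambda}$ replaced by $\widetilde{\mathcal S}_{\widehat\mu',\widehat\lambda}$, $\mathcal S_{\widehat\mu',\widehat\lambda}$ on the right (this is exactly the immersion $\M_{\zeta}(\bv,\bw) \to T^*\mathscr F_{\widehat\bv,\widehat\bw}$ already described right before the statement). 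Composing one square with the inverse of the other along the common left column yields the desired commutative square with isomorphic horizontal maps.

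The remaining work is bookkeeping about partitions and nilpotent types. I would verify formula (\ref{mu-hat}), $\mu_i + \widehat\mu_{n-i+2} = \bw_1 + \cdots + \bw_n$ for all $1 \le i \le n+1$, directly from the definitions $\mu_i = \bw_i + \cdots + \bw_n - \bv_i + \bv_{i-1}$ and the analogous formula $\widehat\mu_{n-i+2} = \widehat\bv_{n-i+1} - \widehat\bv_{n-i+2} = \bw_1 + \cdots + \bw_{i-1} + \bv_i - \bv_{i-1}$ that comes from (\ref{hat-bw}); these two expressions visibly sum to $\bw_1 + \cdots + \bw_n$. Transposing this relation gives (\ref{mu'-hat}), i.e.\ $\widehat\mu' = (i^{\mu'_{\theta(i)}})_{1\le i\le n+1}$, which tells us that $\widehat\mu'$ is genuinely the partition attached to $\widehat\bv$ in the $\theta$-relabelled construction, and that $\widehat\lambda = 1^{\bw_{\theta(1)}} 2^{\bw_{\theta(2)}} \cdots n^{\bw_{\theta(n)}}$ is the Jordan type of $\widehat e_0$. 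This confirms that the right-hand objects in the two squares are exactly the two nilpotent Slodowy slices and their partial Springer resolutions named in the statement, and that the bottom horizontal isomorphism carries $e_0$ of type $(i^{\bw_i})_i$ to $\widehat e_0$ of type $(i^{\bw_{\theta(i)}})_i$ since both correspond, under $\phi_0\varphi_0$ and its $\theta$-analogue, to the point $0 \in \M_1(\bv,\bw)$.

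The main obstacle I anticipate is purely notational rather than conceptual: one has to be careful that the various auxiliary data in Section~\ref{Maffei} (the Maffei $\mathfrak{sl}_2$-triples $(e_i,f_i,[e_i,f_i])$, the grading functions $\mrm{grad}$, the transversality conditions (\ref{t1})--(\ref{sl2})) transform correctly — and canonically — under the relabelling $i \mapsto \theta(i) = n+1-i$. Because $\theta$ reverses the orientation of the $\mrm A_n$ quiver, it interchanges the roles of the maps $x_i$ and $y_i$, hence of the data (\ref{xi-dec}) and (\ref{yi-dec}), and of the ``$T$''-type and ``$S$''-type components; one must check that Proposition~\ref{Maffei-Phi} and Theorem~\ref{Nakajima-Maffei} are symmetric under this swap so that the $\theta$-relabelled construction is literally an instance of the same theorems (they are, since the quiver-variety side $\Lambda_{\zeta_{\mbb C}}(\bv,\bw)$ is manifestly symmetric under $h \leftrightarrow \bar h$). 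Once that symmetry is recorded, the proof is the two-square diagram chase above together with the short partition computation.
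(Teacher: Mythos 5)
Your proposal is correct and follows essentially the same route as the paper: apply Theorem~\ref{Nakajima-Maffei} twice, once with the original labelling and once with the $\theta$-relabelled Dynkin diagram, compose the two squares along the common column $\M_{\zeta}(\bv,\bw)\to\M_1(\bv,\bw)$, and verify the partition identities (\ref{mu-hat})--(\ref{mu'-hat}) to match up Jordan types. The paper treats the step where $\theta$ reverses the quiver orientation as implicit, whereas you flag it and note the $h\leftrightarrow\bar h$ symmetry of $\Lambda_{\zeta_{\mbb C}}(\bv,\bw)$; that is a reasonable point of care but does not change the argument.
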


In light of (\ref{mu'-hat}), the partitions $\mu'$ and $\widehat \mu'$ fit into a rectangle of size $(n+1)\times \sum_{1\leq i\leq n} \bw_i$. Similarly, the Jordan types $\lambda=(1^{\bw_i})_{1\leq i\leq n}$ and $\widehat \lambda= (1^{\bw_{\theta(i)}})_{1\leq i\leq n}$ of $e_0$ and $\widehat e_0$ fit into a rectangle of the same size, depicted in the following.
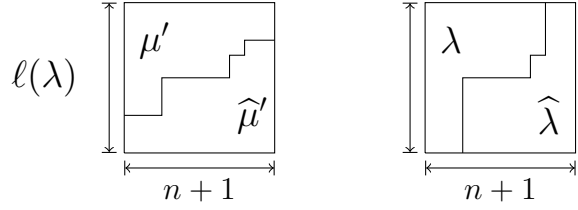
\begin{figure}
\begin{tikzpicture}
\tikzset{rec/.style={thick,text width=2cm,font=\bfseries\large, align=left, text opacity=1},
line/.style={dashed, line width=2pt},
sharea/.style={shade, left color=gray!50, right color=white} 
}
\draw (0,0) -- (2,0) -- (2,2) -- (0,2) -- (0,0);

\draw (0, 0.5)  -- (0.5, 0.5) -- (0.5, 1) -- (1.4, 1) -- (1.4, 1.3) -- (1.6, 1.3) -- (1.6, 1.5) -- (2, 1.5);

\node[rec] at (1.2, 1.5)  {$\mu'$};
\node[rec] at (2.5, 0.5) {$\widehat \mu'$};

\draw[|<-> |] (0, -0.2) -- (2, -0.2) node[pos=.5,sloped,below] {$n+1$};

\draw[|<-> |] (-0.2, 0) -- (-0.2, 2);

\node[rec] at (-0.5, 1) {$\ell(\lambda)$};
\draw (4,0) -- (6,0) -- (6,2) -- (4,2) -- (4,0);

\draw (4.5, 0)  -- (4.5, 0.5) -- (4.5, 1) -- (5.4, 1) -- (5.4, 1.3) -- (5.6, 1.3) -- (5.6, 1.5) -- (5.6, 2);

\node[rec] at (5.2, 1.5)  {$\lambda$};
\node[rec] at (6.5, 0.5) {$\widehat \lambda$};

\draw[|<-> |] (4, -0.2) -- (6, -0.2) node[pos=.5,sloped,below] {$n+1$};

\draw[|<-> |] (3.8, 0) -- (3.8, 2);

\end{tikzpicture}
\caption{Rectangular Symmetry}
\label{rect-sym}
\end{figure} 
This explains the name of the section.

\begin{rem}
Proposition~\ref{prop:rect} yields an identity on the Kostka numbers:
$$
K_{\lambda, \mu'} = 
K_{\widehat \lambda, \widehat \mu'},
\quad \mbox{with}\ \lambda = (i^{\bw_i})_{1\leq i\leq n},
\quad \widehat \lambda = (i^{\bw_{\theta(i)}})_{1\leq i\leq n}
$$
first proved by Briand-Orellana-Rosas in~\cite{BOR15}.
\end{rem}

\subsection{Column/Row removal reductions}
\label{col}

Now we discuss the second application of the Theorem~\ref{Nakajima-Maffei}.
It is clearly that the quiver varieties $\M_{\zeta}(\bv, \bw)$ is isomorphic to  the quiver varieties
$\M_{\zeta}(\breve \bv, \breve \bw)'$ of Dynkin diagram $A_{n+1}$ with the dimension vectors $\breve \bv, \breve \bw$ given by
\[
\breve \bv_0=0,  \breve \bv_i=\bv_{i-1}; \breve \bw_0=0, \breve \bw_i=\bw_{i-1}, \quad \forall 2\leq i\leq n+1. 
\]
By Theorem~\ref{Nakajima-Maffei}, we have
$
\M_{\zeta}(\breve \bv, \breve \bw)' \cong  \widetilde{\mathcal S}_{\breve \mu', \breve\lambda},
$
where 
$\breve \mu$ has an extra entry $\sum_{1\leq i\leq n} \bw_i$ than $\mu$ and 
$\breve \lambda= ((i+1)^{\bw_i})_{1\leq i\leq n}$.
It yields 

\begin{prop}
\label{prop:column}
The  following diagram is commutative with  isomorphic horizontal maps, 
which sends $e_0$ of Jordan type $(i^{\bw_i})_{1\leq i\leq n}$ to $\breve e_0$  of Jordan type $((i+1)^{\bw_{i}})_{1\leq i\leq n}$ in the base.  
\begin{align}
\begin{CD}
\widetilde{\mathcal S}_{\mu', \lambda}
@>\cong>> 
\widetilde{\mathcal S}_{\breve \mu', \breve\lambda}\\
@VVV @VVV\\
\mathcal S_{\mu', \lambda}
@>\cong >>
\mathcal S_{\breve \mu', \breve \lambda}
\end{CD}
\end{align}
\end{prop}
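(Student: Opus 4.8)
The plan is to deduce Proposition~\ref{prop:column} from Nakajima--Maffei's Theorem~\ref{Nakajima-Maffei}, applied both to the $\mrm A_n$-datum $(\bv,\bw)$ and to the $\mrm A_{n+1}$-datum $(\breve\bv,\breve\bw)$, together with the elementary remark that enlarging a quiver by a vertex carrying the zero vector space alters none of the relevant objects. First I would make the identification $\M_{\zeta}(\bv,\bw)\cong\M_{\zeta}(\breve\bv,\breve\bw)'$ of the statement precise: writing the $\mrm A_{n+1}$-datum as $\breve\bv=(0,\bv_1,\dots,\bv_n)$, $\breve\bw=(0,\bw_1,\dots,\bw_n)$, the affine space $\bM(\breve\bv,\breve\bw)$ coincides with $\bM(\bv,\bw)$ since the prepended vertex contributes nothing, and the same holds for the moment-map equations, for the groups $\G_{\breve\bv}$ and $\G_{\breve\bw}$, for the characters $\chi_{\xi}$, and for the stability conditions (S1)--(S2), which only see vertices with nonzero weight. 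Hence $\M_{\zeta}(\bv,\bw)=\M_{\zeta}(\breve\bv,\breve\bw)'$, $\M_{0}(\bv,\bw)=\M_{0}(\breve\bv,\breve\bw)'$ and $\M_{1}(\bv,\bw)=\M_{1}(\breve\bv,\breve\bw)'$, all compatibly with the projection $\pi$ of~(\ref{pi}).

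Next I would run the Maffei recipe of Section~\ref{Maffei} on $(\breve\bv,\breve\bw)$ and invoke Theorem~\ref{Nakajima-Maffei} for it. From $\breve\mu_i=\breve\bw_i+\dots+\breve\bw_{n+1}-\breve\bv_i+\breve\bv_{i-1}$ one reads off that $\breve\mu$ is $\mu$ with the single extra part $\sum_{1\le i\le n}\bw_i$, while the base nilpotent $\breve e_0$ has Jordan type $\breve\lambda=(i^{\breve\bw_i})_{1\le i\le n+1}=((i+1)^{\bw_i})_{1\le i\le n}$ by the paragraph below~(\ref{grad}). Theorem~\ref{Nakajima-Maffei} then supplies isomorphisms $\M_{\zeta}(\breve\bv,\breve\bw)'\cong\widetilde{\mathcal S}_{\breve\mu',\breve\lambda}$ and $\M_{1}(\breve\bv,\breve\bw)'\cong\mathcal S_{\breve\mu',\breve\lambda}$ fitting into the square~(\ref{phi-varphi}) written for $(\breve\bv,\breve\bw)$, with $\pi$ on the left, $\Pi$ on the right, and sending the origin of $\M_1$ to $\breve e_0$.

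Finally I would concatenate. On the total spaces one composes the inverse of $\phi\varphi$ from Theorem~\ref{Nakajima-Maffei} for $(\bv,\bw)$, the padding identification, and the isomorphism just produced for $(\breve\bv,\breve\bw)$, obtaining $\widetilde{\mathcal S}_{\mu',\lambda}\cong\widetilde{\mathcal S}_{\breve\mu',\breve\lambda}$, and likewise on the bases with the $\mathcal S$'s and $\phi_0\varphi_0$. Each half-square commutes by~(\ref{phi-varphi}) for the respective datum, and the middle equality intertwines the two copies of $\pi$, so the outer square commutes with isomorphic horizontal maps. Chasing the origin $0\in\M_1(\bv,\bw)$, which equals $e_0$ via $\varphi_0(0)=e_0$ on the $(\bv,\bw)$ side and $\breve e_0$ via the same formula on the $(\breve\bv,\breve\bw)$ side, gives the asserted behaviour on base points; this recovers the column-removal reduction of~\cite[Proposition~5.4]{KP81} (cf.~\cite[Section~9]{H15}) as an honest isomorphism rather than merely a smooth equivalence of singularities.

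The one step carrying any content is the second: checking that the Maffei construction of Section~\ref{Maffei} --- its transversality conditions and the $\mathfrak{sl}_2$-triple relation below~(\ref{grad}) --- behaves as expected once an idle vertex is prepended, so that it indeed outputs $\widetilde{\mathcal S}_{\breve\mu',\breve\lambda}$ with $\breve\mu'$ and $\breve\lambda$ exactly as described. I expect no genuine obstacle, precisely because that vertex does nothing; the whole proof is the bookkeeping translating ``prepend a zero vertex to the $\mrm A_n$-quiver'' into ``adjoin a part $\sum_i\bw_i$ to $\mu$ and raise every part of $\lambda$ by one''.
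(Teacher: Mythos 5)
Your proposal follows the paper's argument exactly: identify $\M_{\zeta}(\bv,\bw)$ with the $\mrm A_{n+1}$-quiver variety $\M_{\zeta}(\breve\bv,\breve\bw)'$ obtained by prepending an idle vertex with zero vector spaces, compute from (\ref{tild-bw}) and the paragraph below (\ref{grad}) that the resulting Maffei data are $\breve\lambda=((i+1)^{\bw_i})_{1\le i\le n}$ and $\breve\mu=\mu$ with one extra part $\sum_i\bw_i$, and then apply Theorem~\ref{Nakajima-Maffei} to both sides to splice the two commuting squares together. The paper states these steps tersely, essentially as assertions; you have simply filled in the bookkeeping, and your account is correct.
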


If we write the partitions involved as Young diagrams, then the partition $\lambda$ can be  obtained 
from $\breve \lambda$ by removing the left-most column of $\breve \lambda$ via Figure~\ref{CRR}.
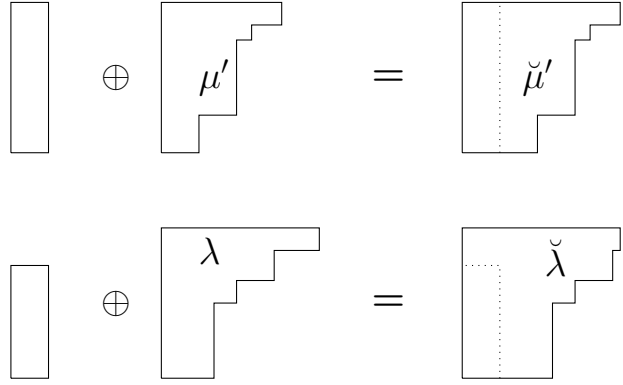
\begin{figure}
\begin{tikzpicture}
\tikzset{rec/.style={thick,text width=2cm,font=\bfseries\large,align=left, text opacity=1},
line/.style={dashed, line width=2pt},
sharea/.style={shade, left color=gray!50, right color=white} 
}
\draw (0, 3) -- (0, 5) -- (.5, 5) -- (.5,3)--(0,3);

\node[rec] at (2.2, 4)  {$\oplus$};

\draw (2, 3) -- (2, 5) -- (3.6, 5) -- (3.6, 4.7) -- (3.2, 4.7) -- (3.2, 4.5) -- (3, 4.5) -- (3, 3.5) -- (2.5, 3.5) -- (2.5, 3) -- (2,3);

\node[rec] at (3.5, 4) {$\mu'$};

\node[rec] at (5.8,  4) {=};

\draw (6, 3) -- (6, 5) -- (8.1, 5) -- (8.1, 4.7) -- (7.7, 4.7) -- (7.7, 4.5) -- (7.5, 4.5) -- (7.5, 3.5) -- (7, 3.5) -- (7, 3) -- (6,3);


\draw [dotted] (6.5, 3) --  (6.5, 5);

\node[rec] at (7.8, 4) {$\breve \mu'$};

\draw (0, 0) -- (0, 1.5) -- (.5, 1.5) -- (.5,0)--(0,0);

\node[rec] at (2.2, 1)  {$\oplus$};

\draw (2,0) -- (2, 2) -- (4.1, 2) -- (4.1, 1.7) -- (3.5, 1.7) -- (3.5, 1.3) -- (3, 1.3) -- (3, 1) -- (2.7, 1) -- (2.7, 0) -- (2, 0);

\node[rec] at (3.5, 1.7) {$\lambda$};

\node[rec] at (5.8,  1) {=};

\draw (6, 0)  -- (6, 2) -- (8.1, 2) -- (8.1, 1.7) -- (8, 1.7) -- (8, 1.3) -- (7.5, 1.3) -- (7.5, 1) --(7.2, 1)-- (7.2,0) -- (6, 0);

\draw [dotted] (6.5, 0) --  (6.5, 1.5) -- (6, 1.5);

\node[rec] at (8.1, 1.6) {$\breve \lambda$};

\end{tikzpicture}
\caption{Column-Removal Reduction}
\label{CRR}
\end{figure}
The partition $\mu'$ can be obtained from $\breve \mu'$ by removing part of the left-most column $\breve \mu'$.
Thus, Proposition~\ref{prop:column} is  a geometric version of Kraft-Procesi's column removal reduction in~\cite[Proposition 5.4]{KP81}.

Now we turn to provide a geometric version of Kraft-Procesi's row-removal reduction in ~\cite[Proposition 4.4]{KP81}.
This is obtained in exactly the same spirit as that of column-removal reduction. 
Precisely, we can identify the variety the variety $\M_{\zeta}(\bv, \bw)$ with another quiver variety $\M_{\zeta}(\ddot{\bv}, \ddot{\bw})'$ of type $A_{n+1}$
where the vectors $\ddot{\bv}$ and $\ddot{\bw}$ are given as follows.
\[
\ddot{\bv}_i = \bv_i, \ddot{\bv}_{n+1} =0, \ddot{\bw}_i = \bw_i, \ddot{\bw}_{n+1}=a, \quad \forall 1\leq i\leq n.
\]
If the associated pair of partitions to $\bw, \bw$ is $(\mu'=(i^{\mu'_i})_{1\leq i\leq n+1}, \lambda=(1^{\bw_i})_{1\leq i\leq n}$, then
the similar one for $\ddot{\bv}, \ddot{\bw}$ will be given as
\begin{align}
\label{row-addition}
\ddot{\mu}' = 1^{\mu'_1} \cdots n^{\mu'_n} (n+1)^{\mu'_{n+1} +a}, \ddot\lambda= 1^{\bw_1}\cdots n^{\bw_n} (n+1)^a. 
\end{align}
(Note that $\widetilde{\ddot\bv}_i = \bv_i + \sum_{j>i} (j-i) \bw_i + (n+1-i) a$.)
Therefore, by Theorem~\ref{Nakajima-Maffei} and the identification $\M_{\zeta}(\bv, \bw) \cong \M_{\zeta}(\ddot \bv, \ddot \bw)'$, we obtain

\begin{prop}
\label{prop:row}
The following diagram is commutative with  isomorphic horizontal maps, 
which sends $e_0$ of Jordan type $(i^{\bw_i})_{1\leq i\leq n}$ to $\ddot e_0$  of  type $((1^{\bw_1} \cdots n^{\bw_n} (n+1)^a)$ in the base.  
\begin{align}
\begin{CD}
\widetilde{\mathcal S}_{\mu', \lambda}
@>\cong>> 
\widetilde{\mathcal S}_{\ddot \mu', \ddot\lambda}\\
@VVV @VVV\\
\mathcal S_{\mu', \lambda}
@>\cong >>
\mathcal S_{\ddot \mu', \ddot \lambda}
\end{CD}
\end{align}
\end{prop}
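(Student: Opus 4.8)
The plan is to derive Proposition~\ref{prop:row} from the Nakajima--Maffei isomorphism (Theorem~\ref{Nakajima-Maffei}) exactly as in the column-removal case: realize $\M_{\zeta}(\bv,\bw)$ as a type $\mrm A_{n+1}$ quiver variety with one extra, dimensionally trivial, vertex, and then record what the Maffei recipe of Section~\ref{Maffei} produces for the enlarged datum $(\ddot\bv,\ddot\bw)$.

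First I would make the identification $\M_{\zeta}(\bv,\bw)\cong\M_{\zeta}(\ddot\bv,\ddot\bw)'$ precise. Since $\ddot\bv_{n+1}=0$, the space $\ddot V_{n+1}$ is zero, so every one of $\Hom(\ddot V_n,\ddot V_{n+1})$, $\Hom(\ddot V_{n+1},\ddot V_n)$, $\Hom(\ddot W_{n+1},\ddot V_{n+1})$, $\Hom(\ddot V_{n+1},\ddot W_{n+1})$ vanishes; hence $\bM(\ddot\bv,\ddot\bw)=\bM(\bv,\bw)$, the two moment maps agree, $\G_{\ddot\bv}=\G_{\bv}$, the characters $\chi_{\xi}$ agree, and the stability conditions (S1)--(S2) are unchanged because every graded subspace has zero component at $n+1$. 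Consequently the rings $R_{\zeta}$ and $R_{0}$ and the morphism $\pi$ all coincide, so $\M_{\zeta}(\bv,\bw)=\M_{\zeta}(\ddot\bv,\ddot\bw)'$ and $\M_0(\bv,\bw)=\M_0(\ddot\bv,\ddot\bw)'$ compatibly with $\pi$; the only change is that the $\G_{\bw}$-action extends to a $\G_{\ddot\bw}$-action with the new $\GL$-factor acting trivially. I would also note that genericity is preserved: as $\ddot\bv_{n+1}=0$, every $\gamma\in R_+(\ddot\bv)$ has $\gamma_{n+1}=0$, so the walls for $(\ddot\bv,\ddot\bw)$ are those for $(\bv,\bw)$, and $\xi_{n+1}$ may be taken positive harmlessly.

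Next I would run the Maffei bookkeeping for $(\ddot\bv,\ddot\bw)$. From~(\ref{tild-bw}) one gets $\widetilde{\ddot\bv}_i=\bv_i+\sum_{j>i}(j-i)\bw_j+(n+1-i)a$ for $1\le i\le n$ and $\widetilde{\ddot\bv}_{n+1}=0$, so the associated composition has entries $\widetilde{\ddot\bv}_{i-1}-\widetilde{\ddot\bv}_i=\mu_i+a$ for $1\le i\le n+1$ and final entry $\widetilde{\ddot\bv}_{n+1}=0$, i.e.\ $\ddot\mu=(\mu_1+a,\dots,\mu_{n+1}+a,0)$, where $\mu$ is the composition attached to $(\bv,\bw)$. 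Reordering and transposing as in Section~\ref{Maffei}: if $\mu$ reorders to $\rho_1\ge\cdots\ge\rho_{n+1}$ then $\ddot\mu$ reorders to $\rho_1+a\ge\cdots\ge\rho_{n+1}+a\ge0$, whence $\ddot\mu'=1^{\rho_1-\rho_2}\cdots n^{\rho_n-\rho_{n+1}}(n+1)^{\rho_{n+1}+a}=1^{\mu'_1}\cdots n^{\mu'_n}(n+1)^{\mu'_{n+1}+a}$, and $\ddot\lambda=(i^{\ddot\bw_i})_{1\le i\le n+1}=1^{\bw_1}\cdots n^{\bw_n}(n+1)^a$. These are precisely the partitions of~(\ref{row-addition}), and $\ddot e_0=\varphi_0(0)$ for the enlarged quiver has Jordan type $\ddot\lambda$ by the construction of the Maffei $\mathfrak{sl}_2$-triple below~(\ref{grad}).

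Finally, applying Theorem~\ref{Nakajima-Maffei} to $(\ddot\bv,\ddot\bw)$ gives isomorphisms $\M_{\zeta}(\ddot\bv,\ddot\bw)'\cong\widetilde{\mathcal S}_{\ddot\mu',\ddot\lambda}$ and $\M_1(\ddot\bv,\ddot\bw)'\cong\mathcal S_{\ddot\mu',\ddot\lambda}$ intertwining $\pi$ with $\Pi$; composing with the identification of the second paragraph and with Theorem~\ref{Nakajima-Maffei} for $(\bv,\bw)$ yields the horizontal isomorphisms of the claimed square, which commutes since it is glued from three commuting squares --- diagram~(\ref{phi-varphi}) for $(\bv,\bw)$, the trivial identification, and the analogue of~(\ref{phi-varphi}) for $(\ddot\bv,\ddot\bw)$. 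Tracking $0\in\M_1(\bv,\bw)$ through these maps, it goes to $0\in\M_1(\ddot\bv,\ddot\bw)'$ and then to $\ddot e_0$ of type $\ddot\lambda$, while on the left it is $e_0$ of type $\lambda$, which gives the last assertion. The only genuinely delicate point is the combinatorial verification in the third paragraph --- matching the reordered and transposed Maffei composition of $(\ddot\bv,\ddot\bw)$ with the partitions in~(\ref{row-addition}), and checking that Theorem~\ref{Nakajima-Maffei} is legitimately invoked despite the degenerate value $\widetilde{\ddot\bv}_{n+1}=0$; everything else is formal.
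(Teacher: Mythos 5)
Your proposal is correct and takes essentially the same route the paper intends: identify $\M_{\zeta}(\bv,\bw)$ with the type $\mrm A_{n+1}$ quiver variety $\M_{\zeta}(\ddot\bv,\ddot\bw)'$ via $\ddot\bv_{n+1}=0$, $\ddot\bw_{n+1}=a$, compute the Maffei data for the enlarged pair to recover (\ref{row-addition}), and apply Theorem~\ref{Nakajima-Maffei} together with the commutative diagram (\ref{phi-varphi}). The paper states this only as a sketch; your write-up correctly supplies the omitted details, namely that the identification is a literal equality of all defining data (including stability and genericity, since $\ddot\bv_{n+1}=0$), and the reordering/transposition computation $\ddot\mu=(\mu_1+a,\dots,\mu_{n+1}+a,0)$ giving $\ddot\mu'$ and $\ddot\lambda$ as in (\ref{row-addition}).
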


Note that the partitions $\mu'$ and $\lambda$ can be obtained from $\ddot\mu'$ and $\ddot \lambda$ by removing the respective first $a$-th rows in Figure~\ref{RRR}. 
\begin{figure}
\begin{tikzpicture}
\tikzset{rec/.style={thick,text width=2cm,font=\bfseries\large,align=left, text opacity=1},
line/.style={dashed, line width=2pt},
sharea/.style={shade, left color=gray!50, right color=white} 
}
\draw (0.5, 4) -- (0.5, 4.2) -- (-1.5, 4.2) -- (-1.5,4)--(0.5,4);

\node[rec] at (2.2, 4)  {$\oplus$};

\draw (2, 3) -- (2, 5) -- (3.6, 5) -- (3.6, 4.7) -- (3.2, 4.7) -- (3.2, 4.5) -- (3, 4.5) -- (3, 3.5) -- (2.5, 3.5) -- (2.5, 3) -- (2,3);

\node[rec] at (3.5, 4) {$\mu'$};

\node[rec] at (5.8,  4) {=};

\draw (6, 3) -- (6, 5) -- (8.1, 5) -- (8.1, 4.7) -- (7.7, 4.7) -- (7.7, 4.5) -- (7.5, 4.5) -- (7.5, 3.5) -- (7, 3.5) -- (7, 3) -- (6,3);

\draw (6, 5) -- (6, 5.2) -- (8.1, 5.2) -- (8.1, 5);


\node[rec] at (7.8, 4) {$\ddot \mu'$};

\draw (0.5, 1) -- (0.5, 1.2) -- (-1.5, 1.2) -- (-1.5,1)--(0.5,1);

\node[rec] at (2.2, 1)  {$\oplus$};

\draw (2,0) -- (2, 2) -- (4.1, 2) -- (4.1, 1.7) -- (3.5, 1.7) -- (3.5, 1.3) -- (3, 1.3) -- (3, 1) -- (2.7, 1) -- (2.7, 0) -- (2, 0);

\node[rec] at (3.5, 1.7) {$\lambda$};

\node[rec] at (6.2,  1) {=};

\draw (6, 0)  -- (6, 2) -- (8.1, 2) -- (8.1, 1.7) -- (7.5, 1.7) -- (7.5, 1.3) -- (7, 1.3) -- (7, 1) --(6.7, 1)-- (6.7,0) -- (6, 0);

\draw (6, 2) -- (6, 2.2) -- (8.1, 2.2)--(8.1, 1.7);

\node[rec] at (7.5, 1.7) {$\ddot \lambda$};

\end{tikzpicture}
\caption{Row-Removal Reduction}
\label{RRR}
\end{figure}
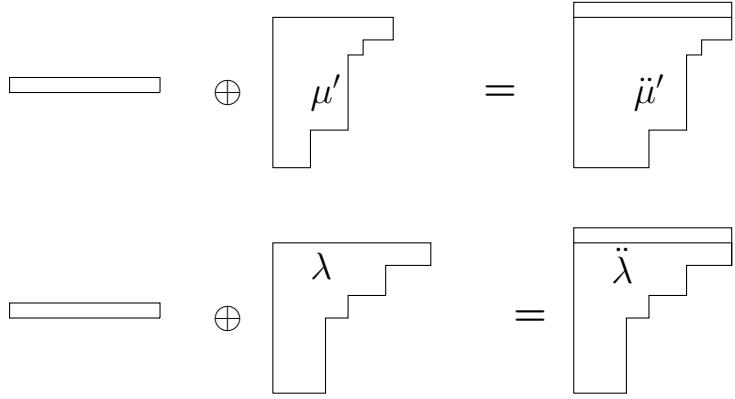
So Proposition~\ref{prop:row} is a geometric version of Kraft-Procesi's row-removal reduction. 
Of course, by combining Propositions~\ref{prop:column} and~\ref{prop:row} we obtain a geometric version of the general reduction in 
~\cite[Proposition 3.1]{KP81}, which plays a critical role in the study of minimal singularities in $\GL_n$. 

\section{Example II: partial resolutions of  nilpotent Slodowy slices}

This section is devoted to the compatibility of Maffei's morphism and the isomorphism $\sigma$.
From which, we see that the $\sigma$-quiver varieties encompass partial Springer resolutions of
nilpotent Slodowy slices of classical groups. 
As a consequence, we deduce the rectangular symmetry for classical groups. 
The symmetry provides a natural home for the recent results of~\cite{HL14, W15} 
on the interactions of two-row Slodowy slices
of symplectic and orthogonal groups.
We also briefly discuss a geometric version of  
Kraft-Procesi's column-removal and row-removal reductions for classical groups in~\cite[Proposition 13.5]{KP82}.

We again assume that $\w=w_0$ and $a=1$. 

\subsection{Maffei's morphism and the bilinear forms on $\widetilde V$ and $\widetilde W$}

Assume that $V$ and $W$ are formed spaces  with signs  $\tilde \delta_{\bv}$ and  $\delta_{\bw}$, respectively.
Recall that  we set $V_0=0$ and $\widetilde V_0 = \widetilde W_1$. 
We define a non-degenerate bilinear form  on $\widetilde V_i$, for all  $0\leq i\leq n$, by
\begin{align}
\label{tild-V-form}
\left \langle (v_i, w_j^{(h)})_{j \geq i+ h}  
\middle|\:  (v'_i, u_j^{(h)})_{j \geq i + h} \right \rangle_{\widetilde V_i}
=\left (v_i, v'_i \right )_{V_i} + \sum_{j \geq i + h} (-1)^{j-i+h} \left (w_j^{(h)}, u_j^{(j-i+1-h)} \right )_{W_j},
\end{align}
where $ v_i, v_i' \in V_i$ and  $w_j^{(h)}, u_j^{(h)} \in W_j^{(h)}$ such that $1\leq h \leq j-i$.
The form $\langle - |- \rangle_{\widetilde V_i}$ on $\widetilde V_i$ may not be a $\delta$-form.
However, if the signs $\tilde \delta_{\bv}$ and $\delta_{\bw}$ alternate, it turns out to be  the case.

\begin{lem}
\label{tV-alternate}
If the sign $\delta_{\bw}$ is $\Gamma$-alternating as in Proposition~\ref{order-4}, then
$\widetilde W_1$ is a $(-1)^{i+1}\delta_{\bw,i}$-form (for each $i$).
If further $\tilde \delta_{\bv}$ is $\Gamma$-alternating and $\tilde \delta_{\bv,i} \delta_{\bw,i}=-1$ for all $i$, then 
the form on $\widetilde V_i$ is a  $\tilde \delta_{\bv,i}$-form for all $1\leq i\leq n$.
\end{lem}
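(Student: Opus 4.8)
The claim is entirely about the symmetry type of the bilinear form $\langle - | - \rangle_{\widetilde V_i}$ defined in (\ref{tild-V-form}), so the proof is a direct computation exploiting the definition together with the two alternating hypotheses. First I would record precisely what the hypotheses give us numerically. Writing $\delta_{\bw}$ being $\Gamma$-alternating means $\delta_{\bw,i}\delta_{\bw,i+1} = -1$ for adjacent vertices of the type $A_n$ chain, hence $\delta_{\bw,j} = (-1)^{j-i}\delta_{\bw,i}$ for all $i,j$; similarly $\tilde\delta_{\bv,j} = (-1)^{j-i}\tilde\delta_{\bv,i}$, and the coupling condition $\tilde\delta_{\bv,i}\delta_{\bw,i} = -1$ holds for every $i$. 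The first assertion about $\widetilde W_1 = \widetilde V_0$ should be extracted as the case $i=0$ of the general formula, so I would actually prove the statement for all $0\le i\le n$ uniformly and then read off the $i=0$ case; note $(-1)^{0+1}\delta_{\bw,i}$ must be interpreted with the indexing used in the lemma, and I would make the bookkeeping explicit at the outset to avoid sign confusion.

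The core of the argument is to swap the two arguments in (\ref{tild-V-form}) and compare. Given $v_i,v_i'\in V_i$ and families $w_j^{(h)}, u_j^{(h)}$, the definition gives
\[
\langle (v_i,w_j^{(h)}) \mid (v_i',u_j^{(h)})\rangle_{\widetilde V_i}
= (v_i,v_i')_{V_i} + \sum_{j\ge i, \, 1\le h\le j-i} (-1)^{j-i+h}(w_j^{(h)}, u_j^{(j-i+1-h)})_{W_j}.
\]
Swapping and using that $(-,-)_{V_i}$ is a $\tilde\delta_{\bv,i}$-form and $(-,-)_{W_j}$ is a $\delta_{\bw,j}$-form, the first term becomes $\tilde\delta_{\bv,i}(v_i,v_i')_{V_i}$ and a typical summand becomes $(-1)^{j-i+h}\delta_{\bw,j}(u_j^{(j-i+1-h)}, w_j^{(h)})_{W_j}$. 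The key step is the change of summation variable $h \mapsto h' = j-i+1-h$, which is an involution on the index set $\{1,\dots,j-i\}$ and sends $(-1)^{j-i+h}$ to $(-1)^{j-i+(j-i+1-h)} = (-1)^{2(j-i)+1-h} = -(-1)^{-h} = -(-1)^{h}$; combined with the extra $(-1)^{j-i}$ one keeps track that the sign of the $W_j$ term in the swapped expression is $(-1)^{j-i+h}\cdot(-1)\cdot(-1)^{2(j-i)}$ times the original, which simplifies to $-1$ times the original sign, so each $W_j$-term picks up a factor $-\delta_{\bw,j}$. Then I substitute $\delta_{\bw,j} = (-1)^{j-i}\delta_{\bw,i}$ and $\tilde\delta_{\bv,i} = -\delta_{\bw,i}^{-1} = -\delta_{\bw,i}$ (since $\delta_{\bw,i}=\pm1$), and check that \emph{all} terms — the $V_i$ term and every $W_j$ term — acquire the common scalar $\tilde\delta_{\bv,i}$. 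Carrying this out carefully yields $\langle y \mid x\rangle_{\widetilde V_i} = \tilde\delta_{\bv,i}\langle x \mid y\rangle_{\widetilde V_i}$, which is exactly the assertion that the form is a $\tilde\delta_{\bv,i}$-form; specializing to $i=0$, where $V_0=0$ so only the $W_j$-terms survive and $\tilde\delta_{\bv,0}$ is replaced by the formula $(-1)^{0+1}\delta_{\bw,i}$ as stated, gives the first claim.

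\textbf{Main obstacle.} The only real difficulty is sign bookkeeping: the exponent $(-1)^{j-i+h}$, the pairing of index $h$ with $j-i+1-h$, the $\delta$-form flips on both $V_i$ and $W_j$, and the propagation formulas $\delta_{\bw,j}=(-1)^{j-i}\delta_{\bw,i}$ must all be combined without error, and one must be careful that the involution $h\mapsto j-i+1-h$ has no fixed points issues (it does not, since $j-i+1$ is always one more than the range length, so the involution is free when $j-i$ is odd and has the midpoint $h=(j-i+1)/2$ when $j-i$ is even — but the sign identity $-(-1)^h$ derived above holds termwise regardless, so no special-casing is needed). I would organize the computation so that the scalar extracted from each summand is shown to be $\tilde\delta_{\bv,i}$ \emph{before} re-indexing, making the verification transparent. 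Everything else is formal.
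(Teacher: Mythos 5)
Your proof is correct and follows essentially the same route as the paper: both isolate a single $W_{j}$-block, use the $\delta$-form property of $(-,-)_{W_j}$ together with the reindexing $h\mapsto j-i+1-h$ (which the paper records implicitly in its displayed equality), arrive at the block symmetry constant $(-1)^{j-i+1}\delta_{\bw,j}$, and then invoke the $\Gamma$-alternation of $\delta_{\bw}$ (and, for the second claim, the coupling $\tilde\delta_{\bv,i}\delta_{\bw,i}=-1$) to show this constant is uniform across blocks and equal to $\tilde\delta_{\bv,i}$. The only cosmetic point is that for the first claim the hypotheses on $\tilde\delta_{\bv}$ are not yet in force, but this is harmless exactly as you note, since $V_0 = 0$ so only the $W_j$-block computation — which uses only the $\delta_{\bw}$-alternation — is needed.
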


\begin{proof}
We only need to observe that for a fixed $j_0$ such that  $j_0-i$ is even (resp. odd), 
then the restriction of $\langle - | - \rangle_{\widetilde V_i}$ to $\oplus_{1\leq h\leq j_0-i} W_{j_0}^{(h)}$ is a 
$-\delta_{\bw, j_0}$-form (resp. $\delta_{\bw,j_0}$-form).
Specifically, we have
$$
\langle (u_{j_0}^{(h)})_{1\leq h\leq j_0-i} | (w_{j_0}^{(h)})_{1\leq h\leq j_0 -i} \rangle_{\widetilde V_i} = (-1)^{j_0 -i +1} \delta_{\bw,j_0}
\langle (w_{j_0}^{(h)})_{1\leq h\leq j_0 -i} | (u_{j_0}^{(h)})_{1\leq h\leq j_0-i} \rangle_{\widetilde V_i},
$$
for all elements $ (w_{j_0}^{(h)})_{1\leq h\leq j_0 -i}$, $(u_{j_0}^{(h)})_{1\leq h\leq j_0-i}$ in $\oplus_{1\leq h\leq j_0-i} W_{j_0}^{(h)}$.
\end{proof}

Let $T^{\natural}: \widetilde V_j \to \widetilde V_i$ denote the right adjoint of 
the linear map $T: \widetilde V_i \to\widetilde V_j$, 
with respect to the forms on $\widetilde V_i$ and $\widetilde V_j$.
In particular, the right adjoint of  a point in $\Lambda(\widetilde V, \widetilde W)$ represented as the collection of linear maps in (\ref{xi-dec}) and (\ref{yi-dec}) satisfies the following.
\begin{align}
X_i^{\natural} & = X_i^*, & Y_i^{\natural} & = Y_i^*, \nonumber \\
(T^{i+1, 1}_{i, V})^{\natural} &  =(T^{i+1, 1}_{i, V})^*, & (S^{V}_{i, i+1, 1})^{\natural} & = (S^{V}_{i, i+1, 1})^*,  \label{x-natural}\\
(T^{j', h'}_{i, j, h})^{\natural} & = (-1)^{j-j'+ h - h' - 1} (T^{j', h'}_{i, j, h})^*, &
(S^{j', h'}_{i, j,h})^{\natural} & = (-1)^{j-j' + h - h' +1} (S^{j', h'}_{i, j, h})^*. \nonumber
\end{align}

Indeed, the identities in the first two rows are due to the fact that the signs of the forms on $V_i$ and $W_{i+1}^{(1)}$ at  vertex $i$ remain unchanged.
For simplicity, we write $t = T^{j', h'}_{i, j, h}$ and then
$$
(-1)^{j-(i+1)+h}(t(u), v)_{W_j} = \langle t(u) | v\rangle_{\widetilde V_{i+1}} = \langle u | t^{\natural} (v) \rangle_{\widetilde V_i} 
=(-1)^{j' - i + h'} (u, t^{\natural}(v))_{W_{j'}},
$$
for all $u\in W_{j'}^{(h')}$ and $v\in W_{j}^{(h)}$.
So we have $t^{\natural} = (-1)^{j-j' + h-h'-1} t^*$, which is the first identity in the last row of (\ref{x-natural}). 
Identities in the second column of (\ref{x-natural})  are proved in a similar way.

By the definition (\ref{tau-raw}), we have an automorphism
$\widetilde \tau: \Lambda(\widetilde V, \widetilde W) \to \Lambda(\widetilde V, \widetilde W)$ 
with respect to the forms on $\widetilde V$ and $\widetilde W$.
We have the following compatibility result of Maffei's morphism and the automorphisms $\tau$ in (\ref{tau-raw})  
and its analog $\widetilde \tau$ on $\Lambda(\widetilde V, \widetilde W)$.

\begin{prop}
\label{tau-Phi}
The following diagram commutes.
\[
\begin{CD}
\Lambda(V, W) @>\Phi>> \Lambda(\widetilde V, \widetilde W) \\
@V\tau VV @VV \widetilde \tau V \\
\Lambda(V, W) @>\Phi >> \Lambda(\widetilde V, \widetilde W)
\end{CD}
\]
\end{prop}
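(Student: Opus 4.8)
The plan is to invoke the uniqueness part of Proposition~\ref{Maffei-Phi}. Since $\mu_i({}^{\tau}\bx) = -\mu_i(\bx)^{*}$ (Section~\ref{involution}), both $\tau$ and $\widetilde\tau$ preserve the zero fibre of the complex moment map, so $\widetilde\tau(\Phi(\bx))$ is again a point of $\Lambda(\widetilde V, \widetilde W)$. By Proposition~\ref{Maffei-Phi} it then suffices to check two things: that $\widetilde\tau(\Phi(\bx))$ is transversal in the sense of conditions (\ref{t1})--(\ref{sl2}), and that its distinguished blocks $X_i, Y_i, T^{i+1,1}_{i,V}, S^V_{i,i+1,1}$ agree with the ones prescribed for $\Phi(\tau(\bx))$, i.e. with the maps read off from $\tau(\bx) = (-y_i^{*}, x_i^{*}, -q_i^{*}, p_i^{*})$. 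Granting both, uniqueness forces $\widetilde\tau(\Phi(\bx)) = \Phi(\tau(\bx))$.

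First I would record the block form of $\widetilde\tau$. For the arrow $h\colon i\to i+1$ one has $\ve(h) = \o(h)-\i(h) = -1$, so $\widetilde\tau$ sends $\widetilde x_i\mapsto -(\widetilde y_i)^{\natural}$, $\widetilde y_i\mapsto (\widetilde x_i)^{\natural}$ and $(\widetilde p_1,\widetilde q_1)\mapsto (-(\widetilde q_1)^{\natural},(\widetilde p_1)^{\natural})$, where $\natural$ is the right adjoint for the forms on $\widetilde V,\widetilde W$. Passing to the decompositions (\ref{xi-dec})--(\ref{yi-dec}) and inserting the sign identities (\ref{x-natural}), this interchanges the $T$-type blocks of $\widetilde x_i$ with the $S$-type blocks of $\widetilde y_i$ (up to an explicit sign, with the superscript/subscript index pairs transposed) and sends the $V$-to-$V$ blocks $X_i, Y_i$ to $-(Y_i)^{*}$ and $(X_i)^{*}$. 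Matching the four distinguished blocks is then immediate from (\ref{x-natural}) and the formulas of Proposition~\ref{Maffei-Phi}: $X_i = -(Y_i)^{*} = -y_i^{*}$, $Y_i = (X_i)^{*} = x_i^{*}$, and $T^{i+1,1}_{i,V} = -(S^V_{i,i+1,1})^{\natural} = -(S^V_{i,i+1,1})^{*} = -q_{i+1}^{*}$, $S^V_{i,i+1,1} = (T^{i+1,1}_{i,V})^{\natural} = (T^{i+1,1}_{i,V})^{*} = p_{i+1}^{*}$, the last two because the relevant vertex forms are unchanged.

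It remains to verify transversality of $\widetilde\tau(\Phi(\bx))$. The $T$- and $S$-conditions come in adjoint pairs (\ref{t1})$\leftrightarrow$(\ref{s1}), $\ldots$, (\ref{t5})$\leftrightarrow$(\ref{s5}); since $\widetilde\tau$ interchanges $T$-blocks with $S$-blocks and a map vanishes iff its adjoint does, these conditions for $\widetilde\tau(\Phi(\bx))$ translate back into the corresponding conditions for $\Phi(\bx)$, which hold by transversality — here one checks from (\ref{grad}) that the grading of a $T$-block equals that of the $S$-block it is exchanged with, so that the vanishing conditions keyed to $\mathrm{grad}<0$ and the identity conditions keyed to $\mathrm{grad}=0$ are respected. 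For the $\mathfrak{sl}_2$-relation (\ref{sl2}) I would use that $\widetilde\tau$ replaces $N_i := \pi_{W'_i}\widetilde y_i\widetilde x_i|_{W'_i}$ by $-(N_i)^{\natural}$ (because $V_i\perp W'_i$ in (\ref{tild-V-form}) and $(AB)^{\natural} = B^{\natural}A^{\natural}$), together with the identities $e_i^{\natural} = -e_i$, $f_i^{\natural} = -f_i$ for the form on $W'_i$; applying $\natural$ to $[N_i,f_i]=[e_i,f_i]$ and using $[A,B]^{\natural} = [B^{\natural},A^{\natural}]$ gives $[-(N_i)^{\natural}-e_i,f_i]=0$, which is exactly (\ref{sl2}) for $\widetilde\tau(\Phi(\bx))$. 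The only genuinely delicate part of the argument is the sign bookkeeping, and it is entirely controlled by (\ref{x-natural}) and the identities $e_i^{\natural}=-e_i$, $f_i^{\natural}=-f_i$ — which is precisely what the weights $(-1)^{j-i+h}$ in (\ref{tild-V-form}) are engineered to produce.
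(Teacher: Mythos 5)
Your proof takes essentially the same route as the paper's: invoke uniqueness in Proposition~\ref{Maffei-Phi}, match the four distinguished blocks via (\ref{x-natural}), and observe that $\widetilde\tau$ exchanges the $T$- and $S$-type conditions (with matching gradings) while preserving (\ref{sl2}) since $e_i^{\natural}=-e_i$, $f_i^{\natural}=-f_i$. Your write-up condenses the index-by-index sign checks that the paper spells out and is a bit more explicit about the commutator manipulation for (\ref{sl2}), but the strategy and the key ingredients are identical.
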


\begin{proof}
Fix a point $\bx =(x_i, y_i, p_i, q_i) \in \Lambda(V, W)$. 
By definition, the point $\Phi \tau(\bx)$ is a point in $\Lambda(\widetilde V, \tilde W)$ determined by 
the transversal conditions (\ref{s1})-(\ref{s5}), (\ref{t1})-(\ref{t5}), (\ref{sl2}) and the following.
\begin{align}
\label{x-dual}
X_i & = - y_i^*, & Y_i & = x_i^* ,\\
T^{i+1, 1}_{i, V} & = - q^*_{i+1}, & S^{V}_{i, i+1, 1} & = p^*_{i+1}.
\end{align}
So it suffices to show that the point $\widetilde \tau \Phi(\bx)$ satisfies the  transversal conditions and the above. 
 
We put a superscript $\widetilde \tau$ on the upper left for the decomposition (\ref{xi-dec})-(\ref{yi-dec}) of $\widetilde \tau \Phi(\bx)$ 
with respect to the fixed decompositions of $\widetilde V$ and $\widetilde W$.
By  (\ref{x-natural}),  the point $\widetilde \tau \Phi(\bx)$ satisfies 
\begin{align}
^{\widetilde \tau} X_i & = - Y_i^{\natural} = - y^*_i , & ^{\widetilde \tau} Y_i & = X_i^{\natural} = x_i^*, \\
^{\widetilde \tau} T^{i+1, 1}_{i, V} & = - (S^V_{i, i+1, 1})^{\natural}  = - q_{i+1}^*, 
& ^{\widetilde \tau} S^V_{i, i+1, 1} & = (T^{i+1,1}_{i, V})^{\natural} = p_{i+1}^*.
\end{align}
Hence it remains to show that the point $\widetilde \tau \Phi(\bx)$ satisfies the transversal conditions.
The conditions (ti) and (si) for $1\leq i\leq 5$ for $\widetilde \tau \Phi(\bx)$ 
follow from the conditions (si) and (ti) for $\Phi(\bx)$, respectively.
More precisely, for (\ref{t1}), we have
\[
^{\widetilde \tau} T^V_{i, j, h} = - (S_{i, V}^{j, j-i+1 -h})^{\natural} =0.
\] 
For (\ref{t2}), we notice that 
$h' \neq 1$ if and only if $(j' - i + 1 - h') \neq j' -i$. 
So by the (\ref{s2}) of $\Phi(\bx)$,  
\[
^{\widetilde \tau} T^{j', h'}_{i, V} = - (S^V_{i, j', j'-i+1-h'})^{\natural} =0.
\]
This shows that $\widetilde \tau \Phi(\bx)$ satisfies the condition (\ref{t2}).
For (\ref{t3}), we observe that
\[
\mrm{grad} ( T^{j', h'}_{i, j, h} ) = \mrm{grad} (S^{j, j - (i+1) + 1 - h}_{i, j', j'-i +1 - h'}),
\]
and $(j', h') \neq (j, h+1)$ if and only if $(j, j-i-h) \neq (j', j'-i+1-h')$.
Thanks to these observations and the (\ref{s3})-(\ref{s4}) of $\Phi (\bx)$, it leads to
\begin{align*}
\begin{split}
& ^{\widetilde \tau} T^{j', h'}_{i, j, h} = - (S^{j, j- (i+1) +1 - h}_{i, j', j'-i+1-h'})^{\natural} =0, 
 %
\end{split}
\end{align*}
if either $\mrm{grad} (\ \! ^{\widetilde \tau} T^{j', h'}_{i, j, h} ) <0$ or 
$\mrm{grad} (\ \! ^{\widetilde \tau} T^{j', h'}_{i, j, h} ) = 0$ and  $(j', h') \neq (j, h+1)$.
If $\mrm{grad} (\ \! ^{\widetilde \tau} T^{j', h'}_{i, j, h} ) = 0$ and $(j, j-i-h) = (j', j'-i+1-h')$, then
\[
(S^{j, j- (i+1) +1 - h}_{i, j', j'-i+1-h'})^{\natural} 
=-  (S^{j, j- (i+1) +1 - h}_{i, j', j'-i+1-h'})^* = - (\mrm{id}_{W_j})^* = - \mrm{id}_{W_j},
\]
where the second equality is from the (\ref{s5}) of $\Phi(\bx)$. 
So   we have
\[
^{\widetilde \tau} T^{j', h'}_{i, j, h} =  \mrm{id}_{W_j}, 
\quad
\mbox{if} \ \mrm{grad} (\ ^{\widetilde \tau} T^{j', h'}_{i, j, h} ) = 0, (j', h') = (j, h+1).
\]
By now, it has been shown that the point $\widetilde \tau \Phi(\bx)$ satisfies the conditions (\ref{t1})-(\ref{t5}).
By an entirely similar argument, we can check that the point
$\widetilde \tau \Phi(\bx)$ satisfies the conditions (\ref{s1})-(\ref{s5}), once we observe from (\ref{x-natural}) that 
$(T^{j, j-i +1-h}_{i, j, j-i-h})^{\natural} = (T^{j, j-i +1-h}_{i, j, j-i-h})^*$ for (\ref{s5}). 

Recall that the automorphism $\widetilde \tau$ on $\mathfrak{sl}(W_i')$, compatible with $\widetilde \tau$  
on $\Lambda(\widetilde \bv, \widetilde \bw)$,
is given by $x \mapsto - x^{\natural}$.
By a similar analysis as above, one can check that $^{\widetilde \tau} e_i = e_i$
and $^{\widetilde \tau} f_i = f_i$. This implies that the point $\widetilde \tau \Phi(\bx)$ 
satisfies the last condition (\ref{sl2}), completing the proof.
\end{proof}

By Proposition ~\ref{tau-Phi}, (\ref{tau-comm}) and (\ref{varphi}), we have the following.

\begin{prop}
\label{tau-varphi}
Assume that $\zeta=(\xi, 0)$ with $\xi$ satisfying that $\xi_i >0$ for all $i\in I$.
Then the automorphism $\tau_{\zeta}$ in (\ref{tau}) and the embeddings  $\varphi$ and $\varphi_0$ in (\ref{varphi}) are compatible. Specifically, 
we have the following commutative diagram.
\begin{align}
\begin{split}
\xymatrix{
& \M_{-\zeta} (\bv, \bw)  \ar@{->}[rr]^{\varphi}  \ar@{.>}[dd]^(.3){\pi}
&&  \M_{-\zeta} (\widetilde \bv, \widetilde \bw) 
\ar@{->}[dd]^{\widetilde \pi} \\
\M_{\zeta}(\bv, \bw) \ar@{->}[ur]^{\tau_{\zeta} }  \ar@{->}[rr]^(.7){\varphi}  \ar@{->}[dd]^{\pi}
& & \M_{\zeta} (\widetilde \bv, \widetilde \bw) \ar@{->}[ur]^{\widetilde \tau_{\zeta}}  \ar@{->}[dd]^(.3){\widetilde \pi} & \\
& \M_{0}(\bv, \bw) \ar@{.>}[rr]^(.3){\varphi_0}   &&\M_0(\widetilde \bv, \widetilde \bw)  \\
\M_{0}(\bv, \bw) \ar@{.>}[ur]^{\tau_0} \ar@{->}[rr]^{\varphi_0}  &&\M_0(\widetilde \bv, \widetilde \bw) \ar@{->}[ur]^{\widetilde \tau_0} &
}
\end{split}
\end{align}
\end{prop}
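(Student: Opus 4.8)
The plan is to reduce everything to Proposition~\ref{tau-Phi}, which already supplies the heart of the matter in the form of the identity $\widetilde\tau\circ\Phi=\Phi\circ\tau$ on $\Lambda(V,W)$ and $\Lambda(\widetilde V,\widetilde W)$; what remains is to descend this identity to the quotient varieties and to glue it to the two squares (\ref{tau-comm}) and (\ref{diag-varphi}) so that all six faces of the displayed cube commute. Throughout I will use that, by Section~\ref{involution}, both $\tau$ and $\widetilde\tau$ are $\G_{\bv}$-equivariant for the twisted action $g\mapsto{}^{\tau}g$ and carry $\Lambda^{\xi\text{-}ss}(\bv,\bw)$ onto $\Lambda^{(-\xi)\text{-}ss}(\bv,\bw)$, and that by the discussion preceding (\ref{varphi}) the morphism $\Phi$ is $\G_{\bv}\times\G_{\bw}$-equivariant and sends $\Lambda^{\xi\text{-}ss}(\bv,\bw)$ into $\Lambda^{\xi\text{-}ss}(\widetilde\bv,\widetilde\bw)$ whenever $\xi_i>0$ for all $i$.

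First I would dispose of the bottom face $\varphi_0\circ\tau_0=\widetilde\tau_0\circ\varphi_0$, which lives entirely on the affine side. Here $\tau_0$ and $\widetilde\tau_0$ are $\mathrm{Spec}$ of the degree-zero parts of the algebra isomorphisms $f\mapsto f\tau$ from Section~\ref{involution}, and $\varphi_0$ is $\mathrm{Spec}$ of the $\G_{\bv}$-invariant part of the pullback $h\mapsto h\circ\Phi$. Composing pullbacks turns the identity of Proposition~\ref{tau-Phi} into an equality of ring homomorphisms $R_0(\widetilde\bv,\widetilde\bw)\to R_0(\bv,\bw)$, and applying $\mathrm{Spec}$ returns the bottom face.

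Next comes the top face $\widetilde\tau_{\zeta}\circ\varphi=\varphi\circ\tau_{\zeta}$. The one point deserving care is that $\varphi$ must be defined over the parameter $-\zeta=(-\xi,0)$ with $(-\xi)_i<0$, where the stability condition (S1) is replaced by the costability condition (S2). This follows from Proposition~\ref{tau-Phi} together with the fact that $\tau$ swaps $\xi$- and $(-\xi)$-semistability: one has
\[
\Phi\bigl(\Lambda^{(-\xi)\text{-}ss}(\bv,\bw)\bigr)=\widetilde\tau\bigl(\Phi(\Lambda^{\xi\text{-}ss}(\bv,\bw))\bigr)\subseteq\widetilde\tau\bigl(\Lambda^{\xi\text{-}ss}(\widetilde\bv,\widetilde\bw)\bigr)=\Lambda^{(-\xi)\text{-}ss}(\widetilde\bv,\widetilde\bw),
\]
so $\Phi$ restricts to a $\G_{\bv}\times\G_{\bw}$-equivariant injection of the $(-\xi)$-semistable loci and descends to a closed immersion $\varphi\colon\M_{-\zeta}(\bv,\bw)\to\M_{-\zeta}(\widetilde\bv,\widetilde\bw)$. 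Since all four maps in the top square are the descents to $\G_{\bv}$-orbit spaces of the $\G_{\bv}$-equivariant maps $\tau$, $\widetilde\tau$, $\Phi$, $\Phi$ between semistable loci, the identity of Proposition~\ref{tau-Phi}, restricted to those loci, passes verbatim to the top face.

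Finally I would assemble the cube: the two ``slanted'' faces $\widetilde\pi\circ\varphi=\varphi_0\circ\pi$, one over $\zeta$ and one over $-\zeta$, are instances of (\ref{diag-varphi}); the two ``vertical'' faces $\pi\circ\tau_{\zeta}=\tau_0\circ\pi$, for the pairs $(\bv,\bw)$ and $(\widetilde\bv,\widetilde\bw)$, are instances of (\ref{tau-comm}); together with the top and bottom faces this exhibits all six faces of the cube as commuting, which is the assertion. (Alternatively, since $\varphi$ is a closed immersion and hence a monomorphism, commutativity of any five faces already forces the sixth.) The only genuine work beyond Proposition~\ref{tau-Phi}, (\ref{tau-comm}) and (\ref{diag-varphi}) is the semistability bookkeeping in the third paragraph --- keeping the two flavours of stability straight as the parameter changes sign --- and that is the main, though modest, obstacle.
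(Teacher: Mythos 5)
Your proof is correct and follows essentially the same route as the paper, which simply invokes Proposition~\ref{tau-Phi}, (\ref{tau-comm}), and (\ref{varphi}) without further comment. Your semistability bookkeeping for extending $\varphi$ to the parameter $-\zeta$ via $\Phi=\widetilde\tau\circ\Phi\circ\tau^{-1}$ is a useful detail the paper leaves implicit, but it does not constitute a different method.
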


\subsection{Maffei's morphism and reflection functors}

In this section, we show the compatibility of Maffei's morphism and the reflection functors on quiver varieties 
in Section ~\ref{Weyl-action}.

Recall that to a pair $(\bv, \bw)$ and a fixed vertex $i\in I$, we associate a new pair $(\bv', \bw) = (s_i *_{\bw} \bv, \bw)$ in
Section ~\ref{Weyl-action}.
To the same pair, we attach a third  pair $(\widetilde \bv, \widetilde \bw)$ in Section ~\ref{Maffei}.
Now apply the operation in (\ref{Weyl-action}) to $(\widetilde \bv, \widetilde \bw)$, we have 
$( (\widetilde \bv)', \widetilde \bw) = (s_i *_{\widetilde \bw} \widetilde \bv, \widetilde \bw)$, while applying the procedure in
Section ~\ref{Maffei} yields $(\widetilde{\bv}', \widetilde \bw)$.
We now compare $(\widetilde \bv)'$ and $\widetilde \bv'$.
If $j\neq i$, then $(\widetilde \bv)'_j = (\widetilde \bv)_j = (\widetilde \bv')_j$.
With the convention $\widetilde \bv_0 = \widetilde \bw_1$, the two vectors 
$(\widetilde \bv)'$ and $\widetilde \bv'$ coincide at $j=i$ by the following computation.
\[
(\widetilde \bv)'_i = \widetilde \bv_{i-1} + \widetilde \bv_{i+1} - \widetilde \bv_i
= \bv'_i + \sum_{j\geq i+1} (j-i) \bw_j = \widetilde \bv'_i.
\]
As a result, we have
$(\widetilde \bv)' = \widetilde \bv'$.

Similar to Section ~\ref{Weyl-action}, we fix a triple $(V, V', W)$ of dimension vector $(\bv, \bv', \bw)$ and $V_j= V_j' $ for all $j\neq i$.
We define $\widetilde V, \widetilde W$ and $\widetilde V'$ as in Section ~\ref{Maffei}. 
In particular, $\widetilde V'_j = \widetilde V_j = V_j \oplus \oplus_{k\geq j+1, 1\leq h\leq k - j} W^{(h)}_{k}$ if $j \neq i$ and
$\widetilde V'_i = V'_i \oplus \oplus_{k\geq i+1, 1\leq k\leq k-i} W^{(h)}_{k}$.

Recall the diagram (\ref{reflection-raw}) from Section ~\ref{Weyl-action}.
We write $\Lambda^{\xi\text{-}ss} (V, W)$ for $\Lambda^{\xi\text{-}ss}_{\zeta_{\mbb C}}(V, W)$ when $\zeta_{\mbb C}=0$.
With the above preparation, we are ready to state  the compatibility of Maffei's morphism $\Phi$ in Proposition ~\ref{Maffei-Phi} with 
the reflection functor diagram (\ref{reflection-raw}).

\begin{prop}
\label{Phi-Si}
Fix $i \in I$.
Assume that
$\zeta=(\xi, 0)$ and there is $w\in W$ such that $w(\xi)_j >0$ for all $j \in I$ 
(equivalently here is $w\in W$ such that  $w(\xi)_j < 0$, $\forall j\in I$).
The following diagram commutes.
\[
\begin{CD}
\Lambda^{\xi\text{-}ss}(V, W) @<<< F^{ss}(V, V', W) @>>>  \Lambda^{s_i(\xi)\text{-}ss}(V', W) \\
@V\Phi VV @V\Phi_FVV @VV\Phi' V \\
\Lambda^{\xi\text{-}ss}(\widetilde V, \widetilde W) @<<< F^{ss}(\widetilde V, \widetilde V', \widetilde W) @>>>  
\Lambda^{s_i(\xi)\text{-}ss}(\widetilde V', \widetilde  W)
\end{CD}
\]
where the rows are (\ref{reflection-raw}) attached to the triples $(V, V', W)$ and $(\widetilde V, \widetilde V', \widetilde W)$,
the vertical morphisms $\Phi$ and $\Phi'$ on the left and right are the restriction of the morphism  
from Proposition \ref{Maffei-Phi} and the middle one is defined to be
$\Phi_F (\bx, \bx') = (\Phi(\bx), \Phi'(\bx'))$.
\end{prop}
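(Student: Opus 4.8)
The plan is to verify the commutativity of the diagram by a direct but careful unravelling of the three morphisms involved: Maffei's embedding $\Phi$ (and its variants $\Phi'$, $\Phi_F$), the reflection-functor correspondence diagram $(\ref{reflection-raw})$, and the transversality normal form of Proposition~\ref{Maffei-Phi}. Because $\Phi_F$ is defined componentwise by $\Phi_F(\bx,\bx')=(\Phi(\bx),\Phi'(\bx'))$, commutativity of the two outer squares will follow once I check two things: (i) that $\Phi_F$ actually maps $F^{ss}(V,V',W)$ into $F^{ss}(\widetilde V,\widetilde V',\widetilde W)$, i.e.\ that the pair $(\Phi(\bx),\Phi'(\bx'))$ satisfies the reflection conditions (R1)--(R4) of Section~\ref{Weyl-action} at vertex $i$ for the larger spaces whenever $(\bx,\bx')$ does so for the original ones; and (ii) that the projections $\pi_1,\pi_2$ in the top and bottom rows intertwine $\Phi,\Phi',\Phi_F$, which is immediate from the definition of $\Phi_F$ once (i) is in place since $\pi_1,\pi_2$ are just the two coordinate projections and $\Phi$ only modifies the ``$W$-part'' of the data in a way that does not touch the $i$-th exact sequence data beyond what (i) controls.

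The heart of the argument is therefore step (i). First I would record, using the decomposition $\widetilde V_i=V_i\oplus\bigoplus_{k\ge i+1,\,1\le h\le k-i}W^{(h)}_k$ and the analogous decomposition of $\widetilde V'_i$ and of $U_i$ (the ``reflection space'' $W_i\oplus\bigoplus_{h:\o(h)=i}V_{\i(h)}$ built from $\widetilde V$), how the maps $a_i(\Phi(\bx))$, $b_i(\Phi(\bx))$, $a_i(\Phi'(\bx'))$, $b_i(\Phi'(\bx'))$ split into the four block types $X_i,T^V_{i,j,h},T^{j',h'}_{i,V},T^{j',h'}_{i,j,h}$ (and the $S$-analogues) appearing in $(\ref{xi-dec})$--$(\ref{yi-dec})$. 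Using the transversality normal form from Proposition~\ref{Maffei-Phi} — in particular that $T^{j,h}_{i,V}=\delta_{h,1}y_{i+1}\cdots y_{j-1}p_j$, $S^V_{i,j,h}=\delta_{h,j-i}q_jx_{j-1}\cdots x_{i+1}$, and that the ``internal'' $W\to W$ blocks are identity maps in grad-zero positions and zero otherwise — the large exact sequence $(\ref{reflection-a})$ for $(\Phi(\bx),\Phi'(\bx'))$ decomposes as a direct sum of the original exact sequence $(\ref{reflection-a})$ for $(\bx,\bx')$ together with a collection of ``combinatorial'' short exact sequences built purely from identity maps between copies of the $W_k$'s; these latter are exact by a bookkeeping argument on the grad-function, independent of $\bx$. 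Equation $(\ref{reflection-b})$, $a_i b_i-a_ib_i'=\zeta'^{(i)}_{\mbb C}$, reduces similarly: since $\zeta_{\mbb C}=0$ here, I only need $a_i(\Phi(\bx))b_i(\Phi(\bx))=a_i(\Phi'(\bx'))b_i(\Phi'(\bx'))$, which again splits as the original relation plus automatically-matching identity-block contributions. Conditions (R3)--(R4) are inherited trivially because $\Phi$ does not alter the data away from the affected arrows and because $\Phi(\bx)\in\Lambda(\widetilde V,\widetilde W)$, $\Phi'(\bx')\in\Lambda(\widetilde V',\widetilde W)$ by Proposition~\ref{Maffei-Phi}. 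I also need to check compatibility with semistability: $\Phi$ carries $\xi$-semistable points to $\xi$-semistable points (already used in $(\ref{varphi})$), and the same for $\Phi'$ with $s_i(\xi)$; the hypothesis that some $w\in W$ makes $w(\xi)$ strictly positive (hence $\Phi,\Phi'$ are defined on the relevant loci as in Section~\ref{Maffei}) is what guarantees both $\Phi$ and $\Phi'$ exist and restrict to the semistable subsets appearing in the rows.

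The main obstacle I expect is the block-wise verification of exactness of $(\ref{reflection-a})$ for the enlarged data: one must show that the ``$W$-to-$W$'' portion of $a_i(\Phi'(\bx'))$ and $b_i(\Phi(\bx))$, governed by the grad-function and the transversality identities (t3)--(t5), (s3)--(s5), fits together into an exact complex independently of $\bx$. This is essentially Maffei's transversality combinatorics reappearing one vertex at a time, so I would organize it as a lemma: \emph{for each pair $(j,h)$ the contribution of $W^{(h)}_j$ to the sequence $(\ref{reflection-a})$ at vertex $i$ is exact}, proved by comparing which copies $W^{(h)}_j$ sit in $\widetilde V_i'$ versus $\widetilde V_i$ versus $U_i$ and tracking that the connecting maps are identities in exactly the right spots. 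Once this lemma is in hand, the diagram chase is routine: the outer rectangles commute because $\Phi_F$ is the product map and $\pi_1,\pi_2$ are coordinate projections, and the vertical maps descend to the $\G$-quotients because $\Phi,\Phi',\Phi_F$ are all equivariant for the relevant general linear group actions (as already noted for $\Phi$ in Section~\ref{Maffei}). I would close by remarking that passing to GIT quotients then yields the commutativity of the induced diagram of $S_i$'s with $\varphi$, which is the form in which this proposition is used downstream.
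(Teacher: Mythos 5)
Your proposal has a genuine gap, and it is exactly the subtlety the paper flags when it says the commutativity ``is not obvious since the $\widetilde\bx$ and $\widetilde\bx'$ are defined inductively.'' Your plan rests on the assertion that the ``internal $W\to W$ blocks are identity maps in grad-zero positions and zero otherwise,'' so that the enlarged reflection data (R1), (R2) split off a purely combinatorial piece that you can check by hand. But the transversality conditions (\ref{t3})--(\ref{t5}), (\ref{s3})--(\ref{s5}) only constrain the blocks $T^{j',h'}_{i,j,h}$, $S^{j',h'}_{i,j,h}$ with $\mrm{grad}\le 0$: the $\mrm{grad}>0$ blocks are \emph{not} zero in general; they are determined implicitly by the moment-map equations together with the $\mathfrak{sl}_2$ condition (\ref{sl2}), and they genuinely depend on $\bx$. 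Since $\Phi(\bx)$ and $\Phi'(\bx')$ are the outputs of two \emph{independent} inductive transversality constructions (one from $\bx$, one from $\bx'$), there is no direct way to compute $b_i(\Phi(\bx))\,a_i(\Phi'(\bx'))$ or the difference in (R2) block-by-block: the unknown $\mrm{grad}>0$ blocks of $\Phi'(\bx')$ cannot simply be compared to those of $\Phi(\bx)$, and one cannot a priori even assert that they agree away from vertex $i$.

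The paper's actual argument avoids this by a device you do not mention: it introduces an auxiliary element $\widetilde\by\in\bM(\widetilde V',\widetilde W)$, built by taking from $\Phi'(\bx')$ only those components that involve $V'_i$ (which \emph{are} explicitly prescribed in Proposition~\ref{Maffei-Phi}), and taking all other components from $\Phi(\bx)$. Because every component of $\widetilde\by$ is then computable, one can check directly that $(\Phi(\bx),\widetilde\by)$ satisfies (R1)--(R4) — the paper's calculations of $b_i(\widetilde\bx)a_i(\widetilde\by)$ use precisely that $a_i(\widetilde\by)|_{W'_i}=a_i(\widetilde\bx)|_{W'_i}$ so that the $W'_i$-block collapses to $-\mu_i(\widetilde\bx)|_{W'_i}=0$. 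Then Maffei's semistability lemmas (\cite[Lemmas 28, 30]{M02}) place $\widetilde\by$ in $\Lambda^{s_i(\xi)\text{-}ss}(\widetilde V',\widetilde W)$, and the uniqueness statement \cite[Lemma 18]{M05} forces $\widetilde\by=\Phi'(\bx')$. That identification is the step that closes the argument, and it is what your block-wise plan cannot replace without effectively reconstructing it: you need some form of uniqueness to conclude that the two independently defined transversal elements agree where they should.

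Finally, the reduction to the case $\xi_j<0$ for all $j$ and the induction on the length of $w$ in the final paragraph of the paper's proof is also an explicit step worth recording; your hypotheses gesture at it but your sketch does not carry it out.
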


\begin{proof}
We first assume that $\xi_j<0$ for all $j<0$. 
Then the morphism $\Phi$ is well-defined by ~\cite[Lemma 19]{M05}.
In this case, it suffices to show that 
the pair $(\widetilde \bx, \widetilde \bx') \equiv (\Phi(\bx), \Phi'(\bx'))$ satisfies the conditions (\ref{reflection-a})-(\ref{reflection-d}).
In light of ~\cite[Lemmas 28, 30]{M02}, this shows that the maps $\Phi'$ and $\Phi_F$ are well-defined,
which is not obvious since the $\widetilde \bx$ and $\widetilde \bx'$ are defined inductively.

We define a new element $\widetilde \by \in \bM(\widetilde V', \widetilde W)$, as a package of linear maps with respect
to the decompositions of $\widetilde V, \widetilde V'$ and $\widetilde W$, such that
a linear map in $\widetilde \by$ is defined to be its counterpart in $\widetilde \bx'$ if it involves $V_i'$,  
or its counterpart in $\widetilde \bx$ otherwise.

If we can show that the pair $(\widetilde \bx, \widetilde \by) $ satisfies the conditions
(\ref{reflection-a})-(\ref{reflection-d}),  then we have
$\widetilde \by \in \Lambda^{s_i(\xi)\text{-} ss}(\widetilde V', \widetilde W)$ by ~\cite[Lemma 28, Lemma 30]{M02}.
By the definition of $\widetilde \by$ and ~\cite[Lemma 18]{M05}, we see immediately that $\widetilde \by = \widetilde \bx'$,
which implies the well-definedness of $\Phi_F$.

The rest is then to  show that the pair $(\widetilde \bx, \widetilde \by)$ satisfies (\ref{reflection-a})-(\ref{reflection-d}).
Clearly from the definitions, the pair satisfies (\ref{reflection-c}) and (\ref{reflection-d}).
We now prove that the sequence in (\ref{reflection-a}) for the pair $(\widetilde \bx, \widetilde \by)$ is a complex, i.e., $b_i(\widetilde \bx) a_i (\widetilde \by) =0$.
We consider the restriction to 
the subspace $W'_i = \oplus_{j\geq i+1, 1\leq h\leq j-i} W_j^{(h)}$ of $\widetilde V'=V'_i \oplus W'_i$, and we get
\[
b_i(\widetilde \bx) a_i (\widetilde \by) |_{W'_i} = 
b_i (\widetilde \bx) a_i(\widetilde \bx)|_{W'_i} = - \mu_i (\widetilde \bx) |_{W'_i} =0.
\]
We then consider the restriction of $b_i (\widetilde \bx) a_i(\widetilde \by)$ to $V'_i$ in two cases.
The first one is to consider $\pi_{V_i} b_i(\widetilde \bx) a_i (\widetilde \by) |_{V'_i}$ where $\pi_{V_i}$ is the project to the component $V_i$.
Any component in $\pi_{V_i} b_i(\widetilde \bx) a_i (\widetilde \by)|_{V'_i}$ passing through $W_{i+1}'$ is zero. 
The possible nonzero component of the linear map $\pi_{V_i} b_i(\widetilde \bx) a_i (\widetilde \by)|_{V'_i}$
passing through $W'_{i-1}$ is when it factors through its subspace $W_{i}^{(1)}$, 
which is $p_iq_i'$ if we use the notations  $\bx =(x_i, y_i, p_i, q_i)$ and $\bx' =(x'_i, y'_i, p'_i, q'_i)$.
Hence we have  
$$
\pi_{V_i} b_i(\widetilde \bx) a_i (\widetilde \by) |_{V'_i} = b_i (\bx) a_i(\bx') =0.
$$
The second case is to consider the component  $\pi_{W'_i}  b_i(\widetilde \bx) a_i (\widetilde \by) |_{V'_i}$. 
In this case, the possible nonzero component factored through the map  $\pi_{W_{i}'} b_i(\widetilde \bx)|_{W'_{i-1}}$ is
when it factors through the linear map $T_{i-1, j, j-i}^{j, j-i+1}: W_{j}^{(j-i+1)} \to W_j^{(j-i)}$, and it equals to
$S^{V'}_{i-1, j, j-i+1}(\widetilde \bx')$ of $\widetilde \bx'$.
On the other hand, the possible nonzero component factored through the map $\pi_{W_i'} b_i (\widetilde \bx)|_{W'_{i+1}}$ is
when it factors through the linear map $- S^{V}_{i, j, j-i}(\widetilde \bx)$ of $\widetilde \bx$ 
and it equals $ - S^{V}_{i, j, j-i}(\widetilde \bx)  x'_i =- S^{V'}_{i-1, j, j-i+1}(\widetilde \bx')$.
The two sums to zero, and therefore we obtain 
$$
\pi_{W'_i}  b_i(\widetilde \bx) a_i (\widetilde \by) |_{V'_i}=0.
$$
Altogether, it confirms that the sequence in (\ref{reflection-a}) for $(\widetilde \bx, \widetilde \by)$ is a complex.

It remains to show that the map $a_i(\widetilde \by)$ is injective for  the pair
$(\widetilde \bx, \widetilde \by)$ to satisfy (\ref{reflection-a}).
Clearly $a_i(\widetilde \by)|_{V'_i} = a_i(\widetilde \bx')|_{V'_i}$ is injective.
The fact that $S^{j, h}_{i-1, j, h} = \mrm{id}_{W_j}$ implies that $a_i(\widetilde \by) |_{W'_i}$ is injective.
So $a_i(\widetilde \by)$ is injective, and thus (\ref{reflection-a}) holds for $(\widetilde \bx, \widetilde \by)$.

Now we show that the pair $(\widetilde \bx, \widetilde \by)$ satisfies the last condition (\ref{reflection-b}).
We observe from the definition of $\widetilde \by$ that 
$a_i(\widetilde \bx)|_{W'_i} \pi_{W'_i} b_i(\widetilde \bx) - a_i(\widetilde \by)|_{W'_i} \pi_{W'_i} b_i(\widetilde \by) =0$.
So it is reduced to show that 
$a_i(\widetilde \bx)|_{V_i} \pi_{V_i} b_i(\widetilde \bx) - a_i(\widetilde \by)|_{V'_i} \pi_{V'_i} b_i(\widetilde \by)=0$.
By using the (\ref{t1}) and (\ref{s1}) of $\widetilde \bx$ and $\widetilde \by$, we have
\begin{align*}
[a_i(\widetilde \bx)|_{V_i} \pi_{V_i} b_i(\widetilde \bx) - a_i(\widetilde \by)|_{V'_i} \pi_{V'_i} b_i(\widetilde \by)]|_{W_{i+1}'}=0,\\
\pi_{W_{i+1}'} [a_i(\widetilde \bx)|_{V_i} \pi_{V_i} b_i(\widetilde \bx) - a_i(\widetilde \by)|_{V'_i} \pi_{V'_i} b_i(\widetilde \by)]=0.
\end{align*}
Moreover, in light of (\ref{t2}) and (\ref{s2}), it yields
\begin{align*}
\begin{split}
& \pi_{W_{i-1}'} [a_i(\widetilde \bx)|_{V_i} \pi_{V_i} b_i(\widetilde \bx) - a_i(\widetilde \by)|_{V'_i} \pi_{V'_i} b_i(\widetilde \by)]|_{W'_{i-1}}\\
& =
\oplus_{j>  i} S^V_{i, j, j-i} \pi_{V_{i+1}} [a_i(\bx) b_i(\bx) - a_i(\bx') b_i(\bx')]|_{V_{i+1}} \oplus_{j' > i} T_{i, V}^{j', 1}=0.
\end{split}
\end{align*}
The following vanishing results can be obtained in a similar manner.
\begin{align*}
\pi_{U_i}  [a_i(\widetilde \bx)|_{V_i} \pi_{V_i} b_i(\widetilde \bx)  - &  a_i(\widetilde \by)|_{V'_i} \pi_{V'_i} b_i(\widetilde \by)]|_{U_i} 
= a_i (\bx) b_i (\bx) - a_i (\bx') b_i(\bx') =0, \\
\pi_{U_i}  [a_i(\widetilde \bx)|_{V_i} \pi_{V_i} b_i(\widetilde \bx) - & a_i(\widetilde \by)|_{V'_i} \pi_{V'_i} b_i(\widetilde \by)]|_{W'_{i-1}} \\
& =  [ a_i (\bx) b_i (\bx) - a_i (\bx') b_i(\bx')]|_{V_{i+1}} \oplus_{j' > i} T^{j', 1}_{i, V} =0, \\
\pi_{W_{i-1}'}  [a_i(\widetilde \bx)|_{V_i} \pi_{V_i} b_i(\widetilde \bx) - & a_i(\widetilde \by)|_{V'_i} \pi_{V'_i} b_i(\widetilde \by)]|_{U_i} \\
& = \oplus_{j> i} S^V_{i, j, j-i}\pi_{V_{i+1}}  [ a_i (\bx) b_i (\bx) - a_i (\bx') b_i(\bx')] =0.
\end{align*}
These analyses imply that the pair $(\widetilde \bx, \widetilde \by)$ satisfies  (\ref{reflection-b}).
This finishes the proof under the assumption that $\xi_j<0$ for all $j\in I$. 

As a result, we see that the map $\Phi$ is well-defined if  $\xi$ is chosen so that the entries in $s_i (\xi)$ are positive. 
Now the general case is obtained by an induction on the length of $w$.
The proposition is thus proved.
\end{proof}

Let $\widetilde S_i: \M_{\zeta} (\widetilde \bv, \widetilde \bw)  \to \M_{s_i(\zeta)} (s_i* \widetilde \bv, \widetilde \bw) $
be the reflection functor in (\ref{Si}) defined for the pair $(\widetilde \bv, \widetilde \bw)$.  
As a consequence of Proposition ~\ref{Phi-Si}, we have the compatibility of the reflection functor $S_i$ (\ref{Si}) 
and Maffei's morphism $\varphi$ (\ref{varphi}).

\begin{prop}
\label{Si-varphi}
Fix $i \in I$.
Assume that $\zeta=(\xi, 0)$ and  there is an element $w\in W$ such that $w(\xi)_j >0$ for all $j \in I$, or $w(\xi)_j < 0$ for all $j\in I$.
Then the reflection functor $S_i$ in (\ref{Si}) and the imbedding $\varphi$ in (\ref{varphi}) 
are compatible with each other. More precisely, we have the following commutative diagram.
\begin{align}
\begin{CD}
\M_{\zeta}(\bv, \bw)  @>\varphi >> \M_{\zeta} (\widetilde \bv, \widetilde \bw) \\
@VS_i VV @VV \widetilde  S_i V\\
 \M_{s_i(\zeta)} (s_i * \bv, \bw) @>\varphi >> \M_{s_i(\zeta)} (s_i* \widetilde \bv, \widetilde \bw) 
\end{CD}
\end{align}
\end{prop}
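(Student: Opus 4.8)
The plan is to obtain Proposition~\ref{Si-varphi} from Proposition~\ref{Phi-Si} by passing to GIT quotients. Recall from Section~\ref{Weyl-action} that both reflection functors are built from the correspondence diagram (\ref{reflection-raw}): one has $S_i=\pi_2\pi_1^{-1}$ and $\widetilde S_i=\widetilde\pi_2\widetilde\pi_1^{-1}$, where $\pi_1,\pi_2$ (resp. $\widetilde\pi_1,\widetilde\pi_2$) are the $\GL(V_i')$- and $\GL(V_i)$-principal bundles (resp. $\GL(\widetilde V_i')$- and $\GL(\widetilde V_i)$-principal bundles) induced by the projections of $F^{ss}(V,V',W)$ (resp. $F^{ss}(\widetilde V,\widetilde V',\widetilde W)$), and $\G_{\bv\cup\bv'}\backslash F^{ss}(V,V',W)$ maps isomorphically onto both $\M_\zeta(\bv,\bw)$ and $\M_{s_i(\zeta)}(s_i*\bv,\bw)$. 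Likewise the morphism $\varphi$ in (\ref{varphi}) is the descent of Maffei's morphism $\Phi$ (restricted to the semistable loci) to the quotients by $\G_{\bv}$ and $\G_{\widetilde\bv}$; and since we have already observed $(\widetilde\bv)'=\widetilde\bv'$, the Maffei construction applied to $(s_i*\bv,\bw)$ produces precisely the pair $(s_i*\widetilde\bv,\widetilde\bw)$, so the right-hand vertical arrow of the asserted square is again a Maffei morphism, namely the descent of the map $\Phi'$ of Proposition~\ref{Phi-Si}.

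First I would record the relevant equivariance. The map $\Phi$ is $\G_{\bv}$-equivariant for the embedding $\G_{\bv}\hookrightarrow\G_{\widetilde\bv}$, $g\mapsto g\oplus\mathrm{id}_{W'}$ (the compatibility stated below (\ref{varphi})), and similarly $\Phi'$ is $\G_{\bv'}$-equivariant for $\G_{\bv'}\hookrightarrow\G_{\widetilde\bv'}$. Hence $\Phi_F=(\Phi,\Phi')$ is equivariant for the embedding $\G_{\bv\cup\bv'}=\G_{\bv}\times\GL(V_i')\hookrightarrow\G_{\widetilde\bv\cup\widetilde\bv'}=\G_{\widetilde\bv}\times\GL(\widetilde V_i')$, $(g,h)\mapsto(g\oplus\mathrm{id}_{W'},\,h\oplus\mathrm{id}_{W'_i})$. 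One checks directly that this embedding carries the defining conditions of $F^{ss}(V,V',W)$ to those of $F^{ss}(\widetilde V,\widetilde V',\widetilde W)$ and is compatible with the bundle structures $\pi_1,\pi_2$; this is routine, the extra coordinates of $\Phi$ and $\Phi'$ being the fixed transversal ones. Since $\G_{\bv}$ acts freely on the $\xi$-semistable locus and $\GL(V_i')$ acts freely on $F^{ss}$, the map $\Phi_F$ takes $\G_{\bv\cup\bv'}$-orbits into $\G_{\widetilde\bv\cup\widetilde\bv'}$-orbits and descends to a morphism $\varphi_F$ of the geometric quotients.

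Then I would simply quotient the diagram of Proposition~\ref{Phi-Si}. As all three vertical maps there are equivariant for the groups just described, and the horizontal maps are the defining bundle projections, the descended diagram is
\[
\begin{CD}
\M_\zeta(\bv,\bw) @<\pi_1<< \G_{\bv\cup\bv'}\backslash F^{ss}(V,V',W) @>\pi_2>> \M_{s_i(\zeta)}(s_i*\bv,\bw) \\
@V\varphi VV @V\varphi_F VV @VV\varphi V \\
\M_\zeta(\widetilde\bv,\widetilde\bw) @<\widetilde\pi_1<< \G_{\widetilde\bv\cup\widetilde\bv'}\backslash F^{ss}(\widetilde V,\widetilde V',\widetilde W) @>\widetilde\pi_2>> \M_{s_i(\zeta)}(s_i*\widetilde\bv,\widetilde\bw)
\end{CD}
\]
in which all horizontal maps are isomorphisms. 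Composing along the top right-to-left and then left-to-right, $\widetilde S_i\circ\varphi=\widetilde\pi_2\widetilde\pi_1^{-1}\varphi=\widetilde\pi_2\varphi_F\pi_1^{-1}=\varphi\,\pi_2\pi_1^{-1}=\varphi\circ S_i$, which is the assertion.

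Finally, there is the case division by the sign of $\xi_i$: when $\xi_i>0$ the functor $S_i$ is, by the convention of Section~\ref{Weyl-action}, the swapped expression $\pi_1\pi_2^{-1}$ (and likewise $\widetilde S_i=\widetilde\pi_1\widetilde\pi_2^{-1}$), so the same diagram chase applies verbatim. The hypothesis that some $w\in\mathcal W$ renders all entries of $w(\xi)$ of one sign is exactly the hypothesis of Proposition~\ref{Phi-Si}, and the reduction to all-negative $\xi$ together with the induction on $\ell(w)$ needed for general $\xi$ has already been carried out there. I do not expect a serious obstacle: the only non-formal input — that Maffei's transversal element attached to $\bx'$ is faithfully reproduced by the reflection procedure applied to the transversal element attached to $\bx$ — is precisely what Proposition~\ref{Phi-Si} supplies, and what remains is the bookkeeping of quotients by the various general linear groups. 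The one point deserving a little care is checking that the embedding $\G_{\bv\cup\bv'}\hookrightarrow\G_{\widetilde\bv\cup\widetilde\bv'}$ preserves the correspondence spaces and intertwines the bundle maps, so that $\varphi_F$ genuinely exists and the squares around $\pi_1$ and $\pi_2$ commute.
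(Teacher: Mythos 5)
Your proposal is correct and takes essentially the same route as the paper: the paper states Proposition~\ref{Si-varphi} as an immediate consequence of Proposition~\ref{Phi-Si}, and what you have written out is exactly the implicit descent-to-GIT-quotient argument (using the $\G_{\bv}$-equivariance of $\Phi$ recorded below (\ref{varphi}), the identity $(\widetilde\bv)'=\widetilde\bv'$, and the definition $S_i=\pi_2\pi_1^{-1}$) that justifies that one-line deduction.
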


\subsection{$\sigma$-Quiver varieties and partial resolutions of  nilpotent Slodowy slices}
\label{s-quiver}

For the pair $(\widetilde \bv, \widetilde \bw)$, let $\widetilde \sigma:  \M_{\zeta} (\widetilde \bv, \widetilde \bw) \to  \M_{-w_0(\zeta)} (w_0* \widetilde \bv, \widetilde \bw) $ denote
the isomorphism defined by (\ref{sigma}). 
By combining Propositions ~\ref{tau-varphi} and \ref{Si-varphi}, we obtain the compatibility of the isomorphism $\sigma$ (\ref{sigma})
and the immersion $\varphi$ (\ref{varphi}).

\begin{prop}
\label{prop:sigma-varphi}
Assume that the parameter $\zeta=0$ or  $\zeta=(\xi,0)$ satisfies that $\xi_i >0$ for all $i\in I$. 
Then the isomorphism $\sigma=\sigma_{\zeta, w_0}$ for $\w=w_0$ in (\ref{sigma}) and the imbedding $\varphi$ in (\ref{varphi}) 
are compatible, so that we have the following commutative diagram.
\begin{align}
\begin{CD}
\M_{\zeta}(\bv, \bw) @>\varphi>>  \M_{\zeta} (\widetilde \bv, \widetilde \bw) \\
@V\sigma VV @VV\widetilde \sigma V\\
 \M_{-w_0(\zeta)} (w_0 * \bv, \bw) @>\varphi >>  \M_{-w_0(\zeta)} (w_0* \widetilde \bv, \widetilde \bw) 
\end{CD}
\end{align}
\end{prop}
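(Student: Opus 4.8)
The plan is to unwind the definition $\sigma=\sigma_{\zeta,w_0}=a\,S_{w_0}\tau_{\zeta}$ from (\ref{sigma}); since $a=1$ throughout this section, $\sigma=S_{w_0}\tau_{\zeta}$, and likewise $\widetilde\sigma=\widetilde S_{w_0}\,\widetilde\tau_{\zeta}$. The statement then follows by pasting together the compatibility of $\tau_{\zeta}$ with $\varphi$ (Proposition~\ref{tau-varphi}) and the compatibility of each simple reflection $S_i$ with $\varphi$ (Proposition~\ref{Si-varphi}), iterated along a reduced word for $w_0$.

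First I would fix a reduced expression $w_0=s_{i_1}\cdots s_{i_l}$, so that $S_{w_0}=S_{i_1}\cdots S_{i_l}$ and $\widetilde S_{w_0}=\widetilde S_{i_1}\cdots\widetilde S_{i_l}$, where $\widetilde S_{i_k}$ is the reflection functor attached to the enlarged pair $(\widetilde\bv,\widetilde\bw)$. Applying the reflections one at a time, starting from $\M_{-\zeta}(\bv,\bw)$ (the target of $\tau_\zeta$), produces a chain of quiver varieties each of whose parameters is a Weyl-group translate of $-\zeta=(-\xi,0)$; since $\xi$ has all entries strictly positive, at every step the inverse of the relevant partial product carries the intermediate parameter back to $-\zeta$, which has all entries of the same strict sign, so the hypothesis of Proposition~\ref{Si-varphi} is satisfied at each stage. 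Iterating that proposition — and using the bookkeeping identity $(\widetilde\bv)'=\widetilde{\bv'}$ from the discussion before Proposition~\ref{Phi-Si}, which gives $w_0*\widetilde\bv=\widetilde{w_0*\bv}$ so that the target pairs on the two sides agree — yields the commuting square
\[
\widetilde S_{w_0}\circ\varphi \;=\; \varphi\circ S_{w_0}\colon\ \M_{-\zeta}(\bv,\bw)\ \longrightarrow\ \M_{-w_0(\zeta)}(w_0*\bv,\bw).
\]

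Stacking this above the $\M_\zeta$-part of the diagram of Proposition~\ref{tau-varphi}, namely $\widetilde\tau_\zeta\circ\varphi=\varphi\circ\tau_\zeta$, gives
\[
\widetilde\sigma\circ\varphi \;=\; \widetilde S_{w_0}\widetilde\tau_\zeta\circ\varphi \;=\; \widetilde S_{w_0}\circ\varphi\circ\tau_\zeta \;=\; \varphi\circ S_{w_0}\tau_\zeta \;=\; \varphi\circ\sigma,
\]
which is the asserted commutativity. For the remaining case $\zeta=0$, the reflection functor on $\M_0$ is the identity whenever it is defined (that is, when $w_0*\bv=\bv$), so $\sigma=\tau_0$ and $\widetilde\sigma=\widetilde\tau_0$, and the claim is precisely the bottom square of the diagram in Proposition~\ref{tau-varphi}; alternatively it can be deduced from the generic case by letting $\xi\to 0$ through $\varphi_0$ and the proper maps $\pi$.

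I do not anticipate a genuine obstacle, since every ingredient is already available. The one point that needs care is verifying that the parameter hypothesis of Proposition~\ref{Si-varphi} persists all the way along the chosen reduced word — this is exactly where the positivity $\xi_i>0$ enters — together with the routine but necessary bookkeeping of how the parameters and dimension vectors transform ($\zeta\mapsto-\zeta\mapsto-w_0(\zeta)$, $\bv\mapsto\bv\mapsto w_0*\bv$, and their tilded counterparts) so that all sources and targets match up in the composition.
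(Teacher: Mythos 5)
Your proof is correct and follows essentially the same route as the paper, which simply states that the result is obtained ``by combining Propositions~\ref{tau-varphi} and~\ref{Si-varphi}.'' You have spelled out that combination in full, in particular checking that the hypothesis of Proposition~\ref{Si-varphi} persists along the reduced word (because each intermediate parameter is carried back to $-\zeta$ by the inverse partial product) and that the identity $(\widetilde\bv)'=\widetilde{\bv'}$ makes the targets match.
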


By an abuse of notations, let  $\widetilde \sigma$ be the isomorphism on $T^*\mathscr F_{\widetilde \bv, \widetilde \bw}$ 
defined by (\ref{sigma1}) with respect to the form on $\widetilde W$ in (\ref{tild-V-form}), 
which is compatible with the isomorphism on $\M_{\xi}(\widetilde \bv, \widetilde \bw)$ 
under the same notation by Theorem~\ref{sigma-sigma-1}. 
So we have 
\begin{cor}
\label{cor:sigma-varphi}
Under the setting of Proposition~\ref{prop:sigma-varphi}, the isomorphism $\sigma$ is compatible with the
isomorphism $\widetilde \sigma$ on $T^*\mathscr F_{\widetilde \bv, \widetilde \bw}$ via $\phi \varphi$. 
\end{cor}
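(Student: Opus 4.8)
The proof of Corollary~\ref{cor:sigma-varphi} is essentially a formal assembly of machinery already in place. The plan is to combine Proposition~\ref{prop:sigma-varphi}, which gives the compatibility of $\sigma$ with Maffei's embedding $\varphi$ at the level of Nakajima varieties, with Theorem~\ref{sigma-sigma-1}, which identifies the abstract automorphism $\widetilde\sigma$ on $\M_{\zeta}(\widetilde\bv,\widetilde\bw)$ with the geometric involution $\sigma_1$ (taking right adjoints and orthogonal complements) on $T^*\mathscr F_{\widetilde\bv,\widetilde\bw}$ via Nakajima's isomorphism $\phi$.

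First I would write down the square from Proposition~\ref{prop:sigma-varphi},
\[
\begin{CD}
\M_{\zeta}(\bv, \bw) @>\varphi>>  \M_{\zeta} (\widetilde \bv, \widetilde \bw) \\
@V\sigma VV @VV\widetilde \sigma V\\
\M_{-w_0(\zeta)} (w_0 * \bv, \bw) @>\varphi >>  \M_{-w_0(\zeta)} (w_0* \widetilde \bv, \widetilde \bw),
\end{CD}
\]
noting that under the hypotheses $\zeta = 0$ or $\zeta = (\xi,0)$ with $\xi_i>0$ this is exactly the setting of that proposition. Then I would paste to the right of it the commuting square from Theorem~\ref{sigma-sigma-1} (applied to the pair $(\widetilde\bv,\widetilde\bw)$), namely $\widetilde\sigma$ on $\M_{\zeta}(\widetilde\bv,\widetilde\bw)$ corresponds to $\widetilde\sigma = \sigma_1$ on $T^*\mathscr F_{\widetilde\bv,\widetilde\bw}$ through $\phi$. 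Concatenating, the outer rectangle reads: $\sigma$ on $\M_{\zeta}(\bv,\bw)$ is intertwined by $\phi\varphi$ with $\widetilde\sigma$ on $T^*\mathscr F_{\widetilde\bv,\widetilde\bw}$. This is precisely the assertion of the corollary.

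The only genuine points to verify — and they are the mildly annoying bookkeeping rather than the main obstacle — are that the forms match up: the form on $\widetilde W$ used to define $\sigma_1$ on $T^*\mathscr F_{\widetilde\bv,\widetilde\bw}$ must be the one in (\ref{tild-V-form}) induced from the form on $W$, so that the $\widetilde\tau$ appearing in $\widetilde\sigma = a\,S_{w_0}\,\widetilde\tau_{\zeta}$ is the same as the $\widetilde\tau$ in Proposition~\ref{tau-Phi}/\ref{tau-varphi}. This is already recorded in the statement of the corollary (``with respect to the form on $\widetilde W$ in (\ref{tild-V-form})'') and was built into Proposition~\ref{prop:sigma-varphi}, so no new work is needed. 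I would also observe that Proposition~\ref{prop:sigma-varphi} was itself obtained by combining the $\tau$-compatibility (Proposition~\ref{tau-varphi}) and the $S_i$-compatibility (Proposition~\ref{Si-varphi}), and since $a=1$ here there is no diagram-isomorphism term to track. The main ``obstacle'' is therefore purely expository: making sure the two commutative diagrams are composed along the correct arrow, i.e.\ that the target of $\varphi$ in the first diagram is the source of $\phi$ in the second, which is immediate. Thus the proof is a one-line diagram chase, and I would present it as such.
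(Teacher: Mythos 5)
Your proof is correct and matches the paper's (very terse) reasoning: the corollary is exactly the concatenation of the commutative square of Proposition~\ref{prop:sigma-varphi} with the identification of $\widetilde\sigma$ on $\M_{\zeta}(\widetilde\bv,\widetilde\bw)$ and $\sigma_1$ on $T^*\mathscr F_{\widetilde\bv,\widetilde\bw}$ furnished by Theorem~\ref{sigma-sigma-1} (applied with the form on $\widetilde W$ from (\ref{tild-V-form})). The paper offers no separate proof beyond citing these two results in the preceding paragraph, which is precisely your one-line diagram paste.
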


Recall the varieties $\widetilde{\mathcal S}_{\mu', \lambda}$ and $\mathcal S_{\mu', \lambda}$ from (\ref{phi-comm}).
 We have the following analogue for classical groups  of the Nakajima-Maffei theorem.

\begin{thm}
\label{i-Nakajima-Maffei}
Assume that  $\zeta=(\xi, 0)$ with  $\xi_i >0$ for all $i\in I$ and $\theta(\xi) = \xi$. 
Assume further  $w_0 * \bv = \bv$. 
The compositions $\phi \varphi$ and $\phi_0 \varphi_0$ of morphisms from (\ref{varphi}) and (\ref{phi-comm})  
yield isomorphisms: ($\sigma=\sigma_{\zeta, w_0}$ for $\w=w_0$)
\begin{align}
\label{sigma-varphi}
\fS_{\zeta} (\bv, \bw) \cong \widetilde{\mathcal S}_{\mu', \lambda}^{\widetilde \sigma}
\quad \mbox{and} \quad
\fS_{1}(\bv, \bw) \cong  \mathcal S_{\mu', \lambda}^{\widetilde \sigma}.
\end{align}
In particular, we have the following commutative diagram
\begin{align}
\begin{CD}
\fS_{\zeta}(\bv, \bw) @>\phi\varphi >> \widetilde{\mathcal S}_{\mu', \lambda}^{\widetilde \sigma}\\
@V\pi^{\sigma} VV @VV\Pi V\\
\fS_1(\bv, \bw) @>\phi_0 \varphi_0 >>  \mathcal S_{\mu', \lambda}^{\widetilde \sigma}
\end{CD}
\end{align}
\end{thm}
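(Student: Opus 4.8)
The plan is to transport Maffei's isomorphism (Theorem~\ref{Nakajima-Maffei}) through the fixed-point construction, using the compatibility results already established. First I would observe that Theorem~\ref{Nakajima-Maffei} gives isomorphisms $\phi\varphi:\M_{\zeta}(\bv,\bw)\xrightarrow{\sim}\widetilde{\mathcal S}_{\mu',\lambda}$ and $\phi_0\varphi_0:\M_1(\bv,\bw)\xrightarrow{\sim}\mathcal S_{\mu',\lambda}$, intertwining $\pi$ with $\Pi$. The hypotheses $\xi_i>0$, $\theta(\xi)=\xi$, $w_0*\bv=\bv$ guarantee that $\sigma=\sigma_{\zeta,w_0}$ is an automorphism of $\M_{\zeta}(\bv,\bw)$ (since $\zeta_{\mbb C}=0$, $-w_0(\zeta)=(w_0(-\xi),0)=(\theta(\xi),0)=\zeta$, and $a=1$ with $w_0*\bv=\bv$), so $\fS_{\zeta}(\bv,\bw)=\M_{\zeta}(\bv,\bw)^{\sigma}$ is defined. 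On the target side, $\widetilde\sigma$ is defined on $T^*\mathscr F_{\widetilde\bv,\widetilde\bw}$ by (\ref{sigma1}) with respect to the form on $\widetilde W$ from (\ref{tild-V-form}); note that Lemma~\ref{tV-alternate} (together with the standing $\Gamma$-alternating hypothesis on $\delta_{\bw}$ and a matching choice of $\tilde\delta_{\bv}$) ensures this form makes $\widetilde W_1$ into a $\delta$-formed space, so that $\widetilde\sigma$ is an honest involution in the sense of Section~\ref{quiver-cotangent} and Theorem~\ref{sigma-sigma-1} applies to the pair $(\widetilde\bv,\widetilde\bw)$.

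Next I would invoke Corollary~\ref{cor:sigma-varphi}, which says precisely that $\phi\varphi$ intertwines $\sigma$ on $\M_{\zeta}(\bv,\bw)$ with $\widetilde\sigma$ on $T^*\mathscr F_{\widetilde\bv,\widetilde\bw}$. Restricting the isomorphism $\phi\varphi:\M_{\zeta}(\bv,\bw)\xrightarrow{\sim}\widetilde{\mathcal S}_{\mu',\lambda}\subseteq T^*\mathscr F_{\widetilde\bv,\widetilde\bw}$ to fixed loci then yields
\[
\fS_{\zeta}(\bv,\bw)=\M_{\zeta}(\bv,\bw)^{\sigma}\xrightarrow{\ \sim\}\widetilde{\mathcal S}_{\mu',\lambda}\cap (T^*\mathscr F_{\widetilde\bv,\widetilde\bw})^{\widetilde\sigma}=\widetilde{\mathcal S}_{\mu',\lambda}^{\widetilde\sigma},
\]
using that $\widetilde{\mathcal S}_{\mu',\lambda}$ is a $\widetilde\sigma$-stable subvariety (it is the image of $\pi$, and $\widetilde\sigma$ commutes with $\pi$ by the commutativity in Theorem~\ref{sigma-sigma-1} and Proposition~\ref{prop:sigma-varphi}). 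The bottom isomorphism is obtained the same way: $\phi_0\varphi_0$ intertwines the automorphism $\sigma_0$ (equivalently $a\tau$ with $a=1$, so just $\tau_0$) on $\M_1(\bv,\bw)$ with $\widetilde\sigma$ on $\mathcal S_{\mu',\lambda}$, via the $\M_0$-level compatibility in Proposition~\ref{tau-varphi} and the diagram (\ref{phi-comm}); restricting to fixed points gives $\fS_1(\bv,\bw)=\pi^{\sigma}(\fS_{\zeta}(\bv,\bw))\cong\mathcal S_{\mu',\lambda}^{\widetilde\sigma}$. The final commutative square is then the fixed-point restriction of the square (\ref{phi-varphi}) in Theorem~\ref{Nakajima-Maffei}, with $\pi$ restricting to $\pi^{\sigma}$ and $\Pi$ restricting to its $\widetilde\sigma$-equivariant analogue.

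The one point requiring genuine care — and the main obstacle — is the bookkeeping in the $\M_1$/base-variety statement: one must check that $\phi_0\varphi_0$ really does carry $\fS_1(\bv,\bw)$ (defined as $\pi^{\sigma}(\fS_{\zeta}(\bv,\bw))$ in (\ref{S1-def})) onto $\mathcal S_{\mu',\lambda}^{\widetilde\sigma}$, and not merely into it. This follows because $\pi^{\sigma}$ is surjective onto its image by construction and $\phi_0\varphi_0$ is an isomorphism intertwining $\pi$ with $\Pi$ and $\sigma$-related automorphisms; so the image of $\fS_{\zeta}(\bv,\bw)$ under $\Pi\circ\phi\varphi=\phi_0\varphi_0\circ\pi^{\sigma}$ is exactly $\mathcal S_{\mu',\lambda}\cap(\text{image})^{\widetilde\sigma}$, and surjectivity of $\pi$ onto $\mathcal S_{\mu',\lambda}$ upgrades this to all $\widetilde\sigma$-fixed points in the base. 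A secondary point is to record explicitly that $\Pi$ is $\widetilde\sigma$-equivariant — this is immediate from $(x,F)\mapsto(-x^*,F^{\perp})$ composed with the first projection, which sends $x$ to $-x^*$, and the base automorphism on $\mathfrak{gl}(\widetilde W_1)$ (resp. $\mathfrak g(\widetilde W_1)$) is precisely $x\mapsto-x^*$. Everything else is a direct appeal to the cited compatibilities, so the proof is essentially a short assembly once these surjectivity and equivariance remarks are in place.
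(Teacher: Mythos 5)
Your plan follows the paper's own very short proof: restrict the Nakajima--Maffei isomorphism $\phi\varphi$ to $\sigma$-fixed loci via Corollary~\ref{cor:sigma-varphi}, and the first isomorphism and the commutative square follow formally. You are also right to single out the base-level statement as the delicate point (a point the paper's two-line proof glosses over): since $\fS_1(\bv,\bw)$ is \emph{defined} in~(\ref{S1-def}) as the image $\pi^\sigma(\fS_\zeta(\bv,\bw))$, the claimed isomorphism $\fS_1(\bv,\bw)\cong \mathcal S_{\mu',\lambda}^{\widetilde\sigma}$ is equivalent to the surjectivity of $\Pi\colon \widetilde{\mathcal S}_{\mu',\lambda}^{\widetilde\sigma}\to \mathcal S_{\mu',\lambda}^{\widetilde\sigma}$, i.e.\ to the assertion that every $\widetilde\sigma$-fixed point of the base has a $\widetilde\sigma$-fixed point in its Springer fiber.

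However, the justification you offer for that surjectivity does not hold up. Saying that "surjectivity of $\pi$ onto $\mathcal S_{\mu',\lambda}$ upgrades this to all $\widetilde\sigma$-fixed points in the base" is a non-sequitur: a surjective equivariant projective morphism need not remain surjective after passing to fixed loci of a finite-group action, because the (nonempty, projective) fiber over a fixed point of the base need not itself contain a fixed point. What the step actually requires is a fact about classical nilpotent orbits: every nilpotent $x\in\mathfrak g(\widetilde W_1)\cap\overline{\mathcal O}_{\mu'}$ admits an isotropic partial flag of the prescribed dimensions adapted to $x$. This amounts to identifying the image of the moment map on $(T^*\mathscr F_{\widetilde\bv,\widetilde\bw})^{\widetilde\sigma}$ (a cotangent bundle of an isotropic partial flag variety for $G(\widetilde W_1)$, hence a Richardson-orbit closure) with $\overline{\mathcal O}_{\mu'}\cap\mathfrak g(\widetilde W_1)$, via the Kraft--Procesi collapse. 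So your assembly of the compatibility results is correct and matches the paper's route, but the one step you flagged as needing care is discharged with an argument that does not actually prove it.
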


\begin{proof}
The isomorphisms 
are consequences of 
Corollary~\ref{cor:sigma-varphi}
and Theorem ~\ref{Nakajima-Maffei}.
\end{proof}

We now derive some specific results from Theorem ~\ref{i-Nakajima-Maffei} 
assuming that  $W$ is a formed space with alternating sign $\delta_{\bw}$. 
Recall from Lemma ~\ref{tV-alternate} that if $\delta_{\bw}$ alternates, 
then $\widetilde W$ is a $\delta$-form with $\delta = \delta_{\bw,1}$.
Precisely, we write 
$\widetilde{\mathcal S}_{\mu', \lambda}^{\mathfrak o_{\widetilde \bw}}$ for 
$\widetilde{\mathcal S}_{\mu', \lambda}^{\widetilde \sigma}$ if the form associated to $\widetilde \sigma$ is a symmetric form.
Similarly there are notations 
$\mathcal S_{\mu', \lambda}^{\mathfrak o_{\widetilde \bw}}$, 
$\widetilde{\mathcal S}_{\mu', \lambda}^{\mathfrak{sp}_{\widetilde \bw}}$
and
$\mathcal S_{\mu', \lambda}^{\mathfrak{sp}_{\widetilde \bw}}$.

\begin{cor}
\label{cor:i-Nakajima-Maffei}
If $\delta_{\bw}$ alternates, i.e., $\delta_{\bw,i} \delta_{\bw,i+1}=-1$ for all $1\leq i\leq n-1$,  then the isomorphisms in (\ref{sigma-varphi}) read as follows.
\begin{align}
\label{sigma-varphi-O}
\fS_{\zeta} (\bv, \bw) & \cong 
\widetilde{\mathcal S}_{\mu', \lambda}^{\mathfrak o_{\widetilde \bw}},  \
\fS_{1}(\bv, \bw) \cong  \mathcal S_{\mu', \lambda}^{\mathfrak o_{\widetilde \bw}},
&& \mbox{if} \ \delta_{\bw,i} =(-1)^{i+1}.\\
\fS_{\zeta} (\bv, \bw) & \cong 
\widetilde{\mathcal S}_{\mu', \lambda}^{\mathfrak{sp}_{\widetilde \bw}}, \
\fS_{1}(\bv, \bw) \cong  \mathcal S_{\mu',\lambda}^{\mathfrak{sp}_{\widetilde \bw}}, 
&& \mbox{if} \ \delta_{\bw,i} =(-1)^i.
\end{align}
\end{cor}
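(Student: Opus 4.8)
The plan is to deduce the statement directly from Theorem~\ref{i-Nakajima-Maffei} together with the explicit determination of the bilinear form on $\widetilde W$ provided by Lemma~\ref{tV-alternate}. Theorem~\ref{i-Nakajima-Maffei} already gives, under the standing hypotheses, isomorphisms $\fS_{\zeta}(\bv,\bw) \cong \widetilde{\mathcal S}_{\mu',\lambda}^{\widetilde\sigma}$ and $\fS_{1}(\bv,\bw) \cong \mathcal S_{\mu',\lambda}^{\widetilde\sigma}$, where $\widetilde\sigma$ is the involution on $T^{*}\mathscr F_{\widetilde\bv,\widetilde\bw}$ defined by (\ref{sigma1}) with respect to the form on $\widetilde W_{1} = \widetilde V_{0}$ given by (\ref{tild-V-form}). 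Hence the only point to verify is that, when $\delta_{\bw}$ alternates, this form is symmetric in the first case and symplectic in the second, so that the associated classical group $G(\widetilde W_{1})$ of (\ref{G(W)}) is $\mrm O_{\widetilde\bw_{1}}$ respectively $\mrm{Sp}_{\widetilde\bw_{1}}$, and the target $\widetilde{\mathcal S}_{\mu',\lambda}^{\widetilde\sigma}$ is then, by definition, the variety denoted $\widetilde{\mathcal S}_{\mu',\lambda}^{\mathfrak o_{\widetilde\bw}}$ or $\widetilde{\mathcal S}_{\mu',\lambda}^{\mathfrak{sp}_{\widetilde\bw}}$ in the paragraph preceding the corollary (and similarly for $\mathcal S_{\mu',\lambda}$).

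First I would record the elementary sign observation. On the type $\mrm A_{n}$ diagram (\ref{Dynkin-A}), the condition that $\delta_{\bw}$ be $\Gamma$-alternating forces $\delta_{\bw,i} = (-1)^{i+1}\delta_{\bw,1}$ for all $1\le i\le n$, whence $(-1)^{i+1}\delta_{\bw,i} = \delta_{\bw,1}$ is independent of $i$. By Lemma~\ref{tV-alternate}, $\widetilde W_{1}$ is thus a $\delta_{\bw,1}$-form. In the case $\delta_{\bw,i} = (-1)^{i+1}$ we have $\delta_{\bw,1} = 1$, so $\widetilde W_{1}$ carries a symmetric form, $G(\widetilde W_{1}) = \mrm O_{\widetilde\bw_{1}}$ and $\mathfrak g(\widetilde W_{1}) = \mathfrak o_{\widetilde\bw}$; in the case $\delta_{\bw,i} = (-1)^{i}$ we have $\delta_{\bw,1} = -1$, so $\widetilde W_{1}$ carries a symplectic form, $G(\widetilde W_{1}) = \mrm{Sp}_{\widetilde\bw_{1}}$ and $\mathfrak g(\widetilde W_{1}) = \mathfrak{sp}_{\widetilde\bw}$. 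This is the only substantive computation; it is pure bookkeeping, the subtle point being merely that the ``for each $i$'' clause of Lemma~\ref{tV-alternate} is consistent, which the displayed identity shows.

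Finally I would assemble the pieces: substituting the identification of $\widetilde\sigma$ just obtained into the isomorphisms of Theorem~\ref{i-Nakajima-Maffei}, the varieties $\widetilde{\mathcal S}_{\mu',\lambda}^{\widetilde\sigma}$ and $\mathcal S_{\mu',\lambda}^{\widetilde\sigma}$ become $\widetilde{\mathcal S}_{\mu',\lambda}^{\mathfrak o_{\widetilde\bw}}$ and $\mathcal S_{\mu',\lambda}^{\mathfrak o_{\widetilde\bw}}$ in the orthogonal case, and $\widetilde{\mathcal S}_{\mu',\lambda}^{\mathfrak{sp}_{\widetilde\bw}}$ and $\mathcal S_{\mu',\lambda}^{\mathfrak{sp}_{\widetilde\bw}}$ in the symplectic case. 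This yields exactly the two displayed pairs of isomorphisms, completing the proof. I do not anticipate any genuine obstacle here; the corollary is essentially a translation of Theorem~\ref{i-Nakajima-Maffei} into the classical-group notation, legitimized by Lemma~\ref{tV-alternate}.
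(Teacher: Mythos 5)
Your proposal is correct and takes essentially the same route as the paper: the corollary is obtained by specializing Theorem~\ref{i-Nakajima-Maffei} once Lemma~\ref{tV-alternate} shows that, for $\Gamma$-alternating $\delta_{\bw}$, the form on $\widetilde W_1$ is a $\delta_{\bw,1}$-form, so that $\widetilde\sigma$ corresponds to the orthogonal group when $\delta_{\bw,1}=1$ and to the symplectic group when $\delta_{\bw,1}=-1$. Your sign bookkeeping $(-1)^{i+1}\delta_{\bw,i}=\delta_{\bw,1}$ matches the paper's remark before the corollary verbatim in substance.
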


\subsection{Rectangular symmetry for classical groups}

By combining  the rectangular symmetry for general linear groups in Section~\ref{rect} 
and Theorem~\ref{i-Nakajima-Maffei},
we can obtain a rectangular symmetry for classical groups. 
We repeat the process in Section~\ref{rect}, we have 
an immersion
\[
\fS_{\zeta}(\bv, \bw) \to T^*\mathscr F_{\widehat \bv, \widehat \bw}^{\widehat{\sigma}},
\]
where the pair $(\widehat \bv, \widehat \bw)$ 
is in (\ref{hat-bw})
and $\widehat \sigma$ is the automorphism 
induced from a form  of the vector space of dimension $\widehat \bw_1$ defined similar to (\ref{tild-V-form}).
There is a similar result as Theorem ~\ref{i-Nakajima-Maffei} in describing the new immersion via the intersection
$\mathcal S_{\widehat \mu', \widehat{\lambda}}^{\widehat \sigma} =
\overline{\mathcal O}^{\widehat \sigma}_{\widehat \mu'} \cap \mathcal S^{\widehat \sigma}_{\widehat e_0}$ and its 
partial Springer resolution 
$\widetilde{\mathcal S}_{\widehat \mu', \widehat{\lambda}}^{\widehat \sigma} =\Pi_{\widehat \bv, \widehat \bw}^{-1}(\mathcal S_{\widehat e_0})^{\widehat \sigma}$.
Thus, we have the following  counterpart of Proposition~\ref{prop:rect}.

\begin{thm}
\label{Sp-O}
Let $(\widetilde \bv, \widetilde \bw)$ and $(\widehat \bv, \widehat \bw)$ be the pairs defined by
(\ref{tild-bw}) and (\ref{hat-bw}) such that associated compositions $\mu$ and $\widehat \mu$ satisfy (\ref{mu-hat}) (see also (\ref{mu'-hat})).
Then the following diagram is commutative with  isomorphic horizontal maps, 
which sends $e_0$ of Jordan type $(i^{\bw_i})_{1\leq i\leq n}$ to $\widehat e_0$  of Jordan type $(i^{\bw_{\theta(i)}})_{1\leq i\leq n}$ in the base.  
\begin{align}
\label{Sp-O-diag}
\begin{CD}
\widetilde{\mathcal S}^{\sigma}_{\mu', \lambda}
@>\cong>> 
\widetilde{\mathcal S}^{\widehat \sigma}_{\widehat \mu', \widehat \lambda}\\
@V \Pi_{\widetilde \bv, \widetilde \bw} VV @VV\Pi_{\widehat \bv, \widehat \bw}V\\
\mathcal S_{\mu', \lambda}^{\sigma}
@>\cong >>
\mathcal S_{\widehat \mu', \widehat \lambda}^{\widehat \sigma}
\end{CD}
\end{align}
\end{thm}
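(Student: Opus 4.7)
The plan is to deduce Theorem~\ref{Sp-O} from Corollary~\ref{cor:i-Nakajima-Maffei} by exploiting the symmetry of the type $\mrm A_n$ Dynkin graph under the involution $\theta: i \mapsto n+1-i$. The key observation is that the numerical data in (\ref{hat-bw}) are obtained from (\ref{tild-bw}) after first relabeling the vertex set via $\theta$: writing $\theta(\bv), \theta(\bw)$ for the relabeled dimension vectors, a direct check (analogous to the $\mathfrak{sl}_n$-case in Section~\ref{rect}) shows that $(\widetilde{\theta(\bv)}, \widetilde{\theta(\bw)}) = (\widehat \bv, \widehat \bw)$. Hence the two partial Springer resolutions in (\ref{Sp-O-diag}) ought to arise from the same $\sigma$-quiver variety realized through the two different labelings.

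More precisely, I would first verify that the $\sigma$-quiver variety is intrinsic under relabeling: the assignment $\bx \mapsto \theta(\bx)$ induces a canonical isomorphism
\[
\Xi : \fS_{\zeta}(\bv, \bw) \overset{\cong}{\longrightarrow} \fS_{\theta(\zeta)}(\theta(\bv), \theta(\bw)),
\]
and a similar one $\Xi_0$ on the bases, intertwining the projections $\pi^{\sigma}$. This is essentially because the quiver data ($\Gamma$, $\ve$, the forms on $V$ and $W$) is intrinsic to the graph and the longest Weyl group element $w_0$ commutes with $\theta$. Since $\delta_{\bw}$ is $\Gamma$-alternating, so is its $\theta$-relabel $\delta_{\theta(\bw)}$, so Corollary~\ref{cor:i-Nakajima-Maffei} is applicable on both sides.

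Applying Corollary~\ref{cor:i-Nakajima-Maffei} to $(\bv, \bw)$ yields $\fS_{\zeta}(\bv, \bw) \cong \widetilde{\mathcal S}^{\sigma}_{\mu',\lambda}$ together with the commutative square to the base $\fS_1(\bv, \bw) \cong \mathcal S^{\sigma}_{\mu',\lambda}$. Applying the same corollary instead to $(\theta(\bv), \theta(\bw))$, and using the identifications $\widetilde{\theta(\bv)} = \widehat \bv$, $\widetilde{\theta(\bw)} = \widehat \bw$ together with the relations (\ref{mu-hat})--(\ref{mu'-hat}) for the partition data, produces $\fS_{\theta(\zeta)}(\theta(\bv), \theta(\bw)) \cong \widetilde{\mathcal S}^{\widehat \sigma}_{\widehat \mu', \widehat \lambda}$ with its own commutative square. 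Composing the two with $\Xi$ and $\Xi_0$ yields the horizontal isomorphisms of (\ref{Sp-O-diag}), and commutativity of the whole diagram is inherited from the two individual commutative squares of Theorem~\ref{i-Nakajima-Maffei}. The matching of $e_0$ and $\widehat e_0$ at the base level follows because $\Xi_0$ transports Maffei's $\mathfrak{sl}_2$-triple through $\theta$, reindexing the Jordan blocks by $i \mapsto \theta(i)$.

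The main obstacle is bookkeeping of the sign data. Under $\theta$, the alternating pattern of $\delta_{\bw}$ on $I$ can shift parity (depending on the parity of $n$), so the form on $\widehat W_1$ induced by (\ref{tild-V-form}) may be of a different type (symmetric versus symplectic) than the form on $\widetilde W_1$. This is precisely what forces $\widehat \sigma$ to differ from $\sigma$ in general, and is what allows (\ref{Sp-O-diag}) to mix partial Springer resolutions of Slodowy slices in $\mathfrak{sp}$ and $\mathfrak{so}$. Verifying that the form on $\widehat W_1$ transported from $\widetilde W_1$ through $\Xi$ agrees, up to $\GL(\widehat W_1)$-conjugacy, with the canonical form attached to $\widehat \sigma$ — invoking Propositions~\ref{tau-form} and~\ref{sigma-indep-of-V} so that this freedom does not affect the resulting $\sigma$-fixed-point variety — is the key technical step; once done, the remainder of the argument is formal.
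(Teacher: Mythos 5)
Your proposal is correct and follows essentially the same route as the paper: the paper obtains Theorem~\ref{Sp-O} by relabeling the vertex set via $\theta$, repeating the Maffei construction to get the data $(\widehat \bv, \widehat \bw)$, and applying Theorem~\ref{i-Nakajima-Maffei} to both resulting embeddings of the same $\sigma$-quiver variety $\fS_{\zeta}(\bv,\bw)$, which is exactly your argument via the relabeling isomorphism $\Xi$. Your explicit bookkeeping of the sign data through Propositions~\ref{tau-form} and~\ref{sigma-indep-of-V} simply spells out what the paper leaves implicit.
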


\begin{rem}
\label{rem:Sp-O}
Perhaps the most important case of 
Theorem ~\ref{Sp-O} and Corollary ~\ref{cor:i-Nakajima-Maffei} is the rectangular symmetry 
between geometries of $\mrm{Sp}_{2w}$ and $\mrm O_{2w'}$
for various $w$ and $w'$ and  respective Lie algebras $\mathfrak{sp}_{2w}$ and $\mathfrak o_{2w'}$. 
Specifically,  assume that $n$ is even and $\delta_{\bw}$ alternates with  $\delta_{\bw,i} = (-1)^i$,
then Theorem ~\ref{Sp-O} (\ref{Sp-O-diag}) yields the following commutative diagram.
\begin{align}
\label{Sp-O-special}
\begin{CD}
\widetilde{\mathcal S}_{\mu', \lambda}^{\mathfrak{sp}_{\widetilde \bw}} 
@>\cong>> 
\widetilde{\mathcal S}_{\widehat \mu', \widetilde{\lambda}}^{\mathfrak o_{\widehat \bw}} \\
@V \Pi_{\widetilde \bv, \widetilde \bw} VV @VV\Pi_{\widehat \bv, \widehat \bw}V\\
\mathcal S_{\mu', \lambda}^{\mathfrak{sp}_{\widetilde \bw}}
@>\cong >>
\mathcal S_{\widehat \mu',  \widehat{\lambda}}^{\mathfrak o_{\widehat \bw}}
\end{CD}
\end{align}

Further, the associated Springer fibers $\mathscr F^{\mathfrak{sp}_{\widetilde \bw}}_{\widetilde \bv, \widetilde \bw; e_0}$ 
and  
$\mathscr F^{\mathfrak{o}_{\widehat \bw}}_{\widehat \bv, \widehat \bw; \widehat e_0}$ 
of $\Pi_{\widetilde \bv, \widetilde \bw}$ and $\Pi_{\widehat \bv, \widehat \bw}$
are isomorphic:
\begin{align}
\label{i-fiber}
\mathscr F^{\mathfrak{sp}_{\widetilde \bw}}_{\widetilde \bv, \widetilde \bw; e_0}
\cong 
\mathscr F^{\mathfrak{o}_{\widehat \bw}}_{\widehat \bv, \widehat \bw; \widehat e_0}.
\end{align}

In the case when $n$ is even and  $\delta_{\bw}$ alternates with $\delta_{\bw, i}=(-1)^{i+1}$,
one has a similar diagram
with the pair $(\mathfrak{sp}_{\widetilde \bw}, \mathfrak{o}_{\widehat\bw})$ replaced by 
$(\mathfrak{o}_{\widetilde \bw}, \mathfrak{sp}_{\widehat\bw})$.
In a similar manner for $n$ being odd,  
one gets similar diagrams with the pair $(\mathfrak{sp}_{\widetilde \bw}, \mathfrak{o}_{\widehat\bw})$ replaced by
either $(\mathfrak{sp}_{\widetilde \bw}, \mathfrak{sp}_{\widehat\bw})$ or $(\mathfrak{o}_{\widetilde \bw}, \mathfrak{o}_{\widehat\bw})$.
\end{rem}

Now we single out a special pair of $(\bv, \bw)$ for (\ref{Sp-O-special}) and (\ref{i-fiber}) in the following example and relate it to
the works of Henderson-Licata~\cite{HL14} and Wilbert~\cite{W15} (see also ~\cite{ES12}).

\begin{example}
\label{2-row}
Fix an integer $k$ such that $0< k\leq n-k$. 
Define $\bw^\dagger$ by $\bw^\dagger_i = \delta_{i, k} + \delta_{i, n-k}$. 
Recall that $n$ is even and set $r=n/2$ for convenience. Let  
$\bv^\dagger$ be a vector defined as follows.
\[
\begin{tabular}{   | c |c | c | c | c | c| c| c | c | c | c | c | c | c| c|} 

 \hline 
$i$ & 1  & 2& $\cdots$  & k-1 & k  & $\cdots$ & r  & r+1& $\cdots$ & n-k & n-k+1 & $\cdots$ & n-1 & n \\
\hline 
$\bv^\dagger_i$  & 1 & 2&$\cdots$  & k-1 & k  & $\cdots$ & k & k+1 & $\cdots$ & k+1 & k & $\cdots$& 2 & 1 \\
 \hline
 \end{tabular}
\]
With respect to the pair $(\bv^\dagger, \bw^\dagger)$, the data in (\ref{Sp-O-special}) read
\begin{align*}
& \widetilde \bv^\dagger= (n-1,  \cdots, r+1,  r, r, r-1, \cdots, 1), && \widetilde \bw^\dagger_i= \delta_{i, 1}n.\\
& \widehat \bv^\dagger = (n+1, \cdots, r+3, r+2, r, r-1, \cdots, 1), && \widehat \bw^\dagger_i = \delta_{i, 1} (n+2). 
\end{align*} 
Note that $\widetilde \bv^\dagger$ has an extra $r$, while $r+1$ is missing from $\widehat \bv^\dagger$. 
In particular, we have
\begin{align*}
& \mu' = n^1, & & e_0 \in \mathcal O_{k^1(n-k)^1} \cap \mathfrak{sp}_n.\\
&  \widehat \mu' = 1^1 (n+1)^1, && \widehat e_0 \in \mathcal O_{(k+1)^1 (n+1-k)^1} \cap \mathfrak o_{n+2}.
\end{align*}
The bottom row of (\ref{Sp-O-special}) reads as the following, which is  Corollary 5.2 in~\cite{HL14}. 
\begin{align}
\label{Sp-O-special-2}
\mathcal S_{n^1, k^1(n-k)^1}^{\mathfrak{sp}_n} 
\cong 
\mathcal S_{1^1 (n+1)^1, (k+1)^1 (n+1-k)^1}^{\mathfrak o_{n+2}}.
\end{align}
Note that both sides in (\ref{Sp-O-special-2}) are empty unless $k$ is even or $k=n-k$.

Observe the $\mathscr F^{\mathfrak{sp}_n}_{\widetilde \bv^\dagger, \widetilde \bw^\dagger}$ is
 the complete flag variety of $\mrm{Sp}_n$.
Hence the left hand side of (\ref{i-fiber}) is the Springer fiber, say $\mathscr B^{\mathfrak{sp}_n}_{e_0}$, of $e_0$. 
In light of the fact that  the complete flag variety of $\mrm O_2$ consists of two points, 
the $\mathscr F^{\mathfrak{o}_{n+2}}_{\widehat \bv^\dagger, \widehat \bw^\dagger}$ 
is isomorphic to a connected component,
say $\mathscr B^{\mathfrak{so}_{n+2}}$,
of the complete flag variety of $\mrm{O}_{n+2}$.
So we get 
$T^* \mathscr B^{\mathfrak{so}_{n+2}} \cong 
T^*\mathscr F^{\mathfrak o_{n+2}}_{\widehat \bv^\dagger, \widehat \bw^\dagger}$.
So the right hand side of  the (\ref{i-fiber}) for $(\bv^\dagger, \bw^\dagger)$ is exactly the Springer fiber 
$\mathscr B^{\mathfrak{so}_{n+2}}_{\widehat e_0}$ of $\widehat e_0$. 
Thus, the equality (\ref{i-fiber}) is transformed into  the following isomorphism, which is 
Theorem B in~\cite{W15}.
\begin{align}
\label{Sp-O-special-3}
\mathscr B^{\mathfrak{sp}_n}_{e_0}
\cong 
\mathscr B^{\mathfrak{so}_{n+2}}_{\widehat e_0}.
\end{align}

Finally, the top row of (\ref{Sp-O-special}) implies the following isomorphism of the Springer resolutions of the nilpotent Slodowy slices in (\ref{Sp-O-special-2}), which proves a conjecture in~\cite[1.3]{HL14}.
\begin{align}
\label{Sp-O-special-4}
\widetilde{\mathcal S}_{n^1, k^1(n-k)^1}^{\mathfrak{sp}_n} 
\cong 
\widetilde{\mathcal S}_{1^1 (n+1)^1, (k+1)^1 (n+1-k)^1}^{\mathfrak o_{n+2}}.
\end{align}
The isomorphism (\ref{Sp-O-special-4}) together with~\cite[Theorem 1.2]{HL14} 
implies a conjecture by McGerty and Lusztig on the relationship between
type D Nakajima varieties and Slodowy varieties in~\cite[1.3]{HL14}.
\end{example}

\subsection{Column/Row removal reductions for classical groups}

Now we investigate the classical counterpart of the geometric column/row removal reductions in Section~\ref{col}.
Recall from Section~\ref{col}, we have the  isomorphism $\M_{\zeta}(\bv, \bw) \cong \M_{\zeta} (\breve \bv, \breve \bw)'$.
If  $V$ and $W$ are formed spaces with signs $\tilde \delta_{\bv}$ and  $\delta_{\bw}$) respectively, 
then the associated vector space $\breve V$ of dimension $\breve \bv$
(resp. $\breve W$) naturally inherit one from $\tilde \delta_{\bv}$ (resp. $\delta_{\bw}$).
So we have an automorphism $\sigma'$ on  $\M_{\zeta}(\breve \bv, \breve \bw)'$. 
In particular, we have the following  geometric incarnation of ~\cite[Proposition 13.5]{KP82}.

\begin{prop}
\label{i-prop:col}
There is $\mathcal S_{\mu', \lambda}^{\widetilde \sigma} \cong \mathcal S_{\breve \mu', \breve \lambda}^{\widetilde \sigma'}$ where $\widetilde \sigma$ is from (\ref{s-quiver}) and $\widetilde \sigma'$ is defined similarly.
\end{prop}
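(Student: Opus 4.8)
The plan is to reduce Proposition~\ref{i-prop:col} to the already-established classical Nakajima--Maffei isomorphism for $\sigma$-quiver varieties, namely Theorem~\ref{i-Nakajima-Maffei}, applied to the two pairs $(\bv,\bw)$ and $(\breve\bv,\breve\bw)$ sitting over the Dynkin diagrams $A_n$ and $A_{n+1}$ respectively. First I would recall from Section~\ref{col} the identification $\M_\zeta(\bv,\bw)\cong\M_\zeta(\breve\bv,\breve\bw)'$, which is simply a relabelling of vertices (shifting the $A_n$-quiver into $A_{n+1}$ with a trivial vertex at position $0$). The key point is that this relabelling is \emph{equivariant} for the relevant structures: the longest Weyl element $w_0$, the orientation function $\ve(h)=\o(h)-\i(h)$, and the transpose $\tau$ are all compatible with the shift, so that the automorphism $\sigma=\sigma_{\zeta,w_0}$ on $\M_\zeta(\bv,\bw)$ is carried to $\sigma'=\sigma_{\zeta,w_0}$ on $\M_\zeta(\breve\bv,\breve\bw)'$. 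This is where I would check that the inherited form on $\breve W$ (coming from $\delta_{\bw}$ with a trivial extra component) indeed induces $\widetilde\sigma'$; since the extra vertex carries the zero space, the form data is literally unchanged and the compatibility is formal.

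Next I would apply Theorem~\ref{i-Nakajima-Maffei} (more precisely the bottom isomorphism $\fS_1(\bv,\bw)\cong\mathcal S^{\widetilde\sigma}_{\mu',\lambda}$) to \emph{each} side. For $(\bv,\bw)$ this gives $\fS_1(\bv,\bw)\cong\mathcal S^{\widetilde\sigma}_{\mu',\lambda}$; for $(\breve\bv,\breve\bw)'$ it gives $\fS_1(\breve\bv,\breve\bw)'\cong\mathcal S^{\widetilde\sigma'}_{\breve\mu',\breve\lambda}$, where $\breve\mu$ and $\breve\lambda$ are the partitions computed in Section~\ref{col} (recall $\breve\mu$ has an extra entry $\sum_i\bw_i$ compared to $\mu$, and $\breve\lambda=((i+1)^{\bw_i})_{1\le i\le n}$, exactly the column-removal pattern of Figure~\ref{CRR}). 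Composing the chain
\[
\mathcal S^{\widetilde\sigma}_{\mu',\lambda}\cong\fS_1(\bv,\bw)\cong\fS_1(\breve\bv,\breve\bw)'\cong\mathcal S^{\widetilde\sigma'}_{\breve\mu',\breve\lambda}
\]
then yields the desired isomorphism. The middle isomorphism is the $\sigma$-fixed-point version of the vertex relabelling, which is an isomorphism because the relabelling is an isomorphism of Nakajima varieties intertwining $\sigma$ with $\sigma'$ (so it restricts to fixed-point loci), together with the observation that $\fS_1$ is defined as the image of $\fS_\zeta$ under $\pi^\sigma$, hence transported isomorphically.

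The row-removal case (the analogue using $(\ddot\bv,\ddot\bw)$ from Section~\ref{col}) would be handled by the identical argument: one identifies $\M_\zeta(\bv,\bw)\cong\M_\zeta(\ddot\bv,\ddot\bw)'$, checks $\sigma$-equivariance of this identification including the inherited form on $\ddot W$, and applies Theorem~\ref{i-Nakajima-Maffei} to both sides with the partition data $(\ddot\mu',\ddot\lambda)$ of~\eqref{row-addition}; this is Proposition~\ref{i-prop:row}. The main obstacle, and the one point deserving genuine care, is verifying that the vertex relabelling is compatible with the \emph{transpose} $\tau$ and hence with $\sigma$ in the presence of the bilinear forms: one must confirm that the form on $\breve W$ obtained by adjoining a zero component at vertex $0$ gives, via Maffei's construction~\eqref{tild-V-form}, the same enlarged form (up to the relevant isometry) as the one used on the $(\bv,\bw)$-side, so that $\widetilde\sigma$ and $\widetilde\sigma'$ really match under $\phi\varphi$ as asserted in Corollary~\ref{cor:sigma-varphi}. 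Once that compatibility is pinned down, everything else is a formal concatenation of isomorphisms already proved in the paper, and no further computation with the Slodowy slices themselves is needed.
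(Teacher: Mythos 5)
Your proposal has a genuine gap at the step asserting that the vertex‑relabelling isomorphism $\M_\zeta(\bv,\bw)\cong\M_\zeta(\breve\bv,\breve\bw)'$ intertwines $\sigma=\sigma_{\zeta,w_0}$ with $\sigma'=\sigma_{\zeta,w_0'}$, and in particular that the longest Weyl group element is ``compatible with the shift.'' The longest element $w_0'$ of $A_{n+1}$ is genuinely different from $w_0$ of $A_n$, and the reflection at the added vertex is \emph{not} trivial even though its dimension is zero, because $s_i\ast_\bw\bv$ at a vertex $i$ depends on the neighbouring dimensions, which are nonzero. Concretely, the conditions $w_0\ast_\bw\bv=\bv$ and $w_0'\ast_{\breve\bw}\breve\bv=\breve\bv$ are not equivalent: for $A_2$ with $\bw=(2,0)$, $\bv=(1,1)$ one has $w_0\ast_\bw\bv=\bv$, but for $\breve\bv=(0,1,1)$, $\breve\bw=(0,2,0)$ in $A_3$ one finds $\mathbf C'\breve\bv-\breve\bw=(-1,-1,1)$, whose middle entry fails the anti‑symmetry condition required for $w_0'\ast_{\breve\bw}\breve\bv=\breve\bv$. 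So $\fS_\zeta(\breve\bv,\breve\bw)'$ is not the transported image of $\fS_\zeta(\bv,\bw)$, and the middle link in your chain $\mathcal S^{\widetilde\sigma}_{\mu',\lambda}\cong\fS_1(\bv,\bw)\cong\fS_1(\breve\bv,\breve\bw)'\cong\mathcal S^{\widetilde\sigma'}_{\breve\mu',\breve\lambda}$ does not follow from $\sigma$-equivariance of the relabelling.

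There is an immediate sanity check that the approach as you describe it cannot be right: if the relabelling really intertwined $\sigma$ with $\sigma'$, the same argument combined with Theorem~\ref{i-Nakajima-Maffei} would deliver $\widetilde{\mathcal S}^{\widetilde\sigma}_{\mu',\lambda}\cong\widetilde{\mathcal S}^{\widetilde\sigma'}_{\breve\mu',\breve\lambda}$ at the level of partial resolutions, and the paper states explicitly, right after Proposition~\ref{i-prop:col}, that these resolutions are \emph{not} isomorphic in general precisely because $\sigma'$ uses the longest element of $A_{n+1}$. You have also mis-located the delicate point: the form data on $\breve W$ is indeed the easy part (the extra vertex carries the zero space), whereas the genuine obstruction is the change of Weyl group element. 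The paper's route is therefore necessarily subtler: it works with the bases $\fS_1\subset\fS_0(\bw)$ inside Lusztig's variety $Z^0_\bw$, where with $\zeta_{\mathbb C}=0$ the Lusztig reflection functor is the identity and $\sigma_0$ reduces to $\tau_0$, so the difference between $w_0$ and $w_0'$ is invisible at that level; one then feeds this into Theorem~\ref{i-Nakajima-Maffei} on each side to identify the two fixed-point slices, while the corresponding resolutions need not match. Your proposal needs to replace the claimed $\sigma$-equivariance of the relabelling with this more careful argument at the level of $\fS_1$ (or $\fS_0$), or else explain separately why the two closed subvarieties $\fS_1(\bv,\bw)$ and $\fS_1(\breve\bv,\breve\bw)'$ coincide inside the identified $\fS_0(\bw)$.
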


Since the definition of $\sigma'$ involves the longest Weyl group element of the Dynkin diagram of type $A_{n+1}$,
it is not immediately clear how to compare $ \widetilde{\mathcal S}_{\mu', \lambda}^{\widetilde \sigma}$ and 
$\widetilde{\mathcal S}_{\breve \mu', \breve \lambda}^{\widetilde \sigma'}$. In general, the two varieties are not isomorphic. 
However, in the case when $(\mu', \lambda)$ and $(\breve \mu', \breve \lambda)$ satisfy
the conditions in Theorem~\ref{Sp-O}, they are isomorphic. 
Example~\ref{2-row} is such a case. 
Similarly, we have the following counterpart of Proposition~\ref{prop:row}.

\begin{prop}
\label{i-prop:row}
Suppose that the pair $(\ddot \mu', \ddot \lambda)$ is defined by (\ref{row-addition}). 
There is an isomorphism $\mathcal S_{\mu', \lambda}^{\widetilde \sigma} \cong \mathcal S_{\ddot \mu', \ddot \lambda}^{\ddot \sigma}$ where $\widetilde \sigma$ is from (\ref{s-quiver}) and $\ddot \sigma$ is defined similarly.
\end{prop}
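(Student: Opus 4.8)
The plan is to mimic exactly the proof strategy already used for the column-removal reduction in Proposition~\ref{i-prop:col}, transporting the $\mathfrak{gl}_n$-level row-removal identity $\M_{\zeta}(\bv,\bw)\cong\M_{\zeta}(\ddot\bv,\ddot\bw)'$ of Section~\ref{col} through Maffei's morphism and the automorphism $\sigma$. First I would record that the identification $\M_{\zeta}(\bv,\bw)\cong\M_{\zeta}(\ddot\bv,\ddot\bw)'$ from Section~\ref{col} is induced by an isomorphism on the underlying $\bM$-spaces which respects the $I$-grading in the obvious way: the new vertex $n+1$ carries $\ddot V_{n+1}=0$ and $\ddot W_{n+1}$ of dimension $a$, and all the existing maps are left untouched. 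Consequently, when $V$ and $W$ are formed spaces with signs $\tilde\delta_{\bv}$ and $\delta_{\bw}$, the space $\ddot W$ inherits a form (the new summand $\ddot W_{n+1}$ of dimension $a$ being equipped with an arbitrarily chosen nondegenerate $\delta$-form, where $\delta$ is dictated by the requirement that $\ddot\delta_{\bw}$ remain $\Gamma$-alternating on the type-$A_{n+1}$ diagram), and similarly $\ddot V$ inherits a form. This data defines an automorphism $\ddot\sigma=\ddot a\,\ddot S_{w_0}\,\ddot\tau$ on $\M_{\zeta}(\ddot\bv,\ddot\bw)'$ with $\ddot a=1$, $\ddot\sigma^2=1$ by Proposition~\ref{sig-4}.

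The key step is to check that the identification $\M_{\zeta}(\bv,\bw)\cong\M_{\zeta}(\ddot\bv,\ddot\bw)'$ intertwines $\sigma$ with $\ddot\sigma$ after composing with Maffei's $\varphi$. Here the subtlety, exactly as flagged in the text after Proposition~\ref{i-prop:col}, is that $\ddot\sigma$ uses the longest element $\ddot w_0$ of the Weyl group of $A_{n+1}$, which is strictly longer than the $w_0$ of $A_n$, so $\sigma$ and $\ddot\sigma$ genuinely differ on the nose and there is no isomorphism $\widetilde{\mathcal S}_{\mu',\lambda}^{\widetilde\sigma}\cong\widetilde{\mathcal S}_{\ddot\mu',\ddot\lambda}^{\ddot\sigma}$ of the total spaces in general. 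But on the \emph{base} $\M_0$ the reflection functors $S_i$ are the identity whenever $\zeta_{\mbb C}=0$ and $w_0*\bv=\bv$ (Lemma~\ref{S0-ind}, following Lusztig~\cite{L00}), so the extra simple reflections occurring in $\ddot w_0$ but not in $w_0$ act trivially on $\M_0(\ddot\bv,\ddot\bw)'$. Therefore on the affine quotients the transposes $\tau_0$ and $\ddot\tau_0$ are what matter, and I would verify via Proposition~\ref{tau-Phi}/Proposition~\ref{tau-varphi} and the explicit form \eqref{tild-V-form} on $\widetilde W$ (which is insensitive to the extra type-$(n+1)^a$ rows, since those just contribute extra orthogonal summands to $\widetilde{\ddot W}_1$) that $\phi_0\varphi_0$ carries the $\tau_0$-fixed locus $\mathcal S_{\mu',\lambda}^{\widetilde\sigma}$ onto $\mathcal S_{\ddot\mu',\ddot\lambda}^{\ddot\sigma}$. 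Concretely, on $\widetilde{\ddot W}_1$ the form $\langle-|-\rangle$ splits as the old form on $\widetilde W_1$ plus a form on the block $\oplus_{1\le h\le n+1-i}(\ddot W_{n+1})^{(h)}$ coming from the $(n+1)^a$ part, and the Slodowy slice $\mathcal S_{\ddot e_0}$ together with $\overline{\mathcal O}_{\ddot\mu'}$ are stable under the resulting $\ddot\tau_0$; matching this against the $\widetilde\sigma$-fixed description of $\mathcal S_{\mu',\lambda}$ on the $\widetilde W_1$ block gives the claimed isomorphism.

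The main obstacle I anticipate is bookkeeping the compatibility of the forms: one must pin down the sign convention on the new dimension-$a$ summand so that $\ddot\delta_{\bw}$ is $\Gamma$-alternating on $A_{n+1}$ (this forces $\delta_{\ddot\bw,n+1}=-\delta_{\bw,n}=(-1)^{n}$ in the alternating case of Corollary~\ref{cor:i-Nakajima-Maffei}), and then trace through Lemma~\ref{tV-alternate} to see that $\widetilde{\ddot W}_1$ acquires the \emph{same} $\delta$-form as $\widetilde W_1$ restricted to the common block — i.e.\ the row addition does not change the type (orthogonal vs.\ symplectic) of the ambient classical group, only enlarges it. Once that is confirmed, Proposition~\ref{tau-Phi} applied to the type-$A_{n+1}$ data, combined with Theorem~\ref{Nakajima-Maffei} and the commuting square \eqref{diag-varphi}, yields the isomorphism $\mathcal S_{\mu',\lambda}^{\widetilde\sigma}\cong\mathcal S_{\ddot\mu',\ddot\lambda}^{\ddot\sigma}$ on the bases, which is the assertion of Proposition~\ref{i-prop:row}. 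As in the discussion following Proposition~\ref{i-prop:col}, I would add the remark that the partial-resolution spaces $\widetilde{\mathcal S}_{\mu',\lambda}^{\widetilde\sigma}$ and $\widetilde{\mathcal S}_{\ddot\mu',\ddot\lambda}^{\ddot\sigma}$ need not be isomorphic in general, precisely because $\ddot\sigma$ involves the longer $\ddot w_0$, but become so when $(\mu',\lambda)$ and $(\ddot\mu',\ddot\lambda)$ additionally satisfy the rectangular hypotheses of Theorem~\ref{Sp-O}.
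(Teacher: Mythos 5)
Your proposal reaches the correct conclusion via the intended approach — reduce to the affine base $\M_0$, where the reflection-functor parts of $\sigma$ and $\ddot\sigma$ become invisible and only the transposes $\tau_0$ and $\ddot\tau_0$ matter, observe that the row-addition identification $\M_0(\bv,\bw)\cong\M_0(\ddot\bv,\ddot\bw)'$ manifestly intertwines these (since $\ddot V_{n+1}=0$ forces all maps at vertex $n+1$ to vanish), and finish with Theorem~\ref{Nakajima-Maffei}, Proposition~\ref{tau-varphi}/\ref{tau-Phi}, and the alternating-sign accounting of Lemma~\ref{tV-alternate}.

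However, the way you set the argument up has a flaw worth noting. You introduce $\ddot\sigma$ as the quiver-variety automorphism $\ddot a\,\ddot S_{\ddot w_0}\,\ddot\tau$ on $\M_\zeta(\ddot\bv,\ddot\bw)'$ and cite Proposition~\ref{sig-4} and Lemma~\ref{S0-ind} for it — both of which require $\ddot w_0\ast\ddot\bv=\ddot\bv$. But that condition essentially never holds for a nontrivial row addition: starting from $w_0\ast\bv=\bv$, a direct computation with $\ddot\bv_{n+1}=0$ and $\ddot\bw_{n+1}=a$ shows it forces $\bw=\mbf C\bv$ and $\bv_n=a=0$. So the quiver-variety $\ddot\sigma$ you describe need not exist as an automorphism of $\M_\zeta(\ddot\bv,\ddot\bw)'$, and any step that genuinely relied on it (or on Lemma~\ref{S0-ind} for the extra simple reflections of $\ddot w_0$) would fail. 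What saves the argument is that the $\ddot\sigma$ of the statement is, by the ``defined similarly'' convention set before Corollary~\ref{cor:sigma-varphi}, the flag-level map $(x,F)\mapsto(-x^*,F^\perp)$ on $T^*\mathscr F_{\widetilde{\ddot\bv},\widetilde{\ddot\bw}}$ — i.e.\ $x\mapsto -x^*$ on $\mathfrak{gl}(\widetilde{\ddot W}_1)$ with the form of type~(\ref{tild-V-form}) — and this is defined with no condition on $\ddot\bv$. Your closing paragraph in fact works directly with this base description through Theorem~\ref{Nakajima-Maffei} and Proposition~\ref{tau-Phi}, so the conclusion stands; simply phrase the setup in those terms and drop the appeal to a total-space $\ddot\sigma$ that may not exist.
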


By Propositions~\ref{i-prop:col} and~\ref{i-prop:row}, one has a geometric version of Theorem 12.3 in~\cite{KP82}.

\section{Fixed-points and categorical quotients}
\label{fixed-cat}

In this section, we consider quiver varieties 
$ \M_0(\bv^0,\bw^0)^{\tau}$
of a general Dynkin graph  
for those pairs of formed spaces ($V^0, W^0$) of dimension vectors
$(\bv^0, \bw^0)$ and signs  $\tilde \delta_{\bv^0} $ and $\delta_{\bw^0}$ are chosen to be alternating, i.e.,
$\tilde \delta_{\bv^0, i} \delta_{\bw^0,i} =-1$ for all $i\in I$ and $\tilde \delta_{\bv^0,\i(h)} \tilde \delta_{\bv^0,\o (h)} =-1$ for all $h\in H$. 
In Remark~\ref{|a|=2}, we will consider $\M_0(\bv^0,\bw^0)^{a\tau}$. 
We show that there is a closed immersion from 
Kraft-Procesi-Nakajima's construction~\cite{KP82, N94} via categorical quotients to $\sigma$-quiver varietes 
$\M_0(\bv,\bw)^{a\tau}$.

\subsection{Polynomial invariants on $\bM (\bv^0, \bw^0)^{\tau}$}

Recall the automorphism $\tau$ on $\bM (\bv^0, \bw^0)$ from (\ref{tau-raw}) and $\G_{\bv^0}$ from (\ref{Gv}). 
Let $\G^{\tau}_{\bv^0} =\{ g\in \G_{\bv^0} | g_i g_i^* =1\}$ and 
$\bM^{\tau} =\bM(\bv^0, \bw^0)^{\tau}$ be the variety of $\tau$-fixed points in $\bM(\bv^0, \bw^0)$.
We are interested in finding a set of generators for  the algebra $R^{\G^{\tau}_{\bv^0}}$ 
of $\G^{\tau}_{\bv^0}$-invariant  regular functions on $\bM^{\tau}$.
Following Lusztig, we consider the following elements in $R^{\G^{\tau}_{\bv^0}}$.
A sequence $h_1, \cdots, h_s$ of arrows in $H$ is called a path if $\i (h_i) = \o(h_{i+1})$ for all $1\leq i\leq s-1$.
It is called a cycle if it further satisfies $\i (h_s) = \o (h_1)$.
For a cycle $h_1, \cdots, h_s$ in $H$, we define a $\G^{\tau}_{\bv^0}$-invariant function $\mrm{tr}_{h_1,\cdots, h_s}$ 
on $\bM^{\tau}$ by 
\begin{align}
\label{Tr}
\mrm{tr}_{h_1,\cdots, h_s} (\bx) = \mrm{trace} (x_{h_s} x_{h_{s-1}} \cdots x_{h_1}), \quad \forall \bx \in \bM^{\tau}.
\end{align}
For any path $h_1,\cdots, h_s \in H$ and a linear form $\chi$ on $\Hom (W^0_{\o(h_1)}, W^0_{\i (h_s)})$, we define
a $\G^{\tau}_{\bv}$-invariant function $\chi_{h_1,\cdots, h_s}$ on $\bM^{\tau}$ by
\begin{align}
\label{f-chi}
\chi_{h_1, \cdots, h_s} (\bx) = \chi (q_{\i (h_s)} x_{h_s} x_{h_{s-1}} \cdots x_{h_1} p_{\o(h_1)}),\quad \forall \bx \in \bM^{\tau}.
\end{align}

The following theorem is an analogue for classical groups of ~\cite[Theorem 1.3]{L98}. 

\begin{thm}
\label{R-G-inv}
Assume that the signs $\tilde \delta_{\bv^0} $ and $\delta_{\bw^0}$ alternate.
The algebra $R^{\G^{\tau}_{\bv^0}}$ is generated by the functions of the forms  (\ref{Tr}) and (\ref{f-chi}).
In particular, the algebra of $\G^{\tau}_{\bv^0}$-invariant regular functions on
$\Lambda(\bv^0, \bw^0)^{\tau}$ in (\ref{Lambda-tau-raw}) is generated by the restriction of the functions (\ref{Tr}) and (\ref{f-chi}) to $\Lambda(\bv^0, \bw^0)^{\tau}$.
\end{thm}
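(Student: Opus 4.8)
The plan is to reduce this to the known case of the general linear group, namely Lusztig's first fundamental theorem for quiver data \cite[Theorem 1.3]{L98}, by exhibiting $\bM^{\tau}$ together with the action of $\G_{\bv^0}^{\tau}$ as (essentially) the fixed-point data of a larger honest quiver representation space under an involution, and then invoking classical invariant theory for the orthogonal and symplectic groups. First I would set up the linear algebra: after fixing bases compatible with the forms, a $\tau$-fixed point $\bx=(x_h,p_i,q_i)$ is one for which ${}^{\tau}x_h = x_h$, i.e.\ $x_{\bar h}=\ve(h)x_h^{*}$, and $q_i = p_i^{*}$ (up to the sign appearing in (\ref{tau-raw})); so a $\tau$-fixed point is completely determined by the ``half'' of the arrow data $(x_h)_{h\in\Omega}$ together with $(p_i)_{i\in I}$, subject only to the relations forced on the diagonal blocks by the $\delta_{\bv^0}$-form being a genuine form. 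The alternating hypothesis on $\tilde\delta_{\bv^0}$ is exactly what guarantees that after the substitution the residual group $\G_{\bv^0}^{\tau}=\prod_i \mathrm O(V_i^0)$ or $\prod_i \mathrm{Sp}(V_i^0)$ is a product of classical groups, with the orthogonal/symplectic alternation matching the alternation of the sign along each arrow; this is the point where the two hypotheses are genuinely used.

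\textbf{Main argument.} With $\bM^{\tau}$ identified with a product of $\Hom$-spaces carrying a $\prod_i G(V_i^0)$-action, the invariant ring $R^{\G^{\tau}_{\bv^0}}$ is computed by the classical first fundamental theorems for $\mathrm O_n$ and $\mathrm{Sp}_n$ (see Weyl, or \cite{KP82} for the quiver-flavoured statements): the invariants of a product of classical groups acting on a sum of vector representations are generated by (i) complete contractions of the defining forms along cycles in the quiver — these are precisely the traces $\mrm{tr}_{h_1,\dots,h_s}$ of (\ref{Tr}), since on $\tau$-fixed points $x_{\bar h}$ is the adjoint of $x_h$, so a contraction along a cycle becomes a trace of a word in the $x_h$; and (ii) the ``boundary'' contractions that begin and end at the $W^0$-spaces — these are exactly the functions $\chi_{h_1,\dots,h_s}$ of (\ref{f-chi}), because $q_{\i(h_s)}$ is the adjoint of $p_{\i(h_s)}$ and the form-contraction of $p_{\o(h_1)}$ against the far end of the path is recorded by the linear functional $\chi$ on $\Hom(W^0_{\o(h_1)},W^0_{\i(h_s)})$. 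One must check that no invariants survive that would pair two $p$'s directly with each other without passing through the quiver; this is handled by the same fundamental-theorem bookkeeping, since any such pairing is either reducible to the listed ones or vanishes identically on $\bM^{\tau}$. The last sentence of the theorem is then immediate: $\Lambda(\bv^0,\bw^0)^{\tau}$ is a closed $\G^{\tau}_{\bv^0}$-stable subvariety of $\bM^{\tau}$ cut out by the moment-map equations, so its coordinate ring is a quotient of $R^{\tau}$, hence its invariant ring is generated by the images of the generators above — here one uses reductivity of $\G^{\tau}_{\bv^0}$ so that taking invariants is exact and commutes with the surjection.

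\textbf{Where the difficulty lies.} I expect the main obstacle to be the careful reduction in the first paragraph: matching $\bM^{\tau}$ with a standard product of vector representations of a product of classical groups in a way that is equivariant on the nose, keeping track of the signs $\ve(h)$, the $(-1)$ in ${}^{\tau}p_i=-q_i^{*}$, and the constraint that the diagonal block $x_h$ for a loop-like contribution (if the graph had loops — here it does not) or the self-adjointness conditions are consistent with the chosen form. Once that dictionary is in place, the invariant-theoretic input is a black box. A secondary subtlety is that the classical FFT for $\mathrm O_n$ also has relations (and, for $\mathrm O_n$ versus $\mathrm{SO}_n$, a Pfaffian-type generator), but since $\G^{\tau}_{\bv^0}$ is the full orthogonal group $\mathrm O(V_i^0)$ and not $\mathrm{SO}(V_i^0)$, no extra determinant/Pfaffian invariant appears, and we only need generation, not a presentation, so the relations are irrelevant to the statement. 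I would therefore organize the write-up as: (1) normal-form lemma for $\tau$-fixed points and identification of the group; (2) citation of the classical FFT in the product form; (3) translation of the two families of classical generators into (\ref{Tr}) and (\ref{f-chi}); (4) the reductivity argument passing to $\Lambda(\bv^0,\bw^0)^{\tau}$.
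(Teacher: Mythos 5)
Your proposal follows essentially the same route as the paper: identify $\bM^{\tau}$ with the ``half'' space of $\Omega$-arrows and one of $p_i,q_i$ using the alternating-sign hypothesis, recognize $\G^{\tau}_{\bv^0}$ as a product of orthogonal/symplectic groups, reduce to tensor invariants via Lusztig's argument in \cite{L98}, and invoke Weyl's first fundamental theorem for classical groups to obtain the contractions, which translate into the trace functions (\ref{Tr}) and the boundary functions (\ref{f-chi}). The paper is slightly more explicit about the intermediate reduction from homogeneous polynomial invariants to multilinear invariants (via \cite[Lemma 1.4, Lemma 1.5]{L98}) and about writing the tensor $T$ as a product of cycle-shaped and path-shaped pieces before applying the classical FFT, but these are precisely the bookkeeping steps you flagged as the locus of difficulty, so the two proofs coincide in substance.
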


The remaining part of this section is devoted to the proof of Theorem~\ref{R-G-inv}.
The proof is modeled on that of ~\cite[Theorem 1.3]{L98} with slight modifications.
Instead of  the results on tensor invariants for general linear groups, 
we need a similar result on tensor invariants for classical groups as follows.
Let $E$ be a $\delta$-formed space with the form $(-, -)_E$ and 
$G(E)$ be the group of isometries with respect to the form $(-,-)_E$.
If $n$ is even and $x=\{ (i_1, j_1), \cdots, (i_{n/2}, j_{n/2}) \}$ is a set of ordered pairs 
such that $\{ i_1, j_1, \cdots, i_{n/2}, j_{n/2}\}=\{1,\cdots, n\}$, we define 
the following $G(E)$-invariant linear forms on $T= E^{\otimes n}$ by 
\begin{align}
\label{f_x}
f_x ( e_1\otimes \cdots \otimes e_n) = \prod_{k =1}^{n/2} (e_{i_k}, e_{j_k}), 
\quad \forall e_1\otimes \cdots \otimes e_n \in E^{\otimes n}.
\end{align}

\begin{prop} [\cite{W}]
\label{W-classical}
The space of $G(E)$-invariant linear forms on the tensor space $T$
is zero when $n$ is odd, and is spanned by the forms $f_x$ (\ref{f_x}) for various $x$ when $n$ is even.
\end{prop}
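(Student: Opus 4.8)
The plan is to recognize the proposition as a direct consequence of the First Fundamental Theorem of invariant theory for the orthogonal and symplectic groups, in the form proved in~\cite{W}. First I would use the universal property of the tensor product to identify the space of linear forms on $T=E^{\otimes n}$ canonically with the space of multilinear forms on $E^{\times n}$, that is, with $(E^*)^{\otimes n}$, equivalently with the multidegree-$(1,1,\ldots,1)$ component of the coordinate ring $\mbb C[E^{\oplus n}]$ of $n$ copies of $E$. Under this identification the form $f_x$ of~(\ref{f_x}) attached to a pairing $x=\{(i_1,j_1),\ldots,(i_{n/2},j_{n/2})\}$ of $\{1,\ldots,n\}$ becomes the monomial $\prod_{k=1}^{n/2}(v_{i_k},v_{j_k})_E$, where $v_1,\ldots,v_n$ denote the vector variables in the $n$ copies of $E$ and $(v_i,v_j)_E$ is the bilinear function $(e_1,\ldots,e_n)\mapsto(e_i,e_j)_E$; this monomial is indeed multilinear, each index $1,\ldots,n$ occurring in exactly one pair. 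Since all the identifications above are $G(E)$-equivariant, they restrict to an identification of the $G(E)$-invariant linear forms on $T$ with the multidegree-$(1,1,\ldots,1)$ part of the invariant ring $\mbb C[E^{\oplus n}]^{G(E)}$.

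Second I would invoke the classical First Fundamental Theorem~\cite{W}: when $G(E)$ is the full orthogonal group $\mrm{O}(E)$ (the case $\delta=1$) or the symplectic group $\mrm{Sp}(E)$ (the case $\delta=-1$), the invariant ring $\mbb C[E^{\oplus n}]^{G(E)}$ is generated by the functions $(v_i,v_j)_E$ for $1\le i,j\le n$ coming from the defining form. It is essential here that $G(E)$ is the \emph{full} isometry group: for $\mrm{SO}(E)$ one would in addition need a determinant invariant $[v_{i_1},\ldots,v_{i_d}]$ with $d=\dim E$, which is itself multilinear and could contribute extra multilinear invariants (e.g.\ when $n=d$), but for $\mrm{O}(E)$ no such term occurs.

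Finally I would extract the multidegree-$(1,1,\ldots,1)$ part. A monomial $\prod_k(v_{i_k},v_{j_k})_E$ has multidegree $\sum_k(e_{i_k}+e_{j_k})$, which equals $(1,1,\ldots,1)$ precisely when the pairs $(i_k,j_k)$ form a perfect matching of $\{1,\ldots,n\}$; such matchings exist only when $n$ is even. Hence for $n$ odd there are no $G(E)$-invariant linear forms on $T$, while for $n$ even these invariant forms are exactly the linear span of the monomials indexed by perfect matchings, i.e.\ of the $f_x$. Transporting back along the equivariant identification of the first paragraph yields the proposition. The only points needing care are the verification that $f_x$ corresponds to the asserted monomial and the remark that, because $G(E)$ is the full orthogonal group, no determinant invariant intrudes; the substantive input, the FFT for $\mrm{O}(E)$ and $\mrm{Sp}(E)$, is quoted from~\cite{W} and is not reproved. (Alternatively, one could run the same argument Schur--Weyl--Brauer-theoretically, identifying $(E^{\otimes n})^{G(E)}$ with the span of Brauer diagrams having all strands among the $n$ points, but this route relies on the same FFT.)
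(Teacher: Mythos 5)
Your proposal is correct and follows essentially the same route as the paper, which simply cites Weyl's First Fundamental Theorem for the orthogonal and symplectic groups in \cite{W}; your reduction of invariant linear forms on $E^{\otimes n}$ to the multilinear part of $\mbb C[E^{\oplus n}]^{G(E)}$ and the multidegree count are exactly the standard bookkeeping behind that citation, and your caution about $\mrm{O}(E)$ versus $\mrm{SO}(E)$ is the right point to flag.
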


Now we begin to prove Theorem ~\ref{R-G-inv}.
For simplicity we write $V$ and $W$ for $V^0$ and $W^0$ respectively in the proof. 
Recall the function $\ve: H\to \{ \pm 1\}$ from Section ~\ref{lavw}.
Let $\Omega = \ve^{-1}(1)$ and we set
\[
\bM_{\Omega} =\bM_{\Omega} (\bv^0, \bw^0) = \oplus_{h\in \Omega} \Hom (V_{\o (h)}, V_{\i (h)} ) \oplus \oplus_{i\in I} \Hom (V_i, W_i).
\]
Since the parameters $\tilde \delta_{\bv^0}$ and $\delta_{\bw^0}$ alternate, there is an isomorphism
\[
\bM^{\tau} \cong \bM_{\Omega}  
\]
given by projection. 
After fixing a basis $\mathcal B_i$ for $W_i$ and identifying $V_i$ and $V_i^*$ via the forms, 
we have 
\begin{align}
\bM_{\Omega}  \cong 
\otimes_{h\in \Omega} V_{\o (h)} \otimes V_{\i (h)} \oplus \oplus_{i\in I, b\in \mathcal B_i} V_{i, b},
\end{align}
where $V_{i, b}$ is a copy of $V_i$ indexed by $b$.

Following Lustig, it is enough to show that 
the space of $\G_{\bv^0}^{\tau}$-invariant regular functions on $\bM^{\tau}$
of homogenous degree $n$ is spanned by various products of functions of the form (\ref{Tr}) and (\ref{f-chi}).
Thanks to ~\cite[Lemma 1.4]{L98}, it is reduced to show that this is also the case  for 
the space of $\G^{\tau}_{\bv^0}$-invariant linear forms on $(\bM^{\tau})^{\otimes n}$.
To this end, it is further reduced to study the $\G^{\tau}_{\bv^0}$-invariant linear forms on the 
tensor space $T = E_1\otimes \cdots \otimes E_n$ where $E_i$ is either $V_{\o (h)} \otimes V_{\i (h)}$
or $V_{i, b}$.
Write $T =\otimes_{i\in I} E^i$ where $E^i$ is the tensor product of all $V_i$ in $T$.
In light of  ~\cite[Lemma 1.5]{L98}, 
the $\G^{\tau}_{\bv^0}$-invariant linear forms on $T$ are the tensor products of $G(V_i)$-invariant linear forms on $E^i$.
If $T$ can be decomposed as the tensor product of components of 
the following forms,
\begin{align*}
\begin{split}
V_{\o (h_1)} \otimes V_{\i (h_1)}^{\otimes 2} 
\otimes V_{\i (h_2)}^{\otimes 2} 
\otimes \cdots \otimes V_{\i  (h_{s-1})}^{\otimes 2} \otimes V_{\i (h_{s})},   \mbox{where}\ &  \mbox{$h_1\cdots   h_r$ is a cycle in $H$},\\
V_{\o (h_1), b} \otimes V_{\o (h_1)} \otimes V_{\i (h_1)}^{\otimes 2} 
\otimes \cdots \otimes V_{\i (h_{s-1})}^{\otimes 2} \otimes V_{\i (h_{s})}
\otimes V_{\i(h_s), b'}, &   \\
 \hfill \mbox{where} \ b \in \mathcal B_{\o(h_1)}, b' \in \mathcal B_{\i (h_s)}, & \  h_1\cdots h_r \ \mbox{is a path in $H$},
\end{split}
\end{align*}
then by applying Proposition ~\ref{W-classical} the  space of 
$\G^{\tau}_{\bv^0}$-invariant linear forms on $T$ is spanned by
the tensor products of the $f_x$ in (\ref{f_x}) for various $x$. 
The latters  in turn are
products of various functions in (\ref{Tr}) and (\ref{f-chi}).
Now following the proof of ~\cite[Theorem 1.3]{L98}
we see that the space of  $\G^{\tau}_{\bv^0}$-invariant linear forms on $T$ is spanned by
products of linear forms in (\ref{Tr}) and (\ref{f-chi}).
Theorem ~\ref{R-G-inv} is thus proved.

\subsection{The closed immersion $\iota$}

Recall $\Lambda(\bv^0, \bw^0)=\Lambda_{\zeta_{\mbb C}} (\bv^0, \bw^0)$ from (\ref{Lambda}) with $\zeta_{\mbb C} =0$. 
We can consider the categorical quotient $\Lambda(\bv^0, \bw^0)^{\tau}//\G^{\tau}_{\bv^0}$. 
Unlike $\M_0(\bv^0, \bw^0)^{\tau}$, the variety $\Lambda(\bv^0, \bw^0)^{\tau}//\G^{\tau}_{\bv^0}$ depends on the forms associated to  $\bv^0$.
For example, when $\Gamma =\mrm A_1$ and the forms on $\bv^0, \bw^0$ do not alternate, $\Lambda(\bv^0, \bw^0)^{\tau}//\G^{\tau}_{\bv^0}=\{\mrm{pt}\}$,  otherwise it is isomorphic
to the determinantal variety in $\mathfrak g(\bw^0)$ of endomorphisms of rank $\leq \dim \bv^0$. 
For the latter fact, we refer the reader to ~\cite[Theorem 1.2]{KP82}.

By the universality of categorical quotient, there is  a morphism 
$$
\Lambda(\bv^0, \bw^0)^{\tau}// \G^{\tau}_{\bv^0}\to  \Lambda (\bv^0, \bw^0)//\G_{\bv^0} =\M_0(\bv^0, \bw^0),
$$
which factors through $\M_0(\bv^0, \bw^0)^{\tau}$ so that we have a morphism of varieties:
\begin{align}
\label{iota}
\iota: \Lambda(\bv^0, \bw^0)^{\tau}// \G^{\tau}_{\bv^0}\to \M_0(\bv^0, \bw^0)^{\tau}.
\end{align}

\begin{prop}
\label{prop:iota-1}
The morphism $\iota$  in (\ref{iota})  is a  closed immersion for an arbitrary graph. 
\end{prop}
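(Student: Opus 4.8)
The plan is to exhibit $\iota$ as a composite of a closed immersion and an isomorphism onto a closed subvariety, using Theorem~\ref{R-G-inv} to control the ring of invariants. Recall that $\M_0(\bv^0,\bw^0) = \operatorname{Spec} R_0(\bv^0,\bw^0)$ with $R_0(\bv^0,\bw^0) = \mbb C[\Lambda(\bv^0,\bw^0)]^{\G_{\bv^0}}$. The $\tau$-fixed subvariety $\M_0(\bv^0,\bw^0)^{\tau}$ is then $\operatorname{Spec}$ of the quotient of $R_0(\bv^0,\bw^0)$ by the ideal generated by $f - f\circ\tau_0$ (equivalently, it is cut out scheme-theoretically inside the affine scheme $\M_0$ by the $\tau_0$-fixed-point condition; since $\tau_0$ has finite order and we are over $\mbb C$, this is the reduced closed subscheme of fixed points). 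On the other hand, $\Lambda(\bv^0,\bw^0)^{\tau}/\!/\G^{\tau}_{\bv^0} = \operatorname{Spec}\bigl(\mbb C[\Lambda(\bv^0,\bw^0)^{\tau}]^{\G^{\tau}_{\bv^0}}\bigr)$, and $\iota$ is the morphism of affine varieties dual to the algebra map
\begin{align}
\label{iota-dual}
\iota^{\#}: \mbb C[\M_0(\bv^0,\bw^0)^{\tau}] \longrightarrow \mbb C[\Lambda(\bv^0,\bw^0)^{\tau}]^{\G^{\tau}_{\bv^0}}
\end{align}
obtained by restricting a $\G_{\bv^0}$-invariant function on $\Lambda(\bv^0,\bw^0)$ that is $\tau_0$-invariant to the subvariety $\Lambda(\bv^0,\bw^0)^{\tau}$, where it automatically becomes $\G^{\tau}_{\bv^0}$-invariant. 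To prove $\iota$ is a closed immersion it suffices to show $\iota^{\#}$ is surjective.

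First I would verify surjectivity of $\iota^{\#}$. By Theorem~\ref{R-G-inv}, the target ring $\mbb C[\Lambda(\bv^0,\bw^0)^{\tau}]^{\G^{\tau}_{\bv^0}}$ is generated by the restrictions to $\Lambda(\bv^0,\bw^0)^{\tau}$ of the cycle functions $\mrm{tr}_{h_1,\dots,h_s}$ in (\ref{Tr}) and the path functions $\chi_{h_1,\dots,h_s}$ in (\ref{f-chi}). So it is enough to show each such generator is the restriction of some element in the image of $\iota^{\#}$, i.e.\ of a $\tau_0$-invariant $\G_{\bv^0}$-invariant regular function on all of $\Lambda(\bv^0,\bw^0)$. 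The functions $\mrm{tr}_{h_1,\dots,h_s}$ and $\chi_{h_1,\dots,h_s}$ are already defined by the same formulas on all of $\bM(\bv^0,\bw^0)$ (not just $\bM^{\tau}$), are manifestly $\G_{\bv^0}$-invariant, and restrict to $\Lambda(\bv^0,\bw^0)$; what remains is to adjust them to be $\tau_0$-invariant without changing their restriction to the fixed locus. Here I would use the standard averaging trick: if $f$ is a $\G_{\bv^0}$-invariant regular function on $\Lambda(\bv^0,\bw^0)$ and $N = |\tau_0|$, then $\bar f := \frac1N\sum_{k=0}^{N-1} f\circ\tau_0^{k}$ is $\G_{\bv^0}$- and $\tau_0$-invariant, and $\bar f|_{\Lambda^{\tau}} = f|_{\Lambda^{\tau}}$ since $\tau_0$ acts trivially on the fixed locus. (One must check $\tau_0$ indeed acts on $R_0(\bv^0,\bw^0)$, which is exactly the content of the discussion in Section~\ref{involution} establishing the isomorphism $R_{\zeta}\to R_{-\zeta}$; with $\zeta=(0,0)$ this is an automorphism of $R_0$.) Applying this to $f = \mrm{tr}_{h_1,\dots,h_s}$ and $f=\chi_{h_1,\dots,h_s}$ produces preimages under $\iota^{\#}$ of a generating set of the target, so $\iota^{\#}$ is surjective and hence $\iota$ is a closed immersion of affine schemes; since both sides are reduced varieties, it is a closed immersion of varieties.

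The main obstacle I anticipate is the bookkeeping to make the averaging step genuinely compatible with the two GIT quotients: one needs that $\M_0(\bv^0,\bw^0)^{\tau}$ really is $\operatorname{Spec}$ of the $\tau_0$-invariant part of $R_0(\bv^0,\bw^0)$ modulo the fixed-point ideal — but what we actually need is only that every regular function on $\M_0(\bv^0,\bw^0)^{\tau}$ lifts to a $\tau_0$-invariant element of $R_0(\bv^0,\bw^0)$, which holds because $\mbb C[\M_0^{\tau}]$ is a quotient of $\mbb C[\M_0] = R_0$ and we may average any lift. One should also confirm that the cycle and path functions are the restrictions of \emph{globally defined} regular functions on $\Lambda(\bv^0,\bw^0)$, not merely on $\bM^{\tau}$; this is immediate from their defining formulas (\ref{Tr}), (\ref{f-chi}), which make sense for arbitrary $\bx\in\bM(\bv^0,\bw^0)$. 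A secondary subtlety worth a remark is that $\iota$ is typically \emph{not} an isomorphism — the image is a proper closed subvariety in general, as the $\mrm A_1$ example in the text (determinantal varieties versus fixed loci) illustrates — so the argument should not try to prove more than surjectivity of $\iota^{\#}$. Once surjectivity is in hand, the conclusion that $\iota$ is a closed immersion for an arbitrary Dynkin (indeed arbitrary) graph follows, since Theorem~\ref{R-G-inv} was proved in that generality.
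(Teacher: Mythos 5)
Your argument is correct and is in substance the same as the paper's: both reduce the claim to surjectivity of the dual algebra map and invoke Theorem~\ref{R-G-inv} to see that $\mbb C[\Lambda(\bv^0,\bw^0)^{\tau}]^{\G^{\tau}_{\bv^0}}$ is generated by restrictions of cycle/path functions, each of which visibly extends to a $\G_{\bv^0}$-invariant regular function on all of $\Lambda(\bv^0,\bw^0)$. The averaging step is a harmless but unnecessary detour — the paper simply observes that the restriction map $\mbb C[\Lambda(\bv^0,\bw^0)]^{\G_{\bv^0}}\to \mbb C[\Lambda(\bv^0,\bw^0)^{\tau}]^{\G^{\tau}_{\bv^0}}$ is surjective, and since this map already factors through $\mbb C[\M_0(\bv^0,\bw^0)^{\tau}]$, surjectivity of $\iota^{\#}$ follows without ever needing to produce a $\tau$-invariant lift.
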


\begin{proof}
It is enough to show that 
the induced map 
\[
\mbb C [\Lambda (\bv^0, \bw^0)]^{\G_{\bv^0}} \to \mbb C[\Lambda(\bv^0, \bw^0)^{\tau}]^{\G^{\tau}_{\bv^0}}
\]
of the inclusion $\Lambda(\bv^0, \bw^0)^{\tau} \to \Lambda(\bv^0, \bw^0)$ is  surjective. 
But this is the case by ~\cite[Theorem 1.3]{L98} and Theorem ~\ref{R-G-inv}.
The proposition is thus proved.
\end{proof}

For the remaining part of this section, we assume that $\Gamma$ is of type $\mrm A_n$.
When $\bw^0_i=0$ for all $i\geq 2$, 
the variety  $\Lambda(\bv^0,  \bw^0)^{\tau}// \G^{\tau}_{ \bv^0}$ is  studied by Kraft-Procesi in ~\cite{KP82}.
The generalization to arbitrary $\bw^0$
is mentioned by Nakajima implicitly in ~\cite[Remark 8.5.4]{N94} 
and explicitly in ~\cite[Appendix A(ii)]{N15}.
See also ~\cite{K90}.
Now we shall sharpen the previous result in type $A_n$. 
By Proposition ~\ref{tau-Phi}, we have a closed immersion.
\[
\Lambda (\bv^0, \bw^0)^{\tau} \overset{\Phi}{\longrightarrow} \Lambda(\widetilde \bv^0, \widetilde \bw^0)^{\widetilde \tau}.
\]
There is a natural imbedding $\G_{\bv^0} \to \G_{\widetilde \bv^0}$ with respect to the decomposition (\ref{t-V-W}), which
restricts to an imbedding $\G^{\tau}_{\bv^0} \to \G^{\widetilde \tau}_{\widetilde \bv^0}$. 
This induces a morphism of varieties
\begin{align}
\label{varphi'}
\Lambda (\bv^0, \bw^0)^{\tau}// \G^{\tau}_{\bv^0} 
\overset{\varphi_0'}{\longrightarrow} 
\Lambda(\widetilde \bv^0, \widetilde \bw^0)^{\widetilde \tau}//
\G^{\widetilde \tau}_{\widetilde \bv^0}.
\end{align}

Putting (\ref{iota}) and (\ref{varphi'}) together yields the following commutative diagram.
\begin{align}
\label{iota-varphi}
\begin{CD}
\Lambda (\bv^0, \bw^0)^{\tau}// \G^{\tau}_{\bv^0} @> \varphi'_0 >> \Lambda(\widetilde \bv^0, \widetilde \bw^0)^{\widetilde \tau}//
\G^{\widetilde \tau}_{\widetilde \bv^0}\\
@V \iota VV @VV\tilde \iota V \\
\M_0(\bv^0, \bw^0)^{\tau} @>\varphi_0>> \M_0(\widetilde \bv^0, \widetilde \bw^0)^{\tau},
\end{CD}
\end{align}
where $\iota$ and $\tilde \iota$ are the morphisms defined in (\ref{iota}).

\begin{prop} 
\label{prop:iota}
When the graph is of Dynkin type $\mrm A_n$ and the signs $\tilde \delta_{\bv^0} , \delta_{\bw^0}$ alternate, 
the morphism $\varphi'_0$ in (\ref{varphi'})  is a closed immersion and $ \tilde \iota$  in (\ref{iota-varphi}) is an isomorphism.
\end{prop}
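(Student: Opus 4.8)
The plan is to prove both statements of Proposition~\ref{prop:iota} simultaneously by bootstrapping from the general linear case, i.e.\ from Maffei's Nakajima-Maffei theorem (Theorem~\ref{Nakajima-Maffei}) together with Lusztig's description of invariants \cite{L98} and our Theorem~\ref{R-G-inv}. First I would observe that the right-hand column of the diagram (\ref{iota-varphi}) has a concrete model: since $\widetilde\bw^0_i=0$ for $i\geq 2$, the variety $\M_0(\widetilde\bv^0,\widetilde\bw^0)$ is (by Theorem~\ref{Nakajima-Maffei}, or rather by \cite[Theorem 7.3]{N94} combined with (\ref{phi-comm})) the nilpotent orbit closure $\overline{\mathcal O}_{\mu'}\subseteq\mathfrak{gl}(\widetilde W_1)$, and its $\widetilde\tau$-fixed-point locus $\M_0(\widetilde\bv^0,\widetilde\bw^0)^\tau$ is the corresponding closure inside the classical Lie algebra $\mathfrak g(\widetilde W_1)$ (here one uses that $\widetilde\tau$ on $\M_0$ is identified, via Theorem~\ref{sigma-sigma-1} and the forms of Lemma~\ref{tV-alternate}, with $x\mapsto -x^\natural$, whose fixed points cut out $\mathfrak g(\widetilde W_1)$ as in (\ref{G(W)})). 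On the other hand $\Lambda(\widetilde\bv^0,\widetilde\bw^0)^{\widetilde\tau}/\!/\G^{\widetilde\tau}_{\widetilde\bv^0}$ is exactly the Kraft-Procesi variety \cite[\S3]{KP82}, known there to map isomorphically onto that same orbit closure in the classical Lie algebra. Hence $\widetilde\iota$ is an isomorphism; this is the base case.

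Next I would run the same argument one step down. The key point is that $\varphi_0'$ in (\ref{varphi'}) is induced by Maffei's closed immersion $\Phi$ at the level of $\Lambda$-spaces, which is $\widetilde\tau$-equivariant by Proposition~\ref{tau-Phi} and $\G^{\tau}_{\bv^0}$-equivariant by construction. To see that $\varphi_0'$ is a closed immersion it suffices to show the induced ring map $\mbb C[\Lambda(\widetilde\bv^0,\widetilde\bw^0)^{\widetilde\tau}]^{\G^{\widetilde\tau}_{\widetilde\bv^0}}\to\mbb C[\Lambda(\bv^0,\bw^0)^{\tau}]^{\G^{\tau}_{\bv^0}}$ is surjective; by Theorem~\ref{R-G-inv} the target is generated by the trace functions (\ref{Tr}) and the matrix-coefficient functions (\ref{f-chi}), and each of these is visibly the $\Phi$-pullback of a function of the same type upstairs (for $\Gamma=\mrm A_n$ with $\widetilde\bw^0$ concentrated at the first vertex, the upstairs invariants of type (\ref{f-chi}) already exhaust all these, exactly as in the $\GL$ case treated by Maffei/Lusztig). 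Surjectivity follows. Then $\varphi_0'$ is a closed immersion and, chasing the commutative square (\ref{iota-varphi}) in which $\iota$ and $\tilde\iota$ are closed immersions (Proposition~\ref{prop:iota-1}) and $\tilde\iota$ is now known to be an isomorphism, one gets that $\iota$ is identified with the composite $\varphi_0^{-1}\circ\tilde\iota\circ\varphi_0'$ on the relevant locus; but actually the cleanest way is: $\varphi_0\circ\iota=\tilde\iota\circ\varphi_0'$ is a closed immersion (composite of closed immersions), $\varphi_0$ is a closed immersion, hence $\iota$ is a closed immersion, which is already Proposition~\ref{prop:iota-1}, and the new content — that $\varphi_0'$ is a closed immersion — is the displayed surjectivity statement.

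For the assertion that $\tilde\iota$ is an isomorphism in the stated generality, the main obstacle is the matching of the two models of the classical orbit closure: one has to check that the Kraft-Procesi categorical quotient \cite[\S3]{KP82} and our $\widetilde\tau$-fixed-point quiver variety $\M_0(\widetilde\bv^0,\widetilde\bw^0)^\tau$ are literally the \emph{same} subvariety of $\mathfrak g(\widetilde W_1)$, not merely two varieties admitting bijective morphisms to it. Surjectivity of $\iota$ onto $\M_0^\tau$ is not automatic — taking $\tau$-fixed points does not in general commute with taking GIT quotients — so I expect the real work to be in verifying that the map $\Lambda(\widetilde\bv^0,\widetilde\bw^0)^{\widetilde\tau}\to\mathfrak g(\widetilde W_1)$ (sending $\widetilde\bx$ to $q_1p_1$ in the notation of (\ref{M-TF})) is surjective onto all of $\overline{\mathcal O}_{\mu'}\cap\mathfrak g(\widetilde W_1)$ and that its image in the quotient equals $\M_0(\widetilde\bv^0,\widetilde\bw^0)^\tau$; here I would invoke exactly the Kraft-Procesi analysis of \cite[\S3, Theorem]{KP82}, which establishes precisely this for $\mathfrak{so}$ and $\mathfrak{sp}$, together with the identification of forms from Lemma~\ref{tV-alternate} to make sure we land in the intended classical type. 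Once the base case is pinned down this way, everything downstairs follows formally from $\varphi_0'$ being a closed immersion and the commutativity of (\ref{iota-varphi}).
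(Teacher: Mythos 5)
Your first paragraph (the base case: both the Kraft--Procesi categorical quotient and $\M_0(\widetilde\bv^0,\widetilde\bw^0)^{\tau}$ are closed immersions into $\mathfrak g(\widetilde W_1)$ with the same image, hence $\tilde\iota$ is an isomorphism) is exactly what the paper does, citing Theorem~\ref{i-Nakajima-Maffei} for the right leg and a version of \cite[Theorem 5.3]{KP82} for the left leg. Where you diverge is in how $\varphi_0'$ is shown to be a closed immersion, and this is where your argument has problems.

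The paper's route is the short diagram chase you flirt with but then garble: once $\tilde\iota$ is known to be an isomorphism, the relation $\tilde\iota\circ\varphi_0'=\varphi_0\circ\iota$ forces $\varphi_0'=\tilde\iota^{-1}\circ\varphi_0\circ\iota$, which is a composite of closed immersions and an isomorphism, hence a closed immersion. Your version concludes instead that ``$\iota$ is a closed immersion'' --- but $\iota$ being a closed immersion is an \emph{input} (Proposition~\ref{prop:iota-1}), not the conclusion being sought, so the chase as you wrote it circles back to a known fact rather than producing the one you need. The deduction you want is the one-line inversion above.

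The invariant-theoretic route you lean on instead --- ``each of these is visibly the $\Phi$-pullback of a function of the same type upstairs'' --- is not correct as stated and is the genuine gap in your write-up. Under Maffei's $\Phi$ the map $\widetilde x_i$ is \emph{not} block-diagonal with respect to $\widetilde V_i=V_i\oplus W_i'$: the transversality data $T^{j,1}_{i,V}=\delta_{h,1}y_{i+1}\cdots y_{j-1}p_j$ and $S^V_{i,j,j-i}=q_jx_{j-1}\cdots x_{i+1}$ are generically nonzero, so $\widetilde x_i$, $\widetilde y_i$ genuinely mix the $V$- and $W'$-components. Consequently the pullback of an upstairs trace $\mathrm{tr}(\widetilde x_{h_s}\cdots\widetilde x_{h_1})$ is \emph{not} the downstairs trace $\mathrm{tr}(x_{h_s}\cdots x_{h_1})$: the former is a trace over $\widetilde V_{\o(h_1)}$ while the latter is over $V_{\o(h_1)}$, and the extra $W'$-block contributes. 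Likewise the upstairs $\chi$-functions live on $\widetilde W_1$, which is strictly larger than $W_1$, so they do not restrict to the downstairs $\chi$-functions in any obvious way. What one actually needs for surjectivity of the ring map is to express the downstairs generators as polynomial combinations of pullbacks --- which is precisely the nontrivial content Maffei and Lusztig had to establish in the $\mathrm{GL}$ case and which you cannot simply assert in the $\tau$-fixed setting. The diagram chase is the cheap way around this, and it is the route the paper takes; repair your chase and drop the invariant-theoretic claim.
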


\begin{proof}
We have a commutative diagram
\[
\xymatrix{
\Lambda(\widetilde \bv^0, \widetilde \bw^0)^{\widetilde \tau}// \G^{\widetilde \tau}_{\widetilde \bv^0} 
\ar@{->}[rr]^{\tilde \iota} \ar@{->}[dr] & &  \M_0(\widetilde \bv^0, \widetilde \bw^0)^{\tau} \ar@{->}[dl] \\
& \mathfrak{gl}(\widetilde \bw^0)^{\widetilde \sigma} 
&
}
\]
where the morphism on the right is from (\ref{Sp-O-diag}) and the one on the left is defined in a similar way.
Both morphisms are  closed immersions with the same image 
by Theorem ~\ref{i-Nakajima-Maffei} and (a slightly general version of) ~\cite[Theorem 5.3]{KP82}, 
which implies that $\tilde \iota$ is isomorphic. 

Since $\varphi_0$ and $\iota$ are closed immersions, so is $\varphi'_0$ by using  the commutative diagram (\ref{iota-varphi}).
The proposition is thus proved.
\end{proof}

\begin{rem}
In light of ~\cite{K90, N15} and Theorem ~\ref{i-Nakajima-Maffei}, 
it is expected that $\iota$ in (\ref{iota}) is an isomorphism for a Dynkin graph of type $\mrm A_n$.
We conjecture that this holds for any graph.
\end{rem}

Most results in this section can be extended to a more general situation where the isomorphism $a$ in Section~\ref{dia} is involved in a straightforward manner. 
We end this section with a remark on the connection with ~\cite[(Ai), (Aiii), (Aiv)]{N15}, which is grown
out from a discussion with Professor H. Nakajima. 

\begin{rem}
\label{|a|=2}
(1). Consider the Dynkin diagram of type $\mrm A_{2n+1}$. Then there is a closed-immersion similar to (\ref{iota}):
\begin{align*}
\label{iota-a}
\iota': \Lambda(\bv^0, \bw^0)^{a \tau}// \G^{a\tau}_{\bv^0}\to \M_0(\bv^0, \bw^0)^{a\tau}.
\end{align*}
The domain of $\iota'$ is an $S^1$-equivariant instantons moduli space on $\mbb R^4$  in ~\cite[(Aiii)]{N15}.
Specifically, if the form on $\bw^0_{i}$ is an orthogonal form for all $i$, then the domain of $\iota'$ is exactly the $\mrm{SO}(r)$-instantons in {\it loc. cit.}, Figure 7. 
(Note that $\bw^0_n$ corresponds to $w_0$ in Figure 7 in {\it loc. cit.})
The orthogonal/symplectic forms in {\it loc. cit.} are defined over $V_i \oplus V_{-i}$ and $W_i\oplus W_{-i}$, similar to~\cite{E09}. In our setting, 
we  assign to  $V_n$ a symplectic form and each
$W_i$ and $V_i$ for $i\neq n$ an orthogonal form $(- | -)$, set $W_i=W_{2n-i}$, $V_i=V_{2n-i}$. From these data,  we can obtain  orthogonal/symplectic forms used in {\it loc. cit} on $W_i\oplus W_{-i}$ or $V_i\oplus V_{2n-i}$
by the rule
$[ (u_1, u_2), (w_1, w_2)] = (u_1|w_2) \pm (u_2| w_1)$,
where the choice of $+$ leads to an orthogonal form and the choice of $-$ leads to a symplectic form as desired. 
Under this setting, the domain of $\iota'$ is exactly the instanton moduli space given in Figure 7, {\it loc. cit.}
Note that in this setting, the orders of $a$ and $\tau$ are $4$, while their composition $a\tau$ has order $2$. 

(2) If our graph allows loops, the arguments in this section still work through, with a minor modification in the proof of Theorem~\ref{R-G-inv}.  
In particular, when the graph is a Jordan quiver, i.e., a vertex with two arrows, then we have a closed-immersion
\begin{align*}
\iota'': \Lambda(\bv^0, \bw^0)^{- a \tau}// \G^{a\tau}_{\bv^0}\to \M_0(\bv^0, \bw^0)^{a\tau}.
\end{align*}
where $a$ is induced by the obvious involution on the Jordan quiver.
The domain of $\iota''$ is an $\mrm{SO}/\mrm{Sp}$ instantons moduli space on $\mbb R^4$ in~\cite[(Ai)]{N15}. 
(See~\cite{Choy} for further details.)

(3) Let $H$ be a finite subgroup in $\mrm{SU}(2)$. By taking the $H$-equivariant parts in $\iota''$, one obtains a similar closed immersion  whose domain is exactly
the $\mrm{SO}/\mrm{Sp}$ instantons moduli space on $\mbb R^4/H$, which is discussed in~\cite[(Aiv)]{N15}. 

Composing $a\tau$ or $\tau$  with the reflection functor $S_{w_0}$, it also gives rise to the $\mrm{SO}/\mrm{Sp}$ instantons moduli space on ALE spaces
if $\M_0(\bv^0, \bw^0)$ is replaced by $\M_{\zeta}^{\mrm{reg}}(\bv^0, \bw^0)$ for $\zeta$ generic.
In particular, if the McKay diagram of $\H$ is of type $\mrm D^{(1)}_{2n}$, $\mrm E^{(1)}_7$ or $\mrm E^{(1)}_8$, one uses $\tau S_{w_0}$, and
$a\tau S_{w_0}$ is used for the remaining cases $\mrm A^{(1)}_n$, $\mrm D^{(1)}_{2n+1}$ and $\mrm E^{(1)}_6$. 
This is known to Nakajima, see~\cite{N18}, and implicitly given in~\cite[Sect. 9]{N03}.
\end{rem}

\section{Quiver varieties and symmetric spaces}
\label{sQV+QSS}

In this section, we study fixed-point subvarieties of Nakajima varieties under an anti-symplectic automorphism.
In type A case, we identify  them with the symmetric space of a given symmetric pair of type AI/AII.

\subsection{The anti-symplectic automorphism $\hat \tau_{\zeta}$}

Similar to $\tau_{\zeta}$, we define a simpler automorphism
\[
\hat \tau: \bM (\bv, \bw) \to \bM (\bv, \bw), \bx= (x_h, p_i, q_i) \mapsto ({}^{\hat \tau} x_h, {}^{\hat\tau} p_i, {}^{\hat\tau} q_i),
\]
where 
$
{}^{\hat\tau} x_h = x^*_{\bar h},\ {}^{\hat \tau} p_i = q_i^* , \ {}^{\hat \tau} q_i = p^*_i, \quad \forall h\in H, i\in I.
$
The $\hat\tau_{\zeta}$ only differs from $\tau_{\zeta}$ by a minus sign at $x_h$ for $h\in \ve^{-1}(-1)$ and $p_i$. 
Despite this minor perturbation, the new automorphism behaves quite differently from $\tau_{\zeta}$, as we shall see in the following 
and yet proofs  are always in parallel 
with the old ones with minor modifications, which often involve removals of minus signs.
It is easy to see 
$
\mu({}^{\hat \tau}\bx) = \mu(\bx)^*.
$
So it induces an isomorphism on $\M_{\zeta}(\bv, \bw)$:
\[
\hat\tau_{\zeta} : \M_{(\xi, \zeta_{\mbb C})}(\bv, \bw) \to \M_{(-\xi, \zeta_{\mbb C})}(\bv, \bw).
\]
It is also clear that $\hat \tau_{\zeta}$ is independent of the choices of forms on $V$ by the same argument for the similar property of $\tau_{\zeta}$. 
In contrast with its symplectic analogue $\tau_{\zeta}$, the $\hat \tau_{\zeta}$ is anti-symplectic, that is, 
$$
\omega ({}^{\hat\tau} \bx, {}^{\hat\tau}\bx') = - \omega (\bx, \bx'),
$$
which can be verified by definition.  Now we determine the order of $\hat \tau_{\zeta}$.

\begin{prop}
If the forms on $W$  are uniform, i.e., $\delta_{\bw,i}=\delta_{\bw, j}$ for all $i, j\in I$,
then the $\hat\tau_{\zeta}$ is involutive: $\hat\tau_{\zeta}^2=1$.
In general, if $W$ is a formed space with sign $\delta_{\bw}$, then $\hat\tau_{\zeta}^4=1$.
\end{prop}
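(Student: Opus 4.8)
The plan is to follow the template of the proof of Proposition~\ref{order-4}, taking advantage of the observation made just above the statement that $\hat\tau_{\zeta}$ on $\M_{\zeta}(\bv,\bw)$ does not depend on the bilinear forms chosen on $V$. First I would fix, for every $i\in I$, a non-degenerate \emph{symmetric} form on $V_i$, so that $\delta_{\bv,i}=1$ for all $i$; by the cited independence this changes neither the variety nor the automorphism $\hat\tau_{\zeta}$, but it simplifies all sign bookkeeping.

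Next I would compute $\hat\tau^{2}$ directly on $\bM(\bv,\bw)$ from the defining formulas ${}^{\hat\tau} x_h = x_{\bar h}^{*}$, ${}^{\hat\tau} p_i = q_i^{*}$, ${}^{\hat\tau} q_i = p_i^{*}$, together with the rule $(T^{*})^{*}=\delta\delta' T$ valid for a map from a $\delta$-formed space to a $\delta'$-formed space. One finds for $\bx=(x_h,p_i,q_i)$ that
\[
\hat\tau^{2}(\bx)=\bigl((x_h^{*})^{*},\,(q_i^{*})^{*},\,(p_i^{*})^{*}\bigr)=\bigl(x_h,\,\delta_{\bw,i}\,p_i,\,\delta_{\bw,i}\,q_i\bigr),
\]
because the two adjoints applied to $x_h$ contribute the factor $\delta_{\bv,\o(h)}\delta_{\bv,\i(h)}=1$, while those applied to $p_i$ and to $q_i$ each contribute $\delta_{\bv,i}\delta_{\bw,i}=\delta_{\bw,i}$. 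Iterating once more gives $\hat\tau^{4}(\bx)=\bigl(x_h,\delta_{\bw,i}^{2}p_i,\delta_{\bw,i}^{2}q_i\bigr)=\bx$, so in fact $\hat\tau^{4}=\mathrm{id}$ already on $\bM(\bv,\bw)$; passing to the GIT quotient yields $\hat\tau_{\zeta}^{4}=1$ on $\M_{\zeta}(\bv,\bw)$, which is the general assertion.

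For the case of uniform forms, assume $\delta_{\bw,i}=\delta$ for all $i$, so that $\hat\tau^{2}(\bx)=(x_h,\delta p_i,\delta q_i)$. I would then set $g=(\delta\cdot\mathrm{id}_{V_i})_{i\in I}\in\G_{\bv}$ and observe from the definition of the $\G_{\bv}$-action that $g.\bx=\bigl(g_{\i(h)}x_h g_{\o(h)}^{-1},\,g_i p_i,\,q_i g_i^{-1}\bigr)=(x_h,\delta p_i,\delta q_i)=\hat\tau^{2}(\bx)$, using $\delta^{-1}=\delta$. Hence $\hat\tau^{2}(\bx)$ lies in the $\G_{\bv}$-orbit of $\bx$, so $\hat\tau_{\zeta}^{2}=1$ on $\M_{\zeta}(\bv,\bw)$.

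This argument is essentially routine; the only steps requiring attention are the correct accounting of the signs produced by applying adjoints twice (the $\delta\delta'$ rule) and the appeal to the independence of $\hat\tau_{\zeta}$ from the forms on $V$, which legitimizes the reduction to symmetric forms. Neither presents a genuine obstacle, so I expect no serious difficulty.
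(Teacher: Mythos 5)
Your proof is correct and takes essentially the same route as the paper's: the paper likewise invokes the independence of $\hat\tau_{\zeta}$ from the forms on $V$ (mirroring Proposition~\ref{tau-form}), reduces to the computation $\hat\tau^{2}([\bx])=[(x_h,\delta_{\bw,i}p_i,\delta_{\bw,i}q_i)]$, and absorbs the signs in the uniform case by acting with $(\delta_{\bw,i}\mathrm{id}_{V_i})_{i\in I}\in\G_{\bv}$, with the $\hat\tau_{\zeta}^4=1$ assertion following by iterating. You merely spell out the sign bookkeeping from the $(T^{*})^{*}=\delta\delta' T$ rule in more detail than the paper's terse proof does.
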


\begin{proof}
The proof follows the same line as that of Proposition 3.3.2 with the
observation that
$\hat \tau^2([\bx]) = [(x_h, \delta_{\bw,i} p_i, \delta_{\bw,i} q_i)]=[\bx]$,
where the last equality is given by the action of the element $(\delta_{\bw, i} \mrm{id}_{V_i})_{i\in I}\in \G_{\bv}$.
The above observation indicates that $\hat\tau_{\zeta}^4=1$. Proposition is thus proved.
\end{proof}

It is clear that the isomorphism $\hat \tau_{\zeta}$ commutes with the isomorphisms $a$ and $S_{\w}$.

\begin{lem}
One has
$S_i \hat \tau_{\zeta} = \hat \tau_{s_i \zeta} S_i$
and $a \hat \tau_{\zeta} = \hat \tau_{a \zeta} a$.
\end{lem}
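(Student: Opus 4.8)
The statement asserts two commutation relations for the anti-symplectic isomorphism $\hat\tau_\zeta$: one with each simple reflection functor $S_i$, and one with the diagram isomorphism $a$. My plan is to prove each by reducing to the corresponding relation already established for the symplectic transpose $\tau_\zeta$, namely Lemma~\ref{comm-Si-tau} and the identity (\ref{a-T}) (specialized to $\mrm f^0=1$, which gives $a\,\tau_\zeta = \tau_{a\zeta}\,a$ up to the relevant change of forms). The key structural observation is that $\hat\tau$ differs from $\tau$ only by a sign flip at $x_h$ for $h\in\ve^{-1}(-1)$ and at $p_i$; that is, if $c=(c_h,c_i)$ denotes the diagonal "sign-change" operator on $\bM(\bv,\bw)$ that multiplies $x_h$ by $\ve(h)^{-1}\cdot(-1)$ appropriately and negates each $p_i$, then $\hat\tau = c\circ\tau$ (or, more precisely, $\hat\tau$ and $\tau$ agree after the diagonal rescaling recorded in the paper). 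The cleanest route is therefore to track how $S_i$ and $a$ interact with this sign twist.

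First I would write out, directly from the defining formulas, the effect of $\hat\tau$ on the building blocks $a_i(\bx)$ and $b_i(\bx)$ used in the reflection functor: one gets $a_i({}^{\hat\tau}\bx) = b_i(\bx)^*$ and $b_i({}^{\hat\tau}\bx) = a_i(\bx)^*$ (note: \emph{no} minus sign, in contrast with the computation $a_i({}^\tau\bx)=b_i(\bx)^*$, $b_i({}^\tau\bx)=-a_i(\bx)^*$ inside the proof of Lemma~\ref{comm-Si-tau}). Then I would run the same argument as in that lemma: the short exact sequence (\ref{reflection-a}) for $\bx'$, when transposed, yields the analogous exact sequence for ${}^{\hat\tau}\bx$ with $V_i$ and $V_i'$ interchanged, and the moment-type equation (\ref{reflection-b}) transposes to the required equation with $\zeta_{\mbb C}$ fixed (since $\hat\tau$ leaves $\zeta_{\mbb C}$ fixed — this is exactly why the parameter on the reflection side becomes $s_i\zeta$ with the \emph{same} $\zeta_{\mbb C}$-part, not $-s_i\zeta_{\mbb C}$). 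Uniqueness of the reflection-functor construction then forces $S_i\,\hat\tau_\zeta = \hat\tau_{s_i\zeta}\,S_i$. The commutation with $a$ is entirely parallel to (\ref{a-T}): one checks on the level of $\bM(\bv,\bw)$ that $\hat\tau$ and the diagram operation $a$ commute up to the permutation of forms (using that $a$ is compatible with $\ve$ via the constant $c_{a,\ve}$ and with $\bar{\empty}$), and then passes to the GIT quotient.

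The main obstacle I anticipate is bookkeeping of signs and the $\ve$-twist in the diagram-isomorphism case: the operation $a$ already carries a factor $\ve(h)^{(1-c)/2}$, and $\hat\tau$ carries its own sign pattern (negation of $p_i$ but \emph{not} $q_i$, and $x_h\mapsto x_{\bar h}^*$ with no $\ve(h)$ factor versus ${}^\tau x_h=\ve(h)x_{\bar h}^*$). I will need to verify that these interact cleanly, i.e., that $a\,\hat\tau$ and $\hat\tau\,a$ differ at most by a $\G_{\bv}$-action by a diagonal element (so that they agree on $\M_{\zeta}$), using the same trick as in Proposition~\ref{tau-form} — choosing convenient forms on $V$ and absorbing the discrepancy into a $\G_{\bv}$-element of the form $(\pm\mrm{id}_{V_i})_i$. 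Since $\hat\tau_\zeta$ is independent of the forms on $V$ (noted in the paragraph preceding the lemma), this freedom is available. Once the $\bM$-level identities are in place, descending to $\M_\zeta(\bv,\bw)$ is automatic because all three operations are defined by the same descent procedure and respect the $\G_{\bv}$- and $\G_{\bw}$-actions; I would state this descent explicitly but not belabor it, as it mirrors the passage from (\ref{tau-raw}) to (\ref{tau}) already carried out in Section~\ref{involution}.
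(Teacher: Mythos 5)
The paper gives no proof of this lemma; it is preceded only by the remark that the commutations are "clear." Your plan to mimic Lemma~\ref{comm-Si-tau} and (\ref{a-T}) is the right one, and your observation that $\hat\tau$ fixes $\zeta_{\mbb C}$ (so that the reflected parameter comes out as $s_i\zeta_{\mbb C}$ rather than $-s_i\zeta_{\mbb C}$) is precisely the key point. But the intermediate identities you assert for the reflection-functor case are incorrect, and the $\ve$-twist that you anticipate only in the diagram-isomorphism case in fact already appears here. Concretely, in the $\o(h)=i$ indexing of $U_i$ one has $a_i({}^{\hat\tau}\bx)=(p_i^*,\,x_{\bar h}^*)_{h}$ whereas $b_i(\bx)^*=(p_i^*,\,\ve(h)x_{\bar h}^*)_{h}$, and similarly $b_i({}^{\hat\tau}\bx)=(q_i^*,\,\ve(h)x_h^*)_{h}$ while $a_i(\bx)^*=(q_i^*,\,x_h^*)_{h}$. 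For $\tau$ the $\ve(h)$ in ${}^{\tau}x_h=\ve(h)x_{\bar h}^*$ cancels the $\ve(h)$ produced by dualizing $b_i$, which is what gives the clean identities used in the proof of Lemma~\ref{comm-Si-tau}; for $\hat\tau$ that cancellation does not occur, and since $\ve(h)+\ve(\bar h)=0$ the factor $\ve(h)$ is never uniformly $+1$ on the arrows out of $i$.

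The correct relations are $a_i({}^{\hat\tau}\bx)=g_U\,b_i(\bx)^*$ and $b_i({}^{\hat\tau}\bx)=a_i(\bx)^*\,g_U$, where $g_U\in\GL(U_i)$ acts trivially on $W_i$ and by $\ve(h)$ on the summand $V_{\i(h)}$ of $U_i$. Since $g_U$ is an involutive automorphism of $U_i$, it does not affect either reflection-functor condition: inserting $g_U$ and $g_U^{-1}=g_U$ in the middle of the dualized exact sequence $0\to V_i\overset{b_i(\bx)^*}{\to} U_i\overset{a_i(\bx')^*}{\to} V_i'\to 0$ preserves exactness and produces exactly the sequence (\ref{reflection-a}) for the pair $({}^{\hat\tau}\bx,{}^{\hat\tau}\bx')$, while in (\ref{reflection-b}) the conjugation by $g_U$ drops out because $\zeta'^{(i)}_{\mbb C}$ is scalar, so one obtains $a_i({}^{\hat\tau}\bx')b_i({}^{\hat\tau}\bx')-a_i({}^{\hat\tau}\bx)b_i({}^{\hat\tau}\bx)=-\zeta'^{(i)}_{\mbb C}=\zeta^{(i)}_{\mbb C}$, which is the required value after swapping roles. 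So the argument goes through once you replace the naive identities by their $g_U$-twisted versions, but as written the step "the transposed sequence yields the analogous exact sequence" would not literally produce $a_i({}^{\hat\tau}\bx)$ and $b_i({}^{\hat\tau}\bx')$. For the commutation with $a$, your plan of absorbing residual sign mismatches into $\G_{\bv}$ via the independence-of-$V$-forms trick of Proposition~\ref{tau-form} is correct and genuinely needed, because the rescaling $\ve(h)^{(1-c)/2}$ in $a$ interacts with ${}^{\hat\tau}x_h=x_{\bar h}^*$ differently than with ${}^{\tau}x_h=\ve(h)x_{\bar h}^*$.
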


\subsection{The  $\hat \sigma$-quiver varieties}

Similar to $\sigma$, we consider the following isomorphism
\begin{align}
\hat \sigma : = a S_{\w} \hat \tau_{\zeta}: \M_{(\xi, \zeta_{\mbb C})}(\bv, \bw)  \to 
\M_{(- a \w \xi, a \w \zeta_{\mbb C})} (a\w*\bv,a \bw), \quad \forall \w\in \mathcal W.
\end{align}

The $\hat \sigma$-$quiver$ $variety$ is defined to be 
$$
\fP_{\zeta}(\bv, \bw) \equiv \M_{\zeta}(\bv, \bw)^{\hat \sigma},
$$ 
whenever $a \w \zeta_{\mbb C} = \zeta_{\mbb C}$, $- a \w \xi =\xi $
and $a \w*\bv= \bv$. By summing over all $\bv$, we have 
$$
\fP_{\zeta}(\bw) \equiv
\M_{\zeta}(\bw)^{\hat \sigma}.
$$
It is clear that $\fP_{\zeta}(\bv, \bw)$, and hence $\fP_{\zeta}(\bw)$, is independent of the choice of the form on $V$, due to the same property on $\tau_{\zeta}$.
Since the $a$ and $S_{\w}$ are symplectomorphisms and $\hat \tau_{\zeta}$ is anti-symplectic, the $\hat \sigma$ is anti-symplectic.
Summing up, we have

\begin{prop}
$\fP_{\zeta}(\bv, \bw)$ is a fixed-point subvariety  of $\M_{\zeta}(\bv, \bw)$ under an anti-symplectic automorphism.
Its definition is independent of the choice of the form on $V$. 
If $\zeta$ is generic, then $\fP_{\zeta}(\bv, \bw)$ is smooth if it is nonempty and $\hat\sigma$ is of finite order. 
If $W$ is a formed space with sign $\delta_{\bw}$ 
and $\w$ is of finite order, then the order of $\hat \sigma$ is a divisor of l.c.m.$\{4, |\w|, |a|\}$.
If further $\delta_{\bw}$ is uniform and $a^2=\w^2 =1$, then $\hat\sigma^2=1$.
\end{prop}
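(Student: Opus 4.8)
The plan is to prove each assertion by adapting the already-established facts about $\tau_\zeta$ and $\sigma$ to $\hat\tau_\zeta$ and $\hat\sigma$, since the excerpt has just explained that the two theories run in parallel with only sign changes. First I would establish smoothness exactly as in Proposition~\ref{smooth}: if $\hat\sigma$ has finite order $N$, then it generates a $\mathbb Z_N$-action on the smooth variety $\M_\zeta(\bv,\bw)$ (smoothness here using that $\zeta$ is generic), and the fixed-point locus of a finite group acting on a smooth variety is smooth by the cited results (\cite{I72, E92, CG}); hence $\fP_\zeta(\bv,\bw)$ is smooth whenever it is nonempty. The independence of the form on $V$ is immediate from the sentence just before the statement: $\hat\tau_\zeta$ is independent of the form on $V$ by the same argument used for $\tau_\zeta$ in Proposition~\ref{tau-form}, and $a$, $S_\w$ do not involve a form on $V$ at all, so neither does $\hat\sigma$ nor its fixed-point subvariety.

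Next I would handle the order of $\hat\sigma = a S_\w \hat\tau_\zeta$. The key commutation relations have just been recorded in the preceding lemma: $S_i\hat\tau_\zeta = \hat\tau_{s_i\zeta}S_i$ and $a\hat\tau_\zeta = \hat\tau_{a\zeta}a$, and $a$ commutes with each $S_i$ up to relabeling by \eqref{a-S}. These are the $\hat\tau$-analogues of Lemma~\ref{comm-Si-tau}, \eqref{a-S}, \eqref{a-T}, so the same bookkeeping as in Proposition~\ref{sigma-finite} and Proposition~\ref{sig-4} applies: powers of $\hat\sigma$ can be rewritten, using these relations, as $a^k S_{\w}^{\,k}\hat\tau_\zeta^{\,k}$ (up to the appropriate $\mathcal W$- and diagram-twisted parameters), so $\hat\sigma^M = 1$ whenever $M$ is a common multiple of $|a|$, $|\w|$, and the order of $\hat\tau_\zeta$. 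Since the preceding proposition shows $\hat\tau_\zeta^4 = 1$ for a formed space $W$ with sign $\delta_{\bw}$, we get that the order of $\hat\sigma$ divides $\mathrm{l.c.m.}\{4,|\w|,|a|\}$.

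Finally, for the sharper claim under the uniformity hypothesis: the preceding proposition already shows that if the forms on $W$ are uniform then $\hat\tau_\zeta^2 = 1$ (this is where the anti-symplectic theory genuinely differs from the symplectic one, which needed $\Gamma$-alternating signs for $\tau_\zeta^2=1$). Combining $\hat\tau_\zeta^2=1$ with $a^2=1$ and $\w^2=1$ and the commutation relations above gives $\hat\sigma^2 = 1$ by the same computation as in the last sentence of Proposition~\ref{sig-4}.

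I do not anticipate a serious obstacle here: every ingredient is either quoted verbatim from earlier in the excerpt (smoothness of fixed loci, genericity $\Rightarrow$ smoothness of $\M_\zeta$) or is the mechanical $\hat\tau$-analogue of a $\tau$-statement already proved, and the excerpt explicitly flags that these analogues hold ``always in parallel with the old ones with minor modifications.'' The only point requiring mild care is bookkeeping the twisted parameters $-a\w\xi$ and $a\w\zeta_{\mathbb C}$ correctly when composing powers of $\hat\sigma$, so that the commutation relations are applied with the right subscripts; but this is exactly the same ledger that was balanced for $\sigma$ in Propositions~\ref{sigma-finite} and~\ref{sig-4}, so it is routine. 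Thus the proof is essentially a citation-and-transport argument, and I would write it as such, pointing to Propositions~\ref{smooth}, \ref{sigma-finite}, \ref{sig-4} and the two displayed propositions immediately preceding the statement.
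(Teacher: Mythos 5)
Your proposal is correct and matches the paper's (implicit) proof: the proposition is stated as a summary after the phrase ``Summing up,'' with each clause being an immediate consequence of the preceding lemma ($S_i\hat\tau_\zeta=\hat\tau_{s_i\zeta}S_i$, $a\hat\tau_\zeta=\hat\tau_{a\zeta}a$), the preceding proposition ($\hat\tau_\zeta^2=1$ when uniform, $\hat\tau_\zeta^4=1$ in general), the independence-of-form remark, the observation that $\hat\sigma$ is anti-symplectic as a composition of two symplectomorphisms and one anti-symplectomorphism, and the smoothness argument of Proposition~\ref{smooth}. Your transport argument is exactly the intended reasoning; the only point you pass over without mentioning is the first clause (that $\hat\sigma$ is an anti-symplectic automorphism), but that too follows directly from what precedes the statement, so nothing is missing.
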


Just like $\sigma$-quiver varieties, the $\hat\sigma$-quiver varieties include original quiver varieties. 
By a general property of anti-symplectic involution, we have

\begin{prop}
If $|\hat \sigma| =2$, i.e., $\hat \sigma$ is anti-involutive, then  $\fP_{\zeta}(\bv, \bw)$ is a Lagrangian subvariety
of $\M_{\zeta}(\bv, \bw)$. In particular, 
the dimension of $\fP_{\zeta}(\bv, \bw)$, if nonempty,  is half of the dimension of $\M_{\zeta}(\bv, \bw)$. 
\end{prop}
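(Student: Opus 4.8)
The plan is to invoke the general principle that the fixed-point set of an anti-symplectic involution on a symplectic manifold is a Lagrangian submanifold, and to assemble the ingredients already established in the paper. First I would record the setup: we are given $|\hat\sigma|=2$, so $\hat\sigma$ is an honest involution of $\M_{\zeta}(\bv,\bw)$, and we have shown that $\hat\sigma$ is anti-symplectic, i.e. $\hat\sigma^*\omega=-\omega$ for the symplectic form $\omega$ induced on $\M_{\zeta}(\bv,\bw)$ from $(\ref{symplectic-form})$. Since $\zeta$ is generic, $\M_{\zeta}(\bv,\bw)$ is smooth, and $\fP_{\zeta}(\bv,\bw)=\M_{\zeta}(\bv,\bw)^{\hat\sigma}$ is smooth as well by the same results of Iversen/Edixhoven/\cite[Lemma 5.11.1]{CG} cited for $\sigma$-quiver varieties (the fixed locus of a finite-order automorphism of a smooth variety is smooth).

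Next I would carry out the pointwise tangent-space computation. Fix $[\bx]\in\fP_{\zeta}(\bv,\bw)$. The differential $d\hat\sigma_{[\bx]}$ is an involution of the symplectic vector space $(T_{[\bx]}\M_{\zeta}(\bv,\bw),\omega_{[\bx]})$ satisfying $d\hat\sigma_{[\bx]}^*\omega_{[\bx]}=-\omega_{[\bx]}$, and by Edixhoven's result (as used in the proof of Proposition~\ref{Ms-symplectic}) its $+1$-eigenspace is exactly $T_{[\bx]}\fP_{\zeta}(\bv,\bw)$. Write the eigenspace decomposition $T_{[\bx]}\M_{\zeta}(\bv,\bw)=T_{[\bx]}\fP_{\zeta}(\bv,\bw)\oplus N$, where $N$ is the $-1$-eigenspace. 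For $u,v$ in the $+1$-eigenspace, $\omega_{[\bx]}(u,v)=\omega_{[\bx]}(d\hat\sigma u,d\hat\sigma v)=-\omega_{[\bx]}(u,v)$, so $\omega_{[\bx]}$ vanishes on $T_{[\bx]}\fP_{\zeta}(\bv,\bw)$; hence $\fP_{\zeta}(\bv,\bw)$ is isotropic. For the coisotropic half, one checks that $\omega_{[\bx]}$ is nondegenerate on $N$ by the identical sign computation, so by nondegeneracy of $\omega_{[\bx]}$ and the orthogonality $T_{[\bx]}\fP_{\zeta}(\bv,\bw)\perp_{\omega} N$ one gets $\dim N=\tfrac12\dim T_{[\bx]}\M_{\zeta}(\bv,\bw)$, whence $\dim T_{[\bx]}\fP_{\zeta}(\bv,\bw)=\tfrac12\dim\M_{\zeta}(\bv,\bw)$. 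Thus $\fP_{\zeta}(\bv,\bw)$ is a Lagrangian submanifold, and its dimension is half that of $\M_{\zeta}(\bv,\bw)$ on each component.

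There is one subtlety I would address: the orthogonality $T_{[\bx]}\fP_{\zeta}\perp_{\omega}N$ needs the remark that for $u$ in the $+1$-eigenspace and $w$ in the $-1$-eigenspace, $\omega_{[\bx]}(u,w)=\omega_{[\bx]}(d\hat\sigma u,d\hat\sigma w)=-\omega_{[\bx]}(u,-w)=\omega_{[\bx]}(u,w)$ gives nothing directly, so instead I would run the standard argument: $\omega_{[\bx]}(u,w)=-\omega_{[\bx]}(d\hat\sigma u,d\hat\sigma w)=-\omega_{[\bx]}(u,-w)=\omega_{[\bx]}(u,w)$ — this again is vacuous, so the correct route is to observe $\omega(u,w) = -\omega(d\hat\sigma u, d\hat\sigma w)$ with $d\hat\sigma u = u$, $d\hat\sigma w = -w$, giving $\omega(u,w)=-\omega(u,-w)=\omega(u,w)$; to break the loop one uses that the $\pm1$ eigenspaces of an involution are $\omega$-complementary precisely because $\omega$ restricted to $+1$ is zero, $\omega$ restricted to $-1$ is zero by the same vanishing, and $\omega$ is nondegenerate, forcing the pairing between them to be a perfect pairing — so in fact $N$ is isotropic too and $\fP_{\zeta}(\bv,\bw)$ is Lagrangian with $N$ playing the role of a Lagrangian complement. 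The main obstacle is thus purely bookkeeping: getting the signs right in this eigenspace argument and confirming that the finite-order/smoothness hypotheses (which require $\zeta$ generic, already assumed) legitimately let us apply Edixhoven's theorem identifying the fixed tangent space with the $+1$-eigenspace. Once that is in place the conclusion about half-dimension is immediate, component by component, since $\M_{\zeta}(\bv,\bw)$ is smooth symplectic hence pure even-dimensional on each component.
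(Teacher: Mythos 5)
Your argument reaches the right conclusion, and for what it's worth the paper itself does not prove this proposition at all — it simply invokes ``a general property of anti-symplectic involution'' — so your job here is to supply the standard fact, which you do. The core computation is correct: since $\hat\sigma^*\omega=-\omega$ and $d\hat\sigma_{[\bx]}$ fixes vectors in $T^{+}:=T_{[\bx]}\fP_{\zeta}(\bv,\bw)$, the identity $\omega(u,v)=\omega(d\hat\sigma u,d\hat\sigma v)=-\omega(u,v)$ forces $\omega|_{T^{+}}=0$, and the same sign computation (with $d\hat\sigma=-1$) shows $\omega|_{T^{-}}=0$ as well. Two isotropic subspaces that span a nondegenerate symplectic space must each have half the dimension, so $T^{+}$ is Lagrangian. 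That is the whole proof.

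However, your exposition contains an internal contradiction you should clean up. In the main paragraph you assert that ``one checks that $\omega_{[\bx]}$ is nondegenerate on $N$ by the identical sign computation'' and that $T^{+}\perp_{\omega}N$ — this is the decomposition used in the \emph{symplectic} case (Proposition~\ref{Ms-symplectic}), where eigenspaces for distinct eigenvalues are $\omega$-orthogonal because $\omega(u,w)=\lambda\,\omega(u,w)$ for $\lambda\neq1$. In the anti-symplectic case exactly the opposite happens: the same sign computation shows $N=T^{-}$ is \emph{isotropic}, not a symplectic complement, and $T^{+}$ and $T^{-}$ are emphatically \emph{not} $\omega$-orthogonal (if they were, $\omega$ would be identically zero on $T$). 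You do eventually correct this in your ``subtlety'' paragraph, after several vacuous detours, by observing that nondegeneracy of $\omega$ and the vanishing on both eigenspaces force the pairing $T^{+}\times T^{-}\to\mbb C$ to be perfect. But the earlier sentences asserting nondegeneracy on $N$ and $\omega$-orthogonality are simply false and should be deleted; the argument should go directly to: both eigenspaces are isotropic, they are complementary as vector spaces, $\omega$ is nondegenerate, therefore both have dimension $\tfrac12\dim T$, therefore $T^{+}$ is Lagrangian.
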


Via restriction, there is a proper map
\begin{align}
\pi^{\hat \sigma}: \fP_{\zeta}(\bv, \bw) \to \fP_1(\bv, \bw),
\end{align}
where $\fP_1(\bv,\bw)$ is defined in the same way as $\fS_1(\bv,\bw)$ in (\ref{S0-def}).

Arguing in a similar way as the $\sigma$ case, it yields

\begin{prop}
The map $\pi^{\hat \sigma}$ is $\G_{\bw}^{\sigma}$-equivariant and if $\zeta_{\mbb C}=0$, it is $\G^{\sigma}_{\bw}\times \mbb C^{\times}$-equivariant. 
\end{prop}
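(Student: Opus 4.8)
The statement to be proven is that $\pi^{\hat\sigma}\colon \fP_{\zeta}(\bv,\bw)\to\fP_1(\bv,\bw)$ is $\G^{\sigma}_{\bw}$-equivariant, and $\G^{\sigma}_{\bw}\times\mbb C^{\times}$-equivariant when $\zeta_{\mbb C}=0$. The plan is to imitate line by line the argument given earlier for the symplectic automorphism $\sigma$ (the discussion leading to the Proposition just before Section 4.2), replacing $\tau_{\zeta}$ by $\hat\tau_{\zeta}$ throughout and checking that the only structural input used there — the way $\G_{\bw}$ intertwines with the three building-block isomorphisms — survives the change of sign. Concretely, I would first record the $\G_{\bw}$-equivariance of $\hat\tau_{\zeta}$: for $\mrm f\in\G_{\bw}$ one sets ${}^{\hat\tau}\mrm f=(\mrm f^{-1})^{*}$ exactly as in the $\tau_{\zeta}$ case (the sign changes in ${}^{\hat\tau}x_h$ and ${}^{\hat\tau}p_i$ do not affect the $\G_{\bw}$-action, since that action only rescales $p_i,q_i$ on the $W$-side), so that $\hat\tau_{\zeta}(\mrm f.[\bx])={}^{\hat\tau}\mrm f.\hat\tau_{\zeta}([\bx])$. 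Combined with the already-established identities $a(\mrm f.[\bx])={}^{a}\mrm f.a([\bx])$ and the $\G_{\bw}$-equivariance of $S_{\w}$, this gives
\[
\hat\sigma(\mrm f.[\bx]) = {}^{a\tau}\mrm f.\,\hat\sigma([\bx]), \qquad \forall\,\mrm f\in\G_{\bw},\ [\bx]\in\M_{\zeta}(\bv,\bw),
\]
where ${}^{a\tau}\mrm f$ denotes the same twisted action on $\G_{\bw}$ as before (note $(\mrm f^{-1})^{*}$ is independent of the $\hat\tau$ vs.\ $\tau$ distinction on the $W$-side).

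Next I would observe that the fixed-point group $\G^{\sigma}_{\bw}=\{\mrm f\in\G_{\bw}\mid \mrm f={}^{a\tau}\mrm f\}$ is exactly the group that preserves $\fP_{\zeta}(\bv,\bw)$: if $[\bx]=\hat\sigma([\bx])$ and $\mrm f\in\G^{\sigma}_{\bw}$ then $\hat\sigma(\mrm f.[\bx])={}^{a\tau}\mrm f.\hat\sigma([\bx])=\mrm f.[\bx]$, so the $\G_{\bw}$-action on $\M_{\zeta}(\bv,\bw)$ restricts to a $\G^{\sigma}_{\bw}$-action on $\fP_{\zeta}(\bv,\bw)$; the same computation on $\M_0(\bv,\bw)$ (using that $\hat\tau_0$ is compatible with $\pi$, which follows from the $\G_{\bw}$-equivariance of $\pi$ together with the definition of $\hat\tau_0$ on the base as the induced map) shows $\G^{\sigma}_{\bw}$ acts on $\fP_1(\bv,\bw)=\pi(\fP_{\zeta}(\bv,\bw))$, and $\pi$ intertwines the two actions because $\pi$ is $\G_{\bw}$-equivariant to begin with. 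This yields the $\G^{\sigma}_{\bw}$-equivariance of $\pi^{\hat\sigma}$.

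For the second assertion I would use the natural $\mbb C^{\times}$-action $t.\bx=(tx_h,tp_i,tq_i)$ on $\bM(\bv,\bw)$. One checks directly from the formulas for ${}^{\hat\tau}x_h,{}^{\hat\tau}p_i,{}^{\hat\tau}q_i$ and for $a$ that both $a$ and $\hat\tau$ commute with this scaling action (each of the three maps is linear in the entries and sends the triple of scaling weights $(1,1,1)$ to itself), and $S_{\w}$ is likewise $\mbb C^{\times}$-equivariant since it is a hyper-Kähler isometry; hence $\hat\sigma$ commutes with the $\mbb C^{\times}$-action. When $\zeta_{\mbb C}=0$ the scaling action preserves $\Lambda_{0}(\bv,\bw)$ (the moment-map equation is homogeneous), hence descends to $\M_{\zeta}(\bv,\bw)$ and to $\M_0(\bv,\bw)$, and since it commutes with $\hat\sigma$ it restricts to $\fP_{\zeta}(\bv,\bw)$ and $\fP_1(\bv,\bw)$; it commutes with the $\G^{\sigma}_{\bw}$-action because the $\G_{\bw}$-action and the scaling action on $\bM(\bv,\bw)$ commute, and $\pi^{\hat\sigma}$ is then $\G^{\sigma}_{\bw}\times\mbb C^{\times}$-equivariant. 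I do not expect a genuine obstacle here — the whole proof is a transcription of the $\sigma$-case — so the only thing requiring any care is bookkeeping the sign discrepancies between $\hat\tau$ and $\tau$ and confirming that none of them touches the $\G_{\bw}$- or $\mbb C^{\times}$-intertwining identities; the paper in fact flags exactly this, remarking that proofs in the $\hat\sigma$ setting ``are always in parallel with the old ones with minor modifications, which often involve removals of minus signs.''
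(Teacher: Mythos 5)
Your proposal is correct and follows precisely the route the paper has in mind: the paper's own ``proof'' is just the single sentence ``Arguing in a similar way as the $\sigma$ case,'' referring back to the discussion that precedes the $\sigma$-version of this proposition, and your write-up is a faithful, more detailed transcription of that argument with $\tau_\zeta$ replaced by $\hat\tau_\zeta$. One small remark: for the $\mbb C^\times$-statement you do not actually need to know that $S_\w$ commutes with the scaling action (nor is the ``hyper-K\"ahler isometry'' justification the cleanest), since the $\mbb C^\times$-action on $\fP_1(\bv,\bw)$ is inherited from the ambient $\M_0(\bv,\bw)^{a\hat\tau}$ where $S_\w$ plays no role; commuting with $a$ and $\hat\tau$ suffices, which is exactly what the paper invokes for the $\sigma$ case.
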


Recall the fixed-point subgroup $\mathcal W^{\w, a}$. 
For any $x\in \mathcal W^{\w, a}$, the original reflection functor induces an action on the $\hat \sigma$-quiver varieties
\[
S_x^{\hat\sigma} : \fP_{\zeta}(\bv, \bw) \to \fP_{x \zeta}(x * \bv, \bw). 
\]
Further, the group $\mathcal W^{\w, a}$ acts on the cohomology group $\H^*(\fP_{\zeta}(\bv, \bw), \mbb Z)$ 
when $\bw - \mathbf C \bv=0$.

\subsection{$\hat \sigma$-quiver varieties of type $\mrm A$}

Recall the setting from  Section 6.1. We define
\begin{align}
\mathfrak p \equiv \mathfrak p(W) = \{ x\in \End (W) | x = x^* \}. 
\end{align}
This is called a $symmetric$ $space$ with respect to the symmetric pair $(\mathfrak{gl}(W), \mathfrak g(W))$.
Let $\N(\mathfrak p)$ be the variety of nilpotent elements in $\mathfrak p$. 

Consider the automorphism $\hat \sigma_1$ on $T^*\mathscr F_{\bv,\bw}$ defined by $(x, F) \mapsto (x^*, F^{\perp})$, 
and the fixed point subvariety $(T^* \mathscr F_{\bv, \bw})^{\hat \sigma_1}$.
Let

\[
\hat \Pi: (T^* \mathscr F_{\bv, \bw})^{\hat \sigma_1} \to  \N(\mathfrak p).
\]
be the first projection.

Retain the setting from Section 6.2. In particular, $\bw_i=0$ for all $i\geq 2$. 

\begin{prop}
Let $a=1$ and  $\w= w_0$. 
Then the $\hat \sigma$ gets identified with the automorphism $\hat \sigma_1$. 
If further $w_0* \bv = \bv$, then there is a commutative diagram 
$$
\begin{CD}
\fP_{\zeta}(\bv, \bw) @>\cong >> (T^*\mathscr F_{\bv, \bw})^{\hat \sigma_1}\\
@V\pi^{\hat \sigma} VV @VV\hat \Pi V\\
\fP_{1}(\bv, \bw) @>>> \N(\mathfrak p)
\end{CD} 
$$ 
\end{prop}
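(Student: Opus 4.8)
The plan is to mirror, line by line, the proof of Theorem~\ref{sigma-sigma-1}, replacing the automorphism $\tau$ by its anti-symplectic cousin $\hat\tau$ and the isomorphism $\sigma_1$ by $\hat\sigma_1$. The essential point is that every structural ingredient used in the $\sigma$-case is still available: Nakajima's isomorphisms $\phi:\M_{\zeta}(\bv,\bw)\to T^*\mathscr F_{\bv,\bw}$ from (\ref{M-TF}) and the dual version $\psi:\M_{-\zeta}(\bv,\bw)\to T^*\mathscr F_{w_0*\bv,\bw}$ from (\ref{M-TF-dual}) do not involve $\tau$ at all, so they carry over verbatim; the only thing that changes is the comparison between the transpose-type automorphisms on the two sides.

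First I would establish the analogue of the first diagram in the proof of Theorem~\ref{sigma-sigma-1}, namely that
\[
\begin{CD}
\M_{\zeta}(\bv,\bw) @>\hat\tau_{\zeta}>> \M_{-\zeta}(\bv,\bw)\\
@V\phi VV @VV\psi V\\
T^*\mathscr F_{\bv,\bw} @>\hat\sigma_1>> T^*\mathscr F_{w_0*\bv,\bw}
\end{CD}
\]
commutes. Here one uses that ${}^{\hat\tau}q_1\,{}^{\hat\tau}p_1 = (q_1p_1)^*$ (no sign, in contrast with $\tau$), matching the first component $x\mapsto x^*$ of $\hat\sigma_1$ exactly. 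For the flag component one needs the identity $(\mathrm{im}\,q_1 y_1\cdots y_{i-1})^{\perp}=\ker\,{}^{\hat\tau}x_{i-1}\cdots{}^{\hat\tau}x_1\,{}^{\hat\tau}p_1$ for all $1\le i\le n$; this is proved exactly as (\ref{im-ker}), the only modification being that the scalar $(-1)^i$ disappears since ${}^{\hat\tau}x_{i-1}\cdots{}^{\hat\tau}x_1\,{}^{\hat\tau}p_1 = f_i^*$ with $f_i=q_1y_1\cdots y_{i-1}$, and then $\ker f_i^*\subseteq(\mathrm{im}\,f_i)^{\perp}$ together with the dimension count $\dim\mathrm{im}\,f_i+\dim\ker f_i^* = \bw_1$ (valid because $q_1,y_1,\dots,y_{i-1}$ are injective on semistable points) forces equality. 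The second step is to check $\psi=\phi\,S_{w_0}$; since $S_{w_0}$ and the maps (\ref{M-TF}), (\ref{M-TF-dual}) are independent of any bilinear form, this is \emph{literally} the second half of the proof of Theorem~\ref{sigma-sigma-1}, unchanged. Combining the two steps gives $\phi\,\hat\sigma=\phi\,S_{w_0}\hat\tau_{\zeta}=\psi\,\hat\tau_{\zeta}=\hat\sigma_1\,\phi$ (using $a=1$), which is the first assertion. For the case $w_0*\bv=\bv$, restricting to fixed-point loci gives $\fP_{\zeta}(\bv,\bw)=\M_{\zeta}(\bv,\bw)^{\hat\sigma}\cong(T^*\mathscr F_{\bv,\bw})^{\hat\sigma_1}$; the commutativity of the base square follows by passing to $\M_0$, identifying $\fP_1(\bv,\bw)$ with the image of $\pi^{\hat\sigma}$, and noting that $\phi_0\varphi_0$ sends $0$ to the nilpotent element $e_0$, while the first projection $\hat\Pi$ lands in $\N(\mathfrak p)$ by (\ref{G(W)}) applied to $\mathfrak p=\{x\mid x=x^*\}$ — the image being a single nilpotent $\mrm{GL}$-orbit closure intersected with $\mathfrak p$.

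The main obstacle I anticipate is purely bookkeeping: making sure that every sign that was present in the $\tau$-computation and is now absent is accounted for consistently, in particular verifying that $\hat\sigma_1$ as defined by $(x,F)\mapsto(x^*,F^{\perp})$ is well-defined (i.e.\ that $x^*(F_i^{\perp})\subseteq F_{i-1}^{\perp}$, which is the $\hat\sigma$-analogue of the well-definedness check for $\sigma_1$ after (\ref{sigma1}); since $(u,x^*(u'))_W=(x(u),u')_W=0$ for $u\in F_{i-1}$ and $u'\in F_i^{\perp}$, this goes through identically) and that the identification of $\fP_1(\bv,\bw)$ with a subvariety of $\N(\mathfrak p)$ matches the image of $\hat\Pi$. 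A secondary point worth stating carefully is that the diagram on the base need not have isomorphic horizontal arrows in general — unlike the symplectic case we cannot invoke a Nakajima–Maffei-type identification here without further hypotheses on $\bv,\bw$ — so the bottom horizontal map is only asserted to be a morphism, and the commutativity is what is claimed. Everything else is a transcription of Theorem~\ref{sigma-sigma-1}'s proof with $\tau\rightsquigarrow\hat\tau$, $\sigma_1\rightsquigarrow\hat\sigma_1$, $\mathfrak g(W)\rightsquigarrow\mathfrak p(W)$, and the overall nilpotent cone replaced by $\N(\mathfrak p)$.
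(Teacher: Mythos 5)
Your proof is correct and is exactly what the paper does: its entire proof of this proposition is the single line ``The proof is the same as that of Theorem 6.2.1 with minus signs removed,'' and you have spelled out precisely that transcription — in particular the observation that $(-1)^i f_i^*$ in (\ref{im-ker}) loses its sign once $\tau$ is replaced by $\hat\tau$, and that the identity $\psi=\phi S_{w_0}$ is form-independent and so carries over verbatim. One small inaccuracy to tidy: your invocation of $\phi_0\varphi_0$ and $e_0$ belongs to the Maffei reduction of Section 7 for general $\bw$, not to the present cotangent-bundle setting where $\bw$ is supported at vertex $1$; here $\fP_1(\bv,\bw)=\pi(\fP_{\zeta}(\bv,\bw))$, the bottom arrow is just the one induced on affinizations, and the commutativity of the base square follows directly from the top square.
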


\begin{proof}
The proof is  the same as that of Theorem 6.2.1 with  minus signs removed.
\end{proof}

\begin{rem}
The above identification implies that $\pi^{\hat \sigma}$ is not semismall in general. 
For example, when the form on $W$ is symplectic and $(\bv, \bw)=(1, 2)$, 
the map  $\hat \Pi$ is the projection from the projective line $\mbb P^1$ to  a point. 
\end{rem}

Now we discuss the $\hat \sigma$ counterpart of the results in Section 8. 
Recall that it is assumed that forms on $V$ and $W$ are $\delta$-forms.
We define a non-degenerate bilinear form $\{ - | - \}$ on $\widetilde V_i$ by
\begin{align}
\left \{ (v_i, w^{(h)}_j)_{j\geq i+h} \middle|\:   (v'_i, u^{(h)}_{j})_{j\geq i + h} \right \}_{\widetilde V_i} 
=(v_i, v'_i)_{V_i} + \sum_{j\geq i+h} \left ( w^{(h)}_j, u^{(j-i+1-h)}_j \right )_{W_j} 
\end{align}
where $v, v'\in V_i$ and $w^{(h)}_j, u^{(h)}_j\in W^{(h)}_j$ such that $1\leq h\leq j-i$.

\begin{lem}
If the forms on $W$ are uniform $\delta$-forms, then so is the form on $\widetilde W_1$.
If further $V$ and $W$ are uniform $\delta$-form, then so is the form on $\widetilde V_i$, similar to $\tilde \tau$ in Proposition 8.1.2.  
\end{lem}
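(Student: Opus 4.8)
The plan is to run the same computation as in Lemma~\ref{tV-alternate}, but now tracking how the symmetry sign behaves under the new form $\{-|-\}_{\widetilde V_i}$, which differs from $\langle-|-\rangle_{\widetilde V_i}$ precisely by the removal of the alternating sign $(-1)^{j-i+h}$. The key observation is the analogue of the identity displayed in the proof of Lemma~\ref{tV-alternate}: for a fixed $j_0$ with $1\le h\le j_0-i$, the restriction of $\{-|-\}_{\widetilde V_i}$ to the block $\oplus_{1\le h\le j_0-i}W_{j_0}^{(h)}$ pairs $w_{j_0}^{(h)}$ with $u_{j_0}^{(j_0-i+1-h)}$ via $(-,-)_{W_{j_0}}$ with coefficient $+1$ (rather than $(-1)^{j_0-i+1}$). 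Hence, since $(-,-)_{W_{j_0}}$ is a uniform $\delta$-form, swapping the two arguments just produces the global factor $\delta$, with no parity-dependent sign. This gives
\[
\{(u_{j_0}^{(h)})_h \mid (w_{j_0}^{(h)})_h\}_{\widetilde V_i} = \delta\,\{(w_{j_0}^{(h)})_h \mid (u_{j_0}^{(h)})_h\}_{\widetilde V_i}
\]
for all such families, exactly the $\delta$-form identity we want on each $W_{j_0}$-block.

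First I would assemble the pieces. The form $\{-|-\}_{\widetilde V_i}$ is by construction an orthogonal direct sum of the form $(-,-)_{V_i}$ on $V_i$ and, for each $j\geq i+1$, the pairing on the block $\oplus_{1\le h\le j-i}W_j^{(h)}$ described above. The $V_i$-summand is a uniform $\delta$-form by hypothesis. Each $W_j$-block is a $\delta$-form by the displayed identity. Since the symmetry sign of an orthogonal direct sum of $\delta$-forms is again $\delta$ (the sign is uniform, so there is no obstruction as there was in the general $\Gamma$-alternating case), the whole form $\{-|-\}_{\widetilde V_i}$ is a uniform $\delta$-form. Specializing to $i=0$ with the convention $\widetilde V_0=\widetilde W_1$ and $V_0=0$ gives the first assertion, that $\widetilde W_1$ carries a uniform $\delta$-form. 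Non-degeneracy is inherited from non-degeneracy of the $(-,-)_{V_i}$ and $(-,-)_{W_j}$ and the block-diagonal (orthogonal) structure, just as in the symplectic-adjacent setting.

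I do not expect a serious obstacle here; the statement is the ``uniform'' shadow of Lemma~\ref{tV-alternate}, and all the bookkeeping is identical except that the alternating signs that complicated the earlier proof are simply absent. The one point that warrants care is the phrase ``similar to $\tilde\tau$ in Proposition 8.1.2'': I read this as asserting that the right adjoint $T^{\natural}$ with respect to $\{-|-\}$ relates to the ordinary transpose $T^*$ by the sign pattern one gets from the new form, and hence that the analogue of Proposition~\ref{tau-Phi} holds with $\tau,\widetilde\tau$ replaced by $\hat\tau,\widetilde{\hat\tau}$. To justify this I would recompute the adjoint formulas (\ref{x-natural}) with $\{-|-\}_{\widetilde V_i}$ in place of $\langle-|-\rangle_{\widetilde V_i}$: the coefficients $(-1)^{j-i+h}$ drop out, so $(T^{j',h'}_{i,j,h})^{\natural}$ and $(S^{j',h'}_{i,j,h})^{\natural}$ become $(-1)^{?}(T^{j',h'}_{i,j,h})^*$ etc.\ with exponents shifted by the parity removed, matching exactly the sign convention in $\hat\tau$ (namely ${}^{\hat\tau}x_h=x_{\bar h}^*$, ${}^{\hat\tau}p_i=q_i^*$, ${}^{\hat\tau}q_i=p_i^*$ with no minus signs). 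The rest of the argument is then word-for-word Proposition~\ref{tau-Phi} with every minus sign deleted, as the excerpt already anticipates in the remark preceding the statement. I would present the adjoint recomputation as the technical core and relegate the verification of the transversality conditions to ``the same as Proposition~\ref{tau-Phi}.''
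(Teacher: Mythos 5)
The paper states this lemma without a proof, treating it as immediate from the definition of $\{-|-\}_{\widetilde V_i}$; your computation is exactly the verification the authors are implicitly appealing to, and it is correct. The essential points are all present: $\{-|-\}_{\widetilde V_i}$ is an orthogonal direct sum of $(-,-)_{V_i}$ and, for each $j$, a block pairing $W_j^{(h)}\leftrightarrow W_j^{(j-i+1-h)}$ carrying no parity-dependent coefficient; swapping arguments in each block just reindexes $h\mapsto j-i+1-h$ and produces the single factor $\delta_{\bw,j}=\delta$; and setting $i=0$, $V_0=0$ recovers the $\widetilde W_1$ statement. One small wording slip worth fixing: the coefficient absent from $\{-|-\}$ relative to $\langle-|-\rangle$ is the per-term $(-1)^{j-i+h}$ from (\ref{tild-V-form}), not $(-1)^{j_0-i+1}$ (the latter is the block-level swap sign in the displayed identity of Lemma~\ref{tV-alternate}, not a coefficient in the form). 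Your last paragraph about recomputing $T^\natural$ versus $T^*$ and the sign pattern for $\hat\tau$ is really the substance of the \emph{next} proposition (the $\hat\tau$-analogue of Proposition~\ref{tau-Phi}), not of this lemma; it is correct as far as it goes but could be cut here without loss.
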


With the form $\{-|-\}$, one can define the automorphism $\hat \tau_{\{-|-\}}$ on $\Lambda(\widetilde V, \widetilde W)$. 

\begin{prop}
The following diagram is commutative.
\[
\begin{CD}
\Lambda(V, W) @>\Phi>> \Lambda(\widetilde V, \widetilde W) \\
@V\hat \tau VV @VV\hat \tau_{\{-|-\}} V\\
\Lambda(V, W) @>\Phi>> \Lambda(\widetilde V, \widetilde W) \\
\end{CD}
\]
\end{prop}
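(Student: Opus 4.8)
The statement is the $\hat\tau$-analogue of Proposition~\ref{tau-Phi}, and the plan is to reuse that proof essentially verbatim, tracking how each sign is affected by the passage from $\tau$ to $\hat\tau$ and from the form $\langle - |- \rangle$ to the form $\{ - | - \}$. First I would record the analogue of the adjoint-sign formulas~(\ref{x-natural}): writing $T^{\flat}$ for the right adjoint of a linear map $T$ between the spaces $\widetilde V_i$, $\widetilde V_j$ with respect to the new forms $\{ - | - \}$, the point is that the alternating sign $(-1)^{j-i+h}$ in~(\ref{tild-V-form}) is replaced by the constant sign $1$ in the definition of $\{ - | - \}_{\widetilde V_i}$. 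Hence the computation
\[
(t(u), v)_{W_j} = \{ t(u) \mid v \}_{\widetilde V_{i+1}} = \{ u \mid t^{\flat}(v) \}_{\widetilde V_i} = (u, t^{\flat}(v))_{W_{j'}}
\]
for $t = T^{j',h'}_{i,j,h}$, $u \in W_{j'}^{(h')}$, $v \in W_j^{(h)}$ now yields $t^{\flat} = t^*$ with no sign, and similarly for the $S$-maps and for $X_i, Y_i, T^{i+1,1}_{i,V}, S^V_{i,i+1,1}$. So the $\hat\tau$-version of~(\ref{x-natural}) is the same set of identities with every factor $(-1)^{j-j'+h-h'\mp 1}$ deleted.

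Next I would verify that $\hat\tau_{\{-|-\}} \circ \Phi(\bx)$ satisfies the same transversal conditions (\ref{t1})--(\ref{t5}), (\ref{s1})--(\ref{s5}), (\ref{sl2}) that characterize the image of $\Phi$, and that its linear maps $X_i, Y_i, T^{i+1,1}_{i,V}, S^V_{i,i+1,1}$ agree with those of $\Phi \circ \hat\tau(\bx)$. The first task is that $\hat\tau$ acts on $\bM(\bv,\bw)$ by ${}^{\hat\tau}x_h = x_{\bar h}^*$, ${}^{\hat\tau}p_i = q_i^*$, ${}^{\hat\tau}q_i = p_i^*$ (no minus signs), so $\Phi\circ\hat\tau(\bx)$ is the transversal element determined by $X_i = y_i^*$, $Y_i = x_i^*$, $T^{i+1,1}_{i,V} = q_{i+1}^*$, $S^V_{i,i+1,1} = p_{i+1}^*$; comparing with the $\hat\tau$-version of~(\ref{x-natural}) shows $\hat\tau_{\{-|-\}}\Phi(\bx)$ has exactly these four blocks. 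The second task — checking the transversality conditions — is where I would simply transcribe the argument in the proof of Proposition~\ref{tau-Phi}: the grading identities $\mrm{grad}(T^{j',h'}_{i,j,h}) = \mrm{grad}(S^{j,\,j-(i+1)+1-h}_{i,j',\,j'-i+1-h'})$, the index bijections such as ``$h'\neq 1 \iff j'-i+1-h' \neq j'-i$'' and ``$(j',h')\neq(j,h+1) \iff (j,j-i-h)\neq(j',j'-i+1-h')$'', and the vanishing/identity dichotomies, are all purely combinatorial and unchanged. The only place a sign intervened before was in condition~(\ref{t5}): in the $\tau$-case one got $(S^{\cdots})^{\flat} = -(S^{\cdots})^* = -\mrm{id}$, which combined with the minus sign built into $\hat\tau$'s action would give $+\mrm{id}$; in the $\hat\tau$-case the adjoint carries no sign \emph{and} $\hat\tau$ carries no sign, so one again lands on $+\mrm{id}_{W_j}$ as required. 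Finally the Maffei $\mathfrak{sl}_2$-triple condition~(\ref{sl2}) is handled exactly as before: one checks ${}^{\hat\tau}e_i = e_i$ and ${}^{\hat\tau}f_i = f_i$ (the relevant signs cancel), so $\hat\tau_{\{-|-\}}\Phi(\bx)$ satisfies it too.

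The main obstacle — really the only place where care is needed — is the bookkeeping of signs under the replacement $\langle - | - \rangle \rightsquigarrow \{ - | - \}$ together with the replacement $\tau \rightsquigarrow \hat\tau$, to make sure the two sources of sign change cancel consistently on every block, in particular on the blocks governed by~(\ref{t5}), (\ref{s5}), and on the $\mathfrak{sl}_2$-triple maps $e_i, f_i$. Concretely I expect that in the new form $\{-|-\}_{\widetilde V_i}$ the pairing is ``symmetric up to a sign depending only on $j_0$'' in the same way as in Lemma~\ref{tV-alternate}, but with the exponent $(-1)^{j_0-i+1}$ replaced by the constant governed by the uniform sign $\delta_{\bw}$; this is exactly the content of the unnumbered lemma stated just above the proposition (``If the forms on $W$ are uniform $\delta$-forms, then so is the form on $\widetilde W_1$ \dots''), which I would invoke. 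Once that lemma is in hand, no genuinely new computation is needed: every step of the proof of Proposition~\ref{tau-Phi} transfers, with the phrase ``remove the minus signs'' applied uniformly, and the proposition follows.
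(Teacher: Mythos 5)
Your proposal is correct and takes essentially the same approach as the paper, which gives exactly one line: ``The proof is the same as that of Proposition~\ref{tau-Phi} with minus signs removed at appropriate places.'' You have simply expanded that remark into a careful verification that the removal of the alternating factor $(-1)^{j-i+h}$ in the form and the removal of the minus sign in $\hat\tau$ cancel consistently on every block, which is precisely the bookkeeping the paper's one-sentence proof tacitly asserts.
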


\begin{proof}
The proof is the same as that of Proposition 8.1.2 with minus signs removed at appropriate places.
\end{proof}

Following the line of arguments in Section 8, we reach at the identification of $\hat \sigma$-quiver varieties with nilpotent Slodowy slices
in symmetric space $\mathfrak p$, a counterpart of Theorem 8.3.2. 
Let $\hat \sigma_{\{-|-\}}$ denote the automorphism on $T^* \mathscr F_{\widetilde \bv, \widetilde \bw}$ defined with respect to the form $\{-|-\}$.

\begin{thm}
Assume that $\zeta = (\xi, 0)$ with $\xi_i>0$ for all $i\in I$ and $\theta \zeta = \zeta$.
Assume also that $w_0* \bv = \bv$. Then there is a commutative diagram
\[
\begin{CD}
\fP_{\zeta}(\bv, \bw) @>\cong >> \widetilde{\mathcal S}^{\hat \sigma_{\{-|-\}}}_{\mu', \lambda}\\
@V \pi^{\hat \sigma} VV @V\hat \Pi VV\\
\fP_1 (\bv, \bw) @>>> \mathcal S^{\hat \sigma_{\{-|-\}}}_{\mu', \lambda}
\end{CD}
\]
\end{thm}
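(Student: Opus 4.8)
The plan is to mirror, essentially verbatim, the argument that establishes Theorem~\ref{i-Nakajima-Maffei}, replacing the symplectic transpose $\tau$ by its anti-symplectic cousin $\hat\tau$ and the bilinear form $\langle-|-\rangle_{\widetilde V_i}$ by $\{-|-\}_{\widetilde V_i}$ throughout. The key inputs are already in place: the compatibility of $\hat\tau$ with Maffei's morphism $\Phi$ (the displayed commutative diagram just above, the $\hat\sigma$-analogue of Proposition~\ref{tau-Phi}); the compatibility of $\hat\tau_\zeta$ with the reflection functors (the $\hat\sigma$-analogue of Lemma~\ref{comm-Si-tau}, recorded in the lemma $S_i\hat\tau_\zeta=\hat\tau_{s_i\zeta}S_i$); and the identification of $\hat\sigma$ with $\hat\sigma_1$ on $T^*\mathscr F_{\bv,\bw}$ in the proposition just preceding. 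The strategy is to assemble these to get an $\hat\sigma$-analogue of Proposition~\ref{prop:sigma-varphi} and Corollary~\ref{cor:sigma-varphi}, i.e.\ compatibility of $\hat\sigma$ on $\M_\zeta(\bv,\bw)$ with $\hat\sigma_{\{-|-\}}$ on $T^*\mathscr F_{\widetilde\bv,\widetilde\bw}$ via $\phi\varphi$, and then take $\hat\sigma$-fixed points on both sides of the Nakajima–Maffei isomorphism (Theorem~\ref{Nakajima-Maffei}).

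Concretely, first I would record the $\hat\sigma$-version of Proposition~\ref{tau-varphi}: since $\hat\tau$ commutes with $\Phi$ (the proposition proved above) and $\pi$ intertwines $\hat\tau_\zeta$ and $\hat\tau_0$, the square relating $\M_{\zeta}(\bv,\bw)\to\M_{\zeta}(\widetilde\bv,\widetilde\bw)$ under $\varphi$ and $\hat\tau_\zeta,\widetilde{\hat\tau}_\zeta$ commutes, and likewise for $\varphi_0$. Second, I would note the $\hat\sigma$-version of Proposition~\ref{Si-varphi}: the proof of Proposition~\ref{Phi-Si} is purely about the transversality conditions (t1)--(t5), (s1)--(s5), (r1) and the exactness/equations (R1)--(R4), none of which see the difference between $\tau$ and $\hat\tau$; only the adjoint identities (\ref{x-natural}) change by the removal of sign factors, compatibly with the change from $\langle-|-\rangle$ to $\{-|-\}$. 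Hence $S_i$ and $\varphi$ are compatible, and by the lemma $S_i\hat\tau_\zeta=\hat\tau_{s_i\zeta}S_i$ together with $a\hat\tau_\zeta=\hat\tau_{a\zeta}a$ (and $a=1$, $\w=w_0$ here), composing over a reduced word for $w_0$ gives the $\hat\sigma$-analogue of Corollary~\ref{cor:sigma-varphi}. Third, taking $\hat\sigma$-fixed loci of the two columns of diagram~(\ref{phi-varphi}) and invoking the $\hat\sigma$-identification $\fP_\zeta(\bv,\bw)\cong(T^*\mathscr F_{\widetilde\bv,\widetilde\bw})^{\hat\sigma_{\{-|-\}}}$ restricted to the Slodowy slice yields $\fP_\zeta(\bv,\bw)\cong\widetilde{\mathcal S}^{\hat\sigma_{\{-|-\}}}_{\mu',\lambda}$ and $\fP_1(\bv,\bw)\cong\mathcal S^{\hat\sigma_{\{-|-\}}}_{\mu',\lambda}$, with the commuting square coming from functoriality of fixed points applied to~(\ref{phi-varphi}).

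The only points requiring genuine care—and the main obstacle—are bookkeeping ones: verifying that the hypotheses $w_0*\bv=\bv$ and $\theta\zeta=\zeta$ are exactly what is needed for $\hat\sigma$ to be an \emph{automorphism} of $\M_\zeta(\bv,\bw)$ (so that $-a w_0\xi=\xi$ reduces to $\theta\xi=\xi$ and $a w_0*\bv=\bv$ reduces to $w_0*\bv=\bv$), and confirming that the uniform-$\delta$-form hypothesis used in the $\hat\sigma$ setting propagates to $\widetilde W_1$ and the $\widetilde V_i$ via the lemma on $\{-|-\}$ (the $\hat\sigma$-analogue of Lemma~\ref{tV-alternate}), so that $\hat\sigma_{\{-|-\}}$ on $T^*\mathscr F_{\widetilde\bv,\widetilde\bw}$ is the honest anti-symplectic involution whose fixed locus on $T^*\mathscr F$ is $\hat\Pi^{-1}(\N(\mathfrak p))$-type. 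None of this is difficult; it is the same pattern as Sections~6--8 with signs deleted. I would therefore write the proof as: ``The proof is identical to that of Theorem~\ref{i-Nakajima-Maffei}, using the $\hat\sigma$-analogues of Propositions~\ref{tau-Phi}, \ref{tau-varphi}, \ref{Phi-Si}, \ref{Si-varphi}, \ref{prop:sigma-varphi} and Corollary~\ref{cor:sigma-varphi} established above (whose proofs in turn are those of the $\sigma$-versions with the relevant minus signs removed), combined with Theorem~\ref{Nakajima-Maffei} and the identification of $\hat\sigma$ with $\hat\sigma_{\{-|-\}}$ on $T^*\mathscr F_{\widetilde\bv,\widetilde\bw}$.''
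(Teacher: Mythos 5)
Your proposal is correct and is essentially the paper's own argument: the paper gives no standalone proof of this theorem, stating only that it is reached ``Following the line of arguments in Section 8,'' i.e.\ by replacing $\tau$ with $\hat\tau$, $\langle-|-\rangle$ with $\{-|-\}$, and the alternating-form hypotheses with uniform-form ones, then taking $\hat\sigma$-fixed loci in the Nakajima--Maffei diagram --- exactly as you propose. Your caveats about the role of $\theta\xi=\xi$ and the propagation of the uniform-form hypothesis to $\widetilde W_1$ and $\widetilde V_i$ are the same bookkeeping points the paper handles via the lemmas immediately preceding the theorem.
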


\begin{rem}
(1) One still has a counterpart of Corollary 8.3.3, with $\delta_{\bw}$ alternates replaced by $\delta_{\bw}$ being uniform.

(2) One still has a rectangular symmetry similar to Theorem 8.4.1. 
Note that the $\hat \sigma$ therein is not the same as the $\hat\sigma$ in this section.

(3) One still has the column/row removal reduction in the symmetric space setting, similar to Propositions~\ref{i-prop:col},~\ref{i-prop:row}. 
This is a refinement of  results in~\cite{O86} (see also~\cite{O91}).

(4) Via the Kostant-Sekiguchi-Vergne correpondence~\cite{S87},~\cite{Ve95} 
and the works of Barbasch-Sepanski~\cite[Theorem 2.3]{BS98} and Chen-Nadler~\cite{CN18}, 
we see that results in the preceding remarks can be transported, at least diffeomorphically,
onto the nilpotent Slowdowy slices of the associated real groups.
\end{rem}

Finally, we return to study the relationship between fixed-points subvarieties  and categorical quotients. 
We assume that forms on $V$ and $W$ are uniform. We can consider the fixed-point subvariety $\bM(\bv, \bw)^{\hat \tau}$ under $\hat \tau$.
We can define the $\G^{\tau}_{\bv}$-invariant functions $\mrm{tr}_{h_1, \cdots, h_s}(-)$ and $\chi_{h_1, \cdots, h_s}(-)$
in exactly the same manner as (111) and (112). 
Then using a similar argument as the proof of Theorem 9.1.1, we have

\begin{prop}
The algebra of $\G^{\tau}_{\bv}$-invariant regular functions on $\bM(\bv, \bw)^{\hat \tau}$ is 
generated by the functions $\mrm{tr}_{h_1,\cdots, h_s}(-)$ and $\chi_{h_1,\cdots, h_s}(-)$ for various paths $h_1,\cdots, h_s$.
\end{prop}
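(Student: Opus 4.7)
The plan is to adapt the proof of Theorem~\ref{R-G-inv} to the anti-symplectic setting, replacing the alternating-sign hypothesis by the uniform-sign one. The overall structure is identical; the only genuine modifications concern the identification of the fixed-point variety as a tensor space.

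First, following the opening moves in the proof of Theorem~\ref{R-G-inv}, I would reduce the claim to showing that for every $n$, the space of $\G^{\tau}_{\bv}$-invariant linear forms on $(\bM(\bv,\bw)^{\hat\tau})^{\otimes n}$ is spanned by products of the linearizations of $\mrm{tr}_{h_1,\ldots,h_s}$ and $\chi_{h_1,\ldots,h_s}$. This reduction is exactly~\cite[Lemma 1.4]{L98} and is insensitive to whether the ambient involution is symplectic or anti-symplectic.

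Second, I would verify that under the uniform-sign hypothesis the projection $\bM(\bv,\bw)^{\hat\tau}\to \bM_{\Omega}(\bv,\bw)$ is an isomorphism. Indeed, the fixed-point condition now reads $x_h = x_{\bar h}^{*}$ and $p_i = q_i^{*}$; for these to be consistent we need the double adjoint to equal the identity, which amounts to $\delta_{\bv,\o(h)}\delta_{\bv,\i(h)}=1$ on arrows and $\delta_{\bv,i}\delta_{\bw,i}=1$ at vertices. Both are guaranteed by uniformity, so any choice of $(x_h)_{h\in\Omega}$ and $(p_i)_{i\in I}$ extends uniquely to a $\hat\tau$-fixed point. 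Hence, after fixing bases of the $W_i$ and identifying each $V_i$ with $V_i^{*}$ via its own form, $\bM(\bv,\bw)^{\hat\tau}$ becomes the same tensor space $\bigotimes_{h\in\Omega} V_{\o(h)}\otimes V_{\i(h)} \oplus \bigoplus_{i\in I,\,b\in\mathcal B_i} V_{i,b}$ that appears in the proof of Theorem~\ref{R-G-inv}.

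Third, the $\G^{\tau}_{\bv}$-action on this tensor space factors through a product of isometry groups $G(V_i)$, and the argument of~\cite[Lemma 1.5]{L98} reduces the description of invariant linear forms on $(\bM^{\hat\tau})^{\otimes n}$ to a tensor product of $G(V_i)$-invariants on tensor powers of $V_i$. Proposition~\ref{W-classical} (Weyl's first fundamental theorem for classical groups) then says these invariants are spanned by products of contractions $f_x$ built from the forms $(-,-)_{V_i}$. As in the original proof, one decomposes the relevant tensor spaces into the ``cycle'' and ``path'' pieces, and each $f_x$ on such a piece reassembles, in the obvious way, into a product of the $\mrm{tr}_{h_1,\ldots,h_s}$ (from cycles in $H$) and the $\chi_{h_1,\ldots,h_s}$ (from paths with endpoints in the $W_i$ part).

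The only step requiring real attention is the second one: confirming that uniform signs are exactly what is needed to make the fixed-point variety project isomorphically onto $\bM_{\Omega}(\bv,\bw)$, so that the whole tensor-space machinery carries over. Once this is in place, every remaining step is a transcription of the argument in Theorem~\ref{R-G-inv} with the minus signs associated to the symplectic form $\omega$ simply absent; the classical invariant theory at the heart of the argument, and the combinatorial identification of contractions with trace and $\chi$ functions, are unchanged.
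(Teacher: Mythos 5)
Your proposal is correct and follows essentially the same route as the paper, which simply invokes the proof of Theorem~\ref{R-G-inv} (Lusztig-style reduction to invariant linear forms, identification of $\bM(\bv,\bw)^{\hat\tau}$ with $\bM_{\Omega}(\bv,\bw)$, and Weyl's first fundamental theorem) with the alternating-sign hypothesis replaced by the uniform one and the minus signs removed; your explicit check that uniformity is exactly what makes the projection to $\bM_{\Omega}(\bv,\bw)$ an isomorphism is the right point to verify and is consistent with the paper's hypothesis that $V$ and $W$ carry uniform $\delta$-forms of the same sign.
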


From the above proposition, we have 

\begin{prop}
There is a closed immersion $\hat \iota: \Lambda(\bv, \bw)^{\hat \tau}// \G^{\tau}_{\bv} \to \fP_1(\bv, \bw)$ with $a=1$.
\end{prop}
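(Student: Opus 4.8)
The statement asserts that the natural map $\hat\iota\colon \Lambda(\bv,\bw)^{\hat\tau}/\!/\G^{\tau}_{\bv}\to \fP_1(\bv,\bw)$ (with $a=1$) is a closed immersion. The plan is to mirror the proof of Proposition~\ref{prop:iota-1} verbatim, replacing $\tau$ by $\hat\tau$ throughout. First I would unwind the definitions: $\fP_1(\bv,\bw)=\pi^{\hat\sigma}(\fP_\zeta(\bv,\bw))$ sits inside $\M_0(\bv,\bw)^{\hat\tau}$ just as $\fS_1$ sits inside $\M_0(\bv,\bw)^{\tau}$, and by the universal property of categorical quotients the inclusion $\Lambda(\bv,\bw)^{\hat\tau}\hookrightarrow\Lambda(\bv,\bw)$ induces a morphism $\Lambda(\bv,\bw)^{\hat\tau}/\!/\G^{\tau}_{\bv}\to \Lambda(\bv,\bw)/\!/\G_{\bv}=\M_0(\bv,\bw)$ which factors through $\M_0(\bv,\bw)^{\hat\tau}$, yielding $\hat\iota$. (Here one uses that $a=1$ so that $\hat\sigma=S_{w_0}\hat\tau$, and on $\M_0$ with $\zeta_{\mbb C}=0$ the reflection functor $S_{w_0}$ is the identity when $w_0*\bv=\bv$, exactly as in Lemma~\ref{S0-ind}; so the relevant fixed-point locus is governed by $\hat\tau$ alone.)

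Second, to prove $\hat\iota$ is a closed immersion it suffices — exactly as in the proof of Proposition~\ref{prop:iota-1} — to show that the restriction map
\[
\mbb C[\Lambda(\bv,\bw)]^{\G_{\bv}} \longrightarrow \mbb C[\Lambda(\bv,\bw)^{\hat\tau}]^{\G^{\tau}_{\bv}}
\]
induced by the inclusion $\Lambda(\bv,\bw)^{\hat\tau}\hookrightarrow\Lambda(\bv,\bw)$ is surjective. The source is generated, by Lusztig's~\cite[Theorem 1.3]{L98}, by the trace functions $\mrm{tr}_{h_1,\dots,h_s}$ along cycles and the functions $\chi_{h_1,\dots,h_s}$ along paths; the target is generated by the same functions restricted to $\Lambda(\bv,\bw)^{\hat\tau}$ by the Proposition immediately preceding this one in the excerpt (the $\hat\tau$-analogue of Theorem~\ref{R-G-inv}). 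Since each generator of the source restricts to the corresponding generator of the target, surjectivity is immediate, and hence $\hat\iota$ is a closed immersion.

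The one point requiring care — and the only place the $\hat\tau$ case genuinely differs from the $\tau$ case — is the hypothesis on the forms. For $\tau$ the invariant-theory input (Theorem~\ref{R-G-inv}) required $\tilde\delta_{\bv}$ and $\delta_{\bw}$ to be $\Gamma$-alternating, so that $\bM^{\tau}\cong\bM_{\Omega}$; for $\hat\tau$ the corresponding simplification $\bM(\bv,\bw)^{\hat\tau}\cong\bM_{\Omega}(\bv,\bw)$ holds precisely when the forms are \emph{uniform} (all $\delta_{\bv,i}$ equal, all $\delta_{\bw,i}$ equal), which is the standing assumption stated just before the Proposition. I would simply cite the preceding Proposition for the generation statement and not reprove it. I do not anticipate a serious obstacle: the whole argument is a routine transcription, and the genuine content — the invariant-theoretic description of $\mbb C[\bM(\bv,\bw)^{\hat\tau}]^{\G^{\tau}_{\bv}}$ via Weyl's first fundamental theorem for the classical groups $G(V_i)$ (Proposition~\ref{W-classical}) — has already been carried out in the companion Proposition. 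The proof is therefore two sentences: ``The proof is identical to that of Proposition~\ref{prop:iota-1}, using the preceding Proposition in place of Theorem~\ref{R-G-inv}.''
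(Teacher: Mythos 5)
Your plan matches the paper's approach exactly: the paper's own justification is precisely ``From the above proposition, we have,'' which is shorthand for the argument you lay out (universal property plus surjectivity of the restriction map on invariants, via Lusztig's Theorem 1.3 in \cite{L98} for the source and the preceding $\hat\tau$-analogue of Theorem~\ref{R-G-inv} for the target), and you correctly flag the hypothesis change from alternating to uniform signs. One small remark, applying equally to the paper's terse proof: the universal-property argument lands the map in $\M_0(\bv,\bw)^{\hat\tau}$, and the further refinement that the image lies in $\fP_1(\bv,\bw)=\pi^{\hat\sigma}(\fP_\zeta(\bv,\bw))$ is left implicit in both treatments.
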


\end{document}